\tikzstyle randomWalkPathStyle=[thin,opacity=0.5]
\newcounter{x}
\newcounter{c}
\newcounter{lastY}
\newcommand{\drawRandomWalk}[6]
{
    \begin{scope}

	\tikzstyle{every node}+=[circle, fill=black]
	\setcounter{c}{0}
	\setcounter{x}{#1}

        \foreach \y in #2 {

		\ifnum\value{c} > 0
			\draw[gray,randomWalkPathStyle] (\value{x} - 1,\value{lastY}) -- (\value{x},\y);
			\ifnum #3 < 99 \draw[gray,dashed,randomWalkPathStyle] (\value{x} - 1,#3) -- (\value{x},#3); \fi
			\ifnum \value{c} > 1
				\node at (\value{x} - 1,\value{lastY}) [fill=#6] {};
			\else
				\node at (\value{x} - 1,\value{lastY}) [fill=#4] {};
			\fi
		\fi

		\setcounter{lastY}{\y}
		\stepcounter{x}
		\stepcounter{c}
        }

	\node at (\value{x} - 1,\value{lastY}) [fill=#5] {};

    \end{scope}
}
\newtheorem{thm}{Theorem}[section]
\newtheorem{lemma}[thm]{Lemma}
\newtheorem{prop}[thm]{Proposition}
\newtheorem{cor}[thm]{Corollary}
\newtheorem{claim}[thm]{Claim}
\theoremstyle{definition}
\newtheorem*{remark}{Remark}
\newtheorem*{conj}{Conjecture}
\renewcommand{\subsubsection}[1] {\smallskip \noindent {\bf #1.}}
\newcommand{\cP}{\mathcal{P}}
\newcommand{\cC}{\mathcal{C}}
\newcommand{\cB}{\mathcal{B}}
\newcommand{\cD}{\mathcal{D}}
\newcommand{\cI}{\mathcal{I}}
\newcommand{\N}{\mathbb{N}}
\newcommand{\Z}{\mathbb{Z}}
\newcommand{\R}{\mathbb{R}}
\newcommand{\E}{\mathbb{E}}
\newcommand{\Var}{\text{Var}}
\renewcommand{\Pr}{\mathbb{P}}
\DeclareMathOperator\Hom{Hom}
\DeclareMathOperator\Rng{Rng}
\DeclareMathOperator\per{per}
\DeclareMathOperator\lcm{lcm}
\def\eqd{\,{\buildrel d \over =}\,}
\title{Random Walk with Long-Range Constraints}
\date{\today}
\author{Ron Peled}
\address{Ron Peled\hfill\break
Tel Aviv University\\
School of Mathematical Sciences\\
Tel Aviv, 69978, Israel.}
\email{peledron@post.tau.ac.il}
\urladdr{http://www.math.tau.ac.il/~peledron}
\author{Yinon Spinka}
\address{Yinon Spinka\hfill\break
Tel Aviv University\\
School of Mathematical Sciences\\
Tel Aviv, 69978, Israel.}
\email{yinonspi@post.tau.ac.il}
\urladdr{http://www.math.tau.ac.il/~yinonspi}
\thanks{Research of R.P. and Y.S. supported by an ISF grant and an IRG grant.}
\begin{document}

\begin{abstract}
We consider a model of a random height function with long-range
constraints on a discrete segment. This model was suggested by
Benjamini, Yadin and Yehudayoff and is a generalization of simple
random walk. The random function is uniformly sampled from all graph
homomorphisms from the graph $P_{n,d}$ to the integers $\Z$, where
the graph $P_{n,d}$ is the discrete segment $\{0,1,\ldots, n\}$ with
edges between vertices of different parity whose distance is at most
$2d+1$. Such a graph homomorphism can be viewed as a height function
whose values change by exactly one along edges of the graph
$P_{n,d}$. We also consider a similarly defined model on the
discrete torus.

Benjamini, Yadin and Yehudayoff conjectured that this model
undergoes a phase transition from a delocalized to a localized phase
when $d$ grows beyond a threshold $c\log n$. We establish this
conjecture with the precise threshold $\log_2 n$. Our results
provide information on the typical range and variance of the height
function for every given pair of $n$ and $d$, including the critical
case when $d-\log_2 n$ tends to a constant.

In addition, we identify the local limit of the model, when $d$ is
constant and $n$ tends to infinity, as an explicitly defined Markov
chain.
\end{abstract}

\maketitle

\section{Introduction}
\label{sec:introduction}

Given two graphs $G$ and $H$, a {\em graph homomorphism} from $G$ to
$H$ is a function $f\colon V(G) \to V(H)$ such that if $x$ and $y$
are neighbors in $G$, then $f(x)$ and $f(y)$ are neighbors in $H$. A
graph homomorphism from a graph $G$ to $\mathbb{Z}$ is then a map
from the vertex set of $G$ to the integers, that maps adjacent
vertices to adjacent integers. For a given vertex $v_0 \in G$, we
denote by $\Hom(G, v_0)$ the set of all homomorphisms from $G$ to
$\mathbb{Z}$, which map $v_0$ to $0$. Precisely,
\[ \Hom(G,v_0) := \big\{ f \colon V(G) \to \Z ~|~ f(v_0)=0,\, |f(x)-f(y)|=1 \text{ when }(x,y) \in E(G)\big\} .\]

The set $\Hom(G,v_0)$ is non-empty and finite when $G$ is finite,
bipartite and connected. Benjamini, H{\"a}ggstr{\"o}m and Mossel
\cite{Benjamini2000} initiated the study of random
$\Z$-homomorphisms, that is, uniformly chosen elements of
$\Hom(G,v_0)$. Special cases of this model include the simple random
walk, when $G=\{0,1,\ldots, n\}$ with nearest-neighbor connections,
the random walk bridge, when $G$ is a cycle, and the branching
random walk, when $G$ is a tree. The model is sometimes referred to
as a $G$-indexed random walk.
The behavior of typical $\Z$-homomorphisms is poorly understood for general graphs $G$.
Beyond simple and branching random walks, results are available mainly for the hypercube \cite{Kahn,Galvin}, high-dimensional cubic lattices \cite{peled2010high} and expander
and tree graphs \cite{Benjamini2000,peled2012lipschitz}. In particular, the case when
$G=\Z_{2n}^2$, a two-dimensional discrete torus, appears completely
open. This case is related to the 6-vertex, square-ice and
antiferromagnetic 3-state Potts models of statistical physics (see \cite{peled2010high}).

Benjamini, Yadin and Yehudayoff \cite{Benjamini} suggested the study
of this model when $G= T_{n,d}$ is a certain one-dimensional graph
with long-range edges, defined below. In this work we study the
properties of the model on this graph, as well as its close
relative, the graph $P_{n,d}$. Specifically, let $P_{n,d}$, for
$n,d\ge 1$, be the graph defined by
\begin{equation}
\label{eq:def-graph-P_n_d}
\begin{aligned}
  V(P_{n,d}) &:= \{0,1,\ldots, n\},\\
  E(P_{n,d}) &:= \{(i,j)\ |\ |i-j|\in\{1,3,\ldots, 2d+1\}\}.
\end{aligned}
\end{equation}
Thus, a uniformly chosen random function $f$ from $\Hom(P_{n,d}, 0)$
is a simple random walk conditioned on satisfying $|f(i) - f(j)|=1$
whenever $i,j$ have different parity and are at distance at most
$2d+1$. Figure~\ref{fig:samples} shows a typical sample from
$\Hom(P_{n,d},0)$. Similarly, let $T_{n,d}$, $n\ge 1$ even and $d
\ge 1$, be the graph defined by
\begin{equation}
\label{eq:def-graph-T_n_d}
\begin{aligned}
  V(T_{n,d}) &:= \{0,1,\ldots, n-1\} , \\
  E(T_{n,d}) &:= \big\{(i,j) ~|~ \min\{|i-j|, n-|i-j|\} \in \{1,3,\ldots, 2d+1\} \big\} .
\end{aligned}
\end{equation}
Thus, a uniformly chosen random function $f$ in $\Hom(T_{n,d},0)$ is
a simple random walk \emph{bridge} conditioned on satisfying $|f(i)
- f(j)|=1$ whenever $i,j$ have different parity and are at distance
at most $2d+1$ on the cycle.

In the rest of the paper we abbreviate $\Z$-homomorphisms to
homomorphisms. We shall loosely refer to homomorphisms on $P_{n,d}$
as being on the line, and to homomorphisms on $T_{n,d}$ as being on
the torus.

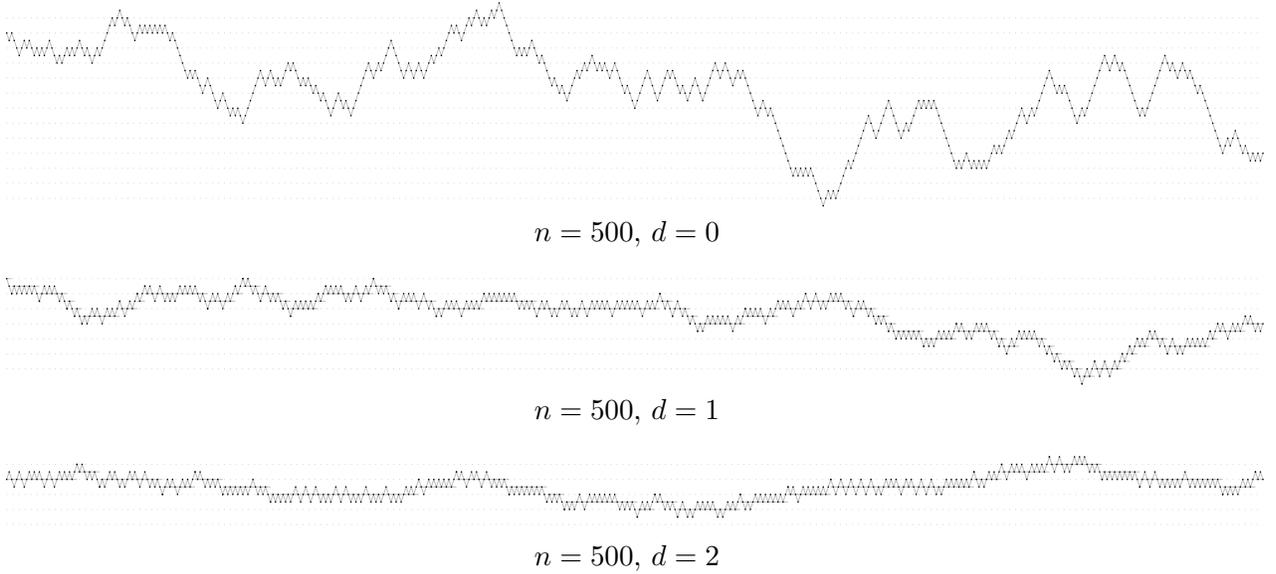
\begin{figure}[!t]
\vspace{8pt}
\centering
\begin{tikzpicture}

    \pgftransformcm{0.0334}{0}{0}{0.1}{\pgfpoint{0cm}{0cm}}
    \tikzstyle{every node}=[minimum size=0.02cm,inner sep=0]
    \draw[line width=0.1mm,gray!40,dotted](0,2)--(500,2);
    \draw[line width=0.1mm,gray!40,dotted](0,0)--(500,0);
    \draw[line width=0.1mm,gray!40,dotted](0,-2)--(500,-2);
    \draw[line width=0.1mm,gray!40,dotted](0,-4)--(500,-4);
    \draw[line width=0.1mm,gray!40,dotted](0,-6)--(500,-6);
    \draw[line width=0.1mm,gray!40,dotted](0,-8)--(500,-8);
    \draw[line width=0.1mm,gray!40,dotted](0,-10)--(500,-10);
    \draw[line width=0.1mm,gray!40,dotted](0,-12)--(500,-12);
    \draw[line width=0.1mm,gray!40,dotted](0,-14)--(500,-14);
    \draw[line width=0.1mm,gray!40,dotted](0,-16)--(500,-16);
    \draw[line width=0.1mm,gray!40,dotted](0,-18)--(500,-18);
    \draw[line width=0.1mm,gray!40,dotted](0,-20)--(500,-20);
    \draw[line width=0.1mm,gray!40,dotted](0,-22)--(500,-22);
    \drawRandomWalk{0}{{0,-1,0,-1,-2,-3,-2,-1,-2,-1,-2,-3,-2,-3,-2,-3,-2,-1,-2,-3,-4,-3,-4,-3,-2,-3,-2,-3,-2,-1,-2,-3,-2,-3,-4,-3,-2,-3,-2,-1,0,1,2,1,2,3,2,1,2,1,0,-1,0,1,0,1,0,1,0,1,0,1,0,1,0,-1,0,-1,-2,-3,-4,-5,-6,-5,-6,-5,-6,-7,-8,-7,-6,-7,-8,-9,-10,-9,-8,-9,-10,-11,-10,-11,-10,-11,-12,-11,-10,-9,-8,-7,-6,-5,-6,-7,-6,-5,-6,-7,-6,-7,-6,-5,-4,-5,-4,-5,-6,-7,-6,-7,-6,-7,-8,-7,-8,-9,-8,-9,-10,-11,-10,-9,-8,-9,-10,-9,-10,-11,-10,-9,-8,-7,-6,-5,-4,-5,-6,-5,-4,-5,-4,-3,-2,-1,-2,-3,-4,-5,-6,-5,-4,-5,-6,-5,-4,-5,-6,-5,-4,-3,-4,-3,-2,-3,-2,-1,0,1,0,1,0,-1,0,1,2,1,2,3,2,1,2,1,2,3,2,3,4,3,2,1,0,-1,-2,-3,-2,-3,-2,-3,-2,-1,-2,-3,-4,-3,-4,-5,-6,-7,-6,-7,-8,-7,-8,-9,-8,-7,-6,-5,-6,-5,-4,-5,-4,-3,-4,-5,-4,-5,-4,-5,-6,-5,-4,-5,-6,-7,-8,-7,-8,-9,-10,-9,-8,-7,-6,-5,-6,-7,-8,-9,-8,-7,-6,-5,-6,-5,-6,-7,-8,-7,-8,-9,-8,-7,-6,-7,-8,-9,-8,-7,-6,-5,-4,-5,-6,-5,-4,-5,-6,-7,-6,-5,-6,-5,-6,-7,-8,-9,-10,-11,-10,-9,-10,-11,-10,-11,-12,-13,-14,-15,-16,-17,-18,-19,-18,-19,-18,-19,-18,-19,-18,-19,-20,-21,-22,-23,-22,-21,-22,-21,-22,-21,-20,-19,-18,-17,-18,-17,-16,-15,-14,-13,-12,-11,-12,-13,-14,-13,-12,-11,-10,-9,-10,-11,-12,-13,-14,-13,-12,-13,-12,-11,-10,-9,-10,-9,-10,-9,-10,-9,-10,-11,-12,-13,-14,-15,-16,-17,-18,-17,-18,-17,-16,-17,-18,-17,-18,-17,-18,-17,-18,-17,-16,-15,-16,-15,-16,-15,-14,-13,-14,-13,-12,-11,-10,-11,-12,-11,-10,-11,-10,-9,-8,-7,-6,-5,-6,-7,-8,-7,-8,-7,-8,-9,-10,-11,-12,-11,-10,-11,-10,-9,-8,-7,-6,-5,-4,-3,-4,-5,-4,-3,-4,-5,-4,-5,-6,-7,-8,-9,-10,-11,-10,-11,-10,-9,-8,-7,-6,-5,-4,-3,-4,-5,-4,-5,-6,-5,-4,-5,-6,-5,-6,-7,-8,-7,-8,-9,-10,-11,-12,-13,-14,-15,-16,-15,-14,-15,-14,-13,-14,-15,-16,-15,-16,-17,-16,-17,-16,-17,-16}}{99}{}{}{};

\end{tikzpicture}
$n=500$, $d=0$

\makebox[0cm]{\vspace{5pt}}

\begin{tikzpicture}

    \pgftransformcm{0.0334}{0}{0}{0.1}{\pgfpoint{0cm}{0cm}}
    \tikzstyle{every node}=[minimum size=0.02cm,inner sep=0]
    \draw[line width=0.1mm,gray!40,dotted](0,0)--(500,0);
    \draw[line width=0.1mm,gray!40,dotted](0,-2)--(500,-2);
    \draw[line width=0.1mm,gray!40,dotted](0,-4)--(500,-4);
    \draw[line width=0.1mm,gray!40,dotted](0,-6)--(500,-6);
    \draw[line width=0.1mm,gray!40,dotted](0,-8)--(500,-8);
    \draw[line width=0.1mm,gray!40,dotted](0,-10)--(500,-10);
    \draw[line width=0.1mm,gray!40,dotted](0,-12)--(500,-12);
    \drawRandomWalk{0}{{0,-1,-2}}{0}{}{}{};
    \drawRandomWalk{2}{{-2,-1,-2,-1,-2,-1,-2,-1,-2,-1,-2,-3}}{-1}{}{}{};
    \drawRandomWalk{13}{{-3,-2,-1,-2,-1,-2,-1,-2,-3,-2,-3,-4}}{-2}{}{}{};
    \drawRandomWalk{24}{{-4,-3,-4,-5}}{-3}{}{}{};
    \drawRandomWalk{27}{{-5,-4,-5,-6}}{-4}{}{}{};
    \drawRandomWalk{30}{{-6,-5,-6,-5,-4,-5,-4,-5,-6,-5,-4,-5,-4,-5,-4,-3}}{-5}{}{}{};
    \drawRandomWalk{45}{{-3,-4,-5,-4,-3,-4,-3,-2}}{-4}{}{}{};
    \drawRandomWalk{52}{{-2,-3,-2,-1}}{-3}{}{}{};
    \drawRandomWalk{55}{{-1,-2,-1,-2,-3,-2,-1,-2,-3,-2,-3,-2,-3,-2,-1,-2,-1,-2,-1,-2,-1,-2,-3,-2,-3,-4}}{-2}{}{}{};
    \drawRandomWalk{80}{{-4,-3,-2,-3,-2,-3,-4,-3,-2,-3,-2,-1}}{-3}{}{}{};
    \drawRandomWalk{91}{{-1,-2,-1,0}}{-2}{}{}{};
    \drawRandomWalk{94}{{0,-1,0,-1,-2,-1,-2,-3}}{-1}{}{}{};
    \drawRandomWalk{101}{{-3,-2,-1,-2,-3,-2,-3,-2,-3,-4}}{-2}{}{}{};
    \drawRandomWalk{110}{{-4,-3,-4,-5}}{-3}{}{}{};
    \drawRandomWalk{113}{{-5,-4,-3,-4,-3,-4,-3,-4,-3,-4,-3,-2}}{-4}{}{}{};
    \drawRandomWalk{124}{{-2,-3,-2,-1}}{-3}{}{}{};
    \drawRandomWalk{127}{{-1,-2,-1,-2,-1,-2,-1,-2,-3,-2,-3,-2,-1,-2,-3,-2,-1,-2,-1,0}}{-2}{}{}{};
    \drawRandomWalk{146}{{0,-1,-2,-1,-2,-1,-2,-3}}{-1}{}{}{};
    \drawRandomWalk{153}{{-3,-2,-3,-4}}{-2}{}{}{};
    \drawRandomWalk{156}{{-4,-3,-2,-3,-2,-3,-2,-3,-4,-3,-2,-3,-4,-3,-4,-5}}{-3}{}{}{};
    \drawRandomWalk{171}{{-5,-4,-5,-4,-3,-4,-3,-4,-3,-4,-5,-4,-3,-4,-3,-4,-3,-4,-3,-2}}{-4}{}{}{};
    \drawRandomWalk{190}{{-2,-3,-2,-3,-2,-3,-2,-3,-2,-3,-2,-3,-2,-3,-4,-3,-4,-3,-4,-3,-4,-5}}{-3}{}{}{};
    \drawRandomWalk{211}{{-5,-4,-3,-4,-3,-4,-5,-4,-5,-4,-3,-4,-3,-4,-5,-4,-5,-4,-3,-4,-5,-4,-3,-4,-3,-4,-3,-4,-3,-4,-5,-4,-3,-4,-3,-4,-3,-4,-3,-4,-3,-4,-5,-4,-3,-4,-3,-4,-3,-2}}{-4}{}{}{};
    \drawRandomWalk{260}{{-2,-3,-4,-3,-4,-5}}{-3}{}{}{};
    \drawRandomWalk{265}{{-5,-4,-3,-4,-5,-4,-5,-6}}{-4}{}{}{};
    \drawRandomWalk{272}{{-6,-5,-6,-7}}{-5}{}{}{};
    \drawRandomWalk{275}{{-7,-6,-7,-6,-5,-6,-5,-6,-5,-6,-5,-6,-5,-6,-7,-6,-5,-6,-5,-4}}{-6}{}{}{};
    \drawRandomWalk{294}{{-4,-5,-4,-5,-4,-5,-4,-5,-6,-5,-4,-5,-4,-3}}{-5}{}{}{};
    \drawRandomWalk{307}{{-3,-4,-3,-4,-3,-4,-5,-4,-3,-4,-3,-2}}{-4}{}{}{};
    \drawRandomWalk{318}{{-2,-3,-4,-3,-2,-3,-4,-3,-4,-3,-2,-3,-2,-3,-2,-3,-4,-3,-4,-5}}{-3}{}{}{};
    \drawRandomWalk{337}{{-5,-4,-3,-4,-3,-4,-5,-4,-5,-6}}{-4}{}{}{};
    \drawRandomWalk{346}{{-6,-5,-6,-5,-6,-7}}{-5}{}{}{};
    \drawRandomWalk{351}{{-7,-6,-7,-8}}{-6}{}{}{};
    \drawRandomWalk{354}{{-8,-7,-8,-7,-8,-7,-8,-7,-8,-7,-8,-9}}{-7}{}{}{};
    \drawRandomWalk{365}{{-9,-8,-9,-8,-9,-8,-7,-8,-7,-8,-7,-8,-7,-6}}{-8}{}{}{};
    \drawRandomWalk{378}{{-6,-7,-6,-7,-8,-7,-8,-7,-6,-7,-6,-7,-6,-7,-8,-7,-8,-9}}{-7}{}{}{};
    \drawRandomWalk{395}{{-9,-8,-9,-10}}{-8}{}{}{};
    \drawRandomWalk{398}{{-10,-9,-8,-9,-8,-7}}{-9}{}{}{};
    \drawRandomWalk{403}{{-7,-8,-7,-8,-7,-8,-7,-8,-9,-8,-9,-10}}{-8}{}{}{};
    \drawRandomWalk{414}{{-10,-9,-10,-11}}{-9}{}{}{};
    \drawRandomWalk{417}{{-11,-10,-11,-12}}{-10}{}{}{};
    \drawRandomWalk{420}{{-12,-11,-12,-11,-12,-13}}{-11}{}{}{};
    \drawRandomWalk{425}{{-13,-12,-13,-14}}{-12}{}{}{};
    \drawRandomWalk{428}{{-14,-13,-12,-13,-12,-11}}{-13}{}{}{};
    \drawRandomWalk{433}{{-11,-12,-13,-12,-11,-12,-13,-12,-11,-12,-11,-10}}{-12}{}{}{};
    \drawRandomWalk{444}{{-10,-11,-10,-9}}{-11}{}{}{};
    \drawRandomWalk{447}{{-9,-10,-9,-8}}{-10}{}{}{};
    \drawRandomWalk{450}{{-8,-9,-8,-9,-8,-7}}{-9}{}{}{};
    \drawRandomWalk{455}{{-7,-8,-7,-8,-9,-8,-9,-10}}{-8}{}{}{};
    \drawRandomWalk{462}{{-10,-9,-8,-9,-10,-9,-10,-9,-8,-9,-8,-9,-8,-9,-8,-9,-8,-7}}{-9}{}{}{};
    \drawRandomWalk{479}{{-7,-8,-7,-6}}{-8}{}{}{};
    \drawRandomWalk{482}{{-6,-7,-6,-7,-8,-7,-6,-7,-6,-7,-6,-5}}{-7}{}{}{};
    \drawRandomWalk{493}{{-5,-6,-5,-6,-7,-6,-7,-6}}{-6}{}{}{};

\end{tikzpicture}
$n=500$, $d=1$

\makebox[0cm]{\vspace{5pt}}

\begin{tikzpicture}

    \pgftransformcm{0.0334}{0}{0}{0.1}{\pgfpoint{0cm}{0cm}}
    \tikzstyle{every node}=[minimum size=0.02cm,inner sep=0]
    \draw[line width=0.1mm,gray!40,dotted](0,2)--(500,2);
    \draw[line width=0.1mm,gray!40,dotted](0,0)--(500,0);
    \draw[line width=0.1mm,gray!40,dotted](0,-2)--(500,-2);
    \draw[line width=0.1mm,gray!40,dotted](0,-4)--(500,-4);
    \draw[line width=0.1mm,gray!40,dotted](0,-6)--(500,-6);
    \drawRandomWalk{0}{{0,1,0,-1,0,1,0,-1,0,1,0,1,0,1,0,-1,0,1,0,-1,0,1,0,1,0,1,0,1,2}}{0}{}{}{};
    \drawRandomWalk{28}{{2,1,2,1,0,1,0,1,0,-1}}{1}{}{}{};
    \drawRandomWalk{37}{{-1,0,-1,0,1,0,1,0,-1,0,-1,0,1,0,1,0,-1,0,1,0,-1,0,-1,0,-1,-2}}{0}{}{}{};
    \drawRandomWalk{62}{{-2,-1,0,-1,0,-1,-2,-1,0,-1,0,-1,0,1}}{-1}{}{}{};
    \drawRandomWalk{75}{{1,0,1,0,-1,0,-1,0,-1,0,-1,-2}}{0}{}{}{};
    \drawRandomWalk{86}{{-2,-1,-2,-1,-2,-1,-2,-1,-2,-1,-2,-1,0,-1,-2,-1,-2,-1,-2,-3}}{-1}{}{}{};
    \drawRandomWalk{105}{{-3,-2,-3,-2,-3,-2,-3,-2,-3,-2,-1,-2,-3,-2,-1,-2,-3,-2,-1,-2,-3,-2,-3,-2,-3,-2,-1,-2,-3,-2,-1,-2,-3,-2,-3,-2,-3,-2,-1,-2,-3,-2,-3,-2,-3,-2,-1,-2,-3,-2,-3,-2,-3,-2,-1,-2,-1,-2,-1,0}}{-2}{}{}{};
    \drawRandomWalk{164}{{0,-1,-2,-1,0,-1,0,-1,0,-1,0,-1,0,-1,0,1}}{-1}{}{}{};
    \drawRandomWalk{179}{{1,0,1,0,-1,0,1,0,1,0,-1,0,1,0,-1,0,-1,0,-1,0,-1,-2}}{0}{}{}{};
    \drawRandomWalk{200}{{-2,-1,-2,-1,-2,-1,-2,-1,-2,-1,-2,-1,-2,-1,-2,-3}}{-1}{}{}{};
    \drawRandomWalk{215}{{-3,-2,-3,-2,-3,-2,-3,-4}}{-2}{}{}{};
    \drawRandomWalk{222}{{-4,-3,-4,-3,-4,-3,-2,-3,-4,-3,-2,-3,-2,-3,-2,-3,-2,-3,-2,-3,-2,-3,-4,-3,-4,-3,-4,-3,-4,-5}}{-3}{}{}{};
    \drawRandomWalk{251}{{-5,-4,-3,-4,-3,-4,-3,-2}}{-4}{}{}{};
    \drawRandomWalk{258}{{-2,-3,-2,-3,-4,-3,-4,-3,-4,-5}}{-3}{}{}{};
    \drawRandomWalk{267}{{-5,-4,-3,-4,-5,-4,-5,-4,-3,-4,-3,-4,-3,-4,-3,-4,-5,-4,-5,-4,-3,-4,-3,-4,-3,-2}}{-4}{}{}{};
    \drawRandomWalk{292}{{-2,-3,-4,-3,-4,-3,-2,-3,-2,-3,-2,-3,-2,-3,-2,-3,-2,-3,-2,-1}}{-3}{}{}{};
    \drawRandomWalk{311}{{-1,-2,-1,-2,-3,-2,-1,-2,-1,-2,-1,-2,-1,-2,-1,-2,-1,0}}{-2}{}{}{};
    \drawRandomWalk{328}{{0,-1,-2,-1,0,-1,-2,-1,0,-1,-2,-1,0,-1,-2,-1,0,-1,-2,-1,-2,-1,0,-1,0,-1,0,-1,-2,-1,0,-1,0,-1,-2,-1,0,-1,-2,-1,0,-1,-2,-1,-2,-1,0,-1,0,-1,0,-1,0,-1,0,-1,0,1}}{-1}{}{}{};
    \drawRandomWalk{385}{{1,0,-1,0,-1,0,1,0,1,0,1,2}}{0}{}{}{};
    \drawRandomWalk{396}{{2,1,0,1,2,1,2,1,2,1,0,1,2,1,2,1,2,1,2,3}}{1}{}{}{};
    \drawRandomWalk{415}{{3,2,1,2,3,2,1,2,1,2,3,2,3,2,3,2,1,2,1,2,1,0}}{2}{}{}{};
    \drawRandomWalk{436}{{0,1,0,1,0,1,0,1,0,1,0,1,0,1,0,-1}}{1}{}{}{};
    \drawRandomWalk{451}{{-1,0,1,0,-1,0,1,0,-1,0,-1,0,-1,0,-1,0,-1,0,1,0,-1,0,-1,0,-1,0,-1,0,-1,0,-1,0,-1,-2}}{0}{}{}{};
    \drawRandomWalk{484}{{-2,-1,-2,-1,-2,-1,-2,-1,0,-1,0,-1,0,1}}{-1}{}{}{};
    \drawRandomWalk{497}{{1,0,1,0}}{0}{}{}{};

\end{tikzpicture}
\raisebox{-0.5ex}{$n=500$, $d=2$}

\caption{Uniformly sampled homomorphisms in $\Hom(P_{n,d},0)$. The case $d=0$ is just a simple random walk. The simulation uses a Metropolis
algorithm (see, e.g., \cite[Chapter~3]{Peres2009markov}) and coupling from the past \cite{ProppWilson}.}
\label{fig:samples}
\vspace{8pt}
\end{figure}

Our main objects of study are the size of the range of a typical
homomorphism on $P_{n,d}$ or $T_{n,d}$ and the variance of the
homomorphism at given vertices. For a graph $G$, the {\em range} of a function $f \colon V(G) \to \Z$ is defined as
\[ \Rng(f) := \{ f(v) ~|~ v \in V(G) \} .\]
Benjamini, Yadin and Yehudayoff made the following conjecture.

\noindent {\bf Conjecture (\cite{Benjamini}).} There exist constants
$b,c>0$ such that if $f_{n,d}$ is uniformly sampled from
$\Hom(T_{n,d},0)$,
\begin{enumerate}
  \item If $d(n)-c\log n \to -\infty$ as $n \to \infty$ then for any positive integer $r$, we have
  \[ \Pr(|\Rng(f_{n,d(n)})| \leq r) \xrightarrow[n \to \infty]{} 0 . \]
  \item If $d(n)-c\log n \to \infty$ as $n \to \infty$ then
  \[ \Pr(|\Rng(f_{n,d(n)})| \leq b) \xrightarrow[n \to \infty]{} 1 . \]
\end{enumerate}
Our work establishes this conjecture with the precise constants
$b=3$ and $c= 1/\log 2$, both on $T_{n,d}$ and $P_{n,d}$. In
addition, we discover that in the subcritical regime, when $d(n) -
\log_2 n\to-\infty$, the size of the range of a typical homomorphism
is of order $\sqrt{n2^{-d}}$ and the variance of the homomorphism at
vertex $k$ is of order $k2^{-d}$. Moreover, we explore the behavior
in the critical regime, when $d(n) - \log_2 n\to\mu\in\R$, and find
that in this case, the size of the range is a tight random variable
whose distribution is closely related to the Poisson distribution.

Our results may be intuitively understood as follows. Let
$f\in\Hom(P_{n,d})$. It is not difficult to verify that if $f(i+m) -
f(i) \geq 3$ then $m \geq 2d+3$. Figure \ref{fig:single-jump-segment} shows such an event. Moreover, if $m=2d+3$ and this event
occurs, then necessarily
\[
  \big(f(i+1) - f(i), f(i+2) - f(i),\ldots, f(i+2d+3) - f(i)\big) =
  (1,2,1,2,\ldots, 1,2,1,2,3).
\]
However, if this sequence of values is possible for $f$, then there
are at least $2^d$ other possible candidates of the form
\[
  (1,1+s_1,1,1+s_2,\ldots, 1,1+s_{d},1,2,1),\quad
  s_i\in\{-1,1\}.
\]
Thus, intuitively, the probability that the homomorphism changes its
height by $3$ on any given small segment is about $2^{-d}$.
Therefore, when $n2^{-d} \to 0$, we will not have any such segment,
so that the size of the range of the homomorphism will be bounded by
$3$. Conversely, when $n2^{-d} \to \infty$, the expected number of
segments with an upward or downward movement of size $3$ will be
roughly $n2^{-d}$. Since the direction of these movements should be
only mildly correlated, we expect the size of the resulting range
to be of order $\sqrt{n2^{-d}}$. Our work makes these ideas
precise.

\begin{figure}[!t]
\centering
\vspace{8pt}
\begin{tikzpicture}
    \pgftransformcm{0.45}{0}{0}{0.55}{\pgfpoint{0cm}{0cm}}

    \draw[line width=0.1mm,gray,dotted](4,-1.8)--(4,2);
    \draw[line width=0.1mm,gray,dotted](13,-1.8)--(13,2);
    \node at (4,-1.8) [below] {$i$};
    \node at (13,-1.8) [below] {$k$};

    \drawRandomWalk{0}{{-1,0}}{99}{gray!25}{gray!25}{};
    \drawRandomWalk{1}{{0,1,0}}{0}{gray!25}{gray!25}{gray!25};
    \drawRandomWalk{3}{{0,-1,0}}{0}{gray!25}{}{orange};
    \drawRandomWalk{5}{{0,1,0,1,0,1,0,1,2}}{0}{}{red}{};
    \drawRandomWalk{13}{{2,1}}{1}{red}{gray!25}{};
    \drawRandomWalk{14}{{1,2,1}}{1}{gray!25}{gray!25}{gray!25};
    \drawRandomWalk{16}{{1,0,1}}{1}{gray!25}{gray!25}{gray!25};
\end{tikzpicture}
\caption{A homomorphism jumps from some value $t$ at vertex $i$ to $t+3$ at vertex $k$. The minimal length of such a segment is $k-i = 2d+3$. In order for this jump to occur, the values at the $d+1$ vertices, $k-1, k-3, \dots, k-2d-1$, are forced to be $t+2$. Here $d=3$.}
\label{fig:single-jump-segment}
\end{figure}
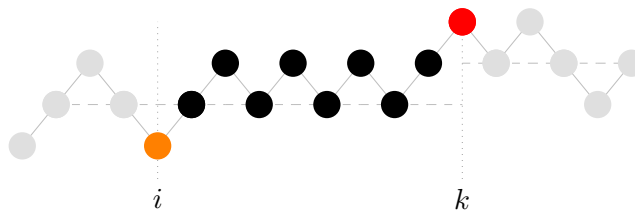

\section{Main Results}
\label{sec:main-results}

\subsection{Homomorphisms on the line}
\label{sec:main-results-line}

In this section we present results on homomorphisms on the graph
$P_{n,d}$, which was defined in \eqref{eq:def-graph-P_n_d}.
Throughout this section, $f_{n,d}$ denotes a uniformly chosen
homomorphism in $\Hom(P_{n,d},0)$.

We state results regarding the size of the range of a typical
homomorphism. As a homomorphism must change its value by exactly one
along edges, the range is always of size at least $2$. In fact, the
range is exactly $2$ only for two particular homomorphisms, and at
least $3$ otherwise. We shall show that the size of the range is $3$
plus a term of order $\sqrt{n2^{-d}}$. Hence, we distinguish between
three regimes, $n2^{-d} \to \infty$, $n2^{-d} \to 0$ and $n2^{-d}
\to \lambda \in (0,\infty)$, termed the subcritical regime, the
supercritical regime and the critical regime, respectively.

\smallskip
\noindent
{\bf The supercritical regime.}
The supercritical regime is when $d(n) - \log_2 n \to \infty$ (i.e. $n 2^{-d(n)} \to 0$) as $n \to \infty$. In this case, the large number of constraints prevents a typical homomorphism from growing. In fact, we show that, with high probability, it will take on only $3$ values.
\begin{thm}
\label{thm:line-supercritical} For any positive integers $n$, $d$
and $r$, we have
\[ \Pr\big( |\Rng(f_{n,d})| \geq 3 + r\big) \leq \binom{n}{r} 2^{-dr} \quad \text{and} \quad \Pr\big( |\Rng(f_{n,d})| < 3\big) \leq 2^{1-n/2} .\]
Thus, if $d(n) - \log_2 n \to \infty$ as $n \to \infty$ then
\[ \Pr\big(|\Rng(f_{n,d(n)})| = 3\big) \xrightarrow[n \to \infty]{} 1 .\]
\end{thm}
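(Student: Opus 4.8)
The limiting statement follows from the two displayed inequalities applied with $r=1$: in the supercritical regime $n2^{-d(n)}\to0$, so $\binom{n}{1}2^{-d(n)}+2^{1-n/2}\to0$, and since $|\Rng(f)|\ge2$ always, this forces $\Pr(|\Rng(f_{n,d(n)})|=3)\to1$. So the two inequalities are the real content; throughout, $f$ denotes a uniformly random element of $\Hom(P_{n,d},0)$.

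The inequality $\Pr(|\Rng(f)|<3)\le2^{1-n/2}$ I would get by bounding $|\Hom(P_{n,d},0)|$ from below. Every edge of $P_{n,d}$ joins vertices of opposite parity, so any function that vanishes on the even vertices and takes an arbitrary value in $\{-1,+1\}$ on each odd vertex is automatically a homomorphism; there are $2^{\lceil n/2\rceil}\ge2^{n/2}$ of these. On the other hand $|\Rng(f)|=2$ forces $f$ to alternate between $0$ and a single fixed value of $\pm1$, so exactly two homomorphisms have range $2$. Hence $\Pr(|\Rng(f)|<3)=2/|\Hom(P_{n,d},0)|\le2^{1-n/2}$.

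The inequality $\Pr(|\Rng(f)|\ge3+r)\le\binom{n}{r}2^{-dr}$ is the heart of the matter. Since the measure is uniform it is equivalent to the counting bound $\#\{f:|\Rng(f)|\ge3+r\}\cdot2^{dr}\le\binom{n}{r}\,|\Hom(P_{n,d},0)|$, which I would establish by an encoding (Peierls-type) argument based on the following rigidity, sharpening the heuristic of the Introduction. Suppose $f$ first attains a value $u$ with $u>0$ at a vertex $q\ge2d+1$, so that $f\le u-1$ on $\{0,\dots,q-1\}$ and $f(q)=u$. Then, using only adjacency to $q$ together with the bound $f\le u-1$, the $2d+1$ values of $f$ just to the left of $q$ are forced into the ``staircase'' $f(q-(2j-1))=u-1$ for $1\le j\le d+1$ and $f(q-2l)=u-2$ for $1\le l\le d$; moreover, whenever this configuration is admissible, so is each of the $2^{d}$ configurations obtained from it by changing $f(q-2l)$ from $u-2$ to $u$ for an arbitrary subset of the indices $l\in\{1,\dots,d\}$ — after, if necessary, a compensating modification of $f$ on $\{q,\dots,n\}$ to keep the long edges crossing this region satisfied. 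A symmetric statement holds when $u<0$. Now suppose $|\Rng(f)|\ge3+r$ and let $1=t_1<t_2<\cdots$ be the times at which the running range (running maximum minus running minimum on $\{0,\dots,k\}$) strictly increases; there are exactly $|\Rng(f)|-1\ge r+2$ of them, and at each $t_i$ with $i\ge3$ the running range before $t_i$ is $\ge2$ and $t_i\ge2d+3$ — because $f$ restricted to $\{0,\dots,t_i\}$ then has range $\ge4$, forcing a pair of its vertices at distance $\ge2d+3$ by the Introduction's observation — so the rigidity applies at $t_3,\dots,t_{r+2}$. The plan: mark the set $S=\{t_3,\dots,t_{r+2}\}\subseteq\{1,\dots,n\}$ (of size $r$) and, for each sign vector $s\in\{-1,+1\}^{dr}$, let $\Phi(f,s)$ be the homomorphism obtained by performing the above modification simultaneously at the $r$ marked vertices with the flips dictated by $s$. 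Then $(f,s)\mapsto(\Phi(f,s),S)$ should be injective — given the image, one locates the (modified) staircases near the marked vertices, reads off $s$, and undoes the modifications to recover $f$ — so summing over the at most $\binom{n}{r}$ choices of $S$ gives the bound.

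The step I expect to be the main obstacle is making the modification $\Phi$ precise and checking that it produces a genuine homomorphism and is injective. Two points need care. First, raising some $f(q-2l)$ by $2$ (and possibly altering $f$ at $q$) can break long edges from this block to vertices as far as $2d+1$ away on either side; the edges to the right of $q$ should be harmless, since just to the right of $q$ the admissible values of $f$ are already pinned to $u-1$ by the edges to $q$ and to the staircase, but the edges reaching further left are not automatically controlled, so the correct prescription is likely a global shift/reflection of part of $f$ beyond or before $q$, and verifying all the ``seam'' constraints for it is the crux. Second, the marked vertices $t_3,\dots,t_{r+2}$ need not be far apart; one must check that first passages to two successive new maxima (respectively minima) lie at distance $\ge2d+1$, and that once the running range is $\ge3$ a ``max block'' and a ``min block'' cannot overlap, so that (with a slightly trimmed block) the $r$ modifications act on disjoint regions and therefore compose and invert independently.
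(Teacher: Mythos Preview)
Your treatment of the inequality $\Pr(|\Rng(f)|<3)\le 2^{1-n/2}$ is correct and is essentially the paper's argument (the paper counts both $\Omega_0$ and $\Omega_1$ in the denominator, but the final bound is the same).

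For the main inequality your overall plan---mark $r$ record times, gain $d$ bits of entropy at each via a local modification, encode the marks with a factor $\binom{n}{r}$---is exactly the paper's strategy.  The difficulty is precisely where you locate it, but the specific modification you propose cannot be repaired.  Flipping the \emph{even-offset} positions $f(q-2l)$ from $u-2$ to $u$ breaks edges going \emph{left}: for instance the edge $(q-2,\,q-2d-3)$ has length $2d+1$, and $f(q-2d-3)$ is not controlled---it can equal $u-3$, in which case the flip makes the difference $3$.  No change confined to $\{q,\dots,n\}$ can fix an edge whose endpoints are both $<q$.

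The paper's map goes the other way.  Set $k':=q-2d-2$ and first observe that the staircase propagates one more layer to the left than you used: every $f(k'-2m)$ with $0\le m\le d$ is adjacent to some $q-(2j-1)\in\{q-1,\dots,q-2d-1\}$ and is therefore forced to equal $u-2$.  Hence the \emph{odd-offset} positions $q-(2j-1)$ (originally all $u-1$) may be freely set to $u-2\pm1$ without violating any left edge---their left neighbours are exactly these $u-2$ vertices.  To make the right seam consistent one does not leave $f|_{[q,n]}$ in place but shifts it: replace $f(i)$ by $f(i+1)-1$ for $i\ge q-1$.  This ``removes'' the jump (the new function has running range $u-1$ through this region), and the map $(f,w)\mapsto\Phi(f,w)$ with $w\in\{-1,1\}^d$ is injective.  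In short: flatten \emph{down} to level $u-2$, not up to level $u$.

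Your worry about overlapping blocks is genuine: successive same-direction records can sit at distance exactly $2d+1$.  The paper does not try to make the individual modifications commute; instead it groups such minimal-distance jumps into a single ``chain'' of length $t$ and applies one map to the whole chain, gaining $2^{dt}$ in one go.  With this chain device the injectivity is clean and the union bound over positions gives $\binom{n}{r}2^{-dr}$ directly.
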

The following corollary gives more precise information about the
structure of a typical homomorphism in the supercritical regime.
Denote by $V_i := \{ 2k+i ~|~ 0 \leq 2k+i \leq n \}$, $i=0,1$, the
even and odd vertices, respectively, and denote by $\Omega_0$ and
$\Omega_1$ the set of homomorphisms which are constant on $V_0$ and
$V_1$, respectively. Note that for each $i \in \{0,1\}$, conditioned
on $f \in \Omega_i$, the random vector $(f(x) - f(i) ~|~ x \in
V_{1-i})$ consists of independent uniform signs.
\begin{cor}
\label{cor:line-supercritical}
If $d(n) - \log_2 n \to \infty$ as $n \to \infty$ then
\begin{align*}
\Pr(\Omega_0 \cup \Omega_1) &= \Pr\big(|\Rng(f_{n,d(n)})| \leq
3\big) \xrightarrow[n \to \infty]{} 1 \quad\text{ and } \\
\Pr(\Omega_0 \cap \Omega_1) &= \Pr\big(|\Rng(f_{n,d(n)})| < 3\big)
\xrightarrow[n \to \infty]{} 0 .
\end{align*}
Moreover, if $n$ tends to infinity through odd numbers then
$\Pr(\Omega_0) \to 1/2$, and if $n$ tends to infinity through even
numbers then $\Pr(\Omega_0) \to 1/3$.
\end{cor}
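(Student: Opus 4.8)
The plan is to reduce everything to Theorem~\ref{thm:line-supercritical} via a deterministic description of the events $\Omega_0$ and $\Omega_1$ together with exact enumeration of the homomorphisms they contain.

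First I would record two elementary observations valid for every $f\in\Hom(P_{n,d},0)$: since $f(0)=0$ and $f$ flips parity along each nearest-neighbor edge $(j,j+1)$, one has $f(j)\equiv j\pmod 2$, so $f(V_0)\subseteq 2\Z$ and $f(V_1)\subseteq 2\Z+1$; and, since the path $0,1,\dots,n$ lies in $P_{n,d}$ and $f$ moves by $\pm1$ along it, $\Rng(f)$ is a block of consecutive integers. From these I would deduce the set identities $\Omega_0\cup\Omega_1=\{|\Rng(f)|\le 3\}$ and $\Omega_0\cap\Omega_1=\{|\Rng(f)|=2\}=\{|\Rng(f)|<3\}$. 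The inclusion $\subseteq$ is immediate: on $\Omega_i$ the values on $V_{1-i}$ differ by $\pm 1$ from the single value on $V_i$, so $|\Rng(f)|\le 3$. For $\supseteq$, if $\Rng(f)=\{a,a+1,a+2\}$ then $a$ and $a+2$ share a parity, hence all vertices attaining $a$ or $a+2$ lie in one class $V_i$ and $f$ is forced to equal $a+1$ on all of $V_{1-i}$, i.e. $f\in\Omega_{1-i}$; the case $|\Rng(f)|=2$ forces constancy on both classes, and $|\Rng(f)|=1$ is impossible. The first display of the corollary then follows from Theorem~\ref{thm:line-supercritical}, which gives $\Pr(|\Rng(f_{n,d(n)})|=3)\to 1$ and $\Pr(|\Rng(f_{n,d(n)})|<3)\le 2^{1-n/2}\to 0$.

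Next I would count exactly. A homomorphism in $\Omega_0$ is forced to be $0$ on $V_0$ (as $0\in V_0$) and then takes arbitrary independent values in $\{-1,+1\}$ on $V_1$, all constraints being automatic, so $|\Omega_0|=2^{|V_1|}$. A homomorphism in $\Omega_1$ is specified by its constant value $c\in\{-1,+1\}$ on $V_1$ and then by arbitrary values in $\{c-1,c+1\}$ on $V_0\setminus\{0\}$, so $|\Omega_1|=2\cdot 2^{|V_0|-1}=2^{|V_0|}$; and $|\Omega_0\cap\Omega_1|=2$. Dividing through by $|\Hom(P_{n,d(n)},0)|$ yields the exact identity
\[
\Pr(\Omega_0)=\frac{|\Omega_0|}{|\Omega_0\cup\Omega_1|}\,\Pr(\Omega_0\cup\Omega_1)=\frac{2^{|V_1|}}{2^{|V_1|}+2^{|V_0|}-2}\,\Pr(\Omega_0\cup\Omega_1).
\]
Since $\Pr(\Omega_0\cup\Omega_1)\to 1$, the limit of $\Pr(\Omega_0)$ equals the limit of the fraction: when $n$ is odd, $|V_0|=|V_1|=(n+1)/2$ and it tends to $1/2$; when $n$ is even, $|V_0|=|V_1|+1=n/2+1$ and it tends to $1/3$.

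I do not expect a serious obstacle once Theorem~\ref{thm:line-supercritical} is in hand: the only steps requiring real attention are the $\supseteq$ direction of the identity $\Omega_0\cup\Omega_1=\{|\Rng(f)|\le 3\}$, where both the parity constraint and the interval structure of $\Rng(f)$ are genuinely needed to see that a range of size $3$ pins down one parity class, and the careful bookkeeping of $|V_0|$ versus $|V_1|$ in the even and odd cases, which is precisely the source of the $1/2$ versus $1/3$ dichotomy.
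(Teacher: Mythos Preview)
Your proposal is correct and follows essentially the same approach as the paper: identify $\Omega_0\cup\Omega_1=\{|\Rng(f)|\le 3\}$ and $\Omega_0\cap\Omega_1=\{|\Rng(f)|<3\}$, count $|\Omega_0|=2^{|V_1|}$ and $|\Omega_1|=2^{|V_0|}$ exactly, and combine with Theorem~\ref{thm:line-supercritical}. The paper asserts the set identities as ``clear'' and phrases the conclusion via the ratio $\Pr(\Omega_1)/\Pr(\Omega_0)=2^{|V_0|-|V_1|}$, whereas you supply the parity/interval justification and use the equivalent fraction $|\Omega_0|/|\Omega_0\cup\Omega_1|$, but these are cosmetic differences.
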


The corollary implies that a typical homomorphism in the
supercritical regime has one of three possible structures. For odd
values of $n$, with probability $1/2 - o(1)$, the homomorphism takes
the value $0$ on all the even vertices, with probability $1/4 -
o(1)$, it takes the value $1$ on all the odd vertices, and, with
probability $1/4 - o(1)$, it takes the value $-1$ on all the odd
vertices. For even values of $n$, the probability of each of these
three options is $1/3 - o(1)$. The dependence on the parity of $n$
arises from the difference in the number of even and odd vertices in
each case. For odd values of $n$, $|V_0|=|V_1|$, whereas for even
values of $n$, $|V_0|=|V_1|+1$.

\smallskip
\noindent {\bf The subcritical regime.} The subcritical regime is
when $d(n) - \log_2 n \to -\infty$ (i.e. $n 2^{-d(n)} \to \infty$)
as $n \to \infty$. Here, the relatively small number of constraints
allows a typical homomorphism to grow.

\begin{thm}
\label{thm:line-subcritical-range} There exist absolute constants
$C,c>0$ such that for any positive integers $n$ and $d$, we have
\[ 3 + \big\lfloor c \sqrt{n 2^{-d}} \big\rfloor - 2^{1-n/2} \leq \E\big[|\Rng(f_{n,d})|\big] \leq 3 + C \sqrt{n 2^{-d}} .\]
Moreover, for any $\epsilon > 0$ there exists a $\delta > 0$ such that for any positive integers $n$ and $d$, we have
\begin{equation}
\label{eq:thm-line-subcritical-range-large-whp}
\Pr\left(|\Rng(f_{n,d})| < 3 + \big\lfloor \delta \sqrt{n2^{-d}} \big\rfloor \right) \leq \epsilon + 2^{1-n/2} .
\end{equation}
In particular, if $d(n) - \log_2 n \to -\infty$ as $n \to \infty$ then for any positive integer $r$, we have
\[ \Pr(|\Rng(f_{n,d(n)})| \leq r) \xrightarrow[n \to \infty]{} 0 .\]
\end{thm}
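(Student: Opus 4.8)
The plan is to show that $|\Rng(f_{n,d})|$ equals, up to an additive $O(1)$, the range of a random-walk-like process whose number of steps is of order $n2^{-d}$, and then to apply standard random-walk estimates. Since $f_{n,d}$ is also a homomorphism on the path $P_{n,0}$, it attains every integer between $\min f_{n,d}$ and $\max f_{n,d}$, so $|\Rng(f_{n,d})| = \max f_{n,d} - \min f_{n,d} + 1$; and since $f\mapsto -f$ preserves both $\Hom(P_{n,d},0)$ and the uniform measure, it suffices to control $\max f_{n,d}$. To coarse-grain, note that within every window of $2d+2$ consecutive vertices one parity class is constant; call that value the \emph{level} $\lambda(i)$ at position $i$. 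The level process is piecewise constant with bounded increments, and — this is the key structural input, essentially the mechanism behind Theorem~\ref{thm:line-supercritical} — a change of level, a \emph{jump}, at a given position has probability of order $2^{-d}$, since a jump forces one specific pattern on roughly $2d$ nearby values where otherwise about $2^d$ patterns are admissible.

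Writing $M$ for the total number of jumps, a union bound over positions, using the local estimate $\Pr(\text{jump at }i)\le C2^{-d}$, gives $\E[M]\le Cn2^{-d}$; with more care — isolating the $\approx 2d$-separation between successive jumps and matching it with a lower bound on the per-position jump probability — one also gets $\E[M]\ge cn2^{-d}$, and, since $M$ is a sum of weakly dependent indicators, concentration of $M$ around its mean, so that $M\ge\delta' n2^{-d}$ with probability at least $1-\epsilon/2$ (for a suitable $\delta'=\delta'(\epsilon)$).

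The second ingredient is that, conditionally on the locations of the jumps (and on the free bits away from them), the sequence of jump directions is a sequence of fair, only weakly dependent $\pm$ signs: fairness is forced by the $f\mapsto -f$ symmetry applied after this conditioning, and the weak dependence comes from a transfer-matrix / Markov-chain analysis of the homomorphism measure between consecutive jump sites. Granting this, $\max f_{n,d}=\max_i\lambda(i)+O(1)$, where $\lambda$ runs, along the jump sites, a mean-zero random walk with $M$ non-degenerate increments. Hoeffding's inequality then gives $\E[\max f_{n,d}\mid\text{jump data}]\le O(1)+C\sqrt{M}$, and $\E\sqrt{M}\le\sqrt{\E M}\le C\sqrt{n2^{-d}}$ yields $\E[|\Rng(f_{n,d})|]\le 3+C\sqrt{n2^{-d}}$. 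For the lower bound, on the event $M\ge\delta' n2^{-d}$ a mean-zero, weakly dependent $\pm$-walk with that many non-degenerate steps has range at least $c'\sqrt{n2^{-d}}$ with probability at least $1-\epsilon/2$ (by a second-moment/anticoncentration bound, or an invariance principle); intersecting the two events and translating back to $f_{n,d}$ gives \eqref{eq:thm-line-subcritical-range-large-whp}, with the $2^{1-n/2}$ term absorbing the exceptional event $\{|\Rng(f_{n,d})|<3\}$, which Theorem~\ref{thm:line-supercritical} bounds by $2^{1-n/2}$. The expectation lower bound follows by integrating \eqref{eq:thm-line-subcritical-range-large-whp} (tracking the $\{|\Rng(f_{n,d})|<3\}$ correction), and the final assertion is then immediate: $d(n)-\log_2 n\to-\infty$ forces $\lfloor\delta\sqrt{n2^{-d(n)}}\rfloor\to\infty$ and $2^{1-n/2}\to0$, so $\Pr(|\Rng(f_{n,d(n)})|\le r)\le\epsilon+o(1)$ for every fixed $r$, and $\epsilon$ is arbitrary.

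The main obstacle is the second ingredient: establishing that, after conditioning on the jump locations, the jump directions are fair \emph{and} have exponentially decaying correlations. Mere symmetry of the conditional law does not suffice for the upper bound — a symmetric but strongly positively correlated sign sequence could push the range up to order $M\asymp n2^{-d}$ rather than $\sqrt{n2^{-d}}$, and a union bound over vertices only yields the weaker estimate $\sqrt{n2^{-d}\log n}$ — so one genuinely needs a quantitative mixing statement for the inter-jump dynamics, which is where a transfer-matrix description of the measure on $\Hom(P_{n,d},0)$ enters. A secondary, more technical difficulty is to make the level/jump decomposition precise in the presence of boundary effects (jumps near $0$ and $n$, overlapping forcing windows) so that the heuristic count $n2^{-d}$ is reproduced, up to constants, in both directions.
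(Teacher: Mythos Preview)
Your high-level strategy matches the paper's: decompose the homomorphism into a coarse ``level'' process $h$ whose increments are $\pm 1$ at jump sites, show the number of jumps has order $n2^{-d}$, and then apply random-walk estimates. However, you misidentify the central technical difficulty, and your suggested route to the lower bound on the jump count is too vague to work as stated.

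You flag as the ``main obstacle'' the need to prove that, conditioned on jump locations, the jump directions have exponentially decaying correlations via a transfer-matrix or mixing argument. In fact the paper gets something much cleaner and stronger, and gets it essentially for free: there is an exact bijection (Lemma~\ref{lem:line-bijection}) between $\{S=I\}$ and $\{-1,1\}^{\cC(I)}\times\{-1,1\}^{FP(I)}$, where $\cC(I)$ is the set of \emph{chains} (maximal runs of minimal-distance jumps) and $FP(I)$ the fluctuation points. Consequently, conditioned on $S$, the signs of the \emph{chains} are i.i.d.\ uniform $\pm 1$ (Corollary~\ref{cor:line-indep-chain-signs}) --- no mixing analysis needed. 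The subtlety you miss is that consecutive jumps at distance exactly $2d+1$ necessarily share a sign, so the level process is $h'(k)=\sum t_j\,\Delta(k_j)$, a sum of independent increments of variable size $t_j\ge 1$; the paper handles the upper bound by Kolmogorov's maximal inequality and the variance estimate $\E\sum t_j^2\le\sum_{t\ge 1}t^2\cdot n2^{-dt}\le Cn2^{-d}$, and the lower bound by Erd\H{o}s's anticoncentration theorem applied to the $|\cC(S)|$ independent increments.

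Your approach to showing $M$ is large is also not the paper's, and as written it has a gap. You propose a ``lower bound on the per-position jump probability'' and ``concentration of $M$ as a sum of weakly dependent indicators''; neither is established in the paper nor obviously true without further work. Instead, the paper counts: via the bijection and an explicit enumeration of feasible jump structures (Claims~\ref{cl:line-bijection-fluc-size} and \ref{cl:line-feasible-jump-structure-count}) one obtains the ratio bound $\Pr(R=r)/\Pr(R=r-1)\ge (n-Crd)/(4r2^d)$, and then a simple monotonicity lemma (Lemma~\ref{lem:ratio-prob-tool}) gives $\Pr(R<\lfloor \epsilon c n2^{-d}\rfloor)\le\epsilon$ directly --- no per-site lower bound and no concentration needed. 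For the range lower bound one additionally needs many \emph{chains}, not just many jumps; the paper handles this by bounding $\E[M_s]$, the expected number of length-$s$ subchains, and using $|\cC(S)|\ge (R-M_s)/(s-1)$.
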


The next theorem quantifies the rate of growth of the variance of
the homomorphism.
\begin{thm}
\label{thm:line-subcritical-endpoint} There exist absolute constants
$C,c>0$ such that for any positive integers $n$ and $d$, we have
\[ \max\{ck 2^{-d},1\} \leq \Var(f_{n,d}(k)) \leq C k 2^{-d} + 4, \quad 1\le k\le n.\]
\end{thm}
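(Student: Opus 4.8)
The plan is to exploit the symmetry $f\mapsto -f$, under which the uniform measure on $\Hom(P_{n,d},0)$ is invariant; this gives $\E[f_{n,d}(k)]=0$, so $\Var(f_{n,d}(k))=\E[f_{n,d}(k)^2]$, and I keep this identification throughout. The bound $\Var(f_{n,d}(k))\ge 1$ is then trivial when $k$ is odd, since $f_{n,d}(k)$ is then an odd integer; when $k$ is even it follows from the elementary observation that $\Pr(f_{n,d}(k)\ne 0)\ge\tfrac14$ (proved by a bounded local modification of the homomorphism), whence $\E[f_{n,d}(k)^2]\ge 4\Pr(f_{n,d}(k)\ne 0)\ge 1$ because $f_{n,d}(k)$ is even. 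The main work is therefore to show $\E[f_{n,d}(k)^2]\asymp k2^{-d}$ up to an additive constant.

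For this I would decompose a homomorphism, restricted to $\{0,1,\dots,k\}$, into \emph{oscillation intervals} — maximal intervals on which it takes only two (necessarily consecutive) values — separated by \emph{jump segments}, on which it passes between one such pair and a neighbouring one. As recalled in the introduction, every jump segment has length at least $2d+3$, which exceeds the interaction range $2d+1$; consequently, grouping consecutive jump segments that are separated by \emph{short} oscillation intervals (say, of length less than $2d$) into \emph{clusters}, distinct clusters do not interact through the edges of $P_{n,d}$. This lets me encode the restricted homomorphism by a \emph{macro-geometry} $\mathcal G$ — the ordered list of clusters, each recorded only up to the reflection that reverses all its jumps, together with the data of the separating oscillation intervals — and, for each cluster $c$, an orientation. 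Conditionally on $\mathcal G$, the clusters are then independent, each cluster's signed level-shift $\sigma_c$ is a symmetric random variable with $\Var(\sigma_c\mid\mathcal G)\in[1,C|c|^2]$ (and $|\sigma_c|=1$ for a singleton cluster), and $f_{n,d}(k)=c(\mathcal G)+\sum_c\sigma_c$ for some $\mathcal G$-measurable integer $c(\mathcal G)$ with $|c(\mathcal G)|\le 1$. Finally, adapting the estimates behind Theorem~\ref{thm:line-subcritical-range} to the window $\{0,\dots,k\}$, the number $N$ of jump segments in it satisfies $\E[N]\asymp k2^{-d}$; the cluster sizes have geometric tails uniform in $n,d,k$ (a jump fails to form its own singleton cluster only when another jump lies within $O(d)$ of it, which costs a further factor $O(d\,2^{-d})\le O(1)$), so that $\E\big[\sum_c|c|^2\big]\le C\E[N]\le Ck2^{-d}$, while the number of singleton clusters has expectation at least $ck2^{-d}$.

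Granting this encoding, both inequalities drop out of the law of total variance. Since $\E[\sigma_c\mid\mathcal G]=0$ and the $\sigma_c$ are conditionally independent, $\E[f_{n,d}(k)^2]=\E[c(\mathcal G)^2]+\E\big[\sum_c\Var(\sigma_c\mid\mathcal G)\big]$. For the upper bound, $\Var(\sigma_c\mid\mathcal G)\le C|c|^2$ gives $\E[f_{n,d}(k)^2]\le 1+C\E\big[\sum_c|c|^2\big]+O(1)\le Ck2^{-d}+4$, the $O(1)$ absorbing the two boundary oscillation intervals (and the fact that over any window of length at most $2d+2$ the homomorphism varies by at most $2$, which also lets one ignore that $f_{n,d}$ restricted to $\{0,\dots,k\}$ is the law of $f_{k,d}$ reweighted by a factor of the last window only). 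For the lower bound, every singleton cluster contributes $\Var(\sigma_c\mid\mathcal G)=1$, so $\E[f_{n,d}(k)^2]\ge\E[\#\{\text{singleton clusters in }\{0,\dots,k\}\}]\ge ck2^{-d}$; combined with the bound $\ge 1$ from the first paragraph this yields $\Var(f_{n,d}(k))\ge\max\{ck2^{-d},1\}$.

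The main obstacle is justifying the encoding of the second paragraph, i.e.\ the claimed conditional independence and symmetry of the clusters. Concretely, one must show that reversing all the jump directions inside a single cluster, and translating the part of the configuration downstream of it by the resulting change in level, always carries valid homomorphisms to valid homomorphisms — equivalently, that the long-range edges joining a cluster to its flanking oscillation intervals carry no constraint distinguishing the two orientations of the cluster; this is exactly where the length threshold $2d$ on the separating intervals is used, and where a careful look at the admissible shapes of a jump segment near its two ends is needed. The second substantial point is the uniform (in $n,d,k$) control of the jump density and of the geometric tails of the cluster sizes, which refines the range estimates of Theorem~\ref{thm:line-subcritical-range} and carries the bulk of the computation.
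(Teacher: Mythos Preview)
Your proposal is correct and follows essentially the same approach as the paper: your oscillation intervals, jump segments, and clusters are the paper's fluctuation segments, jumps, and chains, and your conditional independence of cluster orientations given the macro-geometry $\mathcal G$ is exactly Corollary~\ref{cor:line-indep-chain-signs}, obtained from the bijection of Lemma~\ref{lem:line-bijection}. Two small sharpenings worth noting: the paper's chain definition groups jumps at the \emph{exact} minimal spacing $2d+1$ (this is precisely the threshold at which adjacent jumps are forced to share a sign, so your ``$<2d$'' threshold should be replaced by it), and for the lower bound the paper uses $\sum_c |c|^2 \ge \sum_c |c| = R(k)$ together with $\E[R(k)]\ge ck2^{-d}$ (Lemma~\ref{lem:line-Rk-lower-expectation}), which is slightly simpler than isolating singleton clusters.
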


\smallskip
\noindent {\bf The critical regime.} The critical regime is when
$d(n) - \log_2 n \to - \log_2 \lambda$ (i.e. $n 2^{-d(n)} \to
\lambda$) as $n \to \infty$, for some $\lambda \in (0,\infty)$. In
this case, the balance between the number of constraints at each
vertex and the amount of time available leads to an interesting
limiting behavior. Perhaps surprisingly, it turns out that the
parity of $n$ induces an effect which does not disappear in the
limit.

Denote by $\mu^{\text{even}}(\lambda)$ the distribution of a Poisson($\lambda$) variable conditioned to be even, and denote by $\mu^{\text{odd}}(\lambda)$ the distribution of a Poisson($\lambda$) variable conditioned to be odd. Define the {\em parity-biased Poisson distribution} with parameters $\lambda$ and $\alpha$ to be the following convex combination of $\mu^{\text{even}}(\lambda)$ and $\mu^{\text{odd}}(\lambda)$,
\begin{equation}
\label{eq:parity-biased-poisson}
\mu(\lambda,\alpha) := \frac{\alpha}{\alpha + \tanh(\lambda)} \cdot \mu^{\text{even}}(\lambda) + \frac{\tanh(\lambda)}{\alpha + \tanh(\lambda)} \cdot \mu^{\text{odd}}(\lambda) .
\end{equation}
One may check that
\begin{equation}
\label{eq:parity-biased-poisson-prob}
\mu(\lambda,\alpha)(r) = Z(\lambda,\alpha)^{-1} \cdot \alpha(r) \cdot \frac{\lambda^r}{r!}, \quad r \geq 0 ,
\end{equation}
where $\alpha(r)=\alpha$ if $r$ is even and $\alpha(r)=1$ if $r$ is odd and where $Z(\lambda,\alpha)$ is a normalizing constant. In particular, we see that the Poisson($\lambda$) distribution is obtained as $\mu(\lambda,1)$.

Let $(S_i ~|~ i=0,1,\dots )$ denote a simple random walk, and let
\begin{equation*}
N^{\pm}(\lambda) \sim \mu\left(\lambda/(2\sqrt{2}),
(3/(2\sqrt{2}))^{\pm 1}\right)
\end{equation*}
be independent of $(S_i ~|~ i \geq 0)$. Then $S_{N^{+}(\lambda)}$
and $S_{N^{-}(\lambda)}$ are simple random walks stopped at
independent random times. For a positive integer $k$, denote
$\Rng(S_k) := \{ S_i ~|~ 0 \leq i \leq k \}$.

\begin{thm}
\label{thm:line-critical-range} If $n 2^{-d(n)} \to \lambda$ as $n
\to \infty$, for some $\lambda \in (0,\infty)$, then
\[ |\Rng(f_{n,d(n)})| \xrightarrow[\substack{n \to \infty \\ n \text{ even}}]{(d)} ~ |\Rng(S_{N^+(\lambda)})| +
2\quad\text{ and }\quad|\Rng(f_{n,d(n)})| \xrightarrow[\substack{n \to \infty \\
n \text{ odd}}]{(d)} ~ |\Rng(S_{N^-(\lambda)})| + 2 .\]
\end{thm}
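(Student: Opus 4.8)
The plan is to show that the structure of a typical homomorphism in the critical regime decomposes into a random number of "jump segments" — places where the height changes by $\pm 3$ over a window of length close to $2d+3$ — interspersed with long stretches on which the homomorphism is essentially frozen to a two-valued ("flat") configuration. On the flat stretches the range contributes nothing new, so $|\Rng(f_{n,d})|$ is governed by the sequence of jumps, and tracking the partial sums of the jump directions gives a lazy/simple random walk run for a random number of steps. Concretely, I would first use the combinatorial observation already highlighted before Theorem \ref{thm:line-supercritical} (a jump of size $3$ requires a window of length at least $2d+3$, and at length exactly $2d+3$ the configuration is rigidly $(1,2,1,2,\dots,1,2,3)$ up to the $2^d$ sign choices on the intermediate odd vertices) to define, for a homomorphism $f$, the locations $0 \le t_1 < t_2 < \cdots < t_J \le n$ of its successive jumps. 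The key quantitative input is that the "cost" of inserting a jump at a given location is $\asymp 2^{-d}$ relative to staying flat — this is exactly the heuristic made precise in Theorem \ref{thm:line-supercritical}, whose binomial bound $\binom{n}{r}2^{-dr}$ already identifies the relevant Poisson-type scaling $n2^{-d}\to\lambda$.

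The main step is a careful counting argument: I would partition $\Hom(P_{n,d},0)$ according to the number $j$ of jumps and the vector $\varepsilon \in \{+1,-1\}^j$ of jump directions, and estimate $|\{f : J(f)=j,\ \text{directions}=\varepsilon\}|$. For fixed $j$ and $\varepsilon$, the count factorizes (asymptotically) as a product of: a combinatorial factor $\approx \binom{n/2}{j}$ for choosing where along the line the jumps sit; a factor $(2\sqrt 2)^{?}$ coming from the number of admissible micro-configurations realizing each jump versus each flat block (the $\sqrt 2$ per half-step is the entropy of a free sign, and the $2\sqrt2$ combines the jump window's internal freedom against the flat alternative); and a boundary/parity correction that differs depending on whether the two endpoints $0$ and $n$ sit on flat blocks of the same or opposite type, which is precisely where the parity of $n$ enters. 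Summing over $j$ and normalizing, the number of jumps converges in distribution to a parity-biased Poisson variable: the Poisson parameter is $\lambda/(2\sqrt2)$ (the $2\sqrt2$ being the per-jump entropy discount), and the parity bias $\alpha = (3/(2\sqrt2))^{\pm1}$ arises because, depending on the parity of $n$, an even versus odd number of jumps is favored by exactly the factor $3/(2\sqrt2)$ seen already in the $1/2$ versus $1/3$ dichotomy of Corollary \ref{cor:line-supercritical} (there the "$3$" is the count of flat two-valued homomorphisms when $|V_0| = |V_1|+1$, and the "$2\sqrt2$" is the normalization of two extra free signs). I would check \eqref{eq:parity-biased-poisson-prob} identifies this limit as $\mu(\lambda/(2\sqrt2),(3/(2\sqrt2))^{\pm1})$, hence $N^\pm(\lambda)$.

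Once the number and directions of the jumps are controlled, the conclusion is almost immediate: conditionally on there being $j$ jumps with directions $\varepsilon$, the range of $f$ is (up to the additive $+2$ coming from the two flat levels straddling each end, and with negligibly small probability of degeneracies) determined by the range of the partial-sum path $0, \varepsilon_1, \varepsilon_1+\varepsilon_2, \dots, \sum_{i\le j}\varepsilon_i$; and since the $\varepsilon_i$ are, in the limit, independent fair signs independent of $J$, this is exactly $\Rng(S_{N^\pm(\lambda)})$. I would make this rigorous by exhibiting an explicit coupling between $f_{n,d(n)}$ and the pair $(S, N^\pm(\lambda))$ and showing the symmetric difference of the two range-events has probability $o(1)$. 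The hard part will be the counting in the middle step — specifically, controlling the error terms when jump windows are longer than the minimal $2d+3$ or when two jumps come within distance $O(d)$ of each other, and showing these corrections do not disturb either the Poisson limit or the independence of the signs; handling the boundary effects at $0$ and $n$ with enough precision to pin down the exact parity bias $3/(2\sqrt2)$ (rather than merely an order-of-magnitude statement) is the most delicate point, and is presumably where the bulk of the technical work in the full proof resides.
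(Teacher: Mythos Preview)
Your proposal is correct and takes essentially the same approach as the paper: the paper formalizes your ``jump segments'' via an average-height function $h$ and a jump set $S$, carries out the exact counting via a bijection (Lemma~\ref{lem:line-bijection}) and an enumeration of feasible jump structures (Claim~\ref{cl:line-feasible-jump-structure-count}), derives the parity-biased Poisson limit for the number $R$ of jumps, and then identifies $|\Rng(f)|$ with $|\Rng(S_R)|+2$ by showing that the events ``no two jumps within distance $2d+1$'' and ``no degeneracy at the left boundary'' each have probability $1-o(1)$. Your intuition that the parity bias $3/(2\sqrt2)$ comes from a boundary effect is correct, though in the paper it emerges from a ceiling in the fluctuation-point count (splitting according to whether the first jump lies in $\{1,\dots,2d+1\}$) rather than directly from the $1/3$ versus $1/2$ of Corollary~\ref{cor:line-supercritical}.
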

In fact, as the proof shows, one may couple a critical homomorphism
to a simple random walk run for $N^+$ or $N^-$ steps, according to
the parity of $n$.

\subsection{Homomorphisms on the torus}
\label{sec:main-results-torus}

In this section we present results for homomorphisms on the graph $T_{n,d}$, which was defined in \eqref{eq:def-graph-T_n_d}. Throughout this section, $n$ is even and $f_{n,d}$ denotes a uniformly chosen homomorphism in $\Hom(T_{n,d},0)$.

\smallskip
\noindent
{\bf The supercritical regime.}
The supercritical regime is when $d(n) - \log_2 n \to \infty$ (i.e. $n 2^{-d(n)} \to 0$) as $n \to \infty$. Similarly to the case on the line, the large number of constraints cause a typical homomorphism to take on only $3$ values.

\begin{thm}
\label{thm:torus-supercritical}
For any positive even integer $n$ and any positive integers $d$ and $r$, we have
\[ \Pr\big(|\Rng(f_{n,d})| \geq 3 + r\big) \leq \binom{n}{r}^2 2^{-(2d-1)r} \quad \text{and} \quad \Pr\big( |\Rng(f_{n,d})| < 3\big) \leq 2^{1-n/2} \]
Thus, if $d(n) - \log_2 n \to \infty$ as $n \to \infty$ then
\[ \Pr\big(|\Rng(f_{n,d(n)})| = 3\big) \xrightarrow[n \to \infty]{} 1 .\]
\end{thm}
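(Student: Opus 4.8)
The plan is to mirror the structure of the proof of Theorem~\ref{thm:line-supercritical}, but to account for the fact that on the torus every vertex has constraints ``on both sides,'' which is what produces the stronger bound $\binom{n}{r}^2 2^{-(2d-1)r}$ in place of $\binom{n}{r}2^{-dr}$. First I would set up the combinatorial skeleton: for a homomorphism $f \in \Hom(T_{n,d},0)$ with $|\Rng(f)| \ge 3+r$, I want to locate $r$ disjoint ``ascent/descent'' segments, each of length at least $2d+3$ on the cycle, on which $f$ changes height by $\pm 3$. The key structural fact (the torus analogue of the line observation highlighted in the introduction and Figure~\ref{fig:single-jump-segment}) is that on a segment realizing a height change of $3$ of minimal length $2d+3$, the values are \emph{rigidly forced}: from the low end the values must read $(1,2,1,2,\ldots,1,2,3)$ relative to the starting value, and \emph{symmetrically} from the high end they must read the reverse. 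On the torus both the left neighborhood constraints and the right neighborhood constraints are active simultaneously, so a minimal-length jump segment is doubly constrained, whereas near an endpoint of $P_{n,d}$ only one side was constrained. This double rigidity is precisely what upgrades $2^{-d}$ to $2^{-(2d-1)}$ per segment.

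Next I would make the switching argument quantitative. As in the line case, given a jump segment of the rigid form $(1,2,1,2,\ldots,1,2,3)$, one can flip the ``$+1,+1$'' pairs independently: each of the roughly $d$ interior pairs can be replaced by $(1,1+s)$ with $s \in \{-1,1\}$ without violating any homomorphism constraint, yielding a family of at least $2^d$ alternative local configurations, all of which have height change $0$ or $1$ rather than $3$ across the segment. On the torus one performs this flip simultaneously on both the left-rigid and right-rigid portions of the segment, producing at least $2^{2d-1}$ alternatives (the $-1$ in the exponent absorbing the overlap/middle of the segment and any parity bookkeeping). A counting/injectivity argument — map each bad configuration to one of its $\ge 2^{(2d-1)r}$ modifications, check the map is at most $\binom{n}{r}^2$-to-one because the modification remembers the segment location with a factor $\binom{n}{r}$ for each of the two ``sides'' of data — then gives $\Pr(|\Rng(f)| \ge 3+r) \le \binom{n}{r}^2 2^{-(2d-1)r}$. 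I would be careful here to choose disjoint segments greedily (scanning the cycle for the first vertex where the running height differs by $3$ from a reference level) so that the modifications at different segments do not interfere.

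For the lower tail, $\Pr(|\Rng(f)| < 3) \le 2^{1-n/2}$, I would argue exactly as on the line: $|\Rng(f)| < 3$ means $|\Rng(f)| = 2$, i.e. $f$ alternates between two consecutive integers, which forces $f$ to be one of the two ``checkerboard'' homomorphisms (one on each of $V_0, V_1$). Comparing the number of these to a crude lower bound on $|\Hom(T_{n,d},0)|$ — for instance, the $2^{\lfloor n/2 \rfloor}$-ish configurations obtained by fixing $f$ to be $0$ on all even vertices and free $\pm 1$ on odd vertices, which are valid because any two odd vertices lie at even distance and hence are non-adjacent — yields the stated bound. The final limiting statement is then immediate: if $d(n)-\log_2 n \to \infty$ then $n 2^{-d(n)} \to 0$, so for $r=1$ the bound $\binom{n}{1}^2 2^{-(2d-1)} = n^2 2^{1-2d} = 2\,(n 2^{-d})^2 \to 0$, while $2^{1-n/2}\to 0$, giving $\Pr(|\Rng(f_{n,d(n)})| = 3) \to 1$.

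The main obstacle I expect is the bookkeeping in the switching/injectivity argument on the cycle: one must verify that the $r$ chosen jump segments can be taken pairwise disjoint (with enough separation that the flips are independent), that the rigidity of a minimal-length jump genuinely holds from \emph{both} sides on the torus (this uses that $n$ is even and large enough relative to $d$ so the cycle doesn't ``wrap'' and create spurious short segments), and that the map from bad configurations to modified ones, together with the recorded segment data, is injective enough to produce exactly the exponent $2d-1$ rather than something weaker like $d$ or stronger like $2d+1$. Getting that constant exactly right — rather than up to an additive constant in the exponent — is the delicate point, and I would handle it by carefully counting degrees of freedom in the forced values on a minimal jump segment: $2d+3$ edges, a height change of $3$, leaving $2d$ ``free sign'' slots of which the flip construction safely uses $2d-1$.
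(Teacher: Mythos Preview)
Your proposal rests on a mistaken picture of where the torus improvement comes from, and the switching argument you outline would not produce the bound as written.

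The claim that a single minimal jump segment on $T_{n,d}$ is ``doubly constrained from both sides'' is incorrect: away from the endpoints of $P_{n,d}$, the local constraint structure around a jump is \emph{identical} on the line and the torus. A minimal segment where the height changes by $3$ has exactly one rigid pattern (the one in Figure~\ref{fig:single-jump-segment}), yielding only $2^d$ alternative local configurations, not $2^{2d-1}$. There is no ``left-rigid and right-rigid portion'' of a single segment to flip independently, and so your mechanism for upgrading $2^{-d}$ to $2^{-(2d-1)}$ per segment does not exist.

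The actual source of the improved exponent is the global topological constraint of the torus: since the total height change around the cycle is zero, the positive jumps and negative jumps are equinumerous, i.e.\ $|S^+|=|S^-|=:R$. The bound $|\Rng(f)|\le R+3$ holds just as on the line, so $|\Rng(f)|\ge 3+r$ forces at least $r$ positive jumps \emph{and} at least $r$ negative jumps --- $2r$ jumps in total, not $r$. The paper's Lemma~\ref{lem:torus-jumps-at-multiple-positions} bounds, for any fixed size-$r$ subsets $I,J\subset V$, the probability of having positive jumps at all of $I$ and negative jumps at all of $J$. Its proof removes the jumps \emph{in opposite-sign pairs} $(x,y)$ with $x\in I$, $y\in J$: each such removal frees roughly two jump segments of fluctuation points, producing an entropy gain of order $2^{2d}$, and the precise exponent $2d-1$ comes from tracking how the chain structure $\cC(I)$ changes and how many sign choices the inverse map has. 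The factor $\binom{n}{r}^2$ is then simply the union bound over all pairs $(I,J)$ of $r$-subsets, one for the positive-jump locations and one for the negative-jump locations --- not ``two sides of data'' per segment.

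Your argument for $\Pr(|\Rng(f)|<3)\le 2^{1-n/2}$ and the deduction of the limiting statement are fine and match the paper.
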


Similarly to the case of the line, the following corollary gives
more precise information about the structure of a typical
homomorphism in the supercritical regime. Denote by $V_i := \{ 2k+i
~|~ 0 \leq k < n/2 \}$, $i=0,1$, the even and odd vertices of
$T_{n,d}$, respectively, and denote by $\Omega_0$ and $\Omega_1$ the
set of homomorphisms which are constant on $V_0$ and $V_1$,
respectively. Note that for each $i \in \{0,1\}$, conditioned on $f
\in \Omega_i$, the random vector $(f(x) - f(i) ~|~ x \in V_{1-i})$
consists of independent uniform signs.

\begin{cor}
\label{cor:torus-supercritical}
If $d(n) - \log_2 n \to \infty$ as $n \to \infty$ then
\[ \Pr(\Omega_0 \cup \Omega_1) = \Pr\big(|\Rng(f_{n,d(n)})| \leq 3\big) \xrightarrow[n \to \infty]{} 1 \quad\text{ and }\quad
\Pr(\Omega_0) = \Pr(\Omega_1) \xrightarrow[n \to \infty]{} 1/2. \]
\end{cor}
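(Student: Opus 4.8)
The plan is to deduce the corollary from Theorem~\ref{thm:torus-supercritical} together with a short symmetry argument, in three steps. The first step is a purely deterministic identity: for $n$ even,
\[ \Omega_0 \cup \Omega_1 = \big\{ f \in \Hom(T_{n,d},0) ~:~ |\Rng(f)| \leq 3 \big\} \quad\text{and}\quad \Omega_0 \cap \Omega_1 = \big\{ f ~:~ |\Rng(f)| = 2 \big\}. \]
Since $T_{n,d}$ is connected, the range of any homomorphism is a set of consecutive integers containing $0$; and since $n$ is even, every edge of $T_{n,d}$ joins $V_0$ to $V_1$, so $T_{n,d}$ is bipartite and $f(x)\equiv x \pmod 2$ for all $x$. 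If $|\Rng(f)|\leq 3$, then among the (at most three) consecutive values in $\Rng(f)$ one parity class is represented only once, which forces $f$ to be constant on the corresponding $V_i$; conversely, if $f\in\Omega_i$ then, as every vertex has a neighbour in the opposite class, $f$ takes values in a set of three consecutive integers. The same bookkeeping identifies $\Omega_0\cap\Omega_1$ with $\{|\Rng(f)|=2\}$.

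The second step simply reads off the two limiting probabilities from Theorem~\ref{thm:torus-supercritical}. Taking $r=1$ there gives $\Pr(|\Rng(f_{n,d})|\geq 4)\leq n^2\,2^{-(2d-1)} = 2(n2^{-d})^2$, which tends to $0$ in the supercritical regime; combined with Step~1 this yields $\Pr(\Omega_0\cup\Omega_1) = \Pr(|\Rng(f_{n,d(n)})|\leq 3)\to 1$. Likewise the second bound of that theorem gives $\Pr(\Omega_0\cap\Omega_1) = \Pr(|\Rng(f_{n,d(n)})|<3)\leq 2^{1-n/2}\to 0$.

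The third step is the symmetry $\Pr(\Omega_0)=\Pr(\Omega_1)$. The cyclic shift $\sigma(x)=x+1\bmod n$ is a graph automorphism of $T_{n,d}$ interchanging $V_0$ and $V_1$ (this is exactly where the evenness of $n$ is used). The map $f\mapsto\widetilde f$ with $\widetilde f(x):=f(x+1\bmod n)-f(1)$ is a bijection of $\Hom(T_{n,d},0)$ onto itself — one recovers $f$ from $\widetilde f$ using $\sigma^{-1}$, noting $f(1)=-\widetilde f(n-1)$ — hence measure-preserving for the uniform measure, and it carries $\Omega_0$ bijectively onto $\Omega_1$. Therefore $\Pr(\Omega_0)=\Pr(\Omega_1)$, and by inclusion–exclusion $2\Pr(\Omega_0)=\Pr(\Omega_0\cup\Omega_1)+\Pr(\Omega_0\cap\Omega_1)\to 1+0$, so $\Pr(\Omega_0)=\Pr(\Omega_1)\to 1/2$.

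None of these steps is technically deep. The one point that needs genuine care is the set identity of Step~1, where both facts about homomorphisms — that their ranges are integer intervals and that they respect the bipartition — must be invoked; one must also check in Step~3 that the shift bijection is compatible with the pinning $f(0)=0$. It is worth remarking that, unlike on the line, the even torus is vertex-transitive with $|V_0|=|V_1|=n/2$, which is precisely why the two color classes are genuinely symmetric and the limit is the unbiased value $1/2$, with no dependence on a parity of $n$.
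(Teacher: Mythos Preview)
Your proof is correct and follows essentially the same route as the paper: establish the set identity $\Omega_0\cup\Omega_1=\{|\Rng(f)|\le 3\}$, invoke Theorem~\ref{thm:torus-supercritical} for both limits, and then use $|\Omega_0|=|\Omega_1|$ together with inclusion--exclusion. The only minor difference is in how $|\Omega_0|=|\Omega_1|$ is obtained: the paper simply counts $|\Omega_0|=|\Omega_1|=2^{n/2}$ directly, while you use the cyclic-shift automorphism; both arguments are valid and equally short.
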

Thus, a typical homomorphism in the supercritical regime is constant
on either the even or odd vertices of $T_{n,d}$, with the two cases
being equally likely. The effect induced by the parity of $n$ in
Corollary~\ref{cor:line-supercritical} does not appear here, as $n$
is always assumed to be even in the case of the torus.

\smallskip
\noindent {\bf The subcritical regime.} The subcritical regime is
when $d(n) - \log_2 n \to - \infty$ (i.e. $n 2^{-d(n)} \to \infty$)
as $n \to \infty$. As before, the relatively small number of
constraints allows a typical homomorphism to grow.

\begin{thm}
\label{thm:torus-subcritical-range} There exist absolute constants
$C,c>0$ such that for any positive even integer $n$ and any positive integer $d$, we have
\[ 3 + \big\lfloor c \sqrt{n 2^{-d}} \big\rfloor - 2^{1-n/2} \leq \E\big[|\Rng(f_{n,d})|\big] \leq C(\sqrt{n 2^{-d}}+1) .\]
Moreover, for any $\epsilon > 0$ there exists a $\delta > 0$ such that for any positive even integer $n$ and any positive integer $d$, we have
\[ \Pr\left(|\Rng(f_{n,d})| < 3 + \big\lfloor \delta \sqrt{n 2^{-d}} \big\rfloor \right) \leq \epsilon + 2^{1-n/2} .\]
In particular, if $d(n) - \log_2 n \to -\infty$ as $n \to \infty$ then for any positive integer $r$, we have
\[ \Pr\big(|\Rng(f_{n,d(n)})| \leq r\big) \xrightarrow[n \to \infty]{} 0 .\]
\end{thm}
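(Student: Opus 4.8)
The plan is to deduce the torus estimates from their line counterparts, Theorems~\ref{thm:line-subcritical-range} and~\ref{thm:line-subcritical-endpoint}, by cutting the torus open. Fix an arc $W$ of $2d+1$ consecutive vertices of $T_{n,d}$, say the one centred at $0$, and condition on $f_{n,d}|_W$. Since the uniform homomorphism is a Markov random field with respect to $T_{n,d}$, conditionally on $f_{n,d}|_W$ the restriction of $f_{n,d}$ to the complementary arc $U$ is a uniform homomorphism on the induced subgraph of $T_{n,d}$, subject to the constraints imposed by the (now fixed) edges between $U$ and $W$. As long as $d\le n/4$ this induced subgraph is exactly the line graph $P_{m,d}$ with $m:=n-2d-1$ — there are no wrap-around edges inside $U$ — and, since $W$ induces a graph in which all opposite-parity vertices are adjacent, $f_{n,d}|_W$ takes at most three consecutive values; hence the conditional law of $f_{n,d}|_U$ is that of a uniform homomorphism on $P_{m,d}$ under boundary conditions that pin only the $O(d)$ vertices near each end of $U$, and only to a window of width $3$.

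The central step will be to prove the analogues of Theorems~\ref{thm:line-subcritical-range} and~\ref{thm:line-subcritical-endpoint} for uniform homomorphisms on $P_{m,d}$ under arbitrary admissible boundary conditions of bounded width, with constants independent of the boundary data; this is also the main obstacle. I would obtain these by rerunning the proofs of the line results: the ``$\pm3$-jump segments'' of length $2d+3$ described in the introduction, together with their signs, are local objects, so the boundary conditions can influence only those within distance $O(d)$ of an endpoint. Counting jump segments inside, say, the middle third of the segment, and combining a second-moment estimate on their number with a comparison to a random walk with $\Theta(m2^{-d})$ weakly dependent $\pm$-increments, should show: that the range is at least of order $\sqrt{m2^{-d}}$ with high probability; that the expected number of jump segments is $O(m2^{-d})$, hence (since the range of the associated walk is at most its length) that the expected range is $3+O(\sqrt{m2^{-d}})$; and, similarly, the matching variance bounds. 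The delicate point is verifying that these arguments localize well enough that pinning the values near the two endpoints perturbs the range only by an additive $O(1)$ and alters the jump-segment statistics only within $O(d)$ of the boundary.

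Granting this, the upper bound is immediate. Since the range of a homomorphism on a connected graph is an integer interval, $|\Rng(f_{n,d})|=1+\max_v f_{n,d}(v)-\min_v f_{n,d}(v)$, and as $\Rng(f_{n,d})=\Rng(f_{n,d}|_W)\cup\Rng(f_{n,d}|_U)$ with $|\Rng(f_{n,d}|_W)|\le 3$, we get $|\Rng(f_{n,d})|\le 3+|\Rng(f_{n,d}|_U)|$; averaging the conditional bound $\E[|\Rng(f_{n,d}|_U)|\mid f_{n,d}|_W]\le 3+C\sqrt{m2^{-d}}+C$ over $f_{n,d}|_W$ and using $m\le n$ gives $\E|\Rng(f_{n,d})|\le C'(\sqrt{n2^{-d}}+1)$ when $d\le n/4$. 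When $d>n/4$ one has $n2^{-d}\le 1$ for all but finitely many $n$, and then the bound $\Pr(|\Rng(f_{n,d})|\ge 3+r)\le\binom nr^2 2^{-(2d-1)r}$ of Theorem~\ref{thm:torus-supercritical} already yields $\E|\Rng(f_{n,d})|=O(1)$; the remaining small $n$ are absorbed into $C'$.

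For the lower bounds, split on $d$ once more. When $d\le n/4$ we have $m\ge n/4$, so $\sqrt{m2^{-d}}\ge\tfrac12\sqrt{n2^{-d}}$; conditioning on $f_{n,d}|_W$, applying the boundary-condition form of Theorem~\ref{thm:line-subcritical-range} to $f_{n,d}|_U$, using $|\Rng(f_{n,d})|\ge|\Rng(f_{n,d}|_U)|$, and averaging yields the expectation lower bound (after shrinking $c$; the $-2^{1-n/2}$ term accounts for the degenerate event $\{|\Rng(f_{n,d})|<3\}$ bounded in Theorem~\ref{thm:torus-supercritical}) and, for each $\epsilon>0$ and a suitable $\delta(\epsilon)>0$, the estimate $\Pr(|\Rng(f_{n,d})|<3+\lfloor\delta\sqrt{n2^{-d}}\rfloor)\le\epsilon+2^{1-n/2}$ — the error $2^{1-m/2}$ produced by the conditional line estimate being absorbed into $\epsilon$ for large $n$ and into $2^{1-n/2}$ (by shrinking $\delta$) for the finitely many remaining $n$. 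When $d>n/4$ the quantities $\lfloor c\sqrt{n2^{-d}}\rfloor$ and $\lfloor\delta\sqrt{n2^{-d}}\rfloor$ vanish once $c,\delta$ are small enough (as $n2^{-d}\le 1$), so both statements reduce to $\Pr(|\Rng(f_{n,d})|<3)\le 2^{1-n/2}$ and $\E|\Rng(f_{n,d})|\ge 3-2^{1-n/2}$, which follow from Theorem~\ref{thm:torus-supercritical}. The final assertion follows by letting $\epsilon\to 0$.
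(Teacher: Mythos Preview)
Your route is genuinely different from the paper's. The paper obtains the upper bound by the FKG-type comparison of Proposition~\ref{prop:fkg-line-torus}, which gives $\E_T[\max_k|f(k)|]\le 9\,\E_P[\max_k|f(k)|]$ directly from the line result; for the lower bound it rebuilds the jump/chain machinery on the torus (Lemmas~\ref{lem:torus-chain-prob} and~\ref{lem:torus-ratio-ineq-for-R}) and then combines Proposition~\ref{prop:torus-range-distribution} with the permutation anticoncentration of Proposition~\ref{prop:sampling-without-replacement2}.

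The gap in your plan is the claim that pinning the values near the two endpoints ``perturbs the range only by an additive $O(1)$'' and is otherwise local. Both ends of $U$ abut the \emph{same} arc $W$, and since $|\Rng(f|_W)|\le 3$, conditioning on $f|_W$ forces $f|_U$ to start and end in the same height window of width~$3$. That is a global bridge constraint, not a local boundary effect. Concretely: the lower bound in Theorem~\ref{thm:line-subcritical-range} is proved by showing that, conditionally on the jump set $S$, the chain signs are independent (Corollary~\ref{cor:line-indep-chain-signs}) and then applying Erd\H{o}s anticoncentration (Theorem~\ref{thm:random-sign-walk}) to the endpoint height $h(n)$. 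Under your boundary conditions the endpoint is pinned, so the chain signs are conditioned to (approximately) sum to zero and are no longer independent; anticoncentration of the endpoint gives nothing, and what is actually needed is that the \emph{maximum} of a bridge with non-constant step sizes is large. That is precisely what Proposition~\ref{prop:sampling-without-replacement2} supplies and why the paper introduces it --- your ``comparison to a random walk with weakly dependent $\pm$-increments'' elides this point. A parallel issue hits your upper bound: Kolmogorov's maximal inequality, used in the line proof, requires independent increments and breaks under the bridge conditioning; the paper sidesteps this via FKG rather than by conditioning. In short, the ``boundary-condition form of Theorem~\ref{thm:line-subcritical-range}'' you appeal to is not a rerun of the line proof but is essentially as hard as the torus theorem itself, and would require the same new ingredients.
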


\smallskip
\noindent {\bf The critical regime.} The critical regime is when
$d(n) - \log_2 n \to - \log_2 \lambda$ (i.e. $n 2^{-d(n)} \to
\lambda$) as $n \to \infty$, for some $\lambda \in (0,\infty)$. As
for the line, this choice of parameters leads to an interesting
limiting behavior. In this case, the random homomorphism behaves
similarly to a simple random walk \emph{bridge} of length $2N$,
where $N$ is an independent random variable whose distribution is a
type of biased Poisson distribution. The distribution of $N$ here is
biased differently from the case of the line. Specifically, $N$ has
the distribution of a Poisson random variable with parameter
\begin{equation}\label{eq:lambda_prime_def}
  \lambda' := \frac{\lambda}{4\sqrt{2}} ,
\end{equation}
conditioned to be equal to another such independent Poisson random
variable.

Denote by $\nu(\lambda')$ the distribution of $X$ conditioned on $X=Y$, where $X$ and $Y$ are independent Poisson($\lambda'$) random variables. One may check that
\begin{equation}
\label{eq:equality-biased-poisson}
\nu(\lambda')(k) = Z(\lambda')^{-1} \cdot \frac{(\lambda')^{2k}}{(k!)^2} , \quad k \geq 0 ,
\end{equation}
where $Z(\lambda')$ is a normalizing constant.

For a positive even integer $k$, let $(B^k_i ~|~ 0 \leq i \leq k )$
denote a simple random walk bridge of length $k$ (that is, a simple
random walk conditioned on $B^k_k=0$), and let $N(\lambda') \sim
\nu(\lambda')$ be an independent random variable. Thus,
$B^{2N(\lambda')}$ is obtained by first sampling $N(\lambda')$ and
then sampling a simple random walk bridge of length
$2N(\lambda')$. For a positive even integer $k$, denote $\Rng(B^k)
:= \{ B^k_i ~|~ 0 \leq i \leq k \}$.

\begin{thm}
  \label{thm:torus-critical-range}
If $n 2^{-d(n)} \to \lambda$ as $n \to \infty$, for some $\lambda
\in (0,\infty)$, then
\[ |\Rng(f_{n,d(n)})| \xrightarrow[n \to \infty]{(d)} \big|\Rng\big(B^{2N(\lambda')}\big)\big| +
2,\] where $\lambda'$ is defined by \eqref{eq:lambda_prime_def}.
\end{thm}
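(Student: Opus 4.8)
The plan is to identify the structure of a critical homomorphism on the torus in terms of its "big jumps" — the minimal segments of length $2d+3$ on which the height changes by $3$ — in parallel with the analysis on the line that leads to Theorem \ref{thm:line-critical-range}. The starting observation is the one already highlighted in the introduction: a homomorphism $f\in\Hom(T_{n,d})$ which is not constant on $V_0$ nor on $V_1$ must contain such a jump, and between consecutive jumps the values on the "non-jumping" parity class are independent uniform signs relative to a fixed base level. Reading the values of $f$ at the jump points (and recording the sign $\pm1$ of each jump) should give, for large $n$, a coupling of $f$ with a simple random walk run for a number of steps $N$ equal to the number of jumps, where between steps nothing happens at the coarse scale. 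On the torus, however, the walk must return to its starting value after going around the cycle, so $N$ becomes not just a biased Poisson variable but the distribution of a Poisson variable conditioned to equal an independent copy: this is exactly the $\nu(\lambda')$ of \eqref{eq:equality-biased-poisson}, and the resulting coarse object is a simple random walk bridge $B^{2N(\lambda')}$. The "$+2$" accounts, as on the line, for the two extra values the fine structure contributes on either side of the coarse range.

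First I would set up the combinatorial skeleton. For $f\in\Hom(T_{n,d})$ define its jump set: the (cyclically ordered) locations $i$ where $f$ increases or decreases by $3$ over a window of length $2d+3$. Using the forced-values picture of Figure \ref{fig:single-jump-segment} together with the torus geometry, I would show that distinct jumps cannot overlap (they are separated by at least some fixed distance comparable to $d$), that a homomorphism with $N$ jumps decomposes into $N$ jump-segments and $N$ complementary "flat" segments, and — crucially — that on each flat segment the conditional law of $f$ is that of independent uniform signs on one parity class around a constant level on the other, with the constant levels between consecutive flat segments differing by the recorded $\pm1$ of the intervening jump. This is the torus analogue of the $\Omega_0,\Omega_1$ analysis and of the decomposition underlying Theorems \ref{thm:torus-supercritical} and \ref{thm:line-critical-range}.

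Next I would count. The number of homomorphisms with a prescribed jump pattern (positions, signs) is, up to the combinatorial factors coming from the sign-choices on the flat segments, of the form $2^{(\text{total flat length})}$ times a factor per jump; the enumeration in the proof of Theorem \ref{thm:torus-supercritical} (the $\binom{n}{r}^2 2^{-(2d-1)r}$ bound) already isolates the per-jump weight $\approx 2^{-(2d-1)}$ and the $\binom{n}{r}^2$ choice of positions for the "up" vs. "down" jump points, and it is precisely this $\binom{n}{r}^2$, versus the single $\binom{n}{r}$ on the line, that forces the number of up-jumps to equal the number of down-jumps and produces the "conditioned to be equal" structure. Tracking constants, one jump in a window contributes weight $2^{-(2d+2)}$ relative to the flat alternative (a $d+1$-fold forcing plus normalization), and the number of placements of $k$ up-jumps and $k$ down-jumps along the torus of length $n$ is $\sim \frac{n^{2k}}{(k!)^2 \cdot (\text{stuff})}$; combined with $n2^{-d}\to\lambda$ this yields the weight $\frac{(\lambda')^{2k}}{(k!)^2}$ with $\lambda' = \lambda/(4\sqrt2)$ exactly as in \eqref{eq:lambda_prime_def} and \eqref{eq:equality-biased-poisson}, after accounting for the two parity classes and the $\sqrt2$ normalizations (the same $2\sqrt2$-type factors appearing in the line case, halved by the bridge constraint to $4\sqrt2$). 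The upshot is a coupling: $f_{n,d(n)}$, conditioned on having $k$ up- and $k$ down-jumps, looks (at the coarse scale) like a uniformly random lattice path with $k$ $+1$'s and $k$ $-1$'s in a random cyclic order — i.e. a simple random walk bridge $B^{2k}$ — and the marginal law of $k$ converges to $\nu(\lambda')$. Passing $\Rng$ through this coupling, and adding the contribution of the fine sign fluctuations on each flat segment (which, as on the line, enlarge the coarse range by exactly one on each side with probability tending to $1$, away from the boundary cases of constant homomorphisms which have vanishing probability by Theorem \ref{thm:torus-subcritical-range}), gives $|\Rng(f_{n,d(n)})| \xrightarrow{(d)} |\Rng(B^{2N(\lambda')})| + 2$.

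The main obstacle I anticipate is making the coupling genuinely tight — i.e. controlling the error terms in the enumeration uniformly so that the convergence is in distribution and not merely at the level of expected jump counts. Two issues need care: first, the flat segments have random lengths summing to $n - O(kd)$, and since $kd$ is of order $\log n$ (negligible compared to $n$) one must verify that the $2^{\text{flat length}}$ weights, which are enormous, cancel cleanly in the ratio of partition functions and do not introduce a subtle bias into the position distribution of the jumps — this is where the "only mildly correlated" heuristic from the introduction must be upgraded to a genuine independence-up-to-boundary-effects statement. Second, the cyclic nature of the torus means there is no canonical starting point, so the bridge appears only after quotienting by rotation, and one must check the recorded jump signs really do form a uniformly random cyclic arrangement conditioned on balance; the cleanest route is probably to transfer to the line via a symmetry/conditioning argument, or to mimic directly the transfer-matrix computation used for $P_{n,d}$ in the proof of Theorem \ref{thm:line-critical-range}, replacing the free endpoint by a trace. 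Once the tightness of the number of jumps is established (via the supercritical tail bound of Theorem \ref{thm:torus-supercritical} to truncate at $k \le C\log n$, say, and a matching lower bound from the subcritical estimate), the remainder is bookkeeping.
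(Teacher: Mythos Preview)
Your outline is essentially the paper's approach: identify jumps, show their count $R$ converges in distribution to $\nu(\lambda')$, and argue that conditionally on $R$ the coarse walk is a simple random walk bridge of length $2R$. The paper resolves your two anticipated obstacles cleanly. For the ``uniformly random cyclic arrangement'' of signs, it builds a bijection (Lemma~\ref{lem:torus-bijection2}) between homomorphisms with a prescribed signed chain structure and sign vectors on the fluctuation points, and then a period--counting argument (Claim~\ref{claim:torus-feasible-jump-structure-count} and Lemma~\ref{lem:torus-distribution-of-W}) shows that, conditioned on the multiset $\bar W$ of signed chain lengths, their cyclic order is uniform; this is packaged as Proposition~\ref{prop:torus-range-distribution} and replaces any transfer to the line. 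For tightness of $R$, the paper simply bounds $\E[R]\le Cn2^{-d}$ via Lemma~\ref{lem:torus-chain-prob} and invokes Lemma~\ref{lem:distribution-ratio-convergence} after computing the ratio $\Pr(R=r\mid B)/\Pr(R=r-1\mid B)$ exactly; no truncation at $k\le C\log n$ is needed.

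One point you gloss over does matter: your decomposition into jump-segments and flat segments, with independent signs on each flat segment, presupposes that consecutive jumps are at distance at least $2d+3$. But same-direction jumps can sit at distance exactly $2d+1$, forming a chain with a forced (not flat) segment between them, so your ``jumps cannot overlap'' claim is false as stated. The paper handles this by first conditioning on the event $B=\bigcap_x(A_x^c\cup A_{x+2d+1}^c)$ that no such minimal-distance pair occurs, showing $\Pr(B^c)\le Cn2^{-2d}=o(1)$ via Lemma~\ref{lem:torus-chain-prob}; your picture is then exactly right on $B$. A minor numerical slip: the per-jump entropy cost is $2^{-(d+3/2)}$, not $2^{-(2d+2)}$ (on $B$, Claim~\ref{cl:torus-bijection-fluc-size} gives $|FP(I)|=n/2-(2d+3)R$, so a pair of opposite jumps costs $2^{-(2d+3)}$), and it is the product $c(n,d,r)\binom{2r}{r}2^{-(2d+3)r}$ whose successive ratios tend to $(\lambda')^2/r^2$ with $\lambda'=\lambda/(4\sqrt 2)$.
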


This theorem is closely related to Theorem
\ref{thm:line-critical-range}. On the line, the range of a
homomorphism in the critical regime is determined by a simple random
walk whose length is a parity-biased Poisson random variable. Note
that if we condition a simple random walk with a Poisson($\mu$)
number of steps to end at its initial value, then the number of
steps it takes has distribution $\nu(\mu/2)$. To see this, observe
that the number of positive and negative steps of the random walk
are independent Poisson($\mu/2$) random variables and we are
conditioning that these variables are equal. The same phenomenon
continues to hold if we start with a simple random walk taking a
parity-biased Poisson($\mu$,$\alpha$) number of steps. Indeed, the
number of steps must be even in order for the walk to end at its
initial value, and a parity-biased Poisson($\mu$,$\alpha$)
conditioned to be even is the same as a Poisson($\mu$) conditioned
to be even.

\subsection{Local limits on the line}
\label{sec:main-results-local-limit}

In this section we present results for homomorphisms on the graph
$P_{n,d}$, which was defined in \eqref{eq:def-graph-P_n_d}, when $d$
is constant and $n$ tends to infinity.

Our first result gives an approximate count of the number of
homomorphisms.
\begin{thm}
  \label{thm:local-limit-hom-count}
For any positive integer $d$ there exists a constant $C(d)>0$ such that
\[ |\Hom(P_{n,d},0)| = C(d)\lambda(d)^{n/2}(1+o(1)) \quad \text{ as } n \to \infty, \]
where $\lambda(d)$ is the unique positive solution to the equation
\[ \lambda^{d - 1/2} (\lambda - 2) = 1 . \]
\end{thm}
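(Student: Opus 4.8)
The plan is to prove this by the transfer‑matrix method, so the heart of the matter will be to identify the Perron eigenvalue of the relevant transfer matrix with $\lambda(d)$. (We may assume $d\ge 1$; the case $d=0$ is the simple random walk count $|\Hom(P_{n,0},0)|=2^{n}=4^{n/2}$, which matches $\lambda(0)=4$.)

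First I would set up the state space. Observe that any two vertices of opposite parity lying in a window of $2d+1$ consecutive vertices are at distance at most $2d-1\le 2d+1$ and are therefore adjacent in $P_{n,d}$; hence such a window induces a complete bipartite graph $K_{d+1,d}$. Consequently, in the restriction of any $f\in\Hom(P_{n,d})$ to such a window, one of the two parity classes of values is constant, and if that class has value $t$ then every value in the other class lies in $\{t-1,t+1\}$. In particular there are only finitely many possibilities for this restriction up to an additive constant; let $\mathcal{S}$ be this finite set of window‑states. Because sliding a window by one vertex interchanges the two within‑window parity classes — and hence always flips which class is forced to be constant — it is convenient to advance two vertices at a time: let $M=(M_{s,s'})_{s,s'\in\mathcal S}$, where $M_{s,s'}$ is the number of ways to append heights to two new vertices on the right of a configuration with (shifted) window‑state $s$ so that the result is a valid homomorphism with new window‑state $s'$. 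Reading a homomorphism on $\{0,\dots,n\}$ as the sequence of its consecutive $(2d+1)$‑window states, and using that fixing $f(0)=0$ removes the shift ambiguity, one gets
\[ |\Hom(P_{n,d},0)| = \big\langle u,\, M^{\lfloor (n-2d)/2\rfloor}\, w_n\big\rangle , \]
where $u$ is the all‑ones vector and $w_n$ is a $\{0,1\}$‑valued boundary vector depending only on $n\bmod 2$, encoding the consistent ways to finish the last $O(d)$ vertices.

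Second, I would verify that $M$ is primitive and apply Perron–Frobenius. Irreducibility: from any state one can reach, and be reached from, the ``flat'' state $0,1,0,1,\dots$ in a bounded number of steps that gradually relax the configuration to the alternating one; restricting to this communicating class (the only one carrying all long homomorphisms) we may assume $M$ irreducible. Aperiodicity: the flat state admits a self‑loop, since appending $0,1$ to a window ending in $\dots,0,1$ again produces the flat state. Hence $M$ has a simple, strictly dominant eigenvalue $\lambda(d)>0$ with positive left and right eigenvectors, and
\[ |\Hom(P_{n,d},0)| = C(d)\,\lambda(d)^{n/2}\,(1+o(1)), \]
with $C(d)>0$ an explicit pairing of $u$ and $w_n$ with the eigenvectors; the boundary vectors — together, if needed, with the reflection symmetry $f(\cdot)\mapsto f(n-\cdot)-f(n)$ and the sign flip $f\mapsto -f$ — ensure that $C(d)$ is a genuine positive constant independent of the parity of $n$.

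Third, and this is the main obstacle, I would identify $\lambda(d)$. I would look for an explicit positive right eigenvector of $M$ of product form, assigning to a window‑state the weight $\mu^{-h}$, where $\mu=\sqrt{\lambda(d)}$ and $h$ is the height of the last window vertex measured from the constant parity class of the window, times a bounded factor recording the ``shape'' of the state (which parity is constant and the current offset). Substituting this ansatz into $Mv=\lambda(d)v$, each appended vertex contributes either two free choices when the class constraining it is constant, or a single forced choice when that class spans two values; tracking how long a flat stretch must persist before a level can change — precisely the $2d+1$ vertices of Figure~\ref{fig:single-jump-segment} — the eigenvalue equation collapses to
\[ \mu^{2d-1}(\mu^{2}-2)=1, \qquad\text{equivalently}\qquad \lambda(d)^{d-1/2}\big(\lambda(d)-2\big)=1 . \]
(Equivalently, one computes that the factor of the characteristic polynomial of $M$ carrying the Perron root is $\mu^{2d+1}-2\mu^{2d-1}-1$, which is $(\mu+1)$ — the spurious factor coming from the bipartite‑like structure — times an irreducible degree‑$2d$ factor.) To conclude, one checks the elementary fact, which also yields the uniqueness asserted in the statement, that $g(\lambda):=\lambda^{d-1/2}(\lambda-2)$ is negative on $(0,2)$ and strictly increasing from $0$ to $\infty$ on $(2,\infty)$, so $g(\lambda)=1$ has a unique positive root; since the Perron eigenvalue produced above is a positive root, it is this one. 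I expect the real work to be (i) writing the transition rules of $M$ precisely enough to justify the collapse of the eigenvalue equation, and (ii) the boundary bookkeeping exhibiting $C(d)$ as a fixed positive constant; finiteness of $\mathcal S$, the Perron–Frobenius input, and the analysis of $g$ are routine.
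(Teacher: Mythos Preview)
Your transfer-matrix strategy is valid in outline and genuinely different from the paper's route. The paper does not work with raw window states; instead it encodes each homomorphism as a word on the four-letter alphabet $\Sigma=\{a,b,A,B\}$ with $a=(1,-1)$, $b=(-1,1)$, $A=(1,1,-1)$, $B=(-1,-1,1)$, and characterizes $\Hom(P_{n,d},0)$ bijectively as the set of ``$d$-legal'' words of weight $n$ or $n+1$. This encoding reduces the problem to a linear recursion on $d$ auxiliary counts $c_n(k)$, whose characteristic polynomial is read off directly as $q(\mu)=\mu^{2d-1}(\mu^2-2)-1$; a short root analysis then isolates the dominant root $\sqrt{\lambda(d)}$. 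Your approach, by contrast, works with a state space of size $2^{\Theta(d)}$, so the characteristic polynomial of your $M$ has exponentially high degree, and the content of the theorem becomes the claim that it has $q(\mu)$ as the factor carrying the Perron root.

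That is precisely where your proposal has a real gap. You correctly flag the eigenvalue identification as ``the main obstacle,'' but the ansatz you offer --- a weight $\mu^{-h}$ times an unspecified ``bounded factor recording the shape'' --- is not a proof, and this step is not bookkeeping: it is the whole theorem. Nothing in your setup explains why a $2^{\Theta(d)}$-dimensional eigenvalue problem should collapse to a degree-$(2d+1)$ equation. The paper's alphabet encoding is exactly the device that makes this collapse transparent: the letters $a,b$ (weight $2$) record local fluctuations, the letters $A,B$ (weight $3$) record jumps, and the $d$-legality constraint says an $A$ must be preceded by $d-1$ consecutive $a$'s and then one of $\{a,A\}$. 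This yields a recursion on $O(d)$ states immediately. If you want to push your approach through, you must exhibit an $M$-invariant $O(d)$-dimensional subspace containing the Perron eigenvector --- and finding one is tantamount to rediscovering the paper's encoding. A minor additional point: your assertion that $C(d)$ is parity-independent also needs an argument; the paper gets this for free from the explicit formula $|\Hom(P_{n,d})|=2c_{n-2}(0)+2c_{n-3}(d-1)$ together with $c_n(k)\sim r_k\lambda^{n/2}$.
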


\begin{remark}
The constant $\lambda(d)$ above satisfies
\[ \lambda(d) = 2 + 2^{-d+1/2}(1-o(1)) \quad \text{ as } d \to \infty .\]
\end{remark}

Our next result concerns the local limit of the homomorphism. This
local limit lives on $P_{\infty,d}$, the limiting graph of
$P_{n,d}$. Precisely, $P_{\infty,d}$, for $d\ge 1$, is the graph
defined by
\begin{equation}
\label{eq:def-graph-P_infty_d}
\begin{aligned}
  V(P_{\infty,d}) &:= \{0,1,2,\ldots\},\\
  E(P_{\infty,d}) &:= \big\{(i,j) ~|~ |i-j|\in\{1,3,\ldots, 2d+1\}\big\}.
\end{aligned}
\end{equation}
For a function $g$ defined on a domain $\Omega$ and a set
$A\subseteq \Omega$, we write $g|_A$ for the restriction of $g$ to
$A$.
\begin{thm}[Local Limit]
  \label{thm:local-limit}
For any constant $d \geq 1$, there exists a distribution
$\mu_{\infty,d}$ on $\Hom(P_{\infty,d},0)$ such that the uniform
distribution on $\Hom(P_{n,d},0)$ converges to $\mu_{\infty,d}$ as
$n \to \infty$, in the following sense. Let $f_{n,d}$ be a uniformly
chosen homomorphism in $\Hom(P_{n,d},0)$ and let $f_{\infty,d}$ be
sampled from $\mu_{\infty,d}$. Then,
\[ \Pr(f_{n,d}|_{\{0,1,\dots,r\}} = f) \xrightarrow[n \to \infty]{} \Pr(f_{\infty,d}|_{\{0,1,\dots,r\}} = f) \quad \text{for any } r \geq 0 \text{ and } f \in \Z^{\{0,1,\dots,r\}} . \]
\end{thm}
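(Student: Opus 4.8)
The plan is to establish the existence of the limiting measure $\mu_{\infty,d}$ by a transfer-matrix analysis of the homomorphism model on $P_{n,d}$, following the same circle of ideas that underlies Theorem~\ref{thm:local-limit-hom-count}. The key observation is that a homomorphism on $P_{n,d}$ is a Markov chain of finite memory: the constraint set of vertex $i$ reaches only back to vertex $i-2d-1$, so if we encode the ``state'' at position $i$ as the window $W_i := (f(i), f(i+1), \dots, f(i+2d))$ of $2d+1$ consecutive values (equivalently, the increments $f(i+1)-f(i), \dots, f(i+2d)-f(i-1)$ together with the base height $f(i)$), then the sequence $(W_i)$ is a Markov chain, and the set of homomorphisms on $P_{n,d}$ is in bijection with admissible paths in a finite transfer graph on increment-windows. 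Concretely, I would fix the combinatorial part of the state (the vector of increments, which lives in a finite set) and let the transfer matrix $M$ act on these increment-states, recording that two consecutive windows are compatible iff they agree on their overlap and the new vertex satisfies all $2d+1$ of its parity-distance constraints.

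First I would show that $M$ is a primitive nonnegative matrix: every admissible increment-window can be reached from every other in a bounded number of steps (one can always, for instance, steer the walk back to the ground-state pattern $(1,2,1,2,\dots)$ or its reflections and then to any target window), so some power $M^k$ has all entries positive. By Perron--Frobenius, $M$ has a simple positive eigenvalue $\lambda(d)$ equal to its spectral radius, with positive left and right eigenvectors $u, v$; a short computation identifying admissible windows with the recursion for homomorphism counts shows $\lambda(d)$ is exactly the quantity in Theorem~\ref{thm:local-limit-hom-count}, namely the positive root of $\lambda^{d-1/2}(\lambda-2)=1$. Then, for fixed $r$ and a candidate restriction $f \in \Z^{\{0,1,\dots,r\}}$, the count of homomorphisms on $P_{n,d}$ extending $f$ is, by the transfer-matrix formalism, of the form $(\text{boundary vector for }f)^{\!\top} M^{\,n-r-O(d)} (\text{terminal boundary vector})$, up to corrections at the two ends. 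Dividing by $|\Hom(P_{n,d},0)| = C(d)\lambda(d)^{n/2}(1+o(1))$ and using $M^m = \lambda(d)^m\big(v u^\top/(u^\top v) + o(1)\big)$ as $m\to\infty$, the ratio converges to an explicit limit depending only on $f$ and $r$ (the dependence on $n$ and on the right endpoint washes out into $C(d)$, which is why that unspecified constant appears in Theorem~\ref{thm:local-limit-hom-count}). This limit is, by construction, the probability assigned to the cylinder $\{g|_{\{0,\dots,r\}}=f\}$ by a measure $\mu_{\infty,d}$, which is simply the law of the Markov chain on increment-windows started from the $u$-biased stationary-type initial distribution forced by $f(0)=0$ — equivalently, the Markov chain whose local limit is described after the theorem statement.

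Second, I would check the measure-theoretic bookkeeping: the family of cylinder probabilities just produced is consistent (Kolmogorov extension applies, since summing the limit for $\{g|_{\{0,\dots,r+1\}}=f'\}$ over the last coordinate of $f'$ gives the limit for $\{g|_{\{0,\dots,r\}}=f\}$, because $M v = \lambda(d) v$ makes the one-step sums telescope), so there is a genuine probability measure $\mu_{\infty,d}$ on $\Hom(P_{\infty,d},0)$ with these finite-dimensional distributions, and the displayed convergence is exactly the statement that the cylinder probabilities converge — which we proved. One should also verify $\mu_{\infty,d}$ is supported on actual homomorphisms: every finite restriction in its support extends to an admissible bi-infinite increment path, hence to an element of $\Hom(P_{\infty,d},0)$, because primitivity of $M$ guarantees every admissible window has an admissible successor.

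The main obstacle I expect is the handling of the two boundary effects simultaneously. At the left end, the normalization $f(0)=0$ breaks translation invariance and must be tracked through the first $O(d)$ transfer steps before the chain equilibrates; at the right end, the graph $P_{n,d}$ truncates the constraints for vertices near $n$, so the correct ``terminal'' boundary vector is not simply the right Perron eigenvector $v$ but a modified vector accounting for the missing long-range edges near $n$. Getting the asymptotics $|\Hom(P_{n,d},0)| = C(d)\lambda(d)^{n/2}(1+o(1))$ to be \emph{uniform} enough that dividing by it and taking $n\to\infty$ is legitimate — i.e. ensuring the $o(1)$ errors in numerator and denominator are of the same multiplicative form and cancel — is the delicate point; it amounts to showing that both counts factor as $\lambda(d)^{n/2}$ times a convergent boundary contribution, with the spectral gap of $M$ (which is strictly positive by primitivity) controlling the rate. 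Once that uniformity is in hand, the convergence of every cylinder probability, and hence the theorem, follows.
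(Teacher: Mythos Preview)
Your transfer-matrix approach is correct and is the standard route to local limits for one-dimensional models with finite-range constraints. The paper, however, takes a more explicit path: rather than working with the full window of $2d$ increments (a state space exponential in $d$), it encodes the increment sequence $(f(k)-f(k-1))_k$ as a word over the four-letter alphabet $\Sigma=\{a,b,A,B\}$ with $a=(1,-1)$, $b=(-1,1)$, $A=(1,1,-1)$, $B=(-1,-1,1)$, and shows (Claim~\ref{cl:local-limit-bijection}) that $\Hom(P_{n,d},0)$ is in bijection with the ``$d$-legal'' words of weight $n$ or $n+1$. The $d$-legality condition depends only on the last $d$ letters, so the relevant chain lives on just $2d+2$ states $\{a_1,\dots,a_d,b_1,\dots,b_d,A,B\}$ (where $a_k$ records a streak of $k$ consecutive $a$'s). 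An explicit linear recursion for the counts $c_n(k)$ produces the characteristic polynomial $\mu^{2d-1}(\mu^2-2)-1$, and instead of invoking Perron--Frobenius abstractly the paper locates its roots directly (one real root $>\sqrt{2}$, all others of modulus $<\sqrt{2}$, via Rouch\'e), yielding closed-form transition probabilities for the limiting chain (Lemma~\ref{lem:Markov_limit}). Your approach buys generality and avoids the combinatorial encoding; the paper's approach buys an explicit small chain and a transparent link to the jump analysis of Section~\ref{sec:line}, since the letters $A,B$ are precisely the jumps. One point you should not leave as a hand-wave: primitivity of your transfer matrix requires aperiodicity in addition to irreducibility, and ``steer back to the ground-state pattern'' only addresses the latter; you should exhibit cycles of coprime lengths in the increment-window graph (the period-$2$ cycle coming from the alternating $\pm 1$ pattern together with an odd cycle obtained by inserting a single $(1,1,-1)$ block will do).
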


\begin{remark}
The random homomorphism $f_{\infty,d}$ is described by an explicit
Markov chain on $2d+2$ states, as shown in Figure
\ref{fig:local-limit-markov-chain}, through a process which decodes
infinite words on the alphabet $\{a,b,A,B\}$ into homomorphisms on
$P_{\infty,d}$. See Section~\ref{sec:local-limits} for details.
\end{remark}

\noindent{\bf Policy on constants:} In the rest of the paper we
employ the following policy on constants. We write $C,c,C',c'$ for
positive absolute constants, whose values may change from line to
line. Specifically, the values of $C,C'$ may increase and the values
of $c,c'$ may decrease from line to line.

\section{Preliminaries}
\label{sec:preliminaries}

We gather here a number of general tools which we require for our
proof.

\begin{lemma}
\label{lem:main-tool}
Let $E$ and $F$ be events in a discrete probability space and let $T\colon E \to \cP(F)$ be a mapping, where $\cP(A)$ denotes the collection of all subsets of $A$. For $f \in F$, define
\[ N(f) := \left\{ e \in E ~|~ f \in T(e) \right\} .\]
If for some $p,q > 0$, we have
\begin{equation}
\label{eq:main-tool-assumptions}
\begin{aligned}
\Pr(T(e)) \geq \Pr(e) \cdot p , &\quad e \in E, \\ |N(f)| \leq q, &\quad f \in F,
\end{aligned}
\end{equation}
then
\[ \Pr(E) \leq \Pr(F) \cdot \frac{q}{p} .\]
\end{lemma}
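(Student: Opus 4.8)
The plan is a direct double-counting argument that exploits the fact that we are in a \emph{discrete} probability space, so that the probability of any event equals the sum of the point masses of its outcomes.

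First I would start from $\Pr(E) = \sum_{e \in E} \Pr(e)$ and apply the first hypothesis in \eqref{eq:main-tool-assumptions}, namely $\Pr(e) \le p^{-1}\Pr(T(e))$, to obtain $\Pr(E) \le p^{-1}\sum_{e \in E}\Pr(T(e))$. Next, since each $T(e)$ is a subset of $F$ inside a discrete space, I would write $\Pr(T(e)) = \sum_{f \in T(e)}\Pr(f)$ and interchange the order of summation:
\[
\sum_{e \in E}\sum_{f \in T(e)}\Pr(f) = \sum_{f \in F}\Pr(f)\cdot\big|\{e \in E : f \in T(e)\}\big| = \sum_{f \in F}\Pr(f)\,|N(f)|,
\]
which is just the definition of $N(f)$. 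Finally, the second hypothesis $|N(f)| \le q$ gives $\sum_{f \in F}\Pr(f)\,|N(f)| \le q\sum_{f \in F}\Pr(f) = q\,\Pr(F)$. Chaining these bounds yields $\Pr(E) \le (q/p)\,\Pr(F)$.

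There is no genuine obstacle in this lemma; the only step meriting a word of justification is the interchange of the two summations, which is legitimate because the underlying space is discrete (so all sums are countable) and every summand $\Pr(f) \ge 0$ is non-negative, so Tonelli's theorem (equivalently, reordering a series of non-negative terms) applies. All that remains is to record the chain of inequalities cleanly.
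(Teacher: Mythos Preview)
Your proposal is correct and essentially identical to the paper's proof: the paper also establishes the double-counting identity $\sum_{e \in E}\Pr(T(e)) = \sum_{f \in F}\Pr(f)\,|N(f)|$ by swapping the order of summation, and then invokes the two hypotheses in \eqref{eq:main-tool-assumptions} to conclude. The only cosmetic difference is that the paper states the identity first and then applies the assumptions, whereas you thread the inequalities through the chain as you go.
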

\begin{proof}
It is a simple matter to verify that
\[ \sum_{e \in E} \Pr(T(e))
 = \sum_{e \in E} \sum_{f \in F} \Pr(f) \mathbf{1}_{T(e)}(f)
 = \sum_{f \in F} \sum_{e \in E} \Pr(f) \mathbf{1}_{T(e)}(f)
 = \sum_{f \in F} \Pr(f) \cdot |N(f)| .\]
The result now follows by the assumptions in \eqref{eq:main-tool-assumptions}.
\end{proof}

\begin{remark}
The opposite inequalities in \eqref{eq:main-tool-assumptions} would yield the analogous result.
Namely, if $\Pr(T(e)) \leq p\Pr(e)$ for all $e \in E$ and $|N(f)| \geq q$ for all $f \in F$, then $p\Pr(E) \geq q\Pr(F)$.
Note that when applying this lemma for the uniform distribution, the assumptions in \eqref{eq:main-tool-assumptions} become $|T(e)| \geq p$ for all $e \in E$ and $|N(f)| \leq q$ for all $f \in F$, while the conclusion remains the same.
\end{remark}

\begin{lemma}
\label{lem:ratio-prob-tool}
Let $X$ be a non-negative, integer-valued random variable.
Assume that, for some positive integer $n$ and some $a>1$, we have $\Pr(X = k) \geq a \cdot \Pr(X = k-1)$, for all $1 \leq k \leq n$. Then
\[ \Pr(X < n) \leq 1/a .\]
\end{lemma}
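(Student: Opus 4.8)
The plan is to prove the tail bound $\Pr(X < n) \leq 1/a$ directly, by exploiting the geometric-type growth of the probability mass function implied by the hypothesis. First I would observe that the hypothesis $\Pr(X=k) \geq a\,\Pr(X=k-1)$ for all $1 \leq k \leq n$ iterates to give, for any $0 \leq j \leq k \leq n$, the inequality $\Pr(X=j) \leq a^{-(k-j)}\Pr(X=k)$. In particular, for each fixed $k$ with $0 \leq k \leq n$, summing over $j = 0, 1, \dots, k$ yields
\[ \Pr(X \leq k) = \sum_{j=0}^{k} \Pr(X=j) \leq \Pr(X=k) \sum_{j=0}^{k} a^{-(k-j)} \leq \Pr(X=k) \cdot \frac{1}{1 - 1/a} = \Pr(X=k)\cdot\frac{a}{a-1}, \]
where we used $a > 1$ to sum the geometric series.

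Next I would apply this with $k = n-1$ together with the complementary event. We have $\Pr(X < n) = \Pr(X \leq n-1)$, and from the displayed bound $\Pr(X \leq n-1) \leq \frac{a}{a-1}\Pr(X = n-1)$. On the other hand, the hypothesis at $k = n$ gives $\Pr(X = n) \geq a\,\Pr(X=n-1)$, so $\Pr(X=n-1) \leq \frac{1}{a}\Pr(X=n) \leq \frac{1}{a}\Pr(X \geq n) = \frac{1}{a}\big(1 - \Pr(X < n)\big)$. Substituting,
\[ \Pr(X < n) \leq \frac{a}{a-1}\cdot\frac{1}{a}\big(1 - \Pr(X<n)\big) = \frac{1}{a-1}\big(1 - \Pr(X<n)\big). \]
Rearranging, $\Pr(X<n)\big(1 + \tfrac{1}{a-1}\big) \leq \tfrac{1}{a-1}$, i.e. $\Pr(X<n)\cdot\tfrac{a}{a-1} \leq \tfrac{1}{a-1}$, which gives $\Pr(X<n) \leq 1/a$ as claimed.

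There is essentially no hard part here — the argument is a short manipulation of a geometric series and the normalization of a probability measure. The only point requiring a little care is making sure the chain of inequalities $\Pr(X=j) \leq a^{-(k-j)}\Pr(X=k)$ is valid for all the index ranges used (which follows by a trivial induction on $k-j$ from the single-step hypothesis), and keeping track of the fact that $X$ may take values larger than $n$, so that $1 - \Pr(X<n) = \Pr(X \geq n) \geq \Pr(X=n)$ rather than an equality. An alternative, even slicker route would be to note that $\Pr(X=n-1) + \Pr(X=n) \leq \Pr(X \geq n-1)$ is not quite what we want; the cleanest phrasing really is the geometric-sum bound above, possibly streamlined by writing $\Pr(X < n) \leq \frac{1}{a}\Pr(X < n) + \frac{1}{a}\Pr(X=n) \cdot(\text{correction})$ — but I would present the version through $\Pr(X \leq n-1) \leq \frac{a}{a-1}\Pr(X=n-1)$ since it is transparent.
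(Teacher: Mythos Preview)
Your proof is correct and follows essentially the same approach as the paper: iterate the hypothesis to a geometric bound, sum, and close with the normalization $\Pr(X<n)+\Pr(X=n)\le 1$. The paper's version is marginally more direct in that it compares $\Pr(X<n)$ straight to $\Pr(X=n)$ via $\sum_{k=1}^{n} a^{-k}\le \frac{1}{a-1}$, rather than first bounding against $\Pr(X=n-1)$ and then applying one more step of the hypothesis; but the substance is the same.
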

\begin{proof}
It is easy to verify (by induction) that
\[ \Pr(X=n) \geq a^k \cdot \Pr(X=n-k) , \quad 1 \leq k \leq n . \]
Thus,
\[ \Pr(X < n) = \sum_{k=0}^{n-1} \Pr(X=k) = \sum_{k=1}^{n} \Pr(X=n-k) \leq \Pr(X=n) \cdot \sum_{k=1}^{n} a^{-k} \leq \frac{\Pr(X=n)}{a-1} .\]
Therefore,
\[ 1 \geq \Pr(X \leq n) = \Pr(X < n) + \Pr(X = n) \geq \Pr(X < n) (1 + (a-1)) = a \cdot \Pr(X < n) . \qedhere \]
\end{proof}

We will use a theorem by Benjamini, H\"{a}ggstr\"{o}m and Mossel
\cite{Benjamini2000} to transfer results from the line to the torus.
This is an FKG inequality for the measure induced on non-negative
homomorphisms by taking pointwise absolute value.

Given a set $V$, we equip $\Z^V$ with the usual pointwise partial order $\preceq$. A function $\phi \colon \Z^V \to \R$ is said to be {\em increasing} if $\phi(f) \leq \phi(g)$ whenever $f \preceq g$.

\begin{thm}[FKG inequality for absolute values~{\cite[Proposition~2.3]{Benjamini2000}}]
\label{thm:fkg}
Let $G$ be a finite, bipartite and connected graph, let $v_0 \in V(G)$ and let $f$ be a uniformly chosen homomorphism in $\Hom(G,v_0)$. Then, for any two increasing functions $\phi,\psi \colon \Hom(G,v_0) \to \R$, we have
\[ \E\big[\phi(|f|) \cdot \psi(|f|)\big] \geq \E\big[\phi(|f|)\big] \cdot \E\big[\psi(|f|)\big] \]
where $|f|$ is the non-negative homomorphism obtained from $f$ by taking pointwise absolute value.
\end{thm}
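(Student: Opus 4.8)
The plan is to deduce this from the standard FKG (Harris) inequality for the uniform measure on $\Hom(G,v_0)$ by passing to the measure on $|f|$, using a monotone coupling on a suitable product space. First I would recall that the uniform measure $\mu$ on $\Hom(G,v_0)$ is a nearest-neighbour measure on $\Z^{V(G)}$ with hard constraints $|f(x)-f(y)|=1$ on edges, and that it is in fact a Markov random field. The measure $\nu$ induced on the non-negative homomorphism $|f|$ is then a measure on $\{g\colon V(G)\to\Z_{\ge 0}\}$. The key structural observation, which uses that $G$ is bipartite and connected, is that $f\mapsto |f|$ is (up to the global sign of $f$, which is irrelevant since $f(v_0)=0$ forces... — actually $f(v_0)=0$, so the sign ambiguity is in the value on the bipartition class not containing $v_0$)... more precisely, that $|f|$ together with the parity data determines $f$, and conditionally on $|f|$ the "sign pattern" is essentially forced, so that $\nu$ inherits a lattice/monotonicity structure from $\mu$.

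The cleaner route, and the one I would actually write, is to verify the FKG lattice condition directly for $\nu$. One checks that $\nu$ is supported on non-negative functions $g$ with $|g(x)-g(y)|=1$ on edges \emph{or} $g(x)=g(y)=0$ with... hmm — in fact $g=|f|$ satisfies: on each edge $(x,y)$, either $|g(x)-g(y)|=1$, or $g(x)=g(y)$, and the latter happens only when $f(x)=-f(y)$, i.e. $g(x)=g(y)=|f(x)|$ with $f(x),f(y)$ of opposite sign. So $\nu$ is a Markov random field on $\Z_{\ge0}^{V(G)}$, and I would compute its weights: for $g$ in the support, $\nu(g) \propto 2^{\#\{\text{edges with } g(x)=g(y)>0\}}$ or similar (since each such "pinched" edge corresponds to a sign choice). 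Then I would verify the FKG lattice condition $\nu(g\vee h)\,\nu(g\wedge h)\ge \nu(g)\,\nu(h)$ for all $g,h$ in the support, which by the Holley criterion (or Fortuin–Kasteleyn–Ginibre) implies the desired positive association $\E_\nu[\phi\psi]\ge\E_\nu[\phi]\E_\nu[\psi]$ for increasing $\phi,\psi$. This is exactly the content of \cite[Proposition~2.3]{Benjamini2000}, so I would invoke that reference for the lattice-condition verification rather than reproduce it.

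The main obstacle is precisely checking that $\nu$ satisfies the FKG lattice condition: one must confirm both that the support of $\nu$ is a sublattice of $\Z_{\ge0}^{V(G)}$ (i.e. closed under coordinatewise $\vee$ and $\wedge$ — this needs that if $g,h$ are absolute values of homomorphisms then so are $g\vee h$ and $g\wedge h$, which uses bipartiteness in an essential way) and that the weight counting the "pinched" edges behaves submodularly. Since the theorem is quoted verbatim from \cite{Benjamini2000}, I would keep the write-up short: state that $\nu$ is a Markov random field whose support is a distributive lattice and whose density satisfies Holley's condition, cite \cite[Proposition~2.3]{Benjamini2000} for the details, and conclude by Holley's inequality. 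If a self-contained argument is wanted instead, the honest alternative is to reproduce the Benjamini–Häggström–Mossel proof, which constructs a monotone coupling via a single-site heat-bath (Glauber) dynamics on $|f|$ and checks it is attractive; but given that the result is explicitly attributed, the citation suffices.
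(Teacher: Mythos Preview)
The paper does not prove this theorem at all; it is stated as a preliminary tool and attributed directly to \cite[Proposition~2.3]{Benjamini2000}. So there is nothing to compare your argument against in this paper, and your final paragraph --- simply citing \cite{Benjamini2000} --- is exactly what the paper does.

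That said, your sketch of the underlying argument contains a concrete error worth flagging. You write that on an edge $(x,y)$ the function $g=|f|$ satisfies either $|g(x)-g(y)|=1$ or $g(x)=g(y)$, and you base the weight computation on counting ``pinched'' edges where the latter occurs. But for integer-valued $f$ with $|f(x)-f(y)|=1$, the case $|f(x)|=|f(y)|$ is impossible: if $f(x)$ and $f(y)$ have the same sign (or one is zero) the absolute values differ by $1$, and they cannot have strictly opposite signs while differing by $1$. Thus $|f|$ is \emph{always} a genuine homomorphism, the support of $\nu$ is simply the set of non-negative homomorphisms, and there are no pinched edges. The correct weight is $\nu(g)\propto 2^{c(g)}$ where $c(g)$ is the number of connected components of $\{v:g(v)>0\}$, since the sign of $f$ is constant on each such component and can flip only across zeros of $g$. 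The FKG lattice condition is then verified for this weight in \cite{Benjamini2000}. Your overall strategy (verify Holley's condition for $\nu$) is the right one, but the specific computation you outline would not go through as written.
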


Consider the event $Q$ that a homomorphism on $P_{n,d}$ is in fact a
valid homomorphism on $T_{n,d}$ (by identifying the vertex $n$ with
the vertex $0$). If we could write $\mathbf{1}_Q(f) = \psi(|f|)$ for
some function $\psi$ then we may be able to use the above theorem to
transfer results from the line to the torus by conditioning on $Q$.
However, it is not the case that $\mathbf{1}_Q$ is a function of the
absolute value of the homomorphism, and so we cannot apply Theorem
\ref{thm:fkg} directly. Instead, we make use of Theorem
\ref{thm:fkg} in order to prove a similar proposition specialized
for our purposes. See Proposition \ref{prop:fkg-line-torus} in
Section \ref{sec:torus} for more details.

The following result of Erd{\H o}s is useful for analyzing
homomorphism on the line.
\begin{thm}[{\cite[Theorem~1]{Erdos1945}}]
\label{thm:random-sign-walk}
Let $n$ be a positive integer, let $a_1,\dots,a_n \in \R$ satisfy $|a_i| \geq 1$ for $1 \leq i \leq n$ and let $\epsilon_1,\dots,\epsilon_n \sim U(\{-1,1\})$ be random independent signs. Denote
\[ S := \epsilon_1 a_1 + \cdots + \epsilon_n a_n .\]
Then, for any integer $r>0$ and any $a \in \R$, we have
\[ \Pr(|S - a| < r) \leq r \cdot \binom{n}{\lfloor n/2 \rfloor} \cdot 2^{-n} \leq \frac{C r}{\sqrt{n}} .\]
\end{thm}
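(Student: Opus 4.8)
The plan is to reduce the probabilistic statement to a counting problem about the Boolean lattice and then apply a Sperner/Dilworth-type chain decomposition, in the spirit of Erd{\H o}s's original argument. First I would normalize: since flipping the sign of any fixed $a_i$ together with the law of $\epsilon_i$ leaves the distribution of $S$ unchanged, we may assume $a_i \ge 1$ for all $i$. Identifying a sign vector $\epsilon \in \{-1,1\}^n$ with the set $A := \{ i : \epsilon_i = 1 \}$, we have $S = S(A) = 2\sum_{i \in A} a_i - \sum_{i=1}^n a_i$, and it suffices to bound the number of sets $A$ for which $S(A)$ lies in the open interval $I := (a-r, a+r)$, which has length $2r$; dividing by $2^n$ then gives the probability bound.

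The key monotonicity observation is that if $A \subsetneq B$ then
\[ S(B) - S(A) = 2\sum_{i \in B \setminus A} a_i \ge 2|B\setminus A| \ge 2 . \]
Hence along any chain $A_1 \subsetneq A_2 \subsetneq \cdots \subsetneq A_m$ in the subset lattice $2^{[n]}$, the values $S(A_1) < S(A_2) < \cdots < S(A_m)$ are spaced at least $2$ apart, so at most $r$ of them can fall in the length-$2r$ interval $I$. Next I would invoke the fact that $2^{[n]}$ can be partitioned into exactly $\binom{n}{\lfloor n/2 \rfloor}$ chains --- this follows from Dilworth's theorem together with Sperner's theorem (the largest antichain has size $\binom{n}{\lfloor n/2\rfloor}$), or, more explicitly, from the symmetric chain decomposition of de Bruijn--Tengbergen--Kruyswijk. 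Summing the per-chain bound of $r$ over these $\binom{n}{\lfloor n/2\rfloor}$ chains shows that at most $r\binom{n}{\lfloor n/2\rfloor}$ sets $A$ satisfy $S(A) \in I$, which yields $\Pr(|S-a| < r) \le r \binom{n}{\lfloor n/2\rfloor} 2^{-n}$.

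Finally, the estimate $\binom{n}{\lfloor n/2\rfloor} 2^{-n} \le C/\sqrt{n}$ is a routine consequence of Stirling's formula, giving the second inequality. I do not expect a genuine obstacle here: the only non-elementary ingredient is the chain decomposition of the Boolean lattice, which is a classical combinatorial fact, and the main point to be careful about is the reduction to $a_i \ge 1$ and the bookkeeping that an open interval of length $2r$ can contain at most $r$ reals that are pairwise at distance $\ge 2$. Alternatively, since this statement is quoted verbatim from \cite[Theorem~1]{Erdos1945}, one may simply cite it; the sketch above records the short self-contained argument.
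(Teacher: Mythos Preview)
Your proposal is correct and is essentially Erd{\H o}s's original argument. Note, however, that the paper does not supply a proof of this theorem at all: it is stated as a quoted result with the citation \cite[Theorem~1]{Erdos1945} and used as a black box. So there is nothing to compare against; your sketch simply fills in the classical proof that the paper omits, and your closing remark that one may just cite the result is exactly what the paper does.
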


The next proposition, which is a consequence of the previous result,
is useful for analyzing homomorphisms on the torus.
\begin{prop}
\label{prop:sampling-without-replacement2}
Let $n$ be a positive integer, let $a_1,\dots,a_n \in \R$ satisfy $|a_i| \geq 1$ for $1 \leq i \leq n$ and let $\pi$ be a uniformly chosen permutation of $\{1,2,\dots,n\}$. Denote
\[ S_i := a_{\pi(1)} + \cdots + a_{\pi(i)}, \quad 1 \leq i \leq n . \]
Then, for any integer $r>0$, we have
\[ \Pr\left(\max_{1 \leq i \leq n} |S_i| < r\right) \leq \frac{C r}{\sqrt{n}} .\]
\end{prop}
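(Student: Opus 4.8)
I would bound the probability by the anti-concentration of a \emph{single} partial sum and then reduce that, via a perfect-matching decomposition, to a sum of independent random signs, to which Theorem~\ref{thm:random-sign-walk} applies. As a preliminary normalization, one may assume $r < c\sqrt n$ for a small absolute $c$, since otherwise $Cr/\sqrt n \ge 1$ and the claim is trivial; in particular we may take $r^2 < n$. Also, since $|S_1| = |a_{\pi(1)}| \ge 1$, the case $r=1$ is already trivial. Assume for simplicity that $n$ is even (the odd case requires only cosmetic changes, using a near-perfect matching).

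\textbf{Reduction to one partial sum, and the sign representation.} Since $\{\max_{1\le i\le n}|S_i|<r\}\subseteq\{|S_m|<r\}$ for $m:=n/2$, it suffices to bound $\Pr(|S_m|<r)$, and here $A:=\{\pi(1),\dots,\pi(m)\}$ is a uniformly chosen $m$-subset of $\{1,\dots,n\}$ with $S_m=\sum_{j\in A}a_j$. A uniform $m$-subset $A$ can be generated by first drawing a uniformly random perfect matching $\mathcal M$ of $\{1,\dots,n\}$ and then, independently over the edges $e=\{u_e,v_e\}\in\mathcal M$, keeping exactly one endpoint, chosen by a fair coin $\epsilon_e\in\{-1,1\}$ (a short counting check: the number of matchings compatible with a given split $(A,A^c)$ times the relevant probabilities collapses to $1/\binom{n}{m}$). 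Writing $\Delta_e:=a_{u_e}-a_{v_e}$ and $\mu:=\sum_j a_j$, this yields
\[
  S_m=\sum_{e\in\mathcal M}\Big(\tfrac{a_{u_e}+a_{v_e}}{2}+\epsilon_e\tfrac{\Delta_e}{2}\Big)=\tfrac{\mu}{2}+\tfrac12\sum_{e\in\mathcal M}\epsilon_e\Delta_e .
\]
Call an edge \emph{good} if $|\Delta_e|\ge1$, and let $\mathcal G(\mathcal M)$ be the set of good edges. Conditioning on $\mathcal M$ and on the coins of the non-good edges leaves $S_m$ of the form $(\text{const})+\tfrac12\sum_{e\in\mathcal G}\epsilon_e\Delta_e$ with $|\Delta_e|\ge1$ and independent signs, so Theorem~\ref{thm:random-sign-walk} gives $\Pr(|S_m|<r\mid\mathcal M)\le Cr/\sqrt{|\mathcal G(\mathcal M)|}$ whenever $|\mathcal G(\mathcal M)|\ge1$. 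Thus $\Pr(\max_i|S_i|<r)\le\E\big[\min\{1,\,Cr/\sqrt{|\mathcal G(\mathcal M)|}\}\big]$, and everything reduces to controlling $|\mathcal G(\mathcal M)|$ for a uniform matching.

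\textbf{The combinatorial core (the main obstacle).} The elementary fact driving the argument is that for every edge $\max\{|a_{u_e}+a_{v_e}|,\,|a_{u_e}-a_{v_e}|\}\ge2$, since $|a_u|,|a_v|\ge1$; so an edge that is not good has $|a_{u_e}+a_{v_e}|\ge2$. I would then run a dichotomy on the multiset $\{a_i\}$. If no interval of length $1$ contains more than $(1-c_0)n$ of the $a_i$, then a uniform matching has at least $c_0 n/2$ good edges with probability $1-o(1)$ — the number of good edges is a sum of weakly dependent indicators, so a first/second-moment estimate suffices — and then $\min\{1,Cr/\sqrt{|\mathcal G|}\}\le C'r/\sqrt n$ on that event, while on its complement we use the trivial bound $1$; this gives the desired $C''r/\sqrt n$. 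In the opposite, \emph{clustered} case, more than $(1-c_0)n$ of the $a_i$ lie in a common length-$1$ interval and hence (because $|a_i|\ge1$ forbids values in $(-1,1)$) all of them have the same sign and magnitude $\ge1$. Here $\mathcal G(\mathcal M)$ may be tiny, so Theorem~\ref{thm:random-sign-walk} is of no use; instead I would argue directly on the walk: the $\ge(1-c_0)n$ clustered steps all push $S$ in one direction by at least $1$, so for the path to remain in $(-r,r)$ the $<c_0 n$ remaining steps must be exceptional and delicately placed. I expect this to force either $|S_n|=|\mu|\ge r$ — an event of probability $0$ — or, via a gap/large-deviation estimate on the positions of the $<c_0 n$ non-clustered indices, a probability bound of the form $Cr/\sqrt n$; an alternative that avoids committing to a single index is to start from $n\,\Pr(\max_i|S_i|<r)\le\sum_{k=1}^n\Pr(|S_k|<r)$ and bound each term by the same matching-and-Erd\H os scheme, but this replaces the obstacle by the analogous estimate $\sum_k\Pr(|S_k|<r)\le Cr\sqrt n$. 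Making this clustered-case analysis clean and uniform in $n$, $d$ (equivalently, in $r$) is, I believe, the genuine difficulty of the proposition; the rest is routine.
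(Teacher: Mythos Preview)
Your approach is genuinely different from the paper's, and the gap you yourself identify is real: the clustered case is not handled, and without it the argument is incomplete. The dichotomy you set up is reasonable, but the clustered branch (``I expect this to force\ldots'') is only a heuristic, and the alternative averaging $n\Pr(\max_i|S_i|<r)\le\sum_k\Pr(|S_k|<r)$ just relocates the same obstacle.

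The paper avoids this difficulty entirely with a one-line trick. Instead of fixing the index at $m=n/2$, it takes $T\sim\mathrm{Bin}(n,1/2)$ \emph{independent of $\pi$}. The event $\{\max_i|S_i|<r\}$ still implies $\{|S_T|<r\}$ (also when $T=0$, since $S_0=0$), so
\[
\Pr\Big(\max_{1\le i\le n}|S_i|<r\Big)\le\Pr(|S_T|<r).
\]
The point is that $S_T$ has an exact distributional identity: with $\epsilon_1,\ldots,\epsilon_n$ i.i.d.\ uniform signs and $a:=\sum_i a_i$,
\[
S_T\ \eqd\ \sum_{i:\,\epsilon_i=1}a_i \;=\; \frac{a}{2}+\frac{1}{2}\sum_{i=1}^n \epsilon_i a_i,
\]
because conditioning on $T=k$ makes $\{\pi(1),\ldots,\pi(k)\}$ a uniform $k$-subset, while the set $\{i:\epsilon_i=1\}$ is a uniform subset of random size $\mathrm{Bin}(n,1/2)$. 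Now Theorem~\ref{thm:random-sign-walk} applies \emph{directly} to $\sum_i\epsilon_i a_i$ with the original coefficients $a_i$ (all of modulus $\ge1$), giving $\Pr(|S_T|<r)=\Pr\big(|\sum_i\epsilon_i a_i + a|<2r\big)\le Cr/\sqrt n$.

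The contrast with your route is instructive. By fixing $m=n/2$ you are forced into the matching representation, where the Rademacher coefficients are the pairwise differences $\Delta_e=a_{u_e}-a_{v_e}$; these can be arbitrarily small (even all zero), which is exactly what creates your clustered-case headache. Randomizing the subset \emph{size} --- including each index independently --- replaces the $\Delta_e$ by the $a_i$ themselves, and the hypothesis $|a_i|\ge1$ is used as stated, with no dichotomy needed.
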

\begin{proof}
Let $\epsilon_1,\dots,\epsilon_n \sim U(\{-1,1\})$ be uniform independent signs. Denote
\begin{align*}
a &:= a_1 + \cdots + a_n , \\
S' &:= \sum_{i=1}^{n} \frac{\epsilon_i + 1}{2} a_i = \frac{\epsilon_1 a_1 + \cdots + \epsilon_n a_n}{2} + \frac{a}{2} .
\end{align*}
Let $T \sim \text{Bin}(n,1/2)$ be independent of $\pi$ and observe that
\[ S_T \eqd S' ,\]
an observation which was pointed out to us by Gady Kozma. Therefore, by Theorem \ref{thm:random-sign-walk},
\[ \Pr\left(\max_{1 \leq i \leq n} |S_i| < r\right) \leq \Pr\left(|S_T| < r\right) = \Pr\left(|S'| < r\right) = \Pr\left(\left|\sum_{i=1}^n \epsilon_i a_i + a\right| < 2r\right) \leq \frac{C r}{\sqrt{n}} . \qedhere \]
\end{proof}

The next lemma presents a simple result on limits of distributions.

\begin{lemma}
\label{lem:distribution-ratio-convergence}
Let $X_{\infty},X_1,X_2,\dots$ be non-negative, integer-valued random variables. Assume that $\Pr(X_{\infty}=k)>0$ for all integers $k \geq 0$. If the family $\{X_1,X_2,\dots\}$ is tight, and
\begin{equation}
\label{eq:distribution-ratio-convergence}
\lim_{n \to \infty} \frac{\Pr(X_n=k)}{\Pr(X_n = k -1)} = \frac{\Pr(X_{\infty}=k)}{\Pr(X_{\infty}=k -1)} , \quad k \geq 1,
\end{equation}
then
\[ X_n \xrightarrow[n \to \infty]{(d)} X_{\infty} .\]
\end{lemma}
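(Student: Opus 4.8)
The plan is to upgrade the ratio hypothesis to convergence of the full probability mass functions: I will show that $\Pr(X_n = k) \xrightarrow[n\to\infty]{} \Pr(X_\infty = k)$ for every integer $k \ge 0$, which for integer-valued random variables is equivalent to $X_n \xrightarrow{(d)} X_\infty$ (by Scheff\'e's lemma, since all the masses are nonnegative and sum to one). Set $r_k := \Pr(X_\infty = k)/\Pr(X_\infty = k-1)$ for $k \ge 1$, which lies in $(0,\infty)$ by the positivity assumption on $X_\infty$, and put $c_k := \prod_{j=1}^k r_j = \Pr(X_\infty=k)/\Pr(X_\infty=0)$, with $c_0 := 1$. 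Note that $\sum_{k\ge 0} c_k = 1/\Pr(X_\infty=0) < \infty$.

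First I would record a bookkeeping point: for each fixed $k$, hypothesis \eqref{eq:distribution-ratio-convergence} forces $\Pr(X_n = j) > 0$ for all $0 \le j \le k$ once $n$ is large enough, since otherwise the ratio $\Pr(X_n=j)/\Pr(X_n=j-1)$ could not converge to the finite positive number $r_j$. For such $n$, telescoping gives $\Pr(X_n = k) = \Pr(X_n=0)\, c_k^{(n)}$, where $c_k^{(n)} := \prod_{j=1}^k \Pr(X_n=j)/\Pr(X_n=j-1)$, and $c_k^{(n)} \to c_k$ as $n \to \infty$ (a finite product of convergent factors). Thus the whole statement reduces to proving that $\Pr(X_n = 0) \to \Pr(X_\infty = 0)$.

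For the upper bound, fix $M$ and use $\Pr(X_n=0)\sum_{k=0}^M c_k^{(n)} = \sum_{k=0}^M \Pr(X_n=k) \le 1$; letting $n \to \infty$ gives $\big(\limsup_n \Pr(X_n=0)\big)\sum_{k=0}^M c_k \le 1$, and letting $M \to \infty$ yields $\limsup_n \Pr(X_n=0) \le \Pr(X_\infty=0)$. For the lower bound I would invoke tightness: given $\epsilon > 0$, choose $M$ with $\Pr(X_n > M) < \epsilon$ for all $n$, so $\Pr(X_n=0)\sum_{k=0}^M c_k^{(n)} = \sum_{k=0}^M \Pr(X_n=k) \ge 1-\epsilon$; letting $n\to\infty$ and using $\sum_{k=0}^M c_k \le 1/\Pr(X_\infty=0)$ gives $\liminf_n \Pr(X_n=0) \ge (1-\epsilon)\Pr(X_\infty=0)$, and $\epsilon \to 0$ finishes. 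Combining the two bounds, $\Pr(X_n=0)\to\Pr(X_\infty=0)$, hence $\Pr(X_n=k) = \Pr(X_n=0)\,c_k^{(n)} \to \Pr(X_\infty=0)\,c_k = \Pr(X_\infty=k)$ for every $k$, as desired.

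The only place tightness is used is the lower bound on $\Pr(X_n=0)$, and this is precisely the crux of the lemma: without tightness the masses could leak to infinity (for instance $X_n$ concentrating near $n$) while all the local ratios still converge, so the conclusion would genuinely fail. Everything else is routine: telescoping the ratios into the product representation $\Pr(X_n=k) = \Pr(X_n=0)c_k^{(n)}$ and interchanging the limit in $n$ with the fixed finite sums over $k \le M$.
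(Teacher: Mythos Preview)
Your proof is correct and follows essentially the same approach as the paper: reduce to showing $\Pr(X_n=0)\to\Pr(X_\infty=0)$ by telescoping the ratios and sandwiching via the partial sums $\sum_{k=0}^M \Pr(X_n=k)$. The only cosmetic difference is that you obtain the upper bound by letting $M\to\infty$ without invoking tightness, whereas the paper uses the tightness threshold $M$ for both inequalities at once.
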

\begin{proof}
Let $\epsilon > 0$ and, using the tightness assumption, choose an
integer $M$ such that $\Pr(X_n
> M) \leq \epsilon$ for all $n \in \N \cup \{\infty\}$. Then
\[ 1- \epsilon \leq \sum_{k=0}^M \Pr(X_n=k) \leq 1 , \quad n \in \N \cup \{\infty\} .\]
Therefore, by the assumption \eqref{eq:distribution-ratio-convergence},
\[ \limsup_{n \to \infty} \frac{1- \epsilon}{\Pr(X_n=0)} \leq \lim_{n \to \infty} \sum_{k=0}^M \frac{\Pr(X_n=k)}{\Pr(X_n=0)} = \sum_{k=0}^M \frac{\Pr(X_{\infty}=k)}{\Pr(X_{\infty}=0)} \leq \frac{1}{\Pr(X_{\infty}=0)} .\]
Similarly, we have
\[ \liminf_{n \to \infty} \frac{1}{\Pr(X_n=0)} \geq \lim_{n \to \infty} \sum_{k=0}^M \frac{\Pr(X_n=k)}{\Pr(X_n=0)} = \sum_{k=0}^M \frac{\Pr(X_{\infty}=k)}{\Pr(X_{\infty}=0)} \geq \frac{1 - \epsilon}{\Pr(X_{\infty}=0)} .\]
Since $\epsilon$ is arbitrary, we conclude that $\Pr(X_n=0) \to \Pr(X_{\infty}=0)$ as $n \to \infty$, which in turn gives that $\Pr(X_n=k) \to \Pr(X_{\infty}=k)$ as $n \to \infty$ for all $k$, by \eqref{eq:distribution-ratio-convergence}.
\end{proof}

\section{Homomorphisms on the Line}
\label{sec:line}

In this section we will prove the theorems regarding homomorphisms
on the line which were stated in Section
\ref{sec:main-results-line}. As was pointed out in the introduction,
it seems unlikely that a homomorphism jumps from some value $t$ to
$t \pm 3$ on any given small segment. Figure
\ref{fig:single-jump-segment} illustrates a section of a
homomorphism for which such a jump occurs. The main idea in our
proofs is to identify the vertices at which these jumps occur, as
they determine the large scale behavior of the homomorphism. That
is, the values of the homomorphism at the jumps contain the global
information necessary to determine the range and the variance. To
this end, we first define the notion of the (local) average height
of a homomorphism at a vertex (this is illustrated by the horizontal
dashed line in Figure \ref{fig:rough-movements}). The average height
at a vertex is determined by finding the closest past time at which
$3$ different values appeared consecutively and taking the midpoint
to be the average height. For vertices which no such time exists (as
is the case for the $0$ vertex), we set the average height to $0$.
One can think of the average height as a process beginning at $0$
that ``lazily follows'' the homomorphism, only to ensure that it is
never at a distance greater than $1$. With this notion in hand, we
define a jump as a change in the average height. Of course any such
jump has an associated sign or direction, which is determined by
whether the average height increased or decreased.

We later show that the probability of a jump occurring at a given
vertex (greater than $2d$) is no more than $2^{-d}$. This will show
that in the supercritical regime, with high probability, there will
not be any jumps (after vertex $2d$). That is, the average height
does not change after time $2d$. A moment's reflection reveals that
this means that the homomorphism takes on at most $3$ different
values (not $4$, as it may initially seem).

We do not give a lower bound for the probability of a jump occurring
at a given vertex. Instead, we only show that the typical number of
jumps is of order $n2^{-d}$, the jumps are approximately
equidistributed on the line and that, moreover, the directions of
these jumps are weakly correlated. Of course, if the directions of
these jumps were truly independent, then the values of the
homomorphism at the jumps would constitute a simple random walk. We
will show that, at least in terms of the maximum/range of the
homomorphism, the behavior is very similar to that of a simple
random walk. This will show that the range is of order
$\sqrt{n2^{-d}}$ and that the variance at a vertex $k$ is of order
$k2^{-d}$.

In the analysis of these so-called jumps, we encounter a minor
complication due to the fact that jumps in the same direction can
``clump'' together. Of course jumps cannot occur consecutively in
the sense of two consecutive vertices on the line. So then what is
the minimal distance between two jumps? The answer is twofold. The
minimal distance between two jumps with different directions is
$2d+3$, while two jumps in the same direction can already occur at
distance $2d+1$. This phenomenon will pop up again and again in our
analysis. For example, its manifestation is evident in the Markov
chain describing the local limit in Section~\ref{sec:local-limits}
(see Figure \ref{fig:local-limit-markov-chain}).

One meaning of this phenomenon is that if we condition on the event that a jump occurs at two given vertices, say $k_1$ and $k_2$, $k_1<k_2$, the directions of these jumps are non-negatively correlated. However, conditioning also on the event that a jump does not occur just after the first of these jumps (i.e. at $k_1+2d+1$), their directions become independent. This leads us to consider ``chains'' of jumps. A chain is just a sequence of minimal-distance same-direction jumps. Now, if we condition on the event that there are chains of given lengths (and not longer) at any number of given vertices, the directions of all these chains will be independent. This will allow us to reduce some of the analysis to a case of independent variables.

\subsection{Definitions}

We consider the graph $P_{n,d}$ whose vertex set is $\{0,1,...,n\}$ and whose edges are $(k,m)$ for $|k - m| = 1,3,...,2d+1$. Throughout this section, $\Hom(P_{n,d}):=\Hom(P_{n,d},0)$, $f$ is a uniformly sampled homomorphism from $\Hom(P_{n,d})$, the probability space is the uniform distribution on the set $\Hom(P_{n,d})$, and events are subsets of $\Hom(P_{n,d})$.

We define $h(k)$, the {\em (local) average height} at vertex $k$, inductively as follows. Set $h(0):=0$. For $1 \leq k \leq n$, define
\begin{align*}
h(k) &:= \begin{cases}
    h(k-1)  & \text{if } | f(k) - h(k-1) | \leq 1  \\
    f(k-1)  & \text{otherwise}
\end{cases} , \\
\Delta(k) &:= h(k) - h(k-1) .
\end{align*}
Define the event
\[ A_k := \{ \Delta(k) \neq 0 \} .\]
When $A_k$ occurs, we say that a {\em jump} occurred at vertex $k$ (see Figure \ref{fig:rough-movements}). Let
\begin{equation}\label{eq:S_line_def}
 S := \left\{ 1 \leq k \leq n ~ | ~  \Delta(k) \neq 0 \right\}
\end{equation}
be the positions of the jumps, and denote by
\begin{equation}\label{eq:R_line_def}
 R := |S \setminus\{1,\dots,2d+1\}| = \sum_{k=2d+2}^n \mathbf{1}_{A_k}
\end{equation}
the number of jumps after vertex $2d+1$. Recall that if a jump
occurs at vertex $k$, then the minimal possible value of $k'>k$ at
which another jump can occur is $k+2d+1$. Let $C_{k,t}$ be the event
that there is a chain of $t$ minimal-distance jumps ending at vertex
$k$. That is, for $t \geq 1$ and $(t-1)(2d+1) < k \leq n$, we define
\[ C_{k,t} := A_k \cap A_{k-2d-1} \cap \dots \cap A_{k-(t-1)(2d+1)} .\]

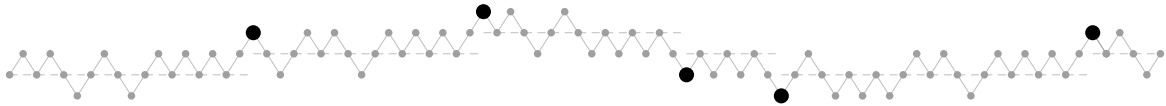
\begin{figure}[!t]
\centering
\vspace{5pt}
\begin{tikzpicture}

    \pgftransformcm{0.18}{0}{0}{0.28}{\pgfpoint{0cm}{0cm}}
    \tikzstyle bigNode=[circle,fill=black,minimum size=0.2cm,inner sep=0]
    \tikzstyle{every node}=[minimum size=0.1cm,inner sep=0]

    \drawRandomWalk{0}{{0,1,0,1,0,-1,0,1,0,-1,0,1,0,1,0,1,0,1,2}}{0}{gray!75}{gray!75}{gray!75};

    \drawRandomWalk{18}{{2,1,0,1,2,1,2,1,0,1,2,1,2,1,2,1,2,3}}{1}{gray!75}{gray!75}{gray!75};
    \node at (18,2) [bigNode] {};

    \drawRandomWalk{35}{{3,2,3,2,1,2,3,2,1,2,1,2,1,2,1,0}}{2}{gray!75}{gray!75}{gray!75};
    \node at (35,3) [bigNode] {};

    \drawRandomWalk{50}{{0,1,0,1,0,1,0,-1}}{1}{gray!75}{gray!75}{gray!75};
    \node at (50,0) [bigNode] {};

    \drawRandomWalk{57}{{-1,0,1,0,-1,0,-1,0,-1,0,1,0,1,0,-1,0,1,0,1,0,1,0,1,2}}{0}{gray!75}{gray!75}{gray!75};
    \drawRandomWalk{80}{{2,1}}{99}{gray!75}{gray!75}{gray!75};
    \node at (57,-1) [bigNode] {};

    \drawRandomWalk{80}{{2,1,2,1,0,1}}{1}{gray!75}{gray!75}{gray!75};
    \node at (80,2) [bigNode] {};

\end{tikzpicture}
\caption{A homomorphism in $\Hom(P_{n,d})$. The big vertices denote the positions of the jumps. The dashed horizontal lines denote the average height. Here $d=3$.}
\label{fig:rough-movements}
\vspace{5pt}
\end{figure}

Let $I = \{s_1,\dots,s_t\} \subset \{1,2,\dots,n\}$. We say that $I$ is a {\em feasible jump structure} if $\{S = I\} \neq \emptyset$. Observe that $\{S=I\} \neq \emptyset$ if and only if $\Pr(S=I)>0$ if and only if when we reorder the $s_i$ to satisfy $s_1 < s_2 < \cdots < s_t$, we have
\begin{equation}
\label{eq:line-feasible-jump-structure}
s_1 \text{ is even and for } 2 \leq j \leq t,~s_j - s_{j-1} \text{ is odd and satisfies } s_j - s_{j-1} \geq 2d+1 .
\end{equation}
In addition, we say that a subset $I \subset \{1,\dots,n\}$ is a {\em feasible jump sub-structure} if it is a subset of a feasible jump structure, or equivalently, if $\{I \subset S\} \neq \emptyset$. For a feasible jump sub-structure $I$, the event $\{I \subset S\}$ can be uniquely written as $C_{k_1,t_1} \cap \dots \cap C_{k_m,t_m}$, where
\begin{equation}
\label{eq:line-chain-structure}
\begin{aligned}
&t_1+\cdots+t_m = |I| , \\
&k_j - k_{j-1} > (2d+1)t_j , \quad 2 \leq j \leq m .
\end{aligned}
\end{equation}
These conditions ensure that there is no overlap between the different chains, and moreover, that there is some gap between them (since otherwise they would merge into a larger chain). For such $I$, we define
\[ \cC(I) := \{ (k_j,t_j) ~|~ 1 \leq j \leq m \} ,\]
and refer to this as the {\em chain structure} of $I$.

\subsection{Main lemmas}

As the above definitions suggest, the notion of a jump at a given vertex plays an important role in our analysis. It turns out that the behavior of jumps at the first $2d+1$ vertices differs significantly from that of the other vertices. Hence, it will be a recurring theme throughout Section \ref{sec:line} that these cases are handled separately.

The first two lemmas concern the probability of jumps at given vertices. The first of which shows that jumps at the first $2d+1$ vertices are not unlikely, while the second shows that elsewhere jumps are unlikely.

\begin{lemma}
\label{lem:line-jump-at-start}
We have
\[ 1/4 \leq \Pr(A_2) \leq 1/2 \]
and
\[ 1/3 \leq \Pr(A_1 \cup \cdots \cup A_{2d+1}) \leq 2/3 .\]
\end{lemma}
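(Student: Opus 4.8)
The plan is to reduce both quantities to events depending only on the restriction of $f$ to the first $2d+2$ vertices, on which $P_{n,d}$ induces a complete bipartite graph, and then to estimate the probabilities there.

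\textbf{Step 1 (localization).} First $A_1=\emptyset$, since $f(1)\in\{-1,1\}$ gives $|f(1)-h(0)|\le1$, hence $h(1)=0$. More generally every odd vertex $k\le 2d+1$ is a neighbour of $0$, so $f(k)\in\{-1,1\}$; thus the first vertex carrying a jump must be even (if $k$ is that vertex then $h(k-1)=0$ and $|f(k)|\ge2$), and since consecutive jumps lie at distance $\ge 2d+1$ — as recorded before the lemma — at most one jump occurs among $1,\dots,2d+1$. Since $f(k)$ has the parity of $k$ (so $|f(k)|\le1$ forces $f(k)=0$ at even $k$),
\[
A_1\cup\cdots\cup A_{2d+1}=\{h(2d+1)\ne0\}=B^{c},\qquad B:=\{f(2)=f(4)=\cdots=f(2d)=0\},
\]
so $\Pr(A_1\cup\cdots\cup A_{2d+1})=1-\Pr(B)$. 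Also $f(2)\in\{-2,0,2\}$ and $A_2=\{|f(2)|=2\}$; both $A_2$ and $B$ depend only on $f|_{\{0,1,\dots,2d+1\}}$.

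\textbf{Step 2 (complete bipartite picture; a formula for $\Pr(A_2)$).} Opposite-parity vertices of $\{0,1,\dots,2d+1\}$ lie at distance $\le2d+1$, so the induced subgraph is $K_{d+1,d+1}$; a homomorphism $\rho$ with $\rho(0)=0$ is of one of two overlapping types — (a) $\rho\equiv0$ on the even vertices, or (b) $\rho$ constant $=t\in\{-1,1\}$ on the odd vertices — overlapping only in the two alternating prefixes $0,\pm1,0,\pm1,\dots$. Thus $B$ is precisely type (a), type (b) is $\mathcal E\cup\mathcal E'$ with $\mathcal E:=\{f(1)=f(3)=\cdots=f(2d+1)=1\}$ and $\mathcal E'$ its image under $f\mapsto-f$, and $\mathcal A:=\mathcal E\cap B$ is the event that $f|_{\{0,\dots,2d+1\}}$ is the alternating prefix $0,1,0,1,\dots$. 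Now $f(2)=\pm2$, being adjacent to every odd vertex of the range, forces all those odd values to $\pm1$, so $A_2\subseteq\mathcal E\cup\mathcal E'$ and $\Pr(A_2)=2\Pr(\mathcal E\cap\{f(2)=2\})$; likewise $B^{c}\subseteq\mathcal E\cup\mathcal E'$ (the first nonzero value among $f(2),\dots,f(2d)$ forces it), so $\Pr(\mathcal E)=\tfrac12\Pr(B^{c})+\Pr(\mathcal A)=\tfrac12(1-\Pr(B))+\Pr(\mathcal A)$. Conditioning on $f|_{V_1}$ — given which each even value $f(v)$ is uniform on $\bigcap_{u\sim v}\{f(u)\pm1\}$, a set of size $\le2$ — yields $\Pr(\mathcal E\cap\{f(2)=2\})-\Pr(\mathcal E\cap\{f(2)=0\})=\Pr(\mathcal E\cap\{f(2d+3)=3\})-\Pr(\mathcal E\cap\{f(2d+3)=-1\})$, which vanishes because $f\mapsto(0,\,2-f(1),\,2-f(2),\,\dots)$ is a well-defined involution of $\mathcal E$ swapping these two events. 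Combining,
\[
\Pr(A_2)=\Pr(\mathcal E)=\tfrac12\bigl(1-\Pr(B)\bigr)+\Pr(\mathcal A).
\]

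\textbf{Step 3 (the inequalities).} The bound $\Pr(A_2)\le\tfrac12$ is immediate from $\mathcal E\subseteq\{f(1)=1\}$. Everything else follows from $\tfrac13\le\Pr(B)\le\tfrac23$ together with $\Pr(\mathcal A)\ge\tfrac12\Pr(B)-\tfrac14$ (needed only when $\Pr(B)>\tfrac12$): indeed then $\Pr(A_1\cup\cdots\cup A_{2d+1})=1-\Pr(B)\in[\tfrac13,\tfrac23]$ and $\Pr(A_2)=\tfrac12(1-\Pr(B))+\Pr(\mathcal A)\ge\tfrac14$. To bound $\Pr(B)$ one uses Lemma~\ref{lem:main-tool} to compare, inside $\Hom(P_{n,d},0)$, the homomorphisms with a type-(a) prefix (the event $B$) and those with a type-(b) prefix (the event $\mathcal E\cup\mathcal E'$): under the uniform measure on $\Hom(K_{d+1,d+1},0)$ the two types are equinumerous ($2^{d+1}$ each, overlap $2$), so matching each type-(a) prefix with a type-(b) prefix \emph{together with their extensions} to $\{2d+2,\dots,n\}$ gives $\Pr(B)\le\Pr(\mathcal E\cup\mathcal E')$; with the identity $\Pr(B)+\Pr(\mathcal E\cup\mathcal E')=1+2\Pr(\mathcal A)$ from Step 2 and the routine estimate $\Pr(\mathcal A)\le\tfrac16$ this gives $\Pr(B)\le\tfrac23$, and the analogous matching in the other direction gives $\Pr(B)\ge\tfrac13$; the bound $\Pr(\mathcal A)\ge\tfrac12\Pr(B)-\tfrac14$ comes from the same comparison restricted to prefixes dominating the alternating one (and is vacuous for large $n$, where $\Pr(B)\to\tfrac12$ or $\tfrac13$).

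\textbf{Main obstacle.} The heart of the matter is the ``matching together with extensions''. Unlike a nearest-neighbour walk, $f$ cannot be resampled freely on $\{1,\dots,2d+1\}$: flipping an odd value from $-1$ to $1$ breaks the constraint at any far even neighbour $u$ with $f(u)=-2$, and repairing this cascades further along the graph; controlling the length of this cascade — equivalently, following the excursion of the average height $h$ past vertex $2d+1$ — is where the bulk of the work lies, and it is the reason the clean constants $\tfrac13,\tfrac23$ (rather than sharper ones) come out.
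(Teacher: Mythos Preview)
Your Steps 1 and 2 are correct and the identity $\Pr(A_2)=\Pr(\mathcal E)=\tfrac12\Pr(J)+\Pr(\mathcal A)$ (via the involution $f\mapsto(0,2-f(1),2-f(2),\dots)$ on $\mathcal E$) is a nice observation not present in the paper. Step~3, however, is where the proof breaks down.

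First, the ``routine estimate $\Pr(\mathcal A)\le\tfrac16$'' is false. For $d=1$, $n=4$ one computes $|\Hom(P_{4,1},0)|=10$ and $|\mathcal A|=2$, so $\Pr(\mathcal A)=\tfrac15>\tfrac16$. Your derivation of $\Pr(B)\le\tfrac23$ from $\Pr(B)\le\tfrac12+\Pr(\mathcal A)$ therefore fails, and the companion bound $\Pr(\mathcal A)\ge\tfrac12\Pr(B)-\tfrac14$ is asserted without argument. Second, the crucial comparison $\Pr(B)\le\Pr(\mathcal E\cup\mathcal E')$ (and its reverse) is never actually carried out: you invoke Lemma~\ref{lem:main-tool} and speak of ``matching each type-(a) prefix with a type-(b) prefix together with their extensions,'' but you construct no such map. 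Your own ``Main obstacle'' paragraph concedes that this is ``where the bulk of the work lies'' and that a naive resampling cascades uncontrollably --- yet the work is not done.

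The paper sidesteps all of this with a device you did not find: rather than resampling the prefix and repairing, it \emph{shifts the entire homomorphism by one position}. The map $f\mapsto\big(0,\,f(0)+f(1),\,f(1)+f(1),\,f(2)+f(1),\dots\big)$ sends $J^c$ into $A_2$ and is at most $2$-to-$1$, giving $\Pr(J^c)\le 2\Pr(A_2)$; the near-inverse $f\mapsto\big(f(1)-f(1),\,f(2)-f(1),\dots\big)$ injects $A_2$ into $J^c$, giving $\Pr(A_2)\le\Pr(J^c)$. Together with $A_2\subset J$ and a one-vertex flip $J\setminus A_2\hookrightarrow A_2$, these four inequalities immediately yield both conclusions. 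No cascading, no estimate on $\Pr(\mathcal A)$, no case analysis on $\Pr(B)\gtrless\tfrac12$.
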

\begin{proof}
Denote $J := A_1 \cup \cdots \cup A_{2d+1}$. We shall show that
\[ \Pr(A_2) \leq \Pr(J) \leq 2 \Pr(A_2) ,\]
\[ \Pr(A_2) \leq \Pr(J^c) \leq 2 \Pr(A_2) ,\]
from which the result easily follows. Note that, by \eqref{eq:line-feasible-jump-structure}, $A_k = \emptyset$ for $k=1,3,\dots,2d+1$, so that $J = A_2 \cup A_4 \cup \cdots \cup A_{2d}$. Clearly $\Pr(A_2) \leq \Pr(J)$, as $A_2 \subset J$.

We note the following useful observation. For a homomorphism $f \in \Hom(P_{n,d})$, we have
\begin{equation}
\label{eq:line-A2-condition}
f \in A_2 \iff f(2) = 2 f(1) \iff f(2) \neq 0 .
\end{equation}

We begin by showing that $\Pr(J \setminus A_2) \leq \Pr(A_2)$, from which it follows that $\Pr(J) \leq 2 \Pr(A_2)$. To this end it suffices to show an injective mapping from $J \setminus A_2$ to $A_2$. Consider the mapping $f \mapsto f_1$ from $J \setminus A_2$ to $A_2$ defined by
\[ f_1(k) :=
    \begin{cases}
        f(k) &\text{if } k \neq 2 \\
        2 f(1) &\text{if } k = 2
    \end{cases} , \quad 0 \leq k \leq n .
\]
One may check that if $f \in J \setminus A_2$ then $f_1 \in A_2$. Recalling \eqref{eq:line-A2-condition}, it is clear that this mapping is invertible, and so we have $\Pr(J \setminus A_2) \leq \Pr(A_2)$.

We now show that $\Pr(A_2) \leq \Pr(J^c)$. Define a mapping $f \mapsto f_2$ from $A_2$ to $J^c$ by
\[ f_2(k) :=
    \begin{cases}
        f(k+1) - f(1) &\text{if } 0 \leq k < n \\
        f(n-1) - f(1) &\text{if } k = n
    \end{cases} , \quad 0 \leq k \leq n .
\]
Again one may check that this mapping is well-defined (in fact, this mapping can be defined on the entire space). Since it is injective (recall \eqref{eq:line-A2-condition}), we obtain $\Pr(A_2) \leq \Pr(J^c)$.

Finally, we show that $\Pr(J^c) \leq 2 \Pr(A_2)$. Consider the mapping $T \colon J^c \to A_2$ defined by
\[ T(f)(k) :=
    \begin{cases}
        0 &\text{if } k = 0 \\
        f(k-1) + f(1) &\text{if } 1 \leq k \leq n
    \end{cases} , \quad 0 \leq k \leq n .
\]
To see that this mapping is well-defined, recall \eqref{eq:line-A2-condition}, and note that $f \in J^c$ implies that $f(0)=f(2)=\cdots=f(2d)=0$. This is not an injective mapping, however, it satisfies $|T^{-1}(g)| \leq 2$ for $g \in A_2$. Therefore, by Lemma \ref{lem:main-tool}, we have $\Pr(J^c) \leq 2 \Pr(A_2)$.
\end{proof}

The next lemma is concerned with the probability of jumps occurring
at given vertices after $2d+1$. It states that this probability is
exponentially small in $d$ times the number of jumps. The idea
behind the proof is to remove the jumps and replace the freed up
areas with segments of constant average height. This allows us to
gain entropy by setting the values at every other vertex in each
such segment to be the average height $\pm 1$. See Figure
\ref{fig:line-chain-removal}.

\begin{lemma}
\label{lem:line-jumps-at-positions}
For any $t \geq 1$ and for any $2d+1 < s_1 < \cdots < s_t \leq n$, we have
\[ \Pr(A_{s_1} \cap \dots \cap A_{s_t}) \leq 2^{-dt} .\]
\end{lemma}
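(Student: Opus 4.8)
The plan is to apply the main counting tool, Lemma~\ref{lem:main-tool}, to the uniform measure on $\Hom(P_{n,d})$ with $E = A_{s_1}\cap\cdots\cap A_{s_t}$ and $F = \Hom(P_{n,d})$ (so $\Pr(F)=1$), and to exhibit a map $T$ that, given a homomorphism with jumps at $s_1,\dots,s_t$, produces at least $2^{dt}$ distinct homomorphisms. Since $F$ is the whole space, $|N(f)|\le q$ will hold with $q$ equal to the maximal number of preimages, and $|T(e)|\ge p = 2^{dt}/q$ will be the content; in fact I would arrange the map so that $q$ can be taken small (ideally $q=1$, an injection on a suitably tagged domain) and $p = 2^{dt}$, giving $\Pr(E)\le q/p = 2^{-dt}$ directly. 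The mechanism for gaining the factor $2^{dt}$ is the one sketched in the text around Figure~\ref{fig:line-chain-removal}: at a jump vertex $k>2d+1$, the configuration of $f$ on the preceding window is highly constrained (cf.\ Figure~\ref{fig:single-jump-segment} and the discussion of forced values $t+2$ at $k-1,k-3,\dots,k-2d-1$), so I would ``straighten out'' each jump — replace the forced staircase pattern by a flat stretch at constant average height — thereby freeing up roughly $d$ vertices whose values can be reset independently to (average height) $\pm 1$, each such choice contributing a factor $2$.

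Concretely, I would first reduce to the chain structure: by \eqref{eq:line-chain-structure} the event $\{s_1,\dots,s_t\}\subseteq S$ equals $C_{k_1,t_1}\cap\cdots\cap C_{k_m,t_m}$ with $\sum t_j = t$ and the chains separated by genuine gaps, so it suffices to treat one chain $C_{k,r}$ at a time and compose the per-chain operations (disjoint supports make the composition legitimate and the entropy gains multiply). For a single chain of $r$ minimal-distance jumps ending at $k$, occupying an interval of length about $r(2d+1)$, I would delete the chain: shift the portion of $f$ to the left of the chain up or down so that it is glued at constant average height across the chain's interval, fill that interval with a canonical homomorphism that is flat at the (new) average height on the even sublattice and oscillates by $\pm 1$ on the odd sublattice, and leave the portion to the right of the chain untouched (it is already at the correct height since the chain's net height change is absorbed). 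On the interval vacated by the chain there are at least $d$ (in fact $\ge dr$, since each of the $r$ jumps frees up $d$ new vertices — here the asymmetry between same-direction distance $2d+1$ and opposite-direction distance $2d+3$ matters) free $\pm1$ choices, yielding at least $2^{dr}$ images; doing this for all $m$ chains yields $2^{d\sum t_j} = 2^{dt}$ images of each $f\in E$.

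The key checks, and where the real work lies, are: (i) well-definedness — that the straightened-out configuration is genuinely a graph homomorphism on $P_{n,d}$, which requires verifying the distance-$\le 2d+1$ parity constraints at the seams between the flattened chain interval and the untouched left/right parts; this is exactly the place where the minimal-distance bookkeeping ($2d+1$ vs.\ $2d+3$) must be handled carefully, and it is the main obstacle. (ii) Injectivity (or bounded multiplicity): I would tag each image with the data of which chains were removed and which $\pm1$ pattern was inserted, and argue this tag, together with the image homomorphism, reconstructs $f$ — since $f$'s values on the chain intervals are forced by the jump structure, knowing where the chains were and knowing $f$ elsewhere determines $f$ completely, so the map is injective and $q=1$. (iii) Disjointness of the per-chain modifications, which follows from the gaps in \eqref{eq:line-chain-structure}. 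Granting (i)--(iii), Lemma~\ref{lem:main-tool} (applied to the uniform distribution, so the hypothesis reads $|T(e)|\ge 2^{dt}$, $|N(f)|\le 1$) gives $\Pr(A_{s_1}\cap\cdots\cap A_{s_t}) = \Pr(E) \le \Pr(F)\cdot q/p = 2^{-dt}$, which is the claim.
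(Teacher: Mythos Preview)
Your approach is essentially the paper's: reduce to the chain decomposition, then for each chain flatten the forced staircase into a segment of constant average height and harvest $\ge d$ free $\pm1$ choices per jump, feeding this into Lemma~\ref{lem:main-tool}. Two points need tightening. First, you propose to shift the portion of $f$ to the \emph{left} of the chain and leave the right untouched; this moves $f(0)$ away from $0$, so the image is not in $\Hom(P_{n,d},0)$. The paper does the opposite: it keeps $f$ unchanged on $\{0,\dots,k'\}$ (where $k'=k-(2d+1)t-1$) and shifts the tail $\{k-1,\dots,n\}$ by $f(k')-f(k-1)$ (with a one-step parity correction $\delta$ when $t$ is odd). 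This preserves the normalization automatically and makes injectivity immediate, since the image together with the known chain location determines the forced values on $[k',k]$.

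Second, your claim that the per-chain modifications have ``disjoint supports'' is not correct once a shift is involved: shifting the tail for the leftmost chain moves every subsequent chain, so the operations do not act on disjoint intervals. The paper sidesteps this by proving, instead of a single global injection, the conditional bound
\[
\Pr\big(C_{k,t}\ \big|\ f|_{\{0,\dots,k'\}}=\xi\big)\le 2^{-dt-\lfloor t/2\rfloor},
\]
valid for every admissible initial segment $\xi$. Multiplying these conditional bounds over the chains $(k_j,t_j)$ gives $2^{-d\sum t_j}=2^{-dt}$ directly, with no need to compose maps or worry about overlapping shifts. Your framework can be salvaged by processing the chains left to right and letting each step's ``right shift'' carry the later (still-present) chains along, but the conditional formulation is cleaner and is what the paper does.
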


\begin{proof}
If $I:=\{s_1,\dots,s_t\}$ is not a feasible jump sub-structure then there is nothing to prove. Otherwise, we consider the chain structure of $I$, $\cC(I) = \{ (k_1,t_1), \dots, (k_m,t_m) \}$, where we have ordered the elements so that the $k_j$ are increasing. Due to our assumption that $s_1>2d+1$, we have $k_1 > (2d+1)t_1$. We note that it is enough to prove that for all $1 \leq j \leq m$,
\[ \Pr(C_{k_j,t_j} ~|~ C_{k_1,t_1} \cap \dots \cap C_{k_{j-1},t_{j-1}}) \leq 2^{-d t_j} .\]
We prove something stronger. Let $1 \leq k \leq n$ and $t \geq 1$ be such that $k':=k - (2d+1)t - 1 \geq 0$. Then, for any $\xi \in \mathbb{Z}^{\{0,\dots,k'\}}$ such that $\Pr(f |_{\{0,\dots,k'\}} = \xi) > 0$, we have
\begin{equation}
\label{eq:line-chain-prob-given-initial-values}
\Pr\left(C_{k,t} ~|~ f |_{\{0,\dots,k'\}} = \xi \right) \leq 2^{-dt - \lfloor t/2 \rfloor} .
\end{equation}
In order to show this, we construct a mapping which removes this chain and replaces the freed up segment with a segment of constant average height (see Figure \ref{fig:line-chain-removal}). Formally, we proceed as follows. For $m \geq 1$ and $w \in \{-1,1\}^m$, denote
\[ \tilde{w} := (w_1, 0, w_2, 0, \dots, w_m, 0) . \]
Define a mapping
\[ T_{k,t} \colon C_{k,t} \times \{-1,1\}^{dt+\lfloor t/2 \rfloor} \to \Hom(P_{n,d}) \]
by
\begin{equation}
\label{eq:def-T-k-t}
T_{k,t}(f,w)(i) := \begin{cases}
    f(i)                        & \text{if } i \leq k' \\
    f(k') + \tilde{w}(i-k')     & \text{if } k' < i < k-1 \\
    f(i+\delta)-f(k-1)+f(k')    & \text{if } k-1 \leq i < n \\
    f(i-\delta)-f(k-1)+f(k')    & \text{if } i = n
\end{cases} ,
\end{equation}
where $\delta = 0$ if $t$ is even and $\delta = 1$ if $t$ is odd.

\begin{figure}[!t]
\centering
\begin{tabular}{ll}
\raisebox{6ex}{$f \in C_{k,t}$} &
\begin{tikzpicture}
    \pgftransformcm{0.45}{0}{0}{0.55}{\pgfpoint{0cm}{0cm}}

    \draw[line width=0.1mm,gray,dotted](5,-2)--(5,4);
    \draw[line width=0.1mm,gray,dotted](21,-2)--(21,4);

    \drawRandomWalk{1}{{0,-1,0}}{99}{gray!25}{gray!50}{gray!37};
    \drawRandomWalk{3}{{0,1,0}}{0}{gray!50}{orange}{gray!75};
    \drawRandomWalk{5}{{0,1,0,1,0,1,2}}{0}{orange}{}{};
    \drawRandomWalk{11}{{2,1,2,1,2,3}}{1}{}{}{};
    \drawRandomWalk{16}{{3,2,3,2,3,4}}{2}{}{red}{};
    \drawRandomWalk{21}{{4,3}}{3}{red}{gray!75}{};
    \drawRandomWalk{22}{{3,4,3,2}}{3}{gray!50}{gray!25}{gray!37};

\end{tikzpicture}
\\ [-0.5ex]
\raisebox{8ex}{$T_{k,t}(f,w)$} &
\begin{tikzpicture}
    \pgftransformcm{0.45}{0}{0}{0.55}{\pgfpoint{0cm}{0cm}}

    \draw[line width=0.1mm,gray,dotted](5,-1.7)--(5,2);
    \draw[line width=0.1mm,gray,dotted](21,-1.7)--(21,2);
    \node at (5,-1.7) [below] {$k'$};
    \node at (21,-1.7) [below] {$k$};

    \drawRandomWalk{1}{{0,-1,0}}{99}{gray!25}{gray!50}{gray!37};
    \drawRandomWalk{3}{{0,1,0}}{0}{gray!50}{orange}{gray!75};
    \drawRandomWalk{5}{{0,-1,0,1,0,1,0,-1,0,-1,0,1,0,-1,0,1}}{0}{orange}{red}{};
    \drawRandomWalk{20}{{1,0}}{0}{red}{gray!75}{};
    \drawRandomWalk{21}{{0,1,0,-1}}{0}{gray!50}{gray!25}{gray!37};
    \node at (25,0) [circle, fill=none] {}; 

\end{tikzpicture}
\end{tabular}
\caption{A section of a homomorphism $f$ in the event $C_{k,t}$. Removing the chain between $k'$ and $k$, and replacing it with fluctuations around the average height, gives the homomorphism $T_{k,t}(f,w)$. The dashed horizontal lines denote the average height. Here $d=2$, $t=3$ and $w=(-1,1,1,-1,-1,1,-1)$.}
\label{fig:line-chain-removal}
\end{figure}
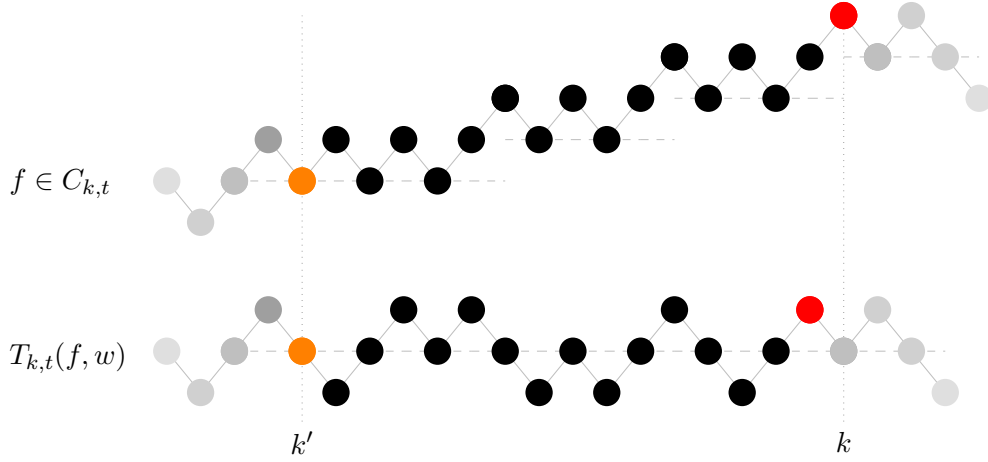

We now show that $T_{k,t}$ is well-defined, i.e. that $T_{k,t}(f,w) \in \Hom(P_{n,d})$.
For $0 \leq i,j \leq n$, denote
\[ \Delta_{i,j} := |T_{k,t}(f,w)(i)-T_{k,t}(f,w)(j)| .\]
For $0 \leq j \leq n$, define the event
\[ B_j := \big\{ f(i)=f(j) \text{ when } 0 \leq i \leq n \text{ satisfies } |i-j| \in \{2,4,\dots,2d\} \big\} . \]
We must show that $\Delta_{i,j}=1$ whenever $|i-j| \in \{1,3,\dots,2d+1\}$.
Let $0 \leq i,j \leq n$ satisfy $|i-j| \in \{1,3,\dots,2d+1\}$ and assume without loss of generality that $i<j$.
We shall further assume that $j<n$, the case $j=n$ being similar.
If $j \leq k'$, $i \geq k-1$ or $k'<i<j<k-1$ then $\Delta_{i,j}=1$ follows immediately from \eqref{eq:def-T-k-t} and the fact that $f \in \Hom(P_{n,d})$.
It remains to check the case when $i \leq k'$ and $j>k'$ and the case when $i<k-1$ and $j \geq k-1$.

We begin with the first case. Here, we have $\Delta_{i,j}=|f(i)-f(k')-\tilde{w}(j-k')|$.
Observe that $C_{k,t} \subset B_{k'}$.
Therefore, if $i$ has the same parity as $k'$ then $f(i)=f(k')$ and $|\tilde{w}(j-k')|=1$ since $j$ has the same parity as $k'$.
Otherwise, $i$ has the opposite parity of $k'$, and then $|f(i)-f(k')|=1$ and $\tilde{w}(j-k')=0$.
Thus, $\Delta_{i,j}=1$.

In the second case, we have $\Delta_{i,j}=|f(j+\delta)-f(k-1)-\tilde{w}(i-k')|$.
Note that $C_{k,t} \subset B_{k-1}$ and that $i-k'$ has the same parity as $j+\delta-k$.
One finds in a similar manner as in the first case that $|f(j+\delta)-f(k-1)|=1$ and $\tilde{w}(i-k')=0$ when $i$ has the same parity as $k'$, and that $f(j+\delta)=f(k-1)$ and $|\tilde{w}(i-k')|=1$ when $i$ has the opposite parity of $k'$. Hence, $\Delta_{i,j}=1$.

Observe that for any $f \in C_{k,t}$, necessarily,
\begin{align*}
&(f(k'),f(k'+1), \dots, f(k), f(k+1)) \\ & \qquad = (f(k'),\dots,f(k')) \pm (\underbrace{0,1,\dots,0,1,0}_{2d+1}, \underbrace{1,2,\dots,1,2,1}_{2d+1}, \dots, \underbrace{t-1,t,\dots,t-1,t,t-1}_{2d+1}, t,t+1,t) .
\end{align*}
Thus, it is easy to see that the mapping is injective. Moreover, the event $\{ f |_{\{0,\dots,k'\}} = \xi \}$ is clearly invariant under this mapping, so that
\[ \Pr\left(C_{k,t} \cap \{ f |_{\{0,\dots,k'\}} = \xi \}\right) \cdot 2^{dt+\lfloor t/2 \rfloor} \leq \Pr\left(f |_{\{0,\dots,k'\}} = \xi \right) , \]
proving \eqref{eq:line-chain-prob-given-initial-values}.
\end{proof}

\begin{remark}
The proof shows in fact that the probability of the event $A_{s_1} \cap \dots \cap A_{s_t}$ is bounded by $2^{-dt - (\lfloor t_1/2 \rfloor + \cdots + \lfloor t_r/2 \rfloor)}$, where $t_1,\dots,t_r$ are the lengths of the chains corresponding to $s_1,\dots,s_t$. With a small modification, the proof can be enhanced to give the bound $2^{-dt - \lfloor t/2 \rfloor}$, but we neither prove nor use this.
\end{remark}

Recall the definition of $R$ from \eqref{eq:R_line_def}. We would
like to obtain inequalities on the probability that $R$ is a given
value. We could do this in a similar manner to which the previous
lemma was proved. However, for variety, we prefer to employ a more
direct combinatorial technique. This approach also has the advantage
of introducing Lemma \ref{lem:line-bijection}, which gives a useful
description of the structure of the homomorphisms in
$\Hom(P_{n,d})$.

We decompose a homomorphism into two parts (see Figure
\ref{fig:line-bijection}). The first part constitutes the changes in
average height (the underlying walk) of the homomorphism, while the
second part constitutes the fluctuations around the average height
(the segments of constant average height). For a feasible jump
sub-structure $I$, define the {\em chain points} of $I$ by
\[ CP(I) := \bigcup_{(k,t) \in \cC(I) }\{ k - (2d+1)t - 1, \dots, k-1, k \} ,\]
and the {\em fluctuation points} of $I$ by
\[ FP(I) := \Big\{ 1 \leq k \leq n ~|~ \min\big\{ i > 0 ~|~ k - i \in CP(I) \cup \{-1\} \big\} \text{ is even} \Big\} .\]
That is, a point is a fluctuation point if its distance from the
chain to its left is positive and even. In particular, recalling the
definition of $S$ from \eqref{eq:S_line_def}, for any homomorphism
$f$ and any $k \in FP(S(f))$, $f$ is not at its average height at
$k$. Now, for a homomorphism $f$, define
\begin{align*}
X(f) &\in \{-1,1\}^{\cC(S(f))} \quad \text{and} \\
Y(f) &\in \{-1,1\}^{FP(S(f))}
\end{align*}
by
\begin{equation}
\begin{aligned}
X(f)_{(k,t)}    &:= f(k) - f(k-1) \quad \text{and} \\
Y(f)_{k}        &:= f(k) - f(k-1) .
\end{aligned}
\end{equation}

\begin{figure}[!t]
\centering
\vspace{5pt}

\newlength{\Dlen}
\newlength{\Dspc}
\Dlen=0.36cm
\Dspc=0.2cm

\newsavebox{\myLine}
\savebox{\myLine}{\tikz{\draw[line width=0.1mm,gray,dotted](0,0)--(0,1);}}

\newcommand{\sepLine}{\makebox[0cm]{\raisebox{-3.5ex}{\smash{{\usebox{\myLine}}}}}}

\newsavebox{\flucUp}
\newsavebox{\flucDown}
\newsavebox{\flucSpace}
\newsavebox{\flucSpaceH}
\savebox{\flucUp}{\makebox[\Dlen]{\small $+$}}
\savebox{\flucDown}{\makebox[\Dlen]{\small $-$}}
\savebox{\flucSpace}{\hspace{\Dlen}}
\savebox{\flucSpaceH}{\hspace{\Dspc}}

\[
\begin{array}{rl}
\raisebox{2.2ex}{$f$}
&
\begin{tikzpicture}

    \pgftransformcm{0.18}{0}{0}{0.28}{\pgfpoint{0cm}{0cm}}
    \tikzstyle bigNode=[draw=none,fill=none,minimum size=0.07cm,inner sep=0]
    \tikzstyle{every node}=[minimum size=0.07cm,inner sep=0]
    \tikzstyle randomWalkAxisStyle=[red,thin,opacity=0.6]

    \draw[line width=0.1mm,gray,dotted](10,-2)--(10,5);
    \draw[line width=0.1mm,gray,dotted](17,-2)--(17,5);
    \draw[line width=0.1mm,gray,dotted](25,-2)--(25,5);
    \draw[line width=0.1mm,gray,dotted](37,-2)--(37,5);
    \draw[line width=0.1mm,gray,dotted](41,-2)--(41,5);
    \draw[line width=0.1mm,gray,dotted](58,-2)--(58,5);
    \draw[line width=0.1mm,gray,dotted](66,-2)--(66,5);
    \draw[line width=0.1mm,gray,dotted](73,-2)--(73,5);

    \drawRandomWalk{0}{{0,1,0,-1,0,-1,0,1,0,1,0}}{0}{gray!75}{blue}{gray!75};
    \tikzstyle randomWalkPathStyle=[blue,thin,opacity=0.5]
    \drawRandomWalk{10}{{0,1,0,1,0,1,2,1}}{99}{blue}{blue}{blue};

    \tikzstyle randomWalkPathStyle=[gray,thin,opacity=0.5]
    \drawRandomWalk{17}{{1,0,1,2,1,2,1,0,1}}{1}{gray!75}{gray!75}{gray!75};
    \tikzstyle randomWalkPathStyle=[blue,thin,opacity=0.5]
    \drawRandomWalk{25}{{1,2,1,2,1,2,3,2,3,2,3,4,3}}{99}{blue}{blue}{blue};

    \tikzstyle randomWalkPathStyle=[gray,thin,opacity=0.5]
    \drawRandomWalk{37}{{3,4,3,2,3}}{3}{gray!75}{gray!75}{gray!75};
    \tikzstyle randomWalkPathStyle=[blue,thin,opacity=0.5]
    \drawRandomWalk{41}{{3,2,3,2,3,2,1,2,1,2,1,0,1,0,1,0,-1,0}}{99}{blue}{blue}{blue};

    \tikzstyle randomWalkPathStyle=[gray,thin,opacity=0.5]
    \drawRandomWalk{58}{{0,1,0,1,0,1,0,-1,0}}{0}{gray!75}{gray!75}{gray!75};
    \tikzstyle randomWalkPathStyle=[blue,thin,opacity=0.5]
    \drawRandomWalk{66}{{0,1,0,1,0,1,2,1}}{99}{blue}{blue}{blue};

    \tikzstyle randomWalkPathStyle=[gray,thin,opacity=0.5]
    \drawRandomWalk{73}{{1,0,1,2,1,0,1}}{1}{gray!75}{gray!75}{gray!75};

\end{tikzpicture}
\\
\underset{\raisebox{-1.75ex}{$X(f)$}}{Y(f)}
&
\hspace{0.035cm}
\usebox{\flucUp}
\usebox{\flucDown}
\usebox{\flucDown}
\usebox{\flucUp}
\usebox{\flucUp}
\sepLine
\raisebox{-3.5ex}{\makebox[1.26cm]{\small $+$}}
\sepLine
\usebox{\flucDown}
\usebox{\flucUp}
\usebox{\flucUp}
\usebox{\flucDown}
\sepLine
\raisebox{-3.5ex}{\makebox[2.16cm]{\small $+$}}
\sepLine
\usebox{\flucUp}
\usebox{\flucDown}
\sepLine
\raisebox{-3.5ex}{\makebox[3.06cm]{\small $-$}}
\sepLine
\usebox{\flucUp}
\usebox{\flucUp}
\usebox{\flucUp}
\usebox{\flucDown}
\sepLine
\raisebox{-3.5ex}{\makebox[1.26cm]{\small $+$}}
\sepLine
\usebox{\flucDown}
\usebox{\flucUp}
\usebox{\flucDown}

\end{array}
\]
\caption{A homomorphism is decomposed into two parts; chains (in
blue) and fluctuations (in gray). The chains, which consist of
consecutive jumps, contribute to the change in average height, while
the fluctuations do not. From the chains, we construct $X(f)$, which
contains the direction of each chain. From the fluctuations, we
construct $Y(f)$, which contains the direction of each fluctuation.
Given the positions of the jumps, $X(f)$ and $Y(f)$ precisely
contain the remaining information on the homomorphism. That is, for
any choice of $X$ and $Y$, there exists a unique homomorphism $f$
with $X(f)=X$ and $Y(f)=Y$. Here $d=2$.} \label{fig:line-bijection}
\vspace{5pt}
\end{figure}
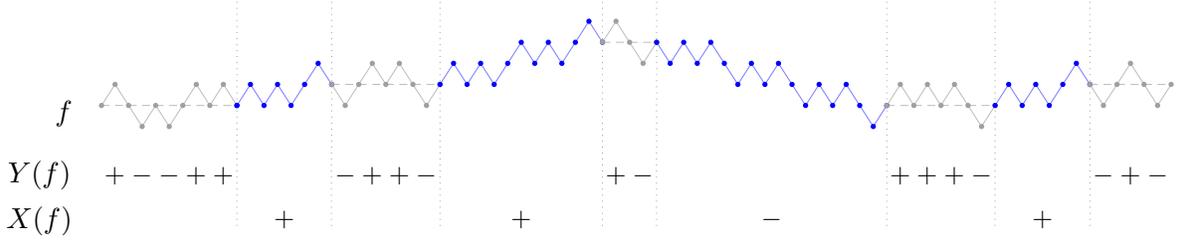

\begin{claim}
\label{cl:line-bijection-fluc-size}
For any feasible jump structure $I$, we have
\begin{equation}
\label{eq:line-bijection-fluc-size}
|FP(I)| = \max\left\{0,d+1- \frac{\min I}{2}\right\} + \left\lceil \frac{n-|I|}{2} \right\rceil - d|I| - |\cC(I)| .
\end{equation}
\end{claim}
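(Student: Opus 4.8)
The plan is to recast $FP(I)$ via the distance from each vertex to the nearest element of $CP(I)\cup\{-1\}$ to its left. Write $\cC(I)=\{(k_1,t_1),\dots,(k_m,t_m)\}$ with $k_1<\cdots<k_m$ and set $a_j:=k_j-(2d+1)t_j-1$, $b_j:=k_j$, so the $j$-th chain contributes exactly the integer interval $[a_j,b_j]$ to $CP(I)$, of length $\ell_j:=b_j-a_j+1=(2d+1)t_j+2$; by \eqref{eq:line-chain-structure} these intervals are pairwise disjoint, appear in left-to-right order, and $a_1=\min I-2d-2$, $b_m=k_m\le n$. Put $W:=CP(I)\cup\{-1\}$ and, for $1\le k\le n$, let $L(k):=\max\{w\in W:w<k\}$ (well defined as $-1\in W$) and $g(k):=k-L(k)$; then $k\in FP(I)$ iff $g(k)$ is even. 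The first step is the elementary rule $g(k)=1$ when $k-1\in W$ and $g(k)=g(k-1)+1$ otherwise, from which it follows that along any maximal block of consecutive vertices of $\{1,\dots,n\}$ containing no ``reset'' (a vertex $k$ with $k-1\in W$), the values of $g$ run through $1,2,\dots,\ell$, of which exactly $\lfloor \ell/2\rfloor$ are even.

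Next I would identify the blocks. Inside a chain interval $[a_j,b_j]$ every vertex except possibly the left endpoint $a_j$ is a reset and carries $g=1$, contributing nothing. The contributing blocks are the gaps $\{b_{j-1}+1,\dots,a_j\}$ of length $\gamma_j:=a_j-b_{j-1}$ for $2\le j\le m$, on which $g$ runs $1,\dots,\gamma_j$; the tail $\{b_m+1,\dots,n\}$ of length $n-b_m$; and an initial block requiring a short case split: if $a_1\ge1$ (equivalently $\min I\ge2d+4$) the vertices $1,\dots,a_1$ carry $g$-values $2,\dots,a_1+1$, while if $a_1\le0$ then $\{0,1,\dots,b_1\}\subseteq CP(I)$ and $1,\dots,b_1$ all carry $g=1$. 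Setting $\gamma_1:=\max\{0,\min I-2d-1\}$ to unify the two initial cases, this gives
\[ |FP(I)| = \lfloor \gamma_1/2 \rfloor + \sum_{j=2}^{m} \lfloor \gamma_j/2 \rfloor + \lfloor (n-b_m)/2 \rfloor . \]

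The last step is bookkeeping whose only substantial input is a parity computation. Since $I$ is a feasible jump structure, consecutive jumps lie at odd distance by \eqref{eq:line-feasible-jump-structure} and $2d+1$ is odd, so $k_j-k_{j-1}\equiv t_j\pmod2$ and hence $\gamma_j=(k_j-k_{j-1})-(2d+1)t_j-1$ is odd for $2\le j\le m$; likewise the last jump obeys $b_m\equiv|I|+1\pmod2$. Using $\sum_{j=1}^m\ell_j=(2d+1)|I|+2m$ and the telescoping identity $b_m-a_1=\sum_{j=1}^m(\ell_j-1)+\sum_{j=2}^m\gamma_j$ one solves for $\sum_{j=2}^m\gamma_j$, substitutes $a_1=\min I-2d-2$, eliminates the floors using the parities just noted (in particular $\lfloor\gamma_1/2\rfloor=\max\{0,\tfrac{\min I}{2}-d-1\}$ and the value of $\lfloor(n-b_m)/2\rfloor$ determined by the parity of $b_m$), and collects terms; the expression then collapses to the right side of \eqref{eq:line-bijection-fluc-size}, with $\max\{0,d+1-\tfrac{\min I}{2}\}$ arising exactly as $\max\{0,\tfrac{\min I}{2}-d-1\}-(\tfrac{\min I}{2}-d-1)$. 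The genuinely delicate point — and the one I would be most careful about — is this initial block: correctly accounting for the first chain interval protruding below $0$ when $\min I\le2d+2$, and then checking that the parities and the ceiling in the target formula line up; everything else is routine manipulation of floors and parities.
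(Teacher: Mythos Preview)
Your proposal is correct and follows essentially the same route as the paper: both count $|FP(I)|$ directly from the structure of $CP(I)$ together with the parity information supplied by feasibility \eqref{eq:line-feasible-jump-structure}. The paper compresses your block-by-block sum into the single identity $|FP(I)|=\lceil (n-|CP(I)\cap\{0,\dots,n\}|)/2\rceil$, which holds precisely because your parity observations force every gap except possibly the tail to have even length; your version makes this explicit, while the paper leaves it implicit. (One trivial edge case neither treatment spells out is $I=\emptyset$, where the formula reduces to $\lceil n/2\rceil$ by inspection.)
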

\begin{proof}
Suppose that $\cC(I) = \{ (k_1,t_1), \dots, (k_m,t_m) \}$. Denote $t := |I| = t_1+\cdots+t_m$ and $s := \min I$. Then
\begin{align*}
|CP(I)| &= \sum_{j=1}^{m} \big( (2d+1)t_j + 2 \big) = (2d+1)t + 2m , \\
|CP(I) \cap \{0,1,\dots,n\}| &= (2d+1)t + 2m - \max\{0, 2d+2-s\} .
\end{align*}
Therefore, recalling that $s$ is even by
\eqref{eq:line-feasible-jump-structure},
\begin{align*}
|FP(I)|
  &= \left\lceil \frac{n - |CP(I) \cap \{0,1,\dots,n\}|}{2} \right\rceil \\
  &= \left\lceil \frac{n - (2d+1)t - 2m + \max\{0, 2d+2-s\}}{2} \right\rceil \\
  &= \left\lceil \frac{n - t}{2} \right\rceil - dt - m + \max\{0, d+1-s/2\} .
\qedhere
\end{align*}
\end{proof}

\begin{lemma}
\label{lem:line-bijection}
For any feasible jump structure $I$, the mapping $f \mapsto (X(f), Y(f))$ is a bijection between $\{S=I\}$ and $\{-1,1\}^{\cC(I)} \times \{-1,1\}^{FP(I)}$.
\end{lemma}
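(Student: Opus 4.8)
The plan is to verify that $f \mapsto (X(f),Y(f))$ is a well-defined map from $\{S=I\}$ into $\{-1,1\}^{\cC(I)}\times\{-1,1\}^{FP(I)}$, and then to exhibit an explicit inverse. Well-definedness is essentially built into the definitions: on the event $\{S=I\}$ the chain structure $\cC(S(f))=\cC(I)$ and the fluctuation-point set $FP(S(f))=FP(I)$ are deterministic functions of $I$, so $X(f)$ and $Y(f)$ live in the claimed product set. The only thing to check is that each coordinate really is a $\pm 1$: for $(k,t)\in\cC(I)$ this is the statement that $A_k$ occurs, i.e. $\Delta(k)\neq 0$, which by the discussion of the average height forces $f(k)-f(k-1)=\pm 1$ to equal $\Delta(k)$; for $k\in FP(I)$ it is that $f$ is off its average height at $k$, which was noted right after the definition of $FP$. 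So the content of the lemma is bijectivity, and I would prove this by constructing the inverse explicitly and checking it lands in $\{S=I\}$.

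First I would record the ``skeleton formula'': for a homomorphism $f\in\{S=I\}$, writing $\cC(I)=\{(k_1,t_1),\dots,(k_m,t_m)\}$ with $k_1<\dots<k_m$, the values of $f$ on the chain points $CP(I)$ are completely determined by $f(0)=0$, by the signs $X(f)_{(k_j,t_j)}$, and by the rigid internal structure of a chain: on the block $\{k_j-(2d+1)t_j-1,\dots,k_j\}$ the increments of $f$ are forced to be $\pm(\underbrace{0,1,\dots,0,1,0}_{2d+1},\dots,\underbrace{t_j-1,t_j,\dots,t_j-1,t_j,t_j-1}_{2d+1})$ with overall sign $X(f)_{(k_j,t_j)}$ — this is exactly the rigidity already used in the proof of Lemma~\ref{lem:line-jumps-at-positions} (the displayed formula for $(f(k'),\dots,f(k+1))$ there). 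Between and around the chains, every vertex is either a chain point (value already fixed by continuity from the preceding chain, since the average height is locally constant there and the parity-matching vertices inherit it) or a fluctuation point, at which the value is the local average height plus $Y(f)_k$. Thus given a target pair $(X,Y)$ I can read off a candidate function $g$ vertex by vertex from left to right: propagate the average height, using $X$ to decide the direction whenever a chain block is entered, and using $Y$ to place the $\pm 1$ fluctuations at the fluctuation points; at non-fluctuation non-chain vertices set $g$ equal to the current average height.

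The remaining work is to check that this $g$ is a genuine element of $\Hom(P_{n,d})$ and that $S(g)=I$ and $X(g)=X$, $Y(g)=Y$. That $g$ is a homomorphism — i.e. $|g(i)-g(j)|=1$ for all $|i-j|\in\{1,3,\dots,2d+1\}$ — is the same parity-bookkeeping verification carried out in Lemma~\ref{lem:line-jumps-at-positions}: vertices at odd distance of opposite parity differ by one because exactly one of them carries a fluctuation/chain-increment $\pm1$ while the other sits at the average height, and vertices at distance $\le 2d+1$ straddling a chain are handled by the rigid chain pattern (the minimal chain length $2d+3$ for opposite-direction jumps is exactly what makes this consistent). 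That $S(g)=I$ follows because the average height of $g$, recomputed from its definition $h(k)=f(k-1)$ when $|f(k)-h(k-1)|\ge 2$, changes precisely at the vertices of $I$: inside a chain block the construction forces three consecutive values to appear so the average advances at each of the $t_j$ designated vertices, and nowhere else does $|g(k)-h(k-1)|$ exceed $1$, since off the chains $g$ only fluctuates by $\pm1$ around a constant. Finally $X(g)=X$ and $Y(g)=Y$ are immediate from how $g$ was built, and injectivity is automatic once we know $g\mapsto(X(g),Y(g))$ inverts $(X,Y)\mapsto g$. The cardinality check $|FP(I)|$ is not needed for the lemma itself but matches Claim~\ref{cl:line-bijection-fluc-size}.

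The main obstacle is the careful parity bookkeeping at the seams — verifying that the proposed $g$ is a homomorphism across the boundary between a chain and the adjacent constant-average-height segment, and that its recomputed average height has jumps at exactly $I$ and no spurious extra jumps (in particular that a fluctuation $\pm1$ never accidentally triggers $|g(k)-h(k-1)|\ge 2$, which uses that fluctuation points are at \emph{even} distance from the chain on their left so the interleaving $\tilde w=(w_1,0,w_2,0,\dots)$ pattern keeps the walk within distance one). All of this is local and follows the template of Lemma~\ref{lem:line-jumps-at-positions}, so I expect it to be routine but somewhat tedious rather than conceptually hard.
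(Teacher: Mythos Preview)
Your proposal is correct and follows essentially the same approach as the paper: the paper also proves bijectivity by writing down the explicit inverse $f_{X,Y}$, defining it piecewise according to whether a vertex is a fluctuation point, a neighbor of one, or lies in a chain block (where it uses the rigid sequence $C=(0,1,\dots,0,1,0,1,2,\dots)$ you describe), and then checking $f_{X,Y}\in\Hom(P_{n,d})$, $S(f_{X,Y})=I$, and that the two maps invert each other. The paper in fact omits the seam-verification details you flag as the main obstacle, so your plan is if anything more thorough than what appears there.
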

\begin{proof}

We shall describe the inverse mapping which maps a pair $(X,Y) \in \{-1,1\}^{\cC(I)} \times \{-1,1\}^{FP(I)}$ to the homomorphism $f_{X,Y} \in \{S=I\}$. For $0 \leq i \leq n$, let
\[ H(X,i) := \sum_{\substack{(k,t) \in \cC(I) \\ k<i}} t \cdot X_{(k,t)} ,\]
be the average height accumulated by chains ending before $i$. For
$(k,t) \in \cC(I)$, denote by $k'(k,t) := k-(2d+1)t-1$ the first
vertex of the chain and observe that $H(X,i) = H(X,k'(k,t))$ for all
$k'(k,t)\le i\le k$. Now, define
\[ f_{X,Y}(i) := \begin{cases}
    H(X,i) + Y_i                &\text{if } i \in FP(I) \\
    H(X,i)                  &\text{if } i+1 \in FP(I) \text{ or } i-1 \in FP(I) \\
    H(X,i) + X_{(k,t)} C_{i-k'(k,t)}    &\text{if } k'(k,t) \le i \leq k \text{ for some } (k,t) \in \cC(I)
\end{cases} , \]
where $C=(C_0,C_1,\dots)$ is the infinite sequence defined by
\[ C  := (\underbrace{0,1,\dots,0,1,0}_{2d+1}, \underbrace{1,2,\dots,1,2,1}_{2d+1}, \underbrace{2,3,\dots,2,3,2}_{2d+1}, \dots ) . \]
See Figure \ref{fig:line-bijection}. It is not difficult to check that $f_{X,Y} \in \Hom(P_{n,d})$ and that $S(f_{X,Y})=I$. It remains to check that $X(f_{X,Y})=X$, $Y(f_{X,Y})=Y$ and $f_{X(f),Y(f)}=f$. We omit the details.
\end{proof}

\begin{cor}
\label{cor:line-indep-chain-signs}
Conditioned on $S$, the following properties hold.
\begin{enumerate}
 \item $X$ is uniformly distributed over $\{-1,1\}^{\cC(S)}$.
 \item The random variables $\{ \Delta(s) \}_{s \in S}$ are independent uniform signs conditioned on $\Delta(s)=\Delta(s')$ whenever $s,s' \in S$ satisfy $|s-s'|=2d+1$.
 \item The difference in average height between two vertices $0 \leq k_0 < k_1 \leq n$ is a sum of independent variables, namely,
\begin{equation}
\label{eq:line-avg-height-diff}
h(k_1) - h(k_0) = \sum_{(j,t) \in \cC(S \cap \{k_0+1,\dots,k_1\})} t \cdot \Delta(j) .
\end{equation}
\end{enumerate}
\end{cor}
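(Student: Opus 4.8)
The plan is to read off all three assertions from the bijection established in Lemma~\ref{lem:line-bijection}. Fix a feasible jump structure $I$ (so that $\Pr(S=I)>0$) and work on the event $\{S=I\}$. Since $f$ is uniform on $\Hom(P_{n,d})$, conditioned on $\{S=I\}$ it is uniform on this event, and Lemma~\ref{lem:line-bijection} identifies it bijectively with the product cube $\{-1,1\}^{\cC(I)}\times\{-1,1\}^{FP(I)}$ via $f\mapsto(X(f),Y(f))$. Hence, conditioned on $S=I$, the pair $(X,Y)$ is uniform on that product, so $X$ and $Y$ are independent and each coordinate is an independent uniform sign; in particular $X$ is uniform on $\{-1,1\}^{\cC(S)}$, which is assertion~(1).

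For assertion~(2), the first point to record is a purely combinatorial one: the classes of the equivalence relation on $S$ generated by ``$|s-s'|=2d+1$'' are exactly the chains of $\cC(S)$. Indeed, consecutive jumps within a chain lie at distance $2d+1$, while the separation condition in \eqref{eq:line-chain-structure} forces any two jumps belonging to distinct chains to lie at distance strictly greater than $2d+1$. The second point is that, from the explicit inverse map in the proof of Lemma~\ref{lem:line-bijection}, for every chain $(k,t)\in\cC(S)$ and every jump $s$ in that chain one has $\Delta(s)=X_{(k,t)}$: along the chain the average height moves by one step in the common direction $X_{(k,t)}$ at each of its $t$ vertices. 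Combining the two points, conditioned on $S=I$ the vector $(\Delta(s))_{s\in I}$ is obtained from the independent uniform signs $(X_{(k,t)})_{(k,t)\in\cC(I)}$ by copying each sign onto all vertices of its chain; equivalently, it is a vector of independent uniform signs conditioned to agree on vertices at distance $2d+1$. (That $\Delta(s)\in\{-1,1\}$ for $s\in S$ is immediate from the definition of $h$, since $|f(k-1)-h(k-1)|\le1$ always.)

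For assertion~(3), telescoping $\Delta(k)=h(k)-h(k-1)$ gives $h(k_1)-h(k_0)=\sum_{k_0<k\le k_1}\Delta(k)=\sum_{s\in S\cap\{k_0+1,\dots,k_1\}}\Delta(s)$. Grouping this sum by the chain structure $\cC\big(S\cap\{k_0+1,\dots,k_1\}\big)$ of the restricted sub-structure, a chain $(j,t)$ there contributes its $t$ jumps, all carrying the single sign $\Delta(j)$, for a total of $t\cdot\Delta(j)$; this is precisely \eqref{eq:line-avg-height-diff}. Finally, the signs $\{\Delta(j)\}$ over distinct chains of $\cC\big(S\cap\{k_0+1,\dots,k_1\}\big)$ are independent: the intersection of the interval $\{k_0+1,\dots,k_1\}$ with any single chain of $\cC(S)$ is a contiguous block, hence a single sub-chain, so distinct chains of the restricted structure are sub-chains of distinct chains of $\cC(S)$ and inherit their (by assertion~(1), independent uniform) signs.

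The genuinely new content is only this bookkeeping: identifying the ``distance $2d+1$'' classes with chains, extracting $\Delta(s)=X_{(k,t)}$ from the inverse map, and checking that restricting $S$ to an interval neither merges two chains nor splits one chain into two sub-chains. I expect the last of these to need the most care --- it is what guarantees the independence claimed in~(3) --- although it is elementary given the separation condition \eqref{eq:line-chain-structure}.
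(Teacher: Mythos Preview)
Your proof is correct and follows the same approach as the paper: all three parts are read off from the bijection of Lemma~\ref{lem:line-bijection}, with (2) coming from (1) plus the identification of the ``distance $2d+1$'' classes with chains, and (3) from telescoping and regrouping by the restricted chain structure. Your write-up is in fact more thorough than the paper's --- the paper dispatches (2) in one line and for (3) only verifies the identity \eqref{eq:line-avg-height-diff} without spelling out the independence argument, whereas you correctly observe that intersecting $S$ with an interval sends each chain of $\cC(S)$ to at most one chain of the restricted structure, which is exactly what is needed for the summands to inherit independent signs.
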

\begin{proof}
The first statement is an immediate consequence of Lemma \ref{lem:line-bijection}. The second statement is in turn a consequence of the first statement and of the definition of the chain structure $\cC(S)$. For the third statement, since
\[ h(k) = \sum_{i=1}^k \Delta(i) = \sum_{\substack{s \in S \\ s \leq k}} \Delta(s) , \quad 1 \leq k \leq n , \]
we see that
\[ h(k_1) - h(k_0) = \sum_{\substack{s \in S \\ k_0 < s \leq k_1}} \Delta(s) = \sum_{(j,t) \in \cC(S \cap \{k_0+1,\dots,k_1\})} t \cdot \Delta(j) . \qedhere \]
\end{proof}

\begin{cor}
\label{cor:line-jump-positions}
Conditioned on $|S|$ and $\min (S \cup \{2d+2\})$, $S$ is uniformly distributed over all feasible jump structures $I$ having $|I|=|S|$ and $\min (I \cup \{2d+2\})=\min (S \cup \{2d+2\})$.
\end{cor}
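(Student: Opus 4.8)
The plan is to show that $\Pr(S=I)$ takes the same value for every feasible jump structure $I$ with a prescribed pair $\bigl(|I|,\ \min(I\cup\{2d+2\})\bigr)$; conditioning then immediately yields the claimed uniformity.

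First I would use that $f$ is uniform on $\Hom(P_{n,d})$, so that $\Pr(S=I) = |\{S=I\}| / |\Hom(P_{n,d})|$ and it suffices to understand the cardinalities $|\{S=I\}|$. By Lemma \ref{lem:line-bijection}, the map $f \mapsto (X(f), Y(f))$ is a bijection from $\{S=I\}$ onto $\{-1,1\}^{\cC(I)} \times \{-1,1\}^{FP(I)}$, whence $|\{S=I\}| = 2^{|\cC(I)| + |FP(I)|}$.

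The key step is then to substitute the formula of Claim \ref{cl:line-bijection-fluc-size}. Adding $|\cC(I)|$ back to the expression given there for $|FP(I)|$ cancels its $-|\cC(I)|$ term, leaving
\[ |\cC(I)| + |FP(I)| = \max\left\{0,\ d+1-\tfrac{\min I}{2}\right\} + \left\lceil \tfrac{n-|I|}{2}\right\rceil - d\,|I| . \]
I would then observe that $\max\{0,\, d+1 - \tfrac{\min I}{2}\}$ is unchanged if one replaces $\min I$ by $\min(I\cup\{2d+2\})$: for $\min I \le 2d+2$ the two minima coincide, while for $\min I \ge 2d+2$ or $I=\emptyset$ both expressions equal $0$ (with the convention $\min\emptyset = \infty$). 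Hence $|\cC(I)| + |FP(I)|$, and therefore $\Pr(S=I)$, is a function of $n$, $|I|$ and $\min(I\cup\{2d+2\})$ alone.

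This gives the corollary at once: conditioning on $|S|$ and $\min(S\cup\{2d+2\})$ restricts $S$ to the set of feasible jump structures with those two values, and on that set $\Pr(S=I)$ is constant, so the conditional distribution is uniform. I do not expect a genuine obstacle; the only point requiring care is the exceptional behaviour of jumps among the first $2d+1$ vertices — a jump structure starting at a small even vertex $s_1<2d+2$ has a different number of fluctuation points from one starting later — which is precisely why the conditioning is on $\min(S\cup\{2d+2\})$ rather than on $\min S$.
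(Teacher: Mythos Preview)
Your proposal is correct and follows exactly the same approach as the paper: use Lemma~\ref{lem:line-bijection} and Claim~\ref{cl:line-bijection-fluc-size} to conclude that $\log_2 |\{S=I\}| = |\cC(I)| + |FP(I)|$ depends only on $|I|$ and $\min(I \cup \{2d+2\})$. The paper compresses this into one sentence, while you spell out the cancellation of $|\cC(I)|$ and the verification that $\max\{0,\, d+1 - \tfrac{\min I}{2}\}$ factors through $\min(I \cup \{2d+2\})$.
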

\begin{proof}
By Lemma \ref{lem:line-bijection} and Claim \ref{cl:line-bijection-fluc-size}, $\log_2 |\{S=I\}| = |\cC(I)| + |FP(I)|$ depends only on $|I|$ and $\min (I \cup \{2d+2\})$.
\end{proof}

For $r \geq 0$ and $1 \leq i \leq d+1$, denote by $c_i(r)$ the number of feasible jump structures $I$ having $|I \setminus \{1,\dots,2d+1\}|=r$ and $\min (I \cup \{2d+2\})=2i$ (recalling from \eqref{eq:line-feasible-jump-structure} that $\min I$ is even).
\begin{claim}
\label{cl:line-feasible-jump-structure-count}
For any non-negative integer $r$, we have
\begin{equation}
\label{eq:line-feasible-jump-structure-count}
\begin{aligned}
c_{d+1}(r) &= \binom{\lfloor (n-r-1)/2 \rfloor - (d-1)r}{r} \quad \text{and} \\
c_i(r) &= \binom{\lfloor (n-r)/2 \rfloor - (d-1)r - i}{r} , \quad 1 \leq i \leq d .
\end{aligned}
\end{equation}
\end{claim}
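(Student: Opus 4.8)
The plan is a direct enumeration. Fix a feasible jump structure $I=\{s_1<s_2<\cdots\}$ and recall that, by \eqref{eq:line-feasible-jump-structure}, it is characterized by: $s_1$ is even and each successive gap $s_j-s_{j-1}$ is odd and at least $2d+1$. The two labels of $c_i(r)$ --- namely $r=|I\setminus\{1,\dots,2d+1\}|$ and $2i=\min(I\cup\{2d+2\})$ --- translate into a box-type system of linear inequalities on nonnegative integers, so the count will fall out of the elementary identity $\#\{(a_1,\dots,a_r)\in\Z_{\ge 0}^r\colon a_1+\cdots+a_r\le K\}=\binom{K+r}{r}$.

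First I would split according to whether $2i=2d+2$ or $2i\le 2d$. If $\min(I\cup\{2d+2\})=2d+2$ then $\min I\ge 2d+2$, so $I\cap\{1,\dots,2d+1\}=\emptyset$ and $|I|=r$. If $\min(I\cup\{2d+2\})=2i$ with $1\le i\le d$ then $s_1=2i\le 2d$, while $s_2\ge s_1+(2d+1)\ge 2d+3$ by \eqref{eq:line-feasible-jump-structure}; hence $2i$ is the \emph{only} element of $I$ lying in $\{1,\dots,2d+1\}$, and $|I|=r+1$. This dichotomy is the single case-sensitive step in the argument and is exactly what produces the two different expressions in \eqref{eq:line-feasible-jump-structure-count}.

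For $i=d+1$ I would parametrize the structures with $|I|=r$ by writing $s_1=(2d+2)+2a_1$ with $a_1\ge 0$ (encoding ``$s_1$ even and $\ge 2d+2$'') and $s_j=s_{j-1}+(2d+1)+2a_j$ with $a_j\ge 0$ for $2\le j\le r$ (encoding ``gap odd and $\ge 2d+1$''); this is a bijection onto $\Z_{\ge 0}^r$ under which $s_r=1+r+2dr+2(a_1+\cdots+a_r)$, so the final constraint $s_r\le n$ becomes $a_1+\cdots+a_r\le\lfloor(n-1-r)/2\rfloor-dr$, using that $2dr$ is even and hence may be pulled out of the floor. Stars-and-bars then gives $c_{d+1}(r)=\binom{\lfloor(n-r-1)/2\rfloor-(d-1)r}{r}$, the case $r=0$ (the empty structure) being consistent since both sides equal $1$. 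For $1\le i\le d$ the same substitution, now with $s_0:=2i$ held fixed and $s_j=s_{j-1}+(2d+1)+2a_j$ for $1\le j\le r$, yields $s_r=2i+r(2d+1)+2(a_1+\cdots+a_r)$ and the constraint $a_1+\cdots+a_r\le\lfloor(n-r)/2\rfloor-dr-i$, whence $c_i(r)=\binom{\lfloor(n-r)/2\rfloor-(d-1)r-i}{r}$.

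There is no serious obstacle; the only thing needing care is the bookkeeping with floor functions --- keeping track of the parity of $n$ and $r$ implicitly inside $\lfloor\cdot\rfloor$ and checking that the quantities extracted ($dr$, respectively $dr$ and $i$) leave an even numerator, so that $\lfloor(x-2m)/2\rfloor=\lfloor x/2\rfloor-m$ applies cleanly. The computation is valid in the non-degenerate regime (in particular whenever $n$ is large relative to $d$, which is the case of interest); the few small cases in which no feasible structure exists are absorbed by the usual convention that a binomial coefficient with upper index smaller than its lower index vanishes.
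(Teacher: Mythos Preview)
Your proposal is correct and follows essentially the same route as the paper: both reduce the count to a stars-and-bars enumeration by writing $s_1$ and the successive gaps $s_j-s_{j-1}$ as their minimal allowed value plus twice a nonnegative integer, so that $s_r\le n$ becomes a single inequality on the sum of the new variables. Your treatment is slightly more explicit about the case split (why $|I|=r$ versus $|I|=r+1$) and about the floor-function bookkeeping, but the substance is identical.
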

\begin{proof}
By considering the distance between two consecutive values in $I$
and recalling \eqref{eq:line-feasible-jump-structure}, we see that
the number of feasible jump structures $I$ having $|I|=r$ and $\min
I > 2d+1$ (where we set $\min \emptyset:=\infty$) is given by the
number of non-negative integer solutions to the equation
\[ x_1 + x_2 + \cdots + x_r \leq n \]
under the additional constraints that $x_1$ is even and at least $2d+2$ and, for $2 \leq j \leq r$, $x_j$ is odd and at least $2d+1$. Therefore, after substituting $x_1=2y_1+2d+2$ and $x_j=2y_j + 2d+1$ for $2 \leq j \leq r$, we see that $c_{d+1}(r)$ is equal to the number of non-negative integer solutions to the equation
\[ 2(y_1 + \cdots y_r) \leq n - (2d+1)r - 1 ,\]
from which the first result easily follows. Similarly, the number of feasible jump structures $I$ having $|I|=r+1$ and $\min I = 2i$ is given by the number of non-negative integer solutions to the equation
\[ 2i + x_1 + \cdots + x_r \leq n \]
under the additional constraint that, for $1 \leq j \leq r$, $x_j$ is odd and at least $2d+1$. Therefore, substituting $x_j=2y_j + 2d+1$ as before, we see that, for $1 \leq i \leq d$, $c_i(r)$ is equal to the number of non-negative integer solutions to the equation
\[ 2(y_1 + \cdots y_r) \leq n - (2d+1)r - 2i ,\]
from which the second result follows.
\end{proof}

The next lemma and its corollary give bounds on the distribution of $R$. Observe that, by Claim \ref{cl:line-feasible-jump-structure-count}, for any $1 \leq i \leq d+1$, $\Pr\big(R=r ~|~ \min (S \cup \{2d+2\})=2i\big)>0$ when $r$ satisfies $(2d+1)r + 2d \leq n$.
\begin{lemma}
\label{lem:line-ratio-ineq-for-R-cond}
For any positive integer $r$ such that $(2d+1)r+2d \leq n$, we have
\[ \frac{n - Crd}{4r 2^{d}} \leq \frac{\Pr\big(R=r ~|~ \min (S \cup \{2d+2\})=2i\big)}{\Pr\big(R=r-1 ~|~ \min (S \cup \{2d+2\})=2i\big)} \leq \frac{n}{2r 2^d} , \quad 1 \leq i \leq d+1 .\]
\end{lemma}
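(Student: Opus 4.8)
The plan is to rewrite each conditional probability as an explicit weighted count and then reduce the two-sided bound to an elementary estimate on a ratio of binomial coefficients. First I would record the combinatorial set-up. Fix $1\le i\le d+1$. By \eqref{eq:line-feasible-jump-structure}, a feasible jump structure $I$ with $\min(I\cup\{2d+2\})=2i$ has exactly one jump in $\{1,\dots,2d+1\}$, namely $\min I=2i$, when $i\le d$, and no jump there when $i=d+1$; writing $r:=|I\setminus\{1,\dots,2d+1\}|$, we thus have $|I|=r+1$ for $i\le d$ and $|I|=r$ for $i=d+1$. By Lemma~\ref{lem:line-bijection}, $\Pr(S=I)=2^{|\cC(I)|+|FP(I)|}/|\Hom(P_{n,d})|$, and by Claim~\ref{cl:line-bijection-fluc-size},
\[ |\cC(I)|+|FP(I)|=\max\Big\{0,\,d+1-\tfrac{\min I}{2}\Big\}+\Big\lceil\tfrac{n-|I|}{2}\Big\rceil-d|I| , \]
an expression which I would observe depends only on $i$ and $r$ (the $\max$-term is $d+1-i$ when $i\le d$ and $0$ when $i=d+1$); denote it $E_i(r)$. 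Since $\{R=r\}\cap\{\min(S\cup\{2d+2\})=2i\}$ is the disjoint union of the $c_i(r)$ events $\{S=I\}$ over such $I$, this gives
\[ \frac{\Pr\big(R=r\mid\min(S\cup\{2d+2\})=2i\big)}{\Pr\big(R=r-1\mid\min(S\cup\{2d+2\})=2i\big)}=\frac{c_i(r)}{c_i(r-1)}\cdot 2^{E_i(r)-E_i(r-1)} . \]

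Next I would check that $E_i(r)-E_i(r-1)\in\{-d-1,-d\}$: the $\max$-term is unaffected by $r$, the $-d|I|$ term decreases by exactly $d$ when $r$ increases by $1$ (as $|I|$ increases by $1$), and the ceiling term contributes $0$ or $-1$. Hence $2^{E_i(r)-E_i(r-1)}\in[2^{-d-1},2^{-d}]$, and it suffices to prove
\[ \frac{n-Crd}{2r}\le\frac{c_i(r)}{c_i(r-1)}\le\frac{n}{2r}\qquad(1\le i\le d+1) \]
for a suitable absolute constant $C$: the upper bound of the lemma then follows upon multiplying by $2^{E_i(r)-E_i(r-1)}\le 2^{-d}$, and the lower bound upon multiplying by $2^{E_i(r)-E_i(r-1)}\ge 2^{-d-1}$ (when $n\le Crd$ the asserted lower bound in the lemma is non-positive, so nothing is needed). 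By Claim~\ref{cl:line-feasible-jump-structure-count}, in each of the two cases $c_i(r)=\binom{A_r}{r}$ for an explicit $A_r$, and one checks directly that $A_{r-1}-A_r\in\{d-1,d\}$ and $A_r-r+1\le n/2$; the standing hypothesis $(2d+1)r+2d\le n$ makes $c_i(r),c_i(r-1)>0$, so $A_r\ge r$.

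The final, and really the only substantive, step is the binomial estimate. Telescoping gives
\[ \frac{c_i(r)}{c_i(r-1)}=\frac{A_r-r+1}{r}\prod_{\ell=1}^{b}\Big(1-\frac{r-1}{A_r+\ell}\Big),\qquad b:=A_{r-1}-A_r\in\{d-1,d\}, \]
where every factor lies in $(0,1]$. Discarding the product gives $\tfrac{c_i(r)}{c_i(r-1)}\le\tfrac{A_r-r+1}{r}\le\tfrac{n}{2r}$. For the lower bound I would use $\prod_{\ell=1}^b(1-x_\ell)\ge 1-\sum_\ell x_\ell$ together with $A_r+\ell\ge A_r+1$, obtaining $\tfrac{c_i(r)}{c_i(r-1)}\ge\tfrac{A_r-r+1}{r}-\tfrac{(A_r-r+1)\,b\,(r-1)}{r\,(A_r+1)}\ge\tfrac{A_r-r+1}{r}-d$, and then plug in the explicit form of $A_r$, which gives $A_r-r+1\ge\tfrac{n-5dr}{2}$; this leads to $\tfrac{c_i(r)}{c_i(r-1)}\ge\tfrac{n-5dr}{2r}-d=\tfrac{n-7dr}{2r}$, so $C=7$ works. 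I expect the bookkeeping with the floor and ceiling functions in $A_r$ and $E_i(r)$, and keeping the error term in the lower bound linear in $rd$, to be the fiddliest part; everything else rests directly on the bijection of Lemma~\ref{lem:line-bijection} and the counting in Claim~\ref{cl:line-feasible-jump-structure-count}.
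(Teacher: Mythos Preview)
Your proof is correct and follows essentially the same approach as the paper: both reduce the ratio of conditional probabilities to the product of a binomial-coefficient ratio $c_i(r)/c_i(r-1)$ and a power-of-two factor, bound the latter between $2^{-d-1}$ and $2^{-d}$, and then estimate the former by elementary manipulations. The only cosmetic difference is in the telescoping of $c_i(r)/c_i(r-1)$: you write it as $(A_r-r+1)/r$ times a product of at most $d$ factors (indexed by the gap $b=A_{r-1}-A_r$), whereas the paper writes it as $A_r/r$ times a product of $r-1$ factors; both arrangements lead to the same two-sided bound.
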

\begin{proof}
By Lemma \ref{lem:line-bijection}, Claim \ref{cl:line-bijection-fluc-size} and Claim \ref{cl:line-feasible-jump-structure-count}, we have
\[ \left| \{ R=r \} \cap \{ \min(S \cup \{2d+2\}) = 2i \} \right| = c_i(r) b_i(r) , \quad 1 \leq i \leq d+1 , \]
where $c_i(r)$ is given by
\eqref{eq:line-feasible-jump-structure-count} and
\begin{equation}
\label{eq:line-def-b-i}
\begin{aligned}
b_{d+1}(r) &= 2^{\lceil (n-r)/2 \rceil - dr} , \\
b_i(r) &= 2^{\lceil (n-r-1)/2 \rceil - dr - i + 1} , \quad 1 \leq i \leq d .
\end{aligned}
\end{equation}
It is easy to see that
\begin{equation}
\label{eq:line-ratio-ineq-for-R-cond-eq1}
2^{-d-1} \leq \frac{b_i(r)}{b_i(r-1)} \leq 2^{-d}, \quad 1 \leq i \leq d+1 ,
\end{equation}
and a computation shows that
\begin{equation}
\label{eq:line-ratio-ineq-for-R-cond-eq2}
\frac{n - Crd}{2r} \leq \frac{c_i(r)}{c_i(r-1)} \leq \frac{n}{2r} , \quad 1 \leq i \leq d+1 .
\end{equation}
We present this last computation for $i=d+1$. We have
\[ \frac{c_{d+1}(r)}{c_{d+1}(r-1)} = \frac{\lfloor (n-r-1)/2 \rfloor - (d-1)r}{r} \cdot \prod_{j=1}^{r-1} \frac{\lfloor (n-r-1)/2 \rfloor - (d-1)r - j}{\lfloor (n-r)/2 \rfloor - (d-1)(r-1) - j + 1} . \]
Since
\[ n/2 - Cdr \leq \lfloor (n-r-1)/2 \rfloor - (d-1)r \leq n/2 , \]
it suffices to show that the product above is at most $1$ and at least $1 - Cdr/(n-Cdr)$. Indeed, every element in the product is clearly at most $1$, and hence so is the product. For the other inequality, note that the last element in the product is the smallest, so that the product is at least
\begin{align*}
 \left( \frac{\lfloor (n-r-1)/2 \rfloor - dr + 1}{\lfloor (n-r)/2 \rfloor - d(r-1) + 1} \right)^{r-1}
  \geq \left( 1 - \frac{d +1}{n/2 - r/2 - dr + d + 1} \right)^r
  \geq 1 - \frac{4dr}{n - 4dr} .
\end{align*}
The statement now follows directly from \eqref{eq:line-ratio-ineq-for-R-cond-eq1} and \eqref{eq:line-ratio-ineq-for-R-cond-eq2}.
\end{proof}

\begin{cor}
\label{cor:line-ratio-ineq-for-R}
For any positive integer $r$ such that $(2d+1)r+2d \leq n$, we have
\[ \frac{n - Crd}{4r 2^{d}} \leq \frac{\Pr(R=r)}{\Pr(R=r-1)} \leq \frac{n}{2r 2^d} .\]
\end{cor}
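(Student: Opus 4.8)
The plan is to deduce this directly from Lemma~\ref{lem:line-ratio-ineq-for-R-cond} by a routine conditioning argument, using the elementary fact that a weighted mediant of fractions all lying in an interval again lies in that interval. Write $L := (n - Crd)/(4r2^d)$ and $U := n/(2r2^d)$ for the two bounds in the statement, and condition on the value of $\min(S \cup \{2d+2\})$, which by \eqref{eq:line-feasible-jump-structure} ranges over $\{2,4,\dots,2d+2\}$.

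First I would record, via the law of total probability, that
\[ \Pr(R=r) = \sum_{i=1}^{d+1} a_i\, p_i \quad\text{and}\quad \Pr(R=r-1) = \sum_{i=1}^{d+1} b_i\, p_i, \]
where $p_i := \Pr(\min(S \cup \{2d+2\}) = 2i)$, $a_i := \Pr(R=r \mid \min(S \cup \{2d+2\}) = 2i)$ and $b_i := \Pr(R=r-1 \mid \min(S \cup \{2d+2\}) = 2i)$. Next I would invoke Lemma~\ref{lem:line-ratio-ineq-for-R-cond} to get $L\, b_i \le a_i \le U\, b_i$ for every $1 \le i \le d+1$; multiplying through by the non-negative weights $p_i$ and summing over $i$ yields $L \sum_i b_i p_i \le \sum_i a_i p_i \le U \sum_i b_i p_i$, which is exactly the asserted two-sided bound on $\Pr(R=r)/\Pr(R=r-1)$.

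The only points needing (minor) care are that the conditional probabilities above are well-defined and that the denominator $\Pr(R=r-1)$ is positive. Both follow from the observation recorded just before Lemma~\ref{lem:line-ratio-ineq-for-R-cond}: under the hypothesis $(2d+1)r + 2d \le n$ (hence also $(2d+1)(r-1)+2d \le n$) one has $\Pr(R = r \mid \min(S \cup \{2d+2\}) = 2i) > 0$ and $\Pr(R = r-1 \mid \min(S \cup \{2d+2\}) = 2i) > 0$ for every $i$, while each conditioning event $\{\min(S \cup \{2d+2\}) = 2i\}$ has positive probability (for $i \le d$ because there is a feasible jump structure with $\min = 2i$, and for $i = d+1$ because the homomorphism that is constant on the even vertices has no jump in $\{1,\dots,2d+1\}$). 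In particular $\Pr(R=r-1) = \sum_i b_i p_i > 0$, so the ratio makes sense. There is no substantive obstacle here: the real content is carried entirely by Lemma~\ref{lem:line-ratio-ineq-for-R-cond}, and this corollary is just the passage from the conditioned statement to the unconditioned one.
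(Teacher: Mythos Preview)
Your argument is correct and is exactly the routine conditioning step the paper has in mind: the corollary is stated without proof precisely because it follows from Lemma~\ref{lem:line-ratio-ineq-for-R-cond} by the weighted-mediant inequality you wrote down. Your checks that each conditioning event has positive probability and that the denominator is nonzero are appropriate and accurate.
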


\subsection{Proof of theorems}

We are now ready to prove the theorems stated in Section \ref{sec:main-results-line}.

\subsubsection{The supercritical regime}
We prove Theorem \ref{thm:line-supercritical} and Corollary \ref{cor:line-supercritical}.
By Lemma \ref{lem:line-jumps-at-positions}, we have
\[ \Pr(R \geq r) = \Pr\left( \bigcup_{2d+1<s_1<\cdots<s_r \leq n} A_{s_1} \cap \dots \cap A_{s_r} \right)
\leq \binom{n}{r} 2^{-dr} , \quad r \geq 1 . \]
One may easily check that $|\Rng(f)| \leq R + 3$, so that
\[ \Pr\big(|\Rng(f)| \geq 3 + r\big) \leq \Pr(R \geq r) \leq \binom{n}{r} 2^{-dr} , \quad r \geq 1 . \qedhere \]
Moreover, it is easy to describe all homomorphisms which take on at most $3$ values. Denote by $V_0$ and $V_1$ the even and odd vertices in $\{0,1,\dots,n\}$, respectively, and denote by $\Omega_0$ and $\Omega_1$ the set of homomorphisms which are constant on $V_0$ and $V_1$, respectively. Then it is clear that $\{ |\Rng(f)| \leq 3 \} = \Omega_0 \cup \Omega_1$, that $\{ |\Rng(f)|<3\} = \Omega_0 \cap \Omega_1$ and that $|\Omega_0 \cap \Omega_1| = 2$. Also, note that $|V_0| = \lfloor n/2 \rfloor + 1$ and $|V_1| = \lceil n/2 \rceil$, so that $|\Omega_0| = 2^{|V_1|} = 2^{\lceil n/2 \rceil}$ and $|\Omega_1| = 2^{|V_0|} = 2^{\lfloor n/2 \rfloor + 1}$. Therefore,
\[ \Pr\big(|\Rng(f)| < 3\big) = \Pr(\Omega_0 \cap \Omega_1) \leq \frac{|\Omega_0 \cap \Omega_1|}{|\Omega_0 \cup \Omega_1|} = \frac{2}{2^{\lceil n/2 \rceil} + 2^{\lfloor n/2 \rfloor + 1} - 2} \leq 2^{1-n/2} , \]
completing the proof of Theorem \ref{thm:line-supercritical}. To obtain Corollary \ref{cor:line-supercritical}, note that
\[ \frac{\Pr(\Omega_1)}{\Pr(\Omega_0)} = 2^{\lfloor n/2 \rfloor + 1 - \lceil n/2 \rceil} = \begin{cases}
    2 & \text{if } n \text{ is even} \\
    1 & \text{if } n \text{ is odd}
\end{cases} .\]
Hence, if $d - \log_2 n \to \infty$ as $n \to \infty$ then, since
$\Pr(\Omega_0 \cup \Omega_1)=\Pr(|\Rng(f)| \leq 3)=1-o(1)$ by
Theorem \ref{thm:line-supercritical}, we see that
$\Pr(\Omega_0)=1/2-o(1)$ if $n$ is odd and $\Pr(\Omega_0)=1/3-o(1)$
if $n$ is even.

\subsubsection{The subcritical regime}
Before proving the relevant theorems, we need a better understanding of the typical number of jumps.

\begin{lemma}
\label{lem:line-R-large-whp}
For any $\epsilon > 0$, we have
\[ \Pr \left( R < \lfloor \epsilon c n 2^{-d} \rfloor ~|~ \min (S \cup \{2d+2\}) = 2i \right) \leq \epsilon , \quad 1 \leq i \leq d+1 . \]
\end{lemma}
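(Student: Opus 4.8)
The plan is to apply Lemma~\ref{lem:ratio-prob-tool} to the random variable $R$, conditioned on the event $\{\min(S \cup \{2d+2\}) = 2i\}$. That lemma states that if a non-negative integer-valued random variable satisfies $\Pr(X=k) \geq a \Pr(X=k-1)$ for all $1 \leq k \leq m$, then $\Pr(X < m) \leq 1/a$. So I would take $X$ to be $R$ under the conditional law, take $m := \lfloor \epsilon c n 2^{-d} \rfloor$, and seek a uniform lower bound $a$ on the ratio $\Pr(R=k \mid \cdot)/\Pr(R=k-1 \mid \cdot)$ over the range $1 \leq k \leq m$.

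The main input is Lemma~\ref{lem:line-ratio-ineq-for-R-cond}, which gives, for any $r$ with $(2d+1)r + 2d \leq n$,
\[
\frac{\Pr(R=r \mid \min(S\cup\{2d+2\})=2i)}{\Pr(R=r-1 \mid \min(S\cup\{2d+2\})=2i)} \geq \frac{n - Crd}{4r\,2^d}.
\]
I would first check that the constraint $(2d+1)r+2d \leq n$ is satisfied for all $r \leq m$: since $m \leq \epsilon c n 2^{-d}$, and we may freely shrink the constant $c$ in the statement, $(2d+1)m + 2d$ is at most a small multiple of $\epsilon n$ plus lower-order terms, which is $\leq n$ once $c$ is small enough (and for small $n$ the bound $2d+1 > n$ forces $m=0$ and the statement is vacuous). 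Then, for $r$ in this range, $rd \leq md \leq \epsilon c n 2^{-d} d$; since $2^{-d} d$ is bounded, $Crd \leq C'\epsilon c\, n$, so $n - Crd \geq n(1 - C'\epsilon c) \geq n/2$ provided $c$ is chosen small enough (depending only on absolute constants). Hence the ratio is at least $n/(8r\,2^d) \geq n/(8m\,2^d)$. Now $m \leq \epsilon c n 2^{-d}$ gives $n/(8m\,2^d) \geq 1/(8\epsilon c)$, so I may take $a := 1/(8\epsilon c)$.

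Applying Lemma~\ref{lem:ratio-prob-tool} with this $a$ and $m$ then yields
\[
\Pr\big(R < \lfloor \epsilon c n 2^{-d} \rfloor \mid \min(S\cup\{2d+2\}) = 2i\big) \leq 8 \epsilon c.
\]
Since $c$ is an absolute constant, $8\epsilon c \leq \epsilon$ holds once $c \leq 1/8$; more carefully, one should absorb the constant by replacing $\epsilon$ with $\epsilon / (8c)$ at the outset, i.e. prove the bound with $\lfloor \epsilon' n 2^{-d}\rfloor$ for a suitable $\epsilon'$ proportional to $\epsilon$ — this is just a matter of the bookkeeping convention for the constant $c$ announced in the paper's policy on constants. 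The only genuine subtlety, and thus the main thing to be careful about, is ensuring the feasibility constraint $(2d+1)r+2d \leq n$ holds throughout the range $r \leq m$ and that the error term $Crd$ in the numerator stays below $n/2$ there; both are handled by choosing the absolute constant $c$ sufficiently small, and by noting that when $2d+1 > n$ the claimed threshold is $0$ and nothing needs proving. No step here is hard; it is a direct combination of the two preliminary ratio lemmas.
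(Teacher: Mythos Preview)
Your proposal is correct and follows essentially the same route as the paper: bound the conditional ratio $\Pr(R=r\mid\cdot)/\Pr(R=r-1\mid\cdot)$ from below via Lemma~\ref{lem:line-ratio-ineq-for-R-cond}, then feed this into Lemma~\ref{lem:ratio-prob-tool}. The paper's proof is terser---it simply asserts that for $c$ small enough the ratio is at least $1/\epsilon$ on the relevant range and invokes Lemma~\ref{lem:ratio-prob-tool}---while you spell out the bookkeeping (feasibility of $(2d+1)r+2d\le n$, control of the $Crd$ term, absorption of the factor $8$ into $c$), but the argument is the same.
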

\begin{proof}
Let $0 < \epsilon < 1$ and $1 \leq i \leq d+1$. Lemma \ref{lem:line-ratio-ineq-for-R-cond} implies that if $c$ is small enough,
\[ \frac{\Pr(R=r ~|~ \min (S \cup \{2d+2\}) = 2i)}{\Pr(R=r-1 ~|~ \min (S \cup \{2d+2\}) = 2i)} \geq \frac{1}{\epsilon} , \quad 1 \leq r \leq \lfloor \epsilon c n 2^{-d} \rfloor . \]
Lemma \ref{lem:ratio-prob-tool} now yields the result.
\end{proof}

\begin{cor}
For any $\epsilon > 0$, if $n 2^{-d} \geq C/\epsilon$ then
\begin{equation}
\label{eq:line-R-large-whp}
\Pr \left( R < \epsilon c n 2^{-d} ~|~ \min (S \cup \{2d+2\}) = 2i \right) \leq \epsilon , \quad 1 \leq i \leq d+1 .
\end{equation}
Consequently, if $n 2^{-d} \geq C$ then
\begin{equation}
\label{eq:line-R-lower-expectation}
\E\big[R ~|~ \min (S \cup \{2d+2\}) = 2i \big] \geq c n 2^{-d} , \quad 1 \leq i \leq d+1 .
\end{equation}
\end{cor}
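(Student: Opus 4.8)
The plan is to read off both displays directly from Lemma~\ref{lem:line-R-large-whp}, paying the price of shrinking the absolute constant $c$ and enlarging $C$ as our policy on constants permits. Write $c_0$ for the constant appearing in Lemma~\ref{lem:line-R-large-whp}, so that for every $0<\epsilon<1$ and every $1\le i\le d+1$,
\[ \Pr\big(R<\lfloor\epsilon c_0 n 2^{-d}\rfloor ~\big|~ \min(S\cup\{2d+2\})=2i\big)\le\epsilon . \]
The case $\epsilon\ge1$ of the corollary is vacuous, since a probability never exceeds $1$, so I may assume $0<\epsilon<1$ throughout.

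To obtain \eqref{eq:line-R-large-whp}, I would set $c:=c_0/2$ and $C:=2/c_0$. If $n2^{-d}\ge C/\epsilon$, then $\epsilon c_0 n2^{-d}\ge 2$, and hence $\lfloor\epsilon c_0 n2^{-d}\rfloor\ge \epsilon c_0 n2^{-d}-1\ge \tfrac{1}{2}\epsilon c_0 n2^{-d}=\epsilon c n2^{-d}$. Consequently $\{R<\epsilon c n2^{-d}\}\subseteq\{R<\lfloor\epsilon c_0 n2^{-d}\rfloor\}$, so \eqref{eq:line-R-large-whp} follows from the displayed inequality above, uniformly in $1\le i\le d+1$.

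For \eqref{eq:line-R-lower-expectation}, I would apply \eqref{eq:line-R-large-whp} with $\epsilon=\tfrac{1}{2}$: whenever $n2^{-d}\ge 2C$, conditionally on $\min(S\cup\{2d+2\})=2i$ we have $R\ge\tfrac{1}{2} c n2^{-d}$ with probability at least $\tfrac{1}{2}$, and since $R\ge0$ this gives $\E[R\mid\min(S\cup\{2d+2\})=2i]\ge\tfrac{1}{4} c n2^{-d}$. Relabelling the constants ($2C\rightsquigarrow C$ and $c/4\rightsquigarrow c$) then yields the stated inequality. This argument is entirely routine; the only subtlety — and the reason there is no real obstacle — is the floor in Lemma~\ref{lem:line-R-large-whp}, which is exactly why the hypothesis must be $n2^{-d}\ge C/\epsilon$ rather than something weaker: one needs $\epsilon c_0 n2^{-d}$ to be at least $2$ in order to absorb the rounding loss.
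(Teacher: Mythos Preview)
Your proof is correct and follows essentially the same approach as the paper: absorb the floor in Lemma~\ref{lem:line-R-large-whp} by shrinking $c$ (using the hypothesis $n2^{-d}\ge C/\epsilon$ to ensure $\epsilon c_0 n2^{-d}\ge 2$), and then substitute $\epsilon=1/2$ to get the expectation bound. The paper's proof is a two-line sketch of exactly this; you have simply spelled out the constant-tracking.
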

\begin{proof}
If $n 2^{-d} \geq C/\epsilon$ then $\epsilon c' n 2^{-d} \leq \lfloor \epsilon c n 2^{-d} \rfloor$, and hence, \eqref{eq:line-R-large-whp} follows from Lemma \ref{lem:line-R-large-whp}. To obtain \eqref{eq:line-R-lower-expectation}, substitute $\epsilon = 1/2$ in \eqref{eq:line-R-large-whp}.
\end{proof}

We shall also require a similar inequality for the number of jumps up to a given vertex. For $2d+1 \leq k \leq n$, define
\[ R(k) := |S \cap \{2d+2,2d+3,\dots,k\}| = \sum_{i=2d+2}^k \mathbf{1}_{A_i} .\]

\begin{lemma}
\label{lem:line-Rk-lower-expectation}
We have
\[ \E[R(k)] \geq c k 2^{-d} - 1/6 , \quad 1 \leq k \leq n .\]
\end{lemma}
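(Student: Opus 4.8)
The plan is to prove for $R(k)$ an analogue of the lower bound \eqref{eq:line-R-lower-expectation} that was obtained for $R$, and then conclude exactly as there. The asserted inequality is vacuous when $k2^{-d}$ is bounded: if $k2^{-d}\le C_0$ for a constant $C_0$ to be fixed below, then choosing the constant $c$ of the lemma small enough that $cC_0\le 1/6$ gives $ck2^{-d}-1/6\le 0\le\E[R(k)]$, so there is nothing to prove. (This already covers all $k\le 2d+1$, for which $R(k)=0$, once $c$ is small.) So assume $k2^{-d}\ge C_0$; then it suffices to show $\E[R(k)]\ge c'k2^{-d}$ for a suitable $c'>0$. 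Mimicking the derivation of \eqref{eq:line-R-lower-expectation} — Lemma \ref{lem:ratio-prob-tool} followed by the choice $\epsilon=1/2$ — this reduces to a conditional ratio inequality
\[
  \frac{\Pr(R(k)=r\mid \mathcal{G})}{\Pr(R(k)=r-1\mid \mathcal{G})}\ \ge\ \frac{c\,k2^{-d}}{r},\qquad 1\le r\le\big\lfloor\tfrac12 c'k2^{-d}\big\rfloor,
\]
holding for every value of a suitable conditioning $\sigma$-algebra $\mathcal{G}$, after which one averages over $\mathcal{G}$.

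The role of $\mathcal{G}$ is to pin down the segment in which the jumps counted by $R(k)$ are free to move. Concretely, condition on $\min(S\cup\{2d+2\})$ (as in Lemma \ref{lem:line-ratio-ineq-for-R-cond}) together with all jumps of $S$ at vertices in $\{k-2d,\dots,n\}$; the latter both reveals whether a chain of jumps straddles vertex $k$ and decouples the remaining jumps from the part of the line to the right of $k$. Given $\mathcal{G}$, the jumps counted by $R(k)$ that are not already revealed form a feasible jump sub-structure inside a deterministic segment $\{a,\dots,b\}$ with $a=\Theta(d)$ and $b=k-O(d)$, so $R(k)$ equals a fixed constant plus the number of such jumps. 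By the bijection of Lemma \ref{lem:line-bijection}, the number of homomorphisms realizing a prescribed feasible structure $I$ there is $2^{|\cC(I)|+|FP(I)|}$, and inserting the count of Claim \ref{cl:line-bijection-fluc-size} and pairing terms with a common value of $\mathcal{G}$ turns the ratio above into one of the exact same shape as in the proof of Lemma \ref{lem:line-ratio-ineq-for-R-cond}, with $n$ replaced by the window length $k-O(d)$; the error term $k-Crd$ in the numerator is harmless since $r\le c'k2^{-d}$ with $k2^{-d}$ large and $c'$ small.

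From the conditional ratio inequality, Lemma \ref{lem:ratio-prob-tool} yields $\Pr(R(k)<\lfloor\tfrac12 c'k2^{-d}\rfloor\mid\mathcal{G})\le 1/2$, hence $\Pr(R(k)<\lfloor\tfrac12 c'k2^{-d}\rfloor)\le 1/2$ after averaging, and therefore $\E[R(k)]\ge\tfrac12\big(\tfrac12 c'k2^{-d}-1\big)$, which is $\ge c''k2^{-d}-1/6$ after a last relabelling of constants. The main obstacle is the bookkeeping at the right boundary: unlike in Lemma \ref{lem:line-ratio-ineq-for-R-cond}, the right end of the relevant window is the vertex $k$ rather than $n$, and chains of jumps may straddle it, so $\mathcal{G}$ must be chosen precisely enough that, conditionally, $R(k)$ becomes ``a constant plus the number of jumps in a deterministic segment of length $k-O(d)$'' to which Claims \ref{cl:line-bijection-fluc-size} and \ref{cl:line-feasible-jump-structure-count} apply verbatim; once this is arranged, the rest is a faithful repetition of the $\E[R]$ argument.
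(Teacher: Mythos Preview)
Your approach differs substantially from the paper's. The paper does not re-derive a ratio inequality for $R(k)$ at all; instead it conditions on $R$ and on $E=\{\min S>2d\}$ and uses Corollary~\ref{cor:line-jump-positions} to observe that the inter-jump gaps $x_1:=\min S-1$, $x_j:=s_j-s_{j-1}$ are exchangeable with $x_1+\cdots+x_R\le n$, whence $\E[x_j\mid R,E]\le n/R$. Markov's inequality then yields $\Pr(R(k)<j\mid R,E)\le jn/(kR)$, so $\E[R(k)\mid R,E]\ge kR/(4n)-1/2$; plugging in the already-established bound $\E[R\mid E]\ge cn2^{-d}$ from \eqref{eq:line-R-lower-expectation} and $\Pr(E)\ge 1/3$ from Lemma~\ref{lem:line-jump-at-start} finishes. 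This sidesteps all boundary bookkeeping at $k$ by instead showing that the jumps are approximately equidistributed on $\{1,\dots,n\}$.

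Your sketch, by contrast, has a real gap precisely in that bookkeeping. When $J:=S\cap\{k-2d,\dots,n\}$ is nonempty with leftmost element $s^*$, condition~\eqref{eq:line-feasible-jump-structure} forces consecutive jump positions to have opposite parities, so the parity of $s^*$ together with $\min(S\cup\{2d+2\})$ pins down the parity of the number $R'$ of unrevealed jumps. Thus, conditioned on your $\mathcal{G}$, $R'$ is supported on a single residue class mod $2$, and the ratio $\Pr(R'=r\mid\mathcal{G})/\Pr(R'=r-1\mid\mathcal{G})$ is either $0$ or undefined---it is \emph{not} ``the exact same shape'' as in Lemma~\ref{lem:line-ratio-ineq-for-R-cond}, and Lemma~\ref{lem:ratio-prob-tool} cannot be invoked as written. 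The approach is salvageable by passing to the step-two ratio $\Pr(R'=r\mid\mathcal{G})/\Pr(R'=r-2\mid\mathcal{G})$ and an obvious variant of Lemma~\ref{lem:ratio-prob-tool}, after which the combinatorics do reduce to a window of length $k-O(d)$; but this is a genuine correction your outline elides, and once one sees it, the paper's exchangeability argument is both shorter and cleaner.
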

\begin{proof}
First note that the statement is trivial when $n 2^{-d} < C$. Thus,
we may assume that $n 2^{-d} \geq C$. Denote $x_1 := \min S - 1$ and
denote by $x_2,\dots,x_r$ the distances between consecutive values
in $S$. By Corollary \ref{cor:line-jump-positions} and
\eqref{eq:line-feasible-jump-structure}, conditioned on the event
$\{R=r\}$ and on the event $E := \{ \min S > 2d \}$ (again, we set
$\min\emptyset=\infty$), $x_1,\dots,x_r$ are identically distributed
and satisfy $1+x_1 + \cdots + x_r \leq n$. Hence
\[ \E[x_i ~|~ R=r,E] \leq n/r , \quad 1 \leq i \leq r .\]
Since $R(k)<j$ if and only if $1+x_1+\cdots+x_j>k$, we have by Markov's inequality that
\[ \Pr(R(k)<j ~|~ R,E) = \Pr(x_1+\cdots+x_j \geq k ~|~ R,E) \leq \frac{1}{k} \E[x_1+\cdots+x_j ~|~ R,E] \leq \frac{jn}{kR} .\]
So
\[ \E[R(k) ~|~ R,E] \geq \Pr\big(R(k)\geq \lfloor kR/2n \rfloor ~|~ R,E\big) \cdot \lfloor kR/2n \rfloor \geq (1/2) \lfloor kR/2n \rfloor \geq kR/4n - 1/2 . \]
Hence, by the assumption that $n 2^{-d} \geq C$ and by \eqref{eq:line-R-lower-expectation},
\[ \E[R(k) ~|~ E] \geq k \E[R ~|~ E]/4n - 1/2 \geq c k 2^{-d} - 1/2 .\]
Finally, by Lemma \ref{lem:line-jump-at-start}, we have
\[ \E[R(k)] \geq \E[R(k) ~|~ E] \cdot \Pr(E) \geq c k 2^{-d} - 1/6 . \qedhere \]
\end{proof}

\begin{lemma}
\label{lem:line-lower-variance}
We have
\[ \Var(f(k)) \geq 1 , \quad 1 \leq k \leq n . \]
\end{lemma}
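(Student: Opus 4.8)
The plan is to reduce, via symmetry and a parity observation, to a statement about $\Pr(f(k)=0)$, and then to treat the two parities of $k$ separately; the even case is the substantive one. First, negation $f\mapsto -f$ is a measure-preserving involution of $\Hom(P_{n,d})$, so $f(k)\eqd -f(k)$; hence $\E[f(k)]=0$ and $\Var(f(k))=\E[f(k)^2]$. Since $f(0)=0$ and $|f(i)-f(i-1)|=1$ for $1\le i\le n$, we have $f(k)\equiv k \pmod 2$. If $k$ is odd then $f(k)$ is a nonzero integer, so $f(k)^2\ge 1$ and $\Var(f(k))\ge 1$; this disposes of the odd case with nothing further to do.

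Now let $k$ be even. Then $f(k)$ is an even integer, so $f(k)^2\in\{0,4,16,\dots\}$ and $\E[f(k)^2]\ge 4\Pr(f(k)\neq 0)$; it therefore suffices to prove $\Pr(f(k)\neq 0)\ge 1/4$. I would condition on $\mathcal G$, the restriction of $f$ to $V(P_{n,d})\setminus\{k\}$. Conditionally on $\mathcal G$, the value $f(k)$ is uniform over the set of integers adjacent to $f(j)$ for every neighbour $j$ of $k$; since all such $j$ have parity opposite to $k$ and $f$ is a homomorphism, that set is either a pair $\{a-1,a+1\}$ with $a$ an odd integer (when all neighbours of $k$ share the value $a$) or a single even integer, their midpoint (when the neighbours take two values $\{a,a+2\}$). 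Let $E$ be the $\mathcal G$-measurable event that all neighbours of $k$ agree; on $E$ we have $\E[f(k)\mid\mathcal G]=a$ and $\Var(f(k)\mid\mathcal G)=1$, while on $E^c$ the conditional variance is $0$. The law of total variance then gives
\[ \Var(f(k)) \;=\; \Pr(E) + \E\big[(\E[f(k)\mid\mathcal G])^2\big] \;\ge\; 2\Pr(E), \]
using $a^2\ge 1$ on $E$. Thus the bound holds whenever $\Pr(E)\ge 1/2$, which is the situation in the supercritical regime, where $f$ is a.s.\ constant on one class of the bipartition.

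The main obstacle is the complementary regime $\Pr(E)<1/2$: one must then extract a contribution of at least $1-2\Pr(E)$ from $\E[\mathbf 1_{E^c}(\E[f(k)\mid\mathcal G])^2]$, i.e.\ from the configurations in which $f(k)$ is pinned to the midpoint of its two neighbour values. The delicate point is that this midpoint equals $0$ exactly when those two values are $-1$ and $+1$, and it is precisely these configurations that make the constant $1$ sharp (for $n$ odd in the supercritical regime one has $\Var(f(k))\to 1$), so the argument cannot afford to be wasteful here.

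I see two ways to close the gap, and I expect the first to be shorter. The first is to build, in the spirit of the injections in the proof of Lemma~\ref{lem:line-jump-at-start}, a map from $\{f(k)=0\}$ into $\{f(k)\neq 0\}$ of multiplicity at most $3$: raise $f(k)$ to $2$ when all neighbours of $k$ equal $+1$, lower it to $-2$ when they all equal $-1$, and handle the mixed configurations by a local reflection of $f$ about the value at $k$ on a short interval around $k$. The second is to use the decomposition developed in Section~\ref{sec:line}: by Corollary~\ref{cor:line-indep-chain-signs}, the event $\{f(k)=0\}$ forces the signed heights of the chains lying below $k$ to sum to $0$ while $k$ itself sits at its average height, and the probability of this can be bounded away from $1$ using Lemma~\ref{lem:line-jump-at-start} (jumps near the origin are not rare) together with the conditional independence of the chain signs. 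Either route reduces the even case to an explicit finite estimate.
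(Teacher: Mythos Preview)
Your reduction is right and matches the paper: symmetry gives $\Var(f(k))=\E[f(k)^2]$, the odd case is trivial, and for even $k$ it suffices to show $\Pr(f(k)\neq 0)\ge 1/4$. But from that point on your argument is incomplete. The conditioning on $\mathcal G$ only yields $\Var(f(k))\ge 2\Pr(E)$, which settles nothing when $\Pr(E)<1/2$, and your two proposed completions are only sketches. The first one in particular has a genuine difficulty you do not resolve: in the ``mixed'' configurations where the neighbours of $k$ take both values $\pm1$, the value $f(k)=0$ is \emph{forced}, so no modification at $k$ alone can work, and ``a local reflection of $f$ about the value at $k$'' is a reflection about $0$, which leaves $f(k)=0$ fixed. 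You have not specified any concrete map that handles these configurations, nor argued why such a map would have multiplicity at most $3$.

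The paper's argument sidesteps this local obstacle entirely by working globally, using the event $A_2=\{f(2)\neq 0\}$ rather than the local picture at $k$. Define, for $f\in A_2$, a new homomorphism $f_0$ by $f_0(0)=0$ and $f_0(i)=f(i)-f(2)$ for $i\ge 1$. This is an injection $A_2\to A_2^c$, and if $f(k)=0$ then $|f_0(k)|=|f(2)|=2$. Hence
\[
\Pr(\{f(k)=0\}\cap A_2)\ \le\ \Pr(\{f(k)\neq 0\}\cap A_2^c)\ =\ 1-\Pr(\{f(k)=0\}\cup A_2),
\]
which rearranges to $\Pr(f(k)=0)+\Pr(A_2)\le 1$. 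Since $\Pr(A_2)\ge 1/4$ by Lemma~\ref{lem:line-jump-at-start}, one gets $\Pr(f(k)=0)\le 3/4$ and thus $\Var(f(k))\ge 4\cdot\tfrac14=1$. The key idea you are missing is that a single global shift by $f(2)$ does the job uniformly, with no case analysis on the neighbourhood of $k$.
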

\begin{proof}
If $k$ is odd then $|f(k)| \geq 1$ and the result follows by the fact that $f(k)$ is symmetric. Henceforth, we assume that $k$ is even.

Consider the mapping $f \mapsto f_0$ from $A_2$ to $A_2^c$ defined by
\[ f_0(i) :=
    \begin{cases}
        0&\text{if } i = 0 \\
        f(i) - f(2) &\text{if } i \geq 1
    \end{cases} , \quad 0 \leq i \leq n .
\]
One may check that this mapping is indeed well-defined and that it is injective. Since $|f_0(k)|=2$ when $f(k)=0$, and since the mapping is injective, we have
\begin{align*}
\Pr(\{f(k)=0\} \cap A_2) & \leq \Pr(\{|f(k)|=2\} \cap A_2^c) \\
& \leq \Pr(\{ f(k) \neq 0\} \cap A_2^c) = 1 - \Pr(\{f(k)=0\} \cup A_2) .
\end{align*}
Therefore,
\[ \Pr(f(k)=0) + \Pr(A_2) = \Pr(\{f(k)=0\} \cup A_2) + \Pr(\{f(k)=0\} \cap A_2) \leq 1 .\]
Since, $\Pr(A_2) \geq 1/4$ by Lemma \ref{lem:line-jump-at-start}, we have
\[ \Pr(f(k)=0) \leq 3/4 .\]
Finally, since $f(k) \neq 0$ if and only if $|f(k)| \geq 2$, we have
\[ \Var(f(k)) = \E\left[f(k)^2\right] \geq 4 \cdot \Pr(|f(k)| \geq 2) = 4 \cdot \Pr(f(k) \neq 0) \geq 1 . \qedhere \]
\end{proof}

We are now ready to prove Theorem \ref{thm:line-subcritical-endpoint} and Theorem \ref{thm:line-subcritical-range}. In both proofs, we consider the following modified average height $h'$. For $1 \leq k \leq n$, define
\[ h'(k) := \begin{cases} 0 &\text{if } k \leq 2d+1 \\ h(k) - h(2d+1) &\text{otherwise} \end{cases} .\]
Recall that Corollary \ref{cor:line-indep-chain-signs} implies that, for any $2d+2 \leq k \leq n$, conditioned on $S$, $\{ \Delta(j) ~|~ (j,t) \in \cC(S \cap \{2d+2,\dots,k\}) \}$ are independent and
\begin{equation}
\label{eq:line-h-tag-formula}
h'(k) = \sum_{(j,t) \in \cC(S \cap \{2d+2,\dots,k\})} t \cdot \Delta(j) , \quad 1 \leq k \leq n .
\end{equation}

\begin{proof}[Proof of Theorem \ref{thm:line-subcritical-endpoint}]
By the above remark, we have
\begin{equation}
\label{eq:line-var-h-tag-k-1}
\Var(h'(k) ~|~ S) = \sum_{(j,t) \in \cC(S \cap \{2d+2,\dots,k\})} t^2 , \quad 1 \leq k \leq n .
\end{equation}
Notice that, conditioned on $S$, the expectation of $h'(k)$ is zero, so that by the law of total variance,
\begin{equation}
\label{eq:line-var-h-tag-k-2}
\Var(h'(k)) = \E[\Var(h'(k) ~|~ S)] , \quad 1 \leq k \leq n .
\end{equation}

To obtain an upper bound on $\Var(h'(k))$, we use Lemma \ref{lem:line-jumps-at-positions} to obtain
\[ \Pr\big((j,t) \in \cC(S \cap \{2d+2,\dots,k\})\big) \leq \Pr(C_{j,t}) \cdot \mathbf{1}_{\{(2d+1)t < j \leq k \}} \leq 2^{-dt} \cdot \mathbf{1}_{\{ j \leq k \}} , \]
for any $1 \leq j \leq n$ and $t \geq 1$. Therefore, by \eqref{eq:line-var-h-tag-k-1} and \eqref{eq:line-var-h-tag-k-2}, we have
\begin{equation}
\label{eq:line-upper-bound-variance-h-tag-k}
\Var(h'(k)) = \E\big[\Var(h'(k)  ~|~ S)\big] \leq \sum_{t=1}^{\infty} \sum_{j=1}^{n} t^2 2^{-dt} \cdot \mathbf{1}_{\{ j \leq k \}} = k \sum_{t=1}^{\infty} t^2 2^{-dt} \leq C k 2^{-d} .
\end{equation}
Finally, since $|f(k)-h(k)| \leq 1$ and $|h(k)-h'(k)| \leq 1$, we have $|f(k)| \leq |h'(k)| + 2$, which gives
\begin{align*}
\Var(f(k)) &= \E\left[f(k)^2\right] \leq \E\left[(|h'(k)| +
2)^2\right] = \E\left[h'(k)^2 + 4|h'(k)|  + 4 \right] \\
 &\leq 5 \cdot \E\left[h'(k)^2\right] + 4 = 5 \cdot \Var(h'(k)) + 4 \leq C k 2^{-d} + 4 .
\end{align*}

For the lower bound, we note that
\[ \sum_{(j,t) \in \cC(S \cap \{2d+2,\dots,k\})} t^2 \geq \sum_{(j,t) \in \cC(S \cap \{2d+2,\dots,k\})} t = |S \cap \{2d+2,\dots,k\}| = R(k) . \]
Therefore, by \eqref{eq:line-var-h-tag-k-1}, \eqref{eq:line-var-h-tag-k-2} and Lemma \ref{lem:line-Rk-lower-expectation}, we have
\[ \Var(h'(k)) =  \E[\Var(h'(k) ~|~ S)] \geq \E[R(k)] \geq c k 2^{-d} - 1/6 .\]
Since $|f(k)-h(k)| \leq 1$ and $|h(k)-h'(k)| \leq 1$, we have $|f(k)| \geq |h'(k)| - 2$. In particular, $|f(k)| \geq |h'(k)|/3$ when $|h'(k)| \geq 3$. Therefore,
\begin{align*}
 \Var(f(k)) = \E\left[f(k)^2\right] &\geq \E\left[(h'(k)/3)^2 \cdot \mathbf{1}_{\{|h'(k)| \geq 3\}} \right] \\
 &= \E\left[h'(k)^2\right]/9 - \E\left[h'(k)^2 \cdot \mathbf{1}_{\{|h'(k)| \leq 2\}} \right] / 9 \\
 &\geq \Var(h'(k))/9 - 4/9 \\
 &\geq c k 2^{-d} - 1/2 .
\end{align*}
Finally, together with Lemma \ref{lem:line-lower-variance}, we have
\[ \Var(f(k)) \geq \max\{1, c k 2^{-d} - 1/2 \} \geq \max\{1, c' k 2^{-d} \} . \qedhere \]
\end{proof}

\begin{proof}[Proof of upper bound in Theorem \ref{thm:line-subcritical-range}]

Denote $\cC(S \cap \{2d+2,\dots,n\}) = \{ (k_1,t_1), \dots, (k_m,t_m) \}$, ordering the elements so that the $k_j$ are increasing. Observe that for any $1 \leq j < m$ and any $k_j \leq k \leq k_{j+1}$, we have that $h(k)$ is between $h(k_j)$ and $h(k_{j+1})$. Therefore,
\[ \max_{1 \leq k \leq n} |h'(k)|  = \max_{1 \leq j \leq m} |h'(k_j)| .\]
In this notation, by \eqref{eq:line-h-tag-formula} we have
\[ h'(k_j) = \sum_{i=1}^j t_i \cdot \Delta(k_i) , \quad 1 \leq j \leq m , \]
where, conditioned on $S$, $\{ \Delta(k_j) ~|~ 1 \leq j \leq m \}$ are independent. Therefore, we may apply Kolmogorov's maximal inequality to the process $(h'(k_j) ~|~ 1 \leq j \leq m)$, conditioned on $S$, to obtain
\[ \Pr\left(\max_{1 \leq k \leq n} |h'(k)| \geq x ~|~ S \right) = \Pr\left(\max_{(k,t) \in \cC(S \cap \{2d+2,\dots,n\})} |h'(k)| \geq x ~|~ S \right) \leq \frac{\Var(h'(n) ~|~ S)}{x^2} .\]
Therefore, by \eqref{eq:line-upper-bound-variance-h-tag-k}, we have
\[ \Pr\left(\max_{1 \leq k \leq n} |h'(k)| \geq x \right) =
\E\left[\Pr\left(\max_{1 \leq k \leq n} |h'(k)| \geq x ~|~ S \right)\right] \leq
\frac{\Var(h'(n))}{x^2} \leq \frac{C n 2^{-d}}{x^2} .\]
From this we obtain
\[ \E\left[\max_{1 \leq k \leq n} |h'(k)|\right]
 = \sum_{x=1}^{\infty} \Pr\left(\max_{1 \leq k \leq n} |h'(k)| \geq x \right)
 \leq \sum_{x=1}^{\infty} \min\left\{1, \frac{C n 2^{-d}}{x^2} \right\}
 \leq C' \sqrt{n 2^{-d}}. \]
Finally, using the fact that
\[ |\Rng(f)| \leq 3 + \max_{1 \leq k \leq n} h'(k) - \min_{1 \leq k \leq n} h'(k) , \]
we obtain
\[ \E\big[|\Rng(f)|\big] \leq 3 + 2 \cdot \E\left[\max_{1 \leq k \leq n} |h'(k)|\right] \leq 3 + C \sqrt{n 2^{-d}} . \qedhere \]
\end{proof}

\begin{proof}[Proof of lower bound in Theorem \ref{thm:line-subcritical-range}]

We begin by showing that the range is large with high probability, when $n 2^{-d}$ is large enough. Fix $0 < \epsilon < 1$. Assume that $n 2^{-d} \geq C/\epsilon$. By \eqref{eq:line-R-large-whp}, there exists a $\delta_1>0$, depending only on $\epsilon$, such that
\begin{equation}
\label{eq:line-R-large-whp2}
\Pr \left( R < 2\delta_1 n 2^{-d} \right) \leq \epsilon/4 .
\end{equation}
This tells us that typically there are many jumps. We now show that typically there are many distinct chains as well. For $s \geq 1$, let
\[ M_s := \sum_{(2d+1)s < k \leq n} \mathbf{1}_{C_{k,s}} \]
be the number of sub-chains of length $s$. Then, as we shall now show,
\begin{equation}
\label{eq:line-M-R-C-relation}
|\cC(S)| \geq \frac{R-M_s}{s-1} , \quad s \geq 2 .
\end{equation}
Indeed, denoting $\cC(S \cap \{2d+2,\dots,n\}) = \{ (k_1,t_1), \dots, (k_m,t_m) \}$ and considering the contribution of each chain to $M_s$, we see that
\[ M_s = \sum_{i=1}^m \max\{0, t_i - s + 1\} \geq \sum_{i=1}^m (t_i - s + 1) = R - (s-1)m .\]
Noting that $|\cC(S)| \geq m$ now yields \eqref{eq:line-M-R-C-relation}. By Lemma \ref{lem:line-jumps-at-positions}, we have
\[ \E[M_s] = \sum_{(2d+1)s < k \leq n} \Pr(C_{k,s}) \leq n 2^{-ds} \leq n 2^{-d} 2^{1-s} , \quad s \geq 1 .\]
Taking $s=s_0$ large enough, we have by Markov's inequality,
\begin{equation}
\label{eq:line-M-small-whp}
\Pr\left(M_{s_0} \geq \delta_1 n 2^{-d}\right) \leq \frac{\E[M_{s_0}]}{\delta_1 n 2^{-d}} \leq \frac{2^{1-s_0}}{\delta_1} \leq \epsilon/4 .
\end{equation}
Therefore, by \eqref{eq:line-R-large-whp2}, \eqref{eq:line-M-R-C-relation} and \eqref{eq:line-M-small-whp}, we have for $\delta_2 := \delta_1/s_0$ that
\begin{equation}
\label{eq:line-many-chains-whp}
\Pr\left(|\cC(S)| \geq \delta_2 n 2^{-d}\right) \geq \Pr\left(R \geq 2\delta_1 n 2^{-d}, M_{s_0} \leq \delta_1 n 2^{-d}\right) \geq 1 - \epsilon/2 .
\end{equation}
Recalling from Corollary \ref{cor:line-indep-chain-signs} that, conditioned on $S$, $h(n)$ is the sum of $|\cC(S)|$ independent random variables, we may apply Theorem \ref{thm:random-sign-walk} to obtain
\[ \Pr\big(|h(n)| < r ~|~ S\big) \leq \frac{Cr}{\sqrt{|\cC(S)|}} , \quad r \in \N . \]
Therefore,
\begin{equation}
\label{eq:line-range-cond-on-chains}
\Pr\big(|\Rng(f)| < r ~|~ |\cC(S)|\big) \leq \Pr\big(|h(n)| < r ~|~ |\cC(S)| \big) \leq \frac{Cr}{\sqrt{|\cC(S)|}} , \quad r \in \N .
\end{equation}
Finally, by \eqref {eq:line-many-chains-whp} and \eqref{eq:line-range-cond-on-chains}, for any $\delta>0$, we have
\begin{align*}
\Pr\left(|\Rng(f)| < \big\lfloor \delta \sqrt{n2^{-d}} \big\rfloor \right)
&\leq \Pr\left(|\cC(S)| < \delta_2 n2^{-d}\right) + \Pr\left(|\Rng(f)| < \big\lfloor \delta \sqrt{n2^{-d}} \big\rfloor ~|~ |\cC(S)| \geq \delta_2 n2^{-d} \right) \\
&\leq \epsilon/2 + C \delta / \sqrt{\delta_2} .
\end{align*}
Therefore, there exists a $\delta>0$, depending only on $\epsilon$, such that if $\delta \sqrt{n 2^{-d}} \geq 1$ then
\[ \Pr\left(|\Rng(f)| < 3 + \big\lfloor \delta \sqrt{n2^{-d}} \big\rfloor \right) \leq \Pr\left(|\Rng(f)| < \big\lfloor 4 \delta \sqrt{n2^{-d}} \big\rfloor \right) \leq \epsilon ,\]
proving \eqref{eq:thm-line-subcritical-range-large-whp} when $n 2^{-d}$ is large enough. On the other hand, if $\delta \sqrt{n 2^{-d}} < 1$ then \eqref{eq:thm-line-subcritical-range-large-whp} follows immediately from Theorem \ref{thm:line-supercritical}, since
\[ \Pr\left(|\Rng(f)| < 3 + \big\lfloor \delta \sqrt{n2^{-d}} \big\rfloor \right) = \Pr\left(|\Rng(f)| < 3 \right) \leq 2^{1-n/2} . \]
It remains to show the lower bound on the expectation. Note that the statement is trivial when $n \leq 2$, and so we may assume that $n \geq 3$. By taking $\epsilon=1/4$ in \eqref{eq:thm-line-subcritical-range-large-whp}, noting that $|\Rng(f)| \geq 2$ and by Theorem \ref{thm:line-supercritical}, we conclude that
\begin{equation} \belowdisplayskip=-12pt
\label{eq:line-subcritical-range-expectation-lower-bound}
\begin{aligned}
\E\big[|\Rng(f)|\big] &= 3 + \E\big[(|\Rng(f)| - 3) \cdot \mathbf{1}_{\{|\Rng(f)| \geq 3\}}\big] + \E\big[(|\Rng(f)| - 3) \cdot \mathbf{1}_{\{|\Rng(f)| < 3\}}\big] \\
&= 3 + \E\big[(|\Rng(f)| - 3) \cdot \mathbf{1}_{\{|\Rng(f)| \geq 3\}}\big] - \Pr\left(|\Rng(f)| < 3\right) \\
&\geq 3 + (1 - 1/4 - 2^{-1/2}) \big\lfloor \delta \sqrt{n 2^{-d}} \big\rfloor - 2^{1-n/2} \\
&\geq 3 + \big\lfloor c \sqrt{n 2^{-d}} \big\rfloor - 2^{1-n/2} .
\end{aligned}
\end{equation}
\end{proof}

\subsubsection{The critical regime}
Here we prove Theorem \ref{thm:line-critical-range}. Denote $\lambda := \lim n 2^{-d}$ which exists and is a positive number by assumption. The proof of  Theorem \ref{thm:line-critical-range} consists of two parts. First, we show that $R$ converges to $N^{\pm}(\lambda)$ as $n$ tends to infinity through even or odd integers. Next, we show that in this regime the values at the jumps constitute a simple random walk and that this walk determines the range of the homomorphism.

By Lemma \ref{lem:line-jumps-at-positions}, we have
\[ \E[R] = \sum_{k=2d+2}^n \Pr(A_k) \leq n 2^{-d} = \lambda + o(1) .\]
Therefore, the expectation of $R$ is uniformly bounded as $n \to \infty$, and hence, Markov's inequality implies that $R$ is tight as $n \to \infty$.
Using notation as in the proof of Lemma \ref{lem:line-ratio-ineq-for-R-cond}, we have
\[ \left| \{ R=r \} \cap \{ \min(S \cup \{2d+2\}) = 2i \} \right| = c_i(r) b_i(r) , \quad 1 \leq i \leq d+1 . \]
A direct computation shows that for any constant $r \geq 0$, we have
\[ c_i(r) \sim \frac{n^r}{2^r r!} \quad \text{ as } n \to \infty , \] uniformly in $1 \leq i \leq d+1$, and
\[ 2^{\lceil (n-r)/2 \rceil} \sim 2^{(n-r)/2} \cdot \gamma(n-r) \quad \text{ as } n \to \infty , \]
where $\gamma(k):=1$ if $k$ is even and $\gamma(k):=\sqrt{2}$ if $k$ is odd. Denoting by $J := A_1 \cup \cdots \cup A_{2d+1}$ the event that a jump occurs prior to vertex $2d+2$, and recalling \eqref{eq:line-def-b-i}, we obtain
\begin{align*}
\left| \{R=r\} \cap J^c \right| &\sim \frac{n^r}{r!} \cdot 2^{n/2 - (d+3/2)r} \cdot \gamma(n-r) , \\
\left| \{R=r\} \cap A_{2i} \right|  &\sim \frac{n^r}{r!} \cdot 2^{n/2 - (d+3/2)r + 1/2 - i} \cdot \gamma(n-r-1) ,
\end{align*}
uniformly in $1 \leq i \leq d$. Therefore, denoting $\lambda' := \lambda / (2 \sqrt{2})$, we have
\[ |\{ R=r \}| \sim \frac{\lambda'^r}{r!} \cdot 2^{n/2} \cdot \left( \gamma(n-r) + \sqrt{2} \gamma(n-r-1) \right) ,\]
where we have used the fact that $\sum_{i=1}^d 2^{-i} \to 1$. Using the tightness of $R$, we see that
\[ \Pr(R=r) \sim Z(n)^{-1} \cdot \frac{\lambda'^r}{r!} \cdot \left( \gamma(n-r) + \sqrt{2} \gamma(n-r-1) \right) ,\]
where $Z(n)$ is a normalizing constant. Therefore, recalling the parity-biased Poisson distribution defined in \eqref{eq:parity-biased-poisson} and the equation \eqref{eq:parity-biased-poisson-prob}, we see that
\begin{equation}
\label{eq:line-critical-R-convergence}
R \xrightarrow[\substack{n \to \infty \\ n \text{ is even}}]{(d)} ~ \mu\big(\lambda', 3/(2\sqrt{2})\big) \quad\text{ and }\quad
R \xrightarrow[\substack{n \to \infty \\ n \text{ is odd}}]{(d)} ~ \mu\big(\lambda', 2\sqrt{2}/3\big) ,
\end{equation}
completing the first part of the proof.

We remark that it is also possible to obtain the limiting
distribution of $R$ conditioned on whether or not a jump occurred at
the first $2d+1$ vertices. We do not make use of this in our paper
but we note the final result. A further calculation gives the
following formula for the asymptotic probability of $J$,
\[ \left(\sqrt{2} \cdot \frac{\Pr(J^c)}{\Pr(J)}\right)^{(-1)^n} \xrightarrow[n \to \infty]{} \frac{\cosh(\lambda') + \sqrt{2} \sinh(\lambda')}{\sinh(\lambda') + \sqrt{2} \cosh(\lambda') } ,\]
and the following formula for the asymptotic distribution of $R$ given $\mathbf{1}_J$,
\[ \Pr(R=r ~|~ \mathbf{1}_J) - \mu\Big(\lambda', \sqrt{2}^{(-1)^{n+1+\mathbf{1}_J}}\Big)(r) \xrightarrow[n \to \infty]{} 0 .\]

We now proceed to analyze the range of a typical homomorphism in the critical regime. We begin by showing that the jumps are sparse enough so that it is unlikely to have chains of length greater than one. Let
\[ B := \bigcap_{k=2d+2}^n (A^c_k \cup A^c_{k-2d-1}) \]
be the event that there are no two minimal-distance jumps (i.e.
jumps at distance $2d+1$). We wish to show that $\Pr(B) = 1 - o(1)$.
Indeed, by considering the first $2d+1$ elements in the intersection
separately from the rest, Lemma \ref{lem:line-jumps-at-positions}
implies that $\Pr(B^c) \leq (2d+1) 2^{-d} + n 2 ^{-2d} = o(1)$ as
required. Notice that $\mathbf{1}_B$ is determined by $S$. Let
$\mathcal{I}$ denote the set of all feasible jump structures $I$
such that $\{S=I\} \subset B$. Observe that on the event $B$,
$\cC(S)=\{ (s,1) ~|~ s \in S \}$. Therefore, by Corollary
\ref{cor:line-indep-chain-signs}, for any $I \in \mathcal{I}$,
conditioned on $S=I$, $\{ \Delta(s) ~|~ s \in S \}$ are independent
uniform signs and, by \eqref{eq:line-h-tag-formula},
\[ h'(k) = \sum_{s \in S \cap \{ 2d+2,\dots,k \}} \Delta(s) .\]
In other words, for any $I \in \mathcal{I}$, conditioned on $S=I$,
$(h'(s) ~|~ 2d+1 < s \in S)$ is a simple random walk of length $R$
(without the leading zero). Since
\[ \{ h'(s) ~|~ 2d+1 < s \in S \} \cup \{0\} = \{ h'(k) ~|~ 2d+1 \leq k \leq
n \} , \]
for any $I \in \mathcal{I}$,
\begin{equation}
\label{eq:line-critical-range-h-tag-like-srw}
|\{ h'(k) ~|~ 2d+1 \leq k \leq n \}| \eqd |\Rng(S_R)| \quad \text{ conditioned on } S=I ,
\end{equation}
where $S_i$ is an independent simple random walk run for $i$ steps.
Define the event
\[ E := \{ |\{ f(k) ~|~ 0 \leq k \leq 2d \}|>2 \} .\]
It is not difficult to check that
\begin{equation}
\label{eq:line-critical-range-h-tag-equals-range-f}
|\Rng(f)| = 2 + |\{ h'(k) ~|~ 2d+1 \leq k \leq n \}| \quad \text{on } E .
\end{equation}
We now show that $\Pr(E)=1-o(1)$. Observe that Lemma \ref{lem:line-bijection} implies that
\[ \Pr(E^c ~|~ \min S \geq 4d+2) = 2 \cdot 2^{-d} . \]
Hence, by Lemma \ref{lem:line-jumps-at-positions}, and since $J \subset E$, we have
\begin{align*}
\Pr(E^c) &= \Pr(E^c \cap \{ \min S < 4d+2 \}) + \Pr(E^c \cap \{ \min S \geq 4d+2 \}) \\
 &\leq \Pr(2d+2 \leq \min S < 4d+2) + \Pr(E^c ~|~ \min S \geq 4d+2) \\
 &\leq 2d 2^{-d} + 2^{1-d} = o(1) .
\end{align*}
Finally, Theorem \ref{thm:line-critical-range} follows from \eqref{eq:line-critical-R-convergence}, \eqref{eq:line-critical-range-h-tag-like-srw}, \eqref{eq:line-critical-range-h-tag-equals-range-f} and the fact that $\Pr(E \cap B)=1-o(1)$.

\section{Homomorphisms on the Torus}
\label{sec:torus}

In this section we prove the theorems regarding homomorphisms on the
torus which were stated in Section \ref{sec:main-results-torus}. The
ideas and notions previously introduced in Section \ref{sec:line} to
handle the case of homomorphisms on the line will still prove to be effective on the
torus, although some of them will need to be adapted. For example,
the notions of average height, jumps and chains will still be used
and they are defined in an analogous manner. One thing which must
change, for instance, is how we use these notions and the events
that we condition on. Note that, if we condition on the lengths and
the positions of the chains, their signs will not be independent,
since they must add up correctly. This fact, which is inherently due
to the topology of the torus, makes the analysis slightly more
complex. Instead, we will show that, conditioned on the lengths and
the signs of the chains (but not on their positions), their relative
order is uniform. This will allow us to reduce some of the analysis
to a case of a uniformly chosen reordering of a sequence of numbers.
One aspect which is simpler for homomorphisms on the torus is that
there are no boundary effects, i.e., no need to consider the first
$2d+1$ vertices separately.

\subsection{Definitions}

We consider the graph $T_{n,d}$, $n$ even, whose vertex set is $V:=\{0,\dots,n-1\}$ and whose edges are defined by $i \sim j$ if and only if $\rho(i,j) = 1, 3,\dots,2d+1$, where we define the {\em distance} $\rho$ between $x$ and $y$ to be
\[ \rho(x,y) := \min\{ |x-y|, n - |x-y| \} , \quad x,y \in V .\]
We define also the {\em clockwise distance} $\rho^+$ from $x$ to $y$ to be
\[ \rho^+(x,y) := y-x + n \mathbf{1}_{\{x>y\}} = \min \{ k \geq 0 ~|~ x+k=y \bmod n \} , \quad x,y \in V .\]
Note that $\rho^+(x,y)+\rho^+(y,x)=n$ and that $\rho(x,y)=\min\{\rho^+(x,y),\rho^+(y,x)\}$ for any $x,y \in V$.

Throughout this section, $\Hom(T_{n,d}):=\Hom(T_{n,d},0)$, $f$ is a uniformly sampled homomorphism from $\Hom(T_{n,d})$, the probability space is the uniform distribution on the set $\Hom(T_{n,d})$, and events are subsets of $\Hom(T_{n,d})$. We also note that, in this section, addition and subtraction of elements in $V$ are always modulo $n$.

We would like to define the notion of the (local) average height of a homomorphism at a vertex $x \in V$. To do so, we ``look back'' just enough in order to define this in a meaningful way. Precisely, for $x \in V$, define the {\em average height} at $x$ as the unique number $h(x)$ satisfying
\[ \text{There exists a } k \geq 1 \text{ for which } \{ f(x - i) ~|~ i=0,1,\dots,k \} = \{ h(x) - 1, h(x), h(x) + 1 \} . \]
This is well defined for any homomorphism $f$ which takes on at
least $3$ values. There are two specific homomorphisms for which the
size of the range is $2$, and hence for which this is not well
defined. These are $f^{\text{flat}}_1$ and $f^{\text{flat}}_{-1}$,
where
\[ f^{\text{flat}}_i(x) := \begin{cases} 0 &\text{if } x \text{ is even} \\ i &\text{if } x \text{ is odd} \end{cases} , \quad x \in V, ~ i \in \{-1,1\} . \]
For these homomorphisms we define $h(x):=0$ for all $x \in V$. For $x \in V$, define
\begin{align*}
\Delta(x) &:= h(x) - h(x-1) , \\
A_x &:= \{ \Delta(x) \neq 0 \} .
\end{align*}
Observe that necessarily $\Delta(x)\in\{-1,0,1\}$. When $A_x$
occurs, we will say that a {\em jump} occurred at vertex $x$. For $x,y \in V$, denote by
\[ A_{x,y} := A_x \cap A_y \cap \{ \Delta(x) = - \Delta(y)\} \]
the event that there are jumps in opposite directions at $x$ and $y$. Denote by
\[ S^+ := \{ x \in V ~|~ \Delta(x) = 1 \} \quad \text{and} \quad S^- := \{ x \in V ~|~ \Delta(x) = - 1 \} \]
the sets of vertices at which a positive or negative jump occurred,
respectively. Let
\begin{equation}\label{eq:S_torus_def}
S := S^+ \cup S^-
\end{equation}
be the set of vertices at which we have a jump in either direction.
Notice that necessarily $|S^+|=|S^-|$, and define
\[ R := |S^+| = |S^-| = |S|/2 , \]
the number of jumps in a given direction. Notice that the clockwise
distance between jumps is at least $2d+1$, as for homomorphisms on
the line. For $x\in V$ and $t\ge 1$, let
\[ C_{x,t} := A_x \cap A_{x-2d-1} \cap \cdots \cap A_{x-(t-1)(2d-1)} \]
be the event that there is a chain of $t$ minimal-distance jumps ending at vertex $x$.

We say that a subset $I \subset V$ is a {\em feasible jump
structure} if $\{S = I\} \neq \emptyset$, i.e. if $\Pr(S=I)>0$. We
would like to describe this condition solely in terms of the
structure of $I$. To this end, write $I = \{s_1,\dots,s_t\}$, where
$0 \leq s_1<\cdots<s_t<n$ and let $s_0 := s_t$. Similarly to the
case of the line, see
condition~\eqref{eq:line-feasible-jump-structure}, the following
conditions are necessary for $I$ to be a feasible jump structure.
\begin{equation}
\label{eq:torus-feasible-jump-structure1}
\begin{aligned}
\rho^+(s_{j-1},s_j) \geq 2d+1 , & \quad 1 \leq j \leq t , \\
\rho^+(s_{j-1},s_j) \text{ is odd} , & \quad 1 \leq j \leq t .
\end{aligned}
\end{equation}
In contrast to the case of the line, these conditions alone are not sufficient for $I$ to be a feasible jump structure. This is due to the fact that the torus imposes a topological constraint. Namely, that at the end of the homomorphism the average height must ``return'' to its initial value. This additional condition, whose precise description \eqref{eq:torus-feasible-jump-structure2} we postpone to the next section, along with condition \eqref{eq:torus-feasible-jump-structure1}, is necessary and sufficient for $I$ to be a feasible jump structure.

In addition, we say that a subset $I \subset V$ is a {\em feasible jump sub-structure} if it is a subset of a feasible jump structure, or equivalently, if $\{I \subset S\} \neq \emptyset$. Notice that the definition implies that condition \eqref{eq:torus-feasible-jump-structure1} is necessary for $I$ to be a feasible jump sub-structure. For any $I \subset V$ satisfying \eqref{eq:torus-feasible-jump-structure1}, by considering the connected components of the subgraph of $T_{n,d}$ induced by $I$, one may see that the event $\{I \subset S\}$ can be uniquely written as $C_{k_1,t_1} \cap \dots \cap C_{k_m,t_m}$, where
\begin{equation}
\label{eq:torus-chain-structure}
\begin{aligned}
& 0 \leq k_1<\cdots<k_m<n ,\\
& t_1+\cdots+t_m = |I| ,\\
& \rho^+(k_{j-1},k_j) > (2d+1)t_j , \quad 1 \leq j \leq m ,
\end{aligned}
\end{equation}
and where we let $k_0 := k_m$ (see Figure \ref{fig:torus-chain-structure}). These conditions ensure that there is no overlap between the different chains, and moreover, that there is some gap between them (since otherwise they would merge into a larger chain). For a subset $I \subset V$ satisfying \eqref{eq:torus-feasible-jump-structure1}, we define
\[ \cC(I) := \{ (k_j, t_j) ~|~ 1 \leq j \leq m \} ,\]
and refer to this as the {\em chain structure} of $I$.

\definecolor{darkgreen}{rgb}{0.2,0.6,0.2}

\def\arrow[#1][#2][#3][#4]{
  (0:#1) arc (0:#3:#1) [rounded corners=0.5] --
  (#3:#1) -- ({#3+#4}:{#1/2+#2/2}) [rounded corners=0.5] --
  (#3:#2) arc (#3:0:#2) [rounded corners=1] --
  (-#4:{#1/2+#2/2}) -- (0:#1)
}

\def\flucline[#1][#2][#3]{
  (0:{#1/2+#2/2}) arc (0:#3:{#1/2+#2/2})
}

\def\jumpline[#1][#2][#3]{
  (0:#1) -- (#3:#2)
}

\def\chain[#1][#2][#3][#4][#5][#6][#7]{
    \begin{scope}[scale=#5]
        \draw[color=darkgreen, bottom color=green!90!black, top color=green!60] [rotate={#1*#3}] \arrow[#6][#7][{#1*#4*#2}][4];
    \end{scope}
}

\def\fluc[#1][#2][#3][#4][#5][#6]{
    \begin{scope}[scale=#4]
        \draw[color=red, decorate, decoration=snake] [rotate={#1*#2}] \flucline[#5][#6][{#1*#3}];
    \end{scope}
}

\def\jump[#1][#2][#3][#4][#5][#6]{
    \begin{scope}[scale=#4]
        \draw[color=blue] [rotate={#1*#2}] (#1:#6) arc (#1:{#1*#3}:#6);
        \draw[color=blue] [rotate={#1*#2}] (#1:#5) -- (#1:#6);
        \draw[color=blue] [rotate={#1*#2}] (0:#5) -- (#1:#5);
    \end{scope}
}

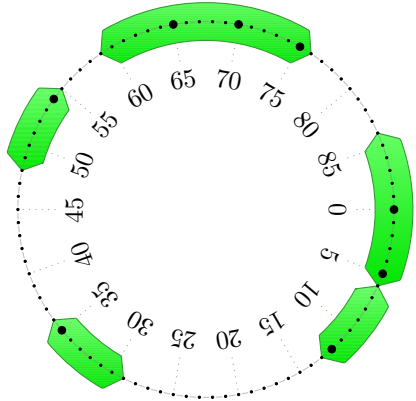
\begin{figure}[!t]
\centering
  \begin{tikzpicture}

    \def \d {5} //2d+1
    \def \n {90}
    \def \radius {2.5cm}
    \def \margin {1} 
    \def \tick {4}
    \def \r {2.5}
    \def \ra {0.9}
    \def \rb {1.1}

    \chain[\tick][\d][15][3][\r][\ra][\rb]
    \chain[\tick][\d][36][1][\r][\ra][\rb]
    \chain[\tick][\d][55][1][\r][\ra][\rb]
    \chain[\tick][\d][78][1][\r][\ra][\rb]
    \chain[\tick][\d][85][2][\r][\ra][\rb]

    \tikzstyle{every node}=[draw, circle, fill,minimum size=0.03cm,inner sep=0]
    \foreach \s in {1,...,\n}
    {
      \node at ({360/\n * (\s - 1)}:\radius) {};
      \draw[-, >=latex,thin,gray,opacity=0.5] ({360/\n * (\s - 1)+\margin}:\radius)
        arc ({360/\n * (\s - 1)+\margin}:{360/\n * (\s)-\margin}:\radius);
    }

    \foreach \s in {0,5,...,85}
    {
      \node[draw=none,fill=none, scale=0.9, rotate=(270-\s*\tick)] at ({\s * \tick * -1}:{\radius*0.7}) {\s};
      \draw[dotted,thin,gray!75] ({\s * \tick}:{\radius*0.8}) -- ({\s * \tick}:\radius);
    }

    \tikzstyle{every node}=[draw, circle, fill,minimum size=0.1cm,inner sep=0]
    \node[] at ({\tick * 15}:\radius) {};
    \node[] at ({\tick * 20}:\radius) {};
    \node[] at ({\tick * 25}:\radius) {};
    \node[] at ({\tick * 36}:\radius) {};
    \node[] at ({\tick * 55}:\radius) {};
    \node[] at ({\tick * 78}:\radius) {};
    \node[] at ({\tick * 85}:\radius) {};
    \node[] at ({\tick * 90}:\radius) {};

  \end{tikzpicture}
\caption{Given a subset $I \subset V$ satisfying \eqref{eq:torus-feasible-jump-structure1}, we construct $\cC(I)$, the chain structure of $I$, by partitioning the elements of $I$ according to the connected components in the subgraph of $T_{n,d}$ induced by $I$. The elements of $I$ are denoted by large vertices and the chain structure is denoted by blocks surrounding the vertices. In the figure, $n=90$, $d=2$ and $I=\{0,5,12,35,54,65,70,75\}$, and hence, $\cC(I)=\{(5,2),(12,1),(35,1),(54,1),(75,3)\}$.}
\label{fig:torus-chain-structure}
\end{figure}

\subsection{The structure of a homomorphism}

In this section, our goal is to a give a useful description of the structure of a homomorphism on the torus. Namely, that which is stated in Lemma \ref{lem:torus-bijection} and Lemma \ref{lem:torus-bijection2} below. To this end, we would like to decompose a homomorphism into two parts (see Figure \ref{fig:line-bijection} and Figure \ref{fig:torus-bijection}). The first part, which we shall denote by $X$, constitutes the changes in average height (the underlying bridge) of the homomorphism, while the second part, which we shall denote by $Y$, constitutes the fluctuations around the average height (the segments of constant average height).

We proceed first to define $X$. Given a subset $I \subset V$ satisfying \eqref{eq:torus-feasible-jump-structure1}, denote the set of {\em feasible sign vectors} for $I$ by
\[ \cB^*(I) := \left\{ \epsilon \in \{-1,1\}^{\cC(I)} ~|~ \sum_{(k,t) \in \cC(I)} \epsilon_{(k,t)} t = 0 \right\} .\]
When $I=\emptyset$, this set contains one element, the function with the empty domain.
Note that in order for a subset $I \subset V$ to be a feasible jump structure, it is necessary and sufficient for $I$ to satisfy \eqref{eq:torus-feasible-jump-structure1} and
\begin{equation}
\label{eq:torus-feasible-jump-structure2}
\cB^*(I) \neq \emptyset .
\end{equation}
This last condition is the manifestation of the topological constraint imposed by the torus. It says that the chain structure induced by the position of the jumps is such that it is possible to assign signs to each chain so that the average height ``returns'' to its initial value when completing an entire loop around the torus.

For a feasible jump structure $I$ and a feasible sign vector $\epsilon \in \cB^*(I)$, define the {\em signed chain structure} of $(I,\epsilon)$ by
\begin{equation}
\label{eq:torus-signed-chain-structure-def}
\cC^{*}(I,\epsilon) := \left\{ (k,\epsilon_{(k,t)} \cdot t) ~|~ (k,t) \in \cC(I) \right\} .
\end{equation}
Recall the definition of $S$ from \eqref{eq:S_torus_def}.
Define $X \in \{-1,1\}^{\cC(S)}$ by
\[ X_{(k,t)} := f(k) - f(k-1) , \quad (k,t) \in \cC(S) ,\]
and note that $X \in \cB^*(S)$. This defines for us the random
signed chain structure $\cC^*(S,X)$. This random variable contains
in a fairly simple manner all the necessary information for
determining the range of $f$. Namely, it gives us the positions,
lengths and signs of the chains in $f$.

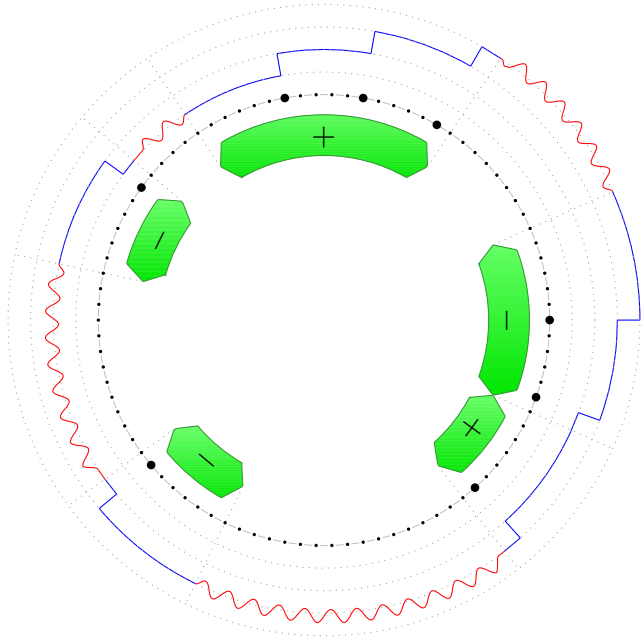
\begin{figure}[!t]
\centering
  \begin{tikzpicture}

    \def \d {5} //2d+1
    \def \n {90}
    \def \radius {3cm}
    \def \margin {1} 
    \def \tick {4}
    \def \r {3}
    \def \ra {0.73}
    \def \rb {0.91}
    \def \rc {0.82}

    \chain[\tick][\d][15][3][\r][\ra][\rb]
    \chain[\tick][\d][36][1][\r][\ra][\rb]
    \chain[\tick][\d][55][1][\r][\ra][\rb]
    \chain[\tick][\d][78][1][\r][\ra][\rb]
    \chain[\tick][\d][85][2][\r][\ra][\rb]

    \draw[dotted,thin,gray!75] ({\tick*14}:\radius*\rc) -- ({\tick*14}:{\radius*1.4});
    \draw[dotted,thin,gray!75] ({\tick*31}:\radius*\rc) -- ({\tick*31}:{\radius*1.4});
    \draw[dotted,thin,gray!75] ({\tick*35}:\radius*\rc) -- ({\tick*35}:{\radius*1.4});
    \draw[dotted,thin,gray!75] ({\tick*42}:\radius*\rc) -- ({\tick*42}:{\radius*1.4});
    \draw[dotted,thin,gray!75] ({\tick*54}:\radius*\rc) -- ({\tick*54}:{\radius*1.4});
    \draw[dotted,thin,gray!75] ({\tick*61}:\radius*\rc) -- ({\tick*61}:{\radius*1.4});
    \draw[dotted,thin,gray!75] ({\tick*77}:\radius*\rc) -- ({\tick*77}:{\radius*1.4});
    \draw[dotted,thin,gray!75] ({\tick*84}:\radius*\rc) -- ({\tick*84}:{\radius*1.4});
    \draw[dotted,thin,gray!75] ({\tick*6}:\radius*\rc) -- ({\tick*6}:{\radius*1.4});

    \fluc[\tick][31][4][\r][1.05][1.15]
    \fluc[\tick][42][12][\r][1.15][1.25]
    \fluc[\tick][61][16][\r][1.25][1.35]
    \fluc[\tick][6][8][\r][1.35][1.45]

    \jump[\tick][14][5][\r][1.4][1.3]
    \jump[\tick][19][5][\r][1.3][1.2]
    \jump[\tick][24][7][\r][1.2][1.1]
    \jump[\tick][35][7][\r][1.1][1.2]
    \jump[\tick][54][7][\r][1.2][1.3]
    \jump[\tick][77][7][\r][1.3][1.2]
    \jump[\tick][84][5][\r][1.2][1.3]
    \jump[\tick][89][7][\r][1.3][1.4]

    \tikzstyle{every node}=[draw, circle, fill,minimum size=0.03cm,inner sep=0]
    \foreach \s in {1,...,\n}
    {
      \node at ({360/\n * (\s - 1)}:\radius) {};
      \draw[-, >=latex,thin,gray,opacity=0.5] ({360/\n * (\s - 1)+\margin}:\radius)
        arc ({360/\n * (\s - 1)+\margin}:{360/\n * (\s)-\margin}:\radius);
    }

    \draw[dotted,thin,gray!75] (0:{\radius*1.1}) arc (0:360:{\radius*1.1});
    \draw[dotted,thin,gray!75] (0:{\radius*1.2}) arc (0:360:{\radius*1.2});
    \draw[dotted,thin,gray!75] (0:{\radius*1.3}) arc (0:360:{\radius*1.3});
    \draw[dotted,thin,gray!75] (0:{\radius*1.4}) arc (0:360:{\radius*1.4});

    \tikzstyle{every node}=[draw, circle, fill,minimum size=0.1cm,inner sep=0]
    \node[] at ({\tick * 15}:\radius) {};
    \node[] at ({\tick * 20}:\radius) {};
    \node[] at ({\tick * 25}:\radius) {};
    \node[] at ({\tick * 36}:\radius) {};
    \node[] at ({\tick * 55}:\radius) {};
    \node[] at ({\tick * 78}:\radius) {};
    \node[] at ({\tick * 85}:\radius) {};
    \node[] at ({\tick * 90}:\radius) {};

    \node[draw=none,fill=none,scale=1.1,rotate=\tick*22.5-90] at ({\tick * 22.5}:{\radius*0.81}) {$+$};
    \node[draw=none,fill=none,scale=1.1,rotate=\tick*38.5-90] at ({\tick * 38.5}:{\radius*0.81}) {$-$};
    \node[draw=none,fill=none,scale=1.1,rotate=\tick*57.5-90] at ({\tick * 57.5}:{\radius*0.81}) {$-$};
    \node[draw=none,fill=none,scale=1.1,rotate=\tick*81-90] at ({\tick * 81}:{\radius*0.81}) {$+$};
    \node[draw=none,fill=none,scale=1.1,rotate=\tick*0-90] at ({\tick * 0}:{\radius*0.81}) {$-$};

  \end{tikzpicture}

\caption{A homomorphism is broken up into sections of fluctuations and chains. The positions, lengths and associated signs of each chain (denoted by blocks with signs inside) make up the signed chain structure $\cC^*(S,X)$ of the homomorphism. This information, along with the independent fluctuation values between the chains (denoted by wavy lines), uniquely determines the homomorphism.}
\label{fig:torus-bijection}
\end{figure}

We now proceed to define $Y$. For a non-empty feasible jump structure $I$, define the {\em fluctuation points} of $I$ by
\[ FP(I) := \Big\{ y \in V ~|~ \rho^+(y,I) \in 2d+1 + 2\N \Big\} ,\]
where $\rho^+(y,I) := \min_{s \in I} \rho^+(y,s)$ and
$\N:=\{1,2,3,\ldots\}$. That is, a point is a fluctuation point if
its clockwise distance to the closest jump in the clockwise
direction is odd and at least $2d+3$. In particular, for any
homomorphism $f$ and any $y \in FP(S(f))$, $f$ is not at its average
height at $y$.
Now, for a homomorphism $f$ having at least one jump, define $Y \in \{-1,1\}^{FP(S)}$ by
\[ Y_y := f(y) - f(y-1) , \quad y \in FP(S) .\]

It will be useful to have the following formula for the number of
fluctuation points.

\begin{claim}
\label{cl:torus-bijection-fluc-size}
For any non-empty feasible jump structure $I$, we have
\[ |FP(I)| = n/2 - (d+1/2)|I| - |\cC(I)| . \]
\end{claim}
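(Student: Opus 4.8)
The plan is to partition the cycle $V$ into $|I|$ consecutive arcs, one attached to each jump in $I$, count the fluctuation points lying in each arc, and add up. This is the torus analogue of Claim~\ref{cl:line-bijection-fluc-size}, and is in fact slightly cleaner since there are no boundary arcs to treat separately.

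First I would set up the partition. For $y\in V$ let $s(y)\in I$ be the first element of $I$ reached moving clockwise from $y$ (so $s(y)=y$ when $y\in I$), so that $\rho^+(y,I)=\rho^+(y,s(y))$. Writing $I$ in cyclic order and letting $\mathrm{gap}(s)$ denote the clockwise distance from the cyclic predecessor of $s$ to $s$, the fiber $\{y: s(y)=s\}$ is the arc $\{s-\mathrm{gap}(s)+1,\dots,s\}$ of $\mathrm{gap}(s)$ vertices, and along this arc the quantity $\rho^+(\cdot,I)$ runs through the values $0,1,\dots,\mathrm{gap}(s)-1$, each exactly once. Since $2d+1+2\N$ is precisely the set of odd integers that are $\geq 2d+3$, the number of fluctuation points in this arc equals the number of odd integers in $[2d+3,\,\mathrm{gap}(s)-1]$.

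Next I would split the elements of $I$ according to the chain decomposition $\cC(I)=\{(k_1,t_1),\dots,(k_m,t_m)\}$. For each of the $t_j-1$ elements of chain $j$ that is not its first jump, the cyclic predecessor lies at the minimal distance, so $\mathrm{gap}(s)=2d+1$ and the arc contributes no fluctuation point (all its $\rho^+$-values are at most $2d$). For the first jump of chain $j$, which sits at $k_j-(2d+1)(t_j-1)$, the cyclic predecessor is the last jump $k_{j-1}$ of chain $j-1$; write $g_j:=\rho^+(k_{j-1},k_j)-(2d+1)(t_j-1)$ for this gap. By \eqref{eq:torus-feasible-jump-structure1} consecutive elements of $I$ sit at odd clockwise distance, so $g_j$ is odd; and because the chains do not merge (cf.\ \eqref{eq:torus-chain-structure}), $g_j\geq 2d+3$. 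Counting the odd integers in $[2d+3,\,g_j-1]$ — the largest being $g_j-2$, since $g_j-1$ is even — gives $\tfrac{g_j-2d-3}{2}$ fluctuation points in that arc, a number that is $\geq 0$ and is also correct at the boundary value $g_j=2d+3$.

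Summing over all arcs gives $|FP(I)|=\sum_{j=1}^m \tfrac{g_j-2d-3}{2}$. Since the points $k_1,\dots,k_m$ go once around the cycle, $\sum_{j=1}^m \rho^+(k_{j-1},k_j)=n$ (with $k_0:=k_m$; and $m\geq 2$ for a non-empty feasible jump structure, since a single chain cannot satisfy \eqref{eq:torus-feasible-jump-structure2}), whence $\sum_j g_j = n-(2d+1)\sum_j(t_j-1)=n-(2d+1)(|I|-m)$. Substituting and using $m=|\cC(I)|$,
\[ |FP(I)| = \tfrac12\big(n-(2d+1)(|I|-m)\big)-\tfrac{2d+3}{2}\,m = \tfrac{n}{2} - \big(d+\tfrac12\big)|I| - |\cC(I)|, \]
which is the claim. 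The only care needed is bookkeeping: correctly identifying which arcs carry fluctuation points, and establishing that the inter-chain gaps $g_j$ are odd and at least $2d+3$; this is the one step I would expect to be the main, if minor, obstacle.
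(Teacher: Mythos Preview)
Your proof is correct. The per-jump arc decomposition works exactly as you describe, and the check that the inter-chain gaps $g_j$ are odd and $\ge 2d+3$ is precisely what is needed (and follows from \eqref{eq:torus-feasible-jump-structure1} and \eqref{eq:torus-chain-structure}, as you note). Your observation that $m\ge 2$ for any non-empty feasible jump structure, needed so that $\sum_j \rho^+(k_{j-1},k_j)=n$, is correct and worth stating.

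The paper's argument is organized a little differently. Rather than partitioning $V$ into $|I|$ arcs and counting fluctuation points arc by arc, it first computes the size of the complementary region $\{y:\rho^+(y,I)<2d+3\}$ in one shot, observing that each chain $(k,t)$ contributes a block of exactly $(2d+1)t+2$ such vertices, for a total of $(2d+1)|I|+2|\cC(I)|$. It then notes that the remaining set $\{y:\rho^+(y,I)\ge 2d+3\}$ breaks into intervals of even length (this is where the oddness of the gaps is used), so exactly half of its points are fluctuation points. Both are elementary counts of the same thing; the paper's version trades your per-arc bookkeeping for a single parity observation on the complement, making it marginally shorter, while your version is more explicit about where each fluctuation point sits.
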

\begin{proof}
We have
\[ \left|\big\{ y \in V ~|~ \rho^+(y,I) < 2d+3 \big\}\right| = \sum_{(k,t) \in \cC(I) } ((2d+1)t + 2) = (2d+1)|I| + 2|\cC(I)| .\]
Furthermore, the set $\{ y \in V ~|~ \rho^+(y,I) \geq 2d+3 \}$ is a
disjoint union of intervals of even length, by
\eqref{eq:torus-feasible-jump-structure1}, so that
\begin{align*}
|FP(I)|
  &= \left|\big\{ y \in V ~|~ \rho^+(y,I) \geq 2d+3 \big\}\right| / 2 \\
  &= \left(n - \left|\big\{ y \in V ~|~ \rho^+(y,I) < 2d+3 \big\}\right|\right) / 2 \\
  &= n/2 - (d+1/2)|I| - |\cC(I)| . \qedhere
\end{align*}
\end{proof}

The final lemmas show that $X, Y$ and the jump structure $S$ exactly
encode the homomorphism.

\begin{lemma}
\label{lem:torus-bijection}
For any non-empty feasible jump structure $I$, the mapping $f \mapsto (X(f), Y(f))$ is a bijection between $\{S=I\}$ and $\cB^*(I) \times \{-1,1\}^{FP(I)}$. Also, the event $\{S=\emptyset\}$ is of size $2^{n/2+1}-2$.
\end{lemma}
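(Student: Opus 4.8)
The plan is to argue in close parallel to Lemma~\ref{lem:line-bijection}, exhibiting an explicit inverse to the map $f\mapsto(X(f),Y(f))$; the one genuinely new feature is that the cyclic consistency needed to reconstruct $f$ from $(X,Y)$ is precisely the feasibility condition $X\in\cB^*(I)$. First I would check that the forward map is well defined on $\{S=I\}$: for $y\in FP(I)$ the vertices $y$ and $y-1$ are adjacent in $T_{n,d}$, so $Y(f)_y=f(y)-f(y-1)\in\{-1,1\}$, and $X(f)\in\cB^*(I)$ because $\sum_{(k,t)\in\cC(I)}X(f)_{(k,t)}\,t=\sum_{x\in V}\Delta(x)=0$, the average height returning to its initial value after one full revolution of the torus.

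To build the inverse, write $\cC(I)=\{(k_1,t_1),\dots,(k_m,t_m)\}$ with $0\le k_1<\cdots<k_m<n$ and put $k'_j:=k_j-(2d+1)t_j-1$. Define $H=H(X,\cdot)\colon V\to\Z$ to be locally constant off the chains and, on passing clockwise through chain $j$, to change by $t_j X_{(k_j,t_j)}$; since $\sum_j t_j X_{(k_j,t_j)}=0$ this rule is consistent all the way around, so $H$ is well defined up to an additive integer constant, which I fix by requiring $f_{X,Y}(0)=0$ (the validity and the jump structure of the resulting homomorphism are translation invariant, so this choice is harmless). Then I would set, cyclically,
\[
f_{X,Y}(i):=\begin{cases}
H(X,i)+Y_i & \text{if } i\in FP(I),\\
H(X,i)+X_{(k_j,t_j)}\,C_{i-k'_j} & \text{if } k'_j\le i\le k_j \text{ for some } j,\\
H(X,i) & \text{otherwise},
\end{cases}
\]
where $C=(\underbrace{0,1,\dots,0,1,0}_{2d+1},\underbrace{1,2,\dots,1,2,1}_{2d+1},\dots)$ is the sequence from Lemma~\ref{lem:line-bijection}, and the third case is exactly the set of vertices that lie at the average height outside all chains.

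The verifications that $S(f_{X,Y})=I$, that $X(f_{X,Y})=X$ and $Y(f_{X,Y})=Y$, and that $f_{X(f),Y(f)}=f$, are routine and parallel the line case, so I would only sketch them. The main obstacle is checking that $f_{X,Y}\in\Hom(T_{n,d})$, i.e.\ that $|f_{X,Y}(x)-f_{X,Y}(y)|=1$ whenever $\rho(x,y)\in\{1,3,\dots,2d+1\}$; this needs a case analysis according to whether $x,y$ lie in a common chain, in a common fluctuation arc, or straddle the boundary between a chain and a fluctuation arc. The key point — exactly as in the verification that $T_{k,t}$ is well defined in the proof of Lemma~\ref{lem:line-jumps-at-positions} — is that by \eqref{eq:torus-chain-structure} one has $\rho^+(k_{j-1},k_j)>(2d+1)t_j$, so that no edge of $T_{n,d}$ joins two different chains, and any edge with an endpoint inside a chain has its other endpoint within clockwise distance $2d+1$ of one of the chain's two endpoints, where the homomorphism is pinned to within $1$ of the average height. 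Since every edge spans at most $2d+1$, which is strictly less than the length of any chain, no edge ``wraps around'' in a manner not already present on the line, and the cyclic case analysis reduces to the linear one.

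Finally, for the cardinality of $\{S=\emptyset\}$: this is the event that the average height is constant, say $\equiv c$ (with $c=0$ for the two flat homomorphisms), which forces $f$ to take values in $\{c-1,c,c+1\}$. Moreover $f$ changes by $\pm1$ along the $n$-cycle formed by the distance-$1$ edges and $f(0)=0$, so $f(x)\equiv x\pmod 2$; hence even vertices carry even values and odd vertices odd values. If $c$ is even then $f\equiv c$ on every even vertex and $c=f(0)=0$; if $c$ is odd then $f\equiv c$ on every odd vertex and $c=\pm1$. Conversely any $f$ constant on the even vertices (necessarily $\equiv0$ there) or constant on the odd vertices (necessarily $\equiv\pm1$ there) has no jump. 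Writing $\Omega_0$ and $\Omega_1$ for the sets of homomorphisms constant on the even and on the odd vertices respectively, we thus get $\{S=\emptyset\}=\Omega_0\cup\Omega_1$, and since $|\Omega_0|=2^{n/2}$ (the $n/2$ odd vertices carry independent signs), $|\Omega_1|=2\cdot2^{n/2-1}=2^{n/2}$ (two choices for the constant odd value, then independent binary choices at the $n/2-1$ even vertices other than $0$), and $|\Omega_0\cap\Omega_1|=2$ (the homomorphisms $f^{\text{flat}}_{1}$ and $f^{\text{flat}}_{-1}$), we conclude $|\{S=\emptyset\}|=2^{n/2}+2^{n/2}-2=2^{n/2+1}-2$.
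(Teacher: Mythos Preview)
Your proposal is correct and follows essentially the same route as the paper, which derives the lemma as an immediate consequence of Lemma~\ref{lem:torus-bijection2} and in turn proves that by appealing to the line case (Lemma~\ref{lem:line-bijection}) without spelling out details; your explicit construction of the inverse via $H(X,\cdot)$ and the sequence $C$, together with the observation that cyclic consistency of $H$ is exactly the condition $X\in\cB^*(I)$, is precisely the intended argument, and your $\Omega_0\cup\Omega_1$ count of $\{S=\emptyset\}$ matches the paper's computation via $\{h\equiv 0\}$ and $\{|h|\equiv 1\}$.

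One small imprecision in your sketch: the assertion that ``no edge of $T_{n,d}$ joins two different chains'' is not literally true, since condition~\eqref{eq:torus-chain-structure} only guarantees $\rho^+(k_{j-1},k'_j)\ge 1$, so two consecutive chain blocks can be adjacent and an edge of length up to $2d+1$ can then have one endpoint in each. Your case analysis should therefore also cover edges straddling two consecutive chain blocks; this extra case is handled by the same local boundary check (the end of one chain sits at the correct height to serve as the start of the next) and poses no real difficulty.
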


This is an immediate consequence of the following lemma.

\begin{lemma}
\label{lem:torus-bijection2}
For any non-empty feasible jump structure $I$ and any feasible sign vector $\epsilon \in \cB^*(I)$, the mapping $f \mapsto Y(f)$ is a bijection between $\{ \cC^*(S,X)=\cC^*(I,\epsilon)\}$ and $\{-1,1\}^{FP(I)}$. Also, the event $\{\cC^*(S,X)=\emptyset\}$ is of size $2^{n/2+1}-2$.
\end{lemma}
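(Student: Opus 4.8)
The plan is to mirror the proof of Lemma~\ref{lem:line-bijection}: we exhibit an explicit inverse map and then check that it is well-defined and injective. Fix a non-empty feasible jump structure $I$ and a feasible sign vector $\epsilon\in\cB^*(I)$, and write $\cC(I)=\{(k_1,t_1),\dots,(k_m,t_m)\}$. The signed chain structure $\cC^*(I,\epsilon)$ records, for each chain, its terminal vertex $k_j$ and its signed length $\epsilon_{(k_j,t_j)}t_j$. From this data we first reconstruct the average height: $h$ is to be piecewise constant on $V$, increasing by $\epsilon_{(k_j,t_j)}t_j$ as one moves clockwise past the chain ending at $k_j$. Since $\epsilon\in\cB^*(I)$ forces $\sum_j\epsilon_{(k_j,t_j)}t_j=0$, this prescription is consistent around the torus and determines $h$ up to a single global additive constant.

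Given $h$, we reconstruct $f$ as follows. On the vertices forming the $j$-th chain (together with the vertex immediately following it), $f$ is set to the forced oscillating-ramp pattern of sign $\epsilon_{(k_j,t_j)}$ (exactly the pattern appearing in the displayed equation in the proof of Lemma~\ref{lem:line-jumps-at-positions}), anchored by the values of $h$ on either side of the chain. On each fluctuation segment (a maximal run of vertices not belonging to any chain), $f$ is set equal to $h$ at the vertices not in $FP(I)$ and to $h\pm1$ at the vertices $y\in FP(I)$, with the sign $Y_y$; this is precisely the alternating behaviour around a constant average height, and by its definition $FP(I)$ is exactly the set of vertices on which $f$ differs from its average height. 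Finally we fix the remaining additive constant so that $f(0)=0$; since $0$ lies in a unique chain or fluctuation segment, this is unambiguous. One then verifies that the resulting $f_{I,\epsilon,Y}$ lies in $\Hom(T_{n,d})$, that $S(f_{I,\epsilon,Y})=I$ and $X(f_{I,\epsilon,Y})=\epsilon$ (so $\cC^*(S,X)=\cC^*(I,\epsilon)$), and that $Y(f_{I,\epsilon,Y})=Y$.

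For injectivity, let $f\in\{\cC^*(S,X)=\cC^*(I,\epsilon)\}$. From $\cC^*(S,X)=\cC^*(I,\epsilon)$ one recovers $\cC(S)=\cC(I)$ and $X=\epsilon$ (take absolute values of the signed lengths), hence $S=I$; this in turn forces $f$ on every chain vertex (the ramp pattern is determined by the chain's sign) and forces $f=h$ on every vertex not in $FP(I)$, so $f$ is determined on all these vertices up to the single additive constant of $h$. That constant is pinned down by $f(0)=0$ (using the value $Y_0$ if $0\in FP(I)$), while $f$ on $FP(I)$ is read off from $Y$; thus $f$ is determined by $Y$. Together with the surjectivity established above, this proves the bijection. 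For the last assertion, $\{\cC^*(S,X)=\emptyset\}=\{S=\emptyset\}$ is the set of homomorphisms with no jump, equivalently with constant average height; a short check (using $d\ge1$) shows these are exactly the homomorphisms constant on the even vertices or constant on the odd vertices, i.e.\ $\Omega_0\cup\Omega_1$, and since $|\Omega_0|=|\Omega_1|=2^{n/2}$ and $\Omega_0\cap\Omega_1=\{f^{\text{flat}}_1,f^{\text{flat}}_{-1}\}$, this set has size $2^{n/2+1}-2$.

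The main obstacle is the verification that $f_{I,\epsilon,Y}$ is a genuine homomorphism on $T_{n,d}$, i.e.\ that $|f(x)-f(y)|=1$ for every edge $(x,y)$. The one genuinely new feature over the line is that the reconstruction must ``close up'' consistently around the cycle, and this is handled entirely by the hypothesis $\epsilon\in\cB^*(I)$, which is exactly what makes $h$ well-defined on $V$. The remaining checks are local: by \eqref{eq:torus-feasible-jump-structure1} and \eqref{eq:torus-chain-structure} distinct chains are separated so that no edge of $T_{n,d}$ joins two of them, whence the stretch of vertices spanned by any edge lies inside a single chain, inside a single fluctuation segment, or straddles exactly one chain/fluctuation boundary; in each of these finitely many local configurations the constraint $|f(x)-f(y)|=1$ is checked directly, precisely as in the (omitted) verification in the proof of Lemma~\ref{lem:line-bijection}.
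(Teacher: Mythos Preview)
Your proposal is correct and follows essentially the same approach as the paper: the paper explicitly omits the proof of the bijection, stating it is ``very similar to that of Lemma~\ref{lem:line-bijection}'', and you have supplied precisely that argument, correctly isolating the one new torus-specific ingredient (the condition $\epsilon\in\cB^*(I)$ ensures that $h$ closes up consistently around the cycle). Your treatment of the second statement via $\Omega_0\cup\Omega_1$ is equivalent to the paper's partition into $\{h\equiv 0\}$ and $\{|h|\equiv 1\}$.
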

\begin{proof}
It is not hard to verify that this is indeed a bijection (see Figure \ref{fig:torus-bijection} for a macroscopic picture and Figure \ref{fig:line-bijection} for a microscopic picture). We omit the proof as it is very similar to that of Lemma \ref{lem:line-bijection}.

For the second statement, we note that $\{\cC^*(S,X)=\emptyset\}=\{S=\emptyset\}=\{h \equiv const \}$, and hence by considering the events $\{h \equiv 0\}$ and $\{|h| \equiv 1\}$, and recalling that we set $h \equiv 0$ when $f$ takes on only two values, the statement readily follows.
\end{proof}

\subsection{The range}

In this section, our goal is to give a more explicit description of the distribution of the range of a homomorphism. Namely, that which is stated in Proposition \ref{prop:torus-range-distribution} below.

Recall the definition of the signed chain structure from
\eqref{eq:torus-signed-chain-structure-def}. Let
\begin{equation}
\label{eq:torus-def-bar-W}
\bar{W} := \left\{ w ~|~ (k,w) \in \cC^*(S,X) \right\} ,
\end{equation}
be the set of lengths and signs of the chains taken with multiplicities, i.e. $\bar{W}$ is a multi-set. For a vector of integers $w=(w_1,\dots,w_m)$, denote by
\begin{equation}
\label{eq:torus-def-range-of-vector}
|\Rng(w)| := 1 + \max_{0 \leq j \leq m} \sum_{i=1}^j w_i - \min_{0 \leq j \leq m} \sum_{i=1}^j w_i ,
\end{equation}
the size of the smallest interval in $\Z$ which contains all partial sums of $w$.

\begin{prop}
\label{prop:torus-range-distribution}
Let $m \geq 1$, let $\bar{w}=\{w_1,\dots,w_m\}$ be a multi-set such that $\Pr(\bar{W}=\bar{w})>0$ and let $\pi$ be a uniformly chosen permutation of $\{1,2,\dots,m\}$. Then,
\[ (|\Rng(f)| \text{ conditioned on } \bar{W} = \bar{w}) \eqd 2 + |\Rng(w_{\pi(1)},\dots,w_{\pi(m)})| .\]
\end{prop}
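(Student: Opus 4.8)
The plan is to leverage the bijection machinery already developed, namely Lemma~\ref{lem:torus-bijection2} together with Claim~\ref{cl:torus-bijection-fluc-size}, to show that conditioning on the multi-set $\bar W$ of chain lengths-with-signs makes the positions of the chains ``uniform'' in an appropriate sense, and that the range of $f$ depends only on the cyclic order in which the signed chain lengths appear. First I would observe that, by Lemma~\ref{lem:torus-bijection2}, for each non-empty feasible jump structure $I$ and each feasible sign vector $\epsilon \in \cB^*(I)$, the fiber $\{\cC^*(S,X) = \cC^*(I,\epsilon)\}$ has size exactly $2^{|FP(I)|}$, and by Claim~\ref{cl:torus-bijection-fluc-size} this equals $2^{n/2 - (d+1/2)|I| - |\cC(I)|}$. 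The crucial point is that this count depends only on $|I|$ and $|\cC(I)| = m$ (the number of chains), hence only on the multi-set $\bar w$ — indeed $|I| = \sum_i |w_i|$ and $m$ is the cardinality of $\bar w$. Therefore, conditioned on $\bar W = \bar w$, the random signed chain structure $\cC^*(S,X)$ is \emph{uniformly distributed} over all signed chain structures $\cC^*(I,\epsilon)$ whose associated multi-set of signed lengths is exactly $\bar w$.

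Next I would set up the combinatorial correspondence between such signed chain structures and cyclic arrangements. A signed chain structure $\cC^*(I,\epsilon) = \{(k_1,w_1'),\dots,(k_m,w_m')\}$ with $0 \le k_1 < \cdots < k_m < n$ is determined by (i) the cyclic sequence of signed lengths $(w_1',\dots,w_m')$ read in clockwise order, and (ii) the gap data, i.e. the clockwise distances between consecutive chains, subject to the constraints in \eqref{eq:torus-chain-structure} — each gap $\rho^+(k_{j-1},k_j)$ must exceed $(2d+1)t_j$, and (since $n$ is even and the fluctuation intervals have even length) the gaps beyond the forced part must be even and sum to $2|FP(I)|$. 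The number of ways to choose the gap data depends only on $|I|$ and $m$, not on the particular arrangement of signs; so summing the uniform weight over gap data, one concludes that conditioned on $\bar W = \bar w$, the cyclic sequence $(w_1',\dots,w_m')$ of signed chain lengths (read clockwise, starting from the chain containing or just after vertex $0$) is uniform over all distinct cyclic orderings of the multi-set $\bar w$ — equivalently, it has the law of $(w_{\pi(1)},\dots,w_{\pi(m)})$ for a uniform permutation $\pi$, up to the choice of cyclic starting point.

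Then I would connect the cyclic order to $|\Rng(f)|$. On the torus, the average height $h$ traces a closed walk: starting from $h(0)$ and moving clockwise, each chain $(k_j,w_j')$ contributes an increment of $w_j'$ to $h$, and between chains $h$ is constant, while $f$ stays within distance $1$ of $h$. Hence $\Rng(h) = \{h(0) + \text{partial sums of the } w_j'\}$ over the clockwise order, and since the walk is closed the starting point of the cyclic read-off does not change the set of partial sums up to a global shift; thus $|\Rng(h)|$ equals $|\Rng(w_{\pi(1)},\dots,w_{\pi(m)})|$ in the notation of \eqref{eq:torus-def-range-of-vector}. Finally, $f$ differs from $h$ by at most $1$ everywhere and, as in the line case (cf.\ the argument around \eqref{eq:line-critical-range-h-tag-equals-range-f}), whenever $S \ne \emptyset$ the homomorphism actually attains values $h_{\max}+1$ and $h_{\min}-1$ at fluctuation points adjacent to the extreme chains, so $|\Rng(f)| = |\Rng(h)| + 2$ exactly (the event $\{\bar W = \bar w\}$ with $\bar w$ non-empty forces $S \ne \emptyset$, so the degenerate flat homomorphisms are excluded). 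Combining these gives the claimed distributional identity.

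The main obstacle I anticipate is the second step: carefully verifying that the count of admissible gap configurations is genuinely independent of the \emph{arrangement} of the signed lengths around the cycle, so that the uniform measure on fibers pushes forward to the uniform measure on cyclic orderings — and handling the bookkeeping of the cyclic-versus-linear distinction (choice of base point) without introducing a spurious bias. One should check that the map from (cyclic order, gap data) to signed chain structures is a bijection onto the relevant set and that fixing the base point at $0$ introduces the same multiplicity for every arrangement. Everything else — the fiber-size computation from Lemma~\ref{lem:torus-bijection2} and Claim~\ref{cl:torus-bijection-fluc-size}, and the $+2$ from passing between $h$ and $f$ — is routine given the tools already in the paper.
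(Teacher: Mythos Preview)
Your overall architecture matches the paper's proof: it too splits the proposition into (i) $|\Rng(f)| = 2 + |\Rng(W)|$ where $W$ is the cyclic class of signed chain lengths (the paper's Lemma~\ref{lem:torus-range-equality}), and (ii) the law of $W$ given $\bar W=\bar w$ equals that of $[w_{\pi(1)},\dots,w_{\pi(m)}]$ (the paper's Lemma~\ref{lem:torus-distribution-of-W}). Your step~1 and step~3 are essentially correct, though for the $+2$ you should argue directly from the definition of $h$ (for every $x$, the three values $h(x)-1,h(x),h(x)+1$ lie in $\Rng(f)$), rather than via fluctuation points, which need not exist between every pair of consecutive chains.

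The genuine gap is in step~2. Your claim that the cyclic sequence is ``uniform over all distinct cyclic orderings'' and that ``fixing the base point at $0$ introduces the same multiplicity for every arrangement'' is false in general. If $\bar w$ admits a cyclic arrangement $[w]$ with non-trivial period $\per(w)<m$, then that arrangement corresponds to \emph{fewer} signed chain structures than an aperiodic one: rotating the torus by $n\cdot\per(x\vee w)/m$ positions returns the same structure, so the rotation multiplicity is $n\cdot\per(x\vee w)/m$, not $n$. Concretely, for $\bar w=\{1,1,-1,-1\}$ the class $[1,-1,1,-1]$ has period $2$ and gets half the weight of the period-$4$ class $[1,1,-1,-1]$. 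The reason the proposition still holds is that $[w_\pi]$ for a uniform permutation $\pi$ has \emph{exactly the same} bias: a cyclic class $[w]$ contains $\per(w)$ linear orderings, so $\Pr([w_\pi]=[w])\propto\per(w)$ as well. The paper makes this precise by introducing the auxiliary variable $Z$ encoding both the gap vector and the cyclic arrangement, computing $|\{Z=[x\vee w]\}|=\tfrac{n\cdot\per(x\vee w)}{m}\cdot 2^{\sum x_i}$ (Claim~\ref{claim:torus-feasible-jump-structure-count}), and then using $\gcd(\per x,\per w)\cdot\lcm(\per x,\per w)=\per(x)\cdot\per(w)$ to extract the $\per(w)$ factor. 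So your anticipated obstacle is real, and it resolves not by ``same multiplicity'' but by the two period biases cancelling.
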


Proposition \ref{prop:torus-range-distribution} is a direct
consequence of the following two lemmas. The first of these, Lemma
\ref{lem:torus-range-equality}, relates the range of $f$ to a random
variable $W$ defined below. The second, Lemma
\ref{lem:torus-distribution-of-W}, describes the distribution of $W$
conditioned on $\bar{W}$.

\vspace{2pt}
Given a set $\mathcal{X}$ and a vector $x \in \mathcal{X}^m$, define the {\em period} of $x$ by
\[ \per(x) := \min \{ 1 \leq k \leq m ~|~ \sigma^k(x)=x \} ,\]
where $\sigma \colon \mathcal{X}^m \to \mathcal{X}^m$ is the mapping
$(x_1,\dots,x_m) \mapsto (x_2,\dots,x_m,x_1)$ and $\sigma^k$ is its
iteration $k$ times, so that $\sigma^m$ is the identity map. Define
an equivalence relation on $\mathcal{X}^m$ by $x \sim y$ if and only
if there exists a $k$ such that $\sigma^k(x)=y$. Denote by
$[x]:=\{x,\sigma(x), \ldots, \sigma^{m-1}(x)\}$ the equivalence
class of $x$. Observe that $|[x]|=\per(x)$. For $x,y \in
\mathcal{X}$, define
\[ x \vee y := \big((x_1,y_1),\dots,(x_m,y_m)\big) ,\]
and note that $\per(x \vee y) = \lcm(\per(x),\per(y))$.

Write $\cC^*(S,X) = \{(k_i,w_i)\}_{i=1}^{m}$, where $0 \leq k_1 < \cdots < k_m < n$. Let $k_0:=k_m$ and define
\[ W := \left[\left( w_1, \dots, w_m \right)\right] . \]
That is, $W$ forgets the absolute position of the chains and
remembers only their signed length and relative ordering. Note that
$\bar{W}$ is determined by $W$.

We begin by showing that the random variable $W$ governs the range
of the homomorphism. For a vector of integers $w$ whose sum is zero,
recalling \eqref{eq:torus-def-range-of-vector}, we define
$|\Rng([w])| := |\Rng(w)|$, and note that this is indeed
well-defined by the equivalence class of $w$.

\begin{lemma}
We have
\label{lem:torus-range-equality}
\[ |\Rng(f)| = 2 + |\Rng(W)| \quad \text{on the event } \{ \bar{W} \neq \emptyset \} .\]
\end{lemma}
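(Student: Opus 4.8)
The plan is to show that, on the event $\{\bar W \neq \emptyset\}$, the range of $f$ is controlled entirely by the average height $h$, and that the range of $h$ in turn equals $|\Rng(W)|$. First I would record the elementary fact that $|f(x) - h(x)| \leq 1$ for every $x \in V$, so that $\Rng(f)$ is contained in an interval obtained by enlarging the interval $[\min_x h(x), \max_x h(x)]$ by one on each side; this gives $|\Rng(f)| \leq 2 + |\Rng(h)|$, where I write $|\Rng(h)| := 1 + \max_x h(x) - \min_x h(x)$. For the reverse inequality, I would argue that every value in this enlarged interval is actually attained: if $h$ takes value $v$ at some vertex $x$, then by the defining property of the average height there is a $k \geq 1$ with $\{f(x-i) : 0 \leq i \leq k\} = \{v-1, v, v+1\}$, so $v-1, v, v+1$ are all in $\Rng(f)$; applying this at a vertex where $h$ attains its maximum (resp.\ minimum) shows $\max_x h(x) + 1$ and $\min_x h(x) - 1$ lie in $\Rng(f)$, and since $\Rng(f) \cap \Z$ is an interval (a homomorphism changes by exactly one along edges, and $T_{n,d}$ is connected) all intermediate integers are attained too. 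Hence $|\Rng(f)| = 2 + |\Rng(h)|$ on $\{\bar W \neq \emptyset\}$ (the hypothesis guarantees $f$ takes at least three values, so $h$ is well defined and not the degenerate flat case).

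It then remains to identify $|\Rng(h)|$ with $|\Rng(W)|$. Here I would use the chain decomposition: writing $\cC^*(S,X) = \{(k_i, w_i)\}_{i=1}^m$ with $0 \leq k_1 < \dots < k_m < n$ and $k_0 := k_m$, the function $h$ is constant on each arc between consecutive chains, and crossing the chain ending at $k_i$ changes $h$ by exactly $w_i$ (this is the content of the signed chain structure, cf.\ Corollary-type statements and the bijection in Lemma \ref{lem:torus-bijection2}). Fixing a reference vertex $y_0$ lying in the fluctuation arc just before the chain ending at $k_1$ and setting $h(y_0) =: c$, the set of values taken by $h$ around the torus is exactly $\{c\} \cup \{c + \sum_{i=1}^j w_i : 1 \leq j \leq m\}$, i.e.\ $c$ plus the set of all partial sums $\sum_{i=1}^j w_i$ for $0 \leq j \leq m$ (the $j=0$ partial sum being $0$, corresponding to $y_0$ itself, and the $j=m$ one being $0$ as well since $\sum w_i = 0$ because $X \in \cB^*(S)$). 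Therefore $\max_x h(x) - \min_x h(x) = \max_{0 \leq j \leq m} \sum_{i=1}^j w_i - \min_{0 \leq j \leq m} \sum_{i=1}^j w_i$, so $|\Rng(h)| = |\Rng(w_1,\dots,w_m)| = |\Rng(W)|$, using that the right-hand side of \eqref{eq:torus-def-range-of-vector} is invariant under cyclic rotation of $w$ (which I would note follows since rotating shifts all partial sums by a common constant, as the total sum is zero) and hence is well defined on the equivalence class $W$.

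The main obstacle I anticipate is the bookkeeping in the middle step: making precise that $h$ really is constant on the fluctuation arcs and jumps by exactly $w_i$ across the $i$-th chain, and that starting the partial-sum count from the arc before the first chain gives precisely the value set of $h$ with no off-by-one or wrap-around errors. This is essentially a restatement of the structural description already established for the torus (the signed chain structure and the bijection of Lemma \ref{lem:torus-bijection2}), so I expect to be able to cite those rather than re-derive them; the remaining work is just careful indexing. The enlargement-by-one argument and the cyclic-invariance remark are both routine. A convenient way to organize the whole thing is: (i) $|\Rng(f)| = 2 + |\Rng(h)|$; (ii) the multiset of values of $h$ equals (a translate of) the set of partial sums of any representative of $W$; (iii) conclude.
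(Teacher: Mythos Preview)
Your proposal is correct and follows essentially the same route as the paper: first establish $|\Rng(f)| = 2 + (1 + \max_x h(x) - \min_x h(x))$ using $|f(x)-h(x)|\le 1$ together with $\{h(x)-1,h(x),h(x)+1\}\subset\Rng(f)$, and then identify the range of $h$ with the range of the partial sums of any representative of $W$ via the chain decomposition. The paper's proof is terser (it simply asserts that partial sums of $W$ correspond to differences in average height and that the three-value property of $h$ gives the extra $2$), but the structure, including your remark on cyclic invariance, matches exactly.
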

\begin{proof}
The partial sums of $W$ correspond to differences in average height between two vertices. Therefore, $|\Rng(W)| = 1 + \max_{x \in V} h(x) - \min_{x \in V} h(x)$. By the definition of the average height, we have $|f(x)-h(x)| \leq 1$ and $\{h(x)-1,h(x),h(x)+1\} \subset \Rng(f)$ for any vertex $x \in V$. Therefore, by considering vertices at which the average height is maximal or minimal, we obtain the additional factor of $2$ in the above equation.
\end{proof}

\begin{remark}
On the event $\{ \bar{W} = \emptyset \}$, the size of the range of $f$ is either $2$ or $3$. However, Lemma \ref{lem:torus-bijection} implies that, conditioned on $\bar{W} = \emptyset$, the probability that the size of the range is $2$ is of order $2^{-n/2}$.
\end{remark}

The next lemma is the final ingredient in the proof of Proposition
\ref{prop:torus-range-distribution}. The remaining part of this
section is devoted to its proof.

\begin{lemma}
\label{lem:torus-distribution-of-W}
Let $m \geq 1$, let $\bar{w}=\{w_1,\dots,w_m\}$ be a multi-set such that $\Pr(\bar{W}=\bar{w})>0$ and let $\pi$ be a uniformly chosen permutation of $\{1,2,\dots,m\}$. Then,
\[ (W \text{conditioned on } \bar{W} = \bar{w}) \eqd [w_{\pi(1)}, \dots, w_{\pi(m)}] .\]
\end{lemma}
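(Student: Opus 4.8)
The plan is to reduce, via Lemma~\ref{lem:torus-bijection2} and Claim~\ref{cl:torus-bijection-fluc-size}, to a purely combinatorial counting problem, and then to solve that problem by a double count.

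\textbf{Step 1 (reduction to uniformity).} Since $\Pr(\bar W=\bar w)>0$ and $m\ge1$, the multiset $\bar w$ satisfies $\sum_{i=1}^m w_i=0$; set $L:=\sum_{i=1}^m|w_i|$. Let $\mathcal S(\bar w)$ be the set of signed chain structures of the form $\cC^*(S(f),X(f))$ that arise from some $f\in\Hom(T_{n,d})$ with $\bar W=\bar w$. Then $\{\bar W=\bar w\}$ is the disjoint union, over $\cC^*\in\mathcal S(\bar w)$, of the events $\{\cC^*(S,X)=\cC^*\}$, and each such $\cC^*$ equals $\cC^*(I,\epsilon)$ for a unique feasible jump structure $I$ and $\epsilon\in\cB^*(I)$. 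By Lemma~\ref{lem:torus-bijection2}, $|\{\cC^*(S,X)=\cC^*(I,\epsilon)\}|=2^{|FP(I)|}$, and by Claim~\ref{cl:torus-bijection-fluc-size}, $|FP(I)|=n/2-(d+\tfrac12)L-m$, which is the same for every element of $\mathcal S(\bar w)$. Hence, conditioned on $\bar W=\bar w$, the signed chain structure $\cC^*(S,X)$ is uniform on $\mathcal S(\bar w)$, and since $W$ is a function of $\cC^*(S,X)$, for every cyclic class $[v]$ of linear orderings of $\bar w$ we have $\Pr(W=[v]\mid\bar W=\bar w)=|\{\cC^*\in\mathcal S(\bar w):W(\cC^*)=[v]\}|\,/\,|\mathcal S(\bar w)|$.

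\textbf{Step 2 (the count).} Fix a cyclic class $[v]$ and count pairs $(\cC^*,j)$, where $\cC^*\in\mathcal S(\bar w)$ has $W(\cC^*)=[v]$ and $j$ marks one of its $m$ chains, in two ways. On one hand this number is $m\cdot|\{\cC^*\in\mathcal S(\bar w):W(\cC^*)=[v]\}|$. On the other hand, I claim these pairs are in bijection with triples $(p,u,g)$ where $p\in V$, $u$ is a linear ordering of $\bar w$ with $[u]=[v]$, and $g=(g_1,\dots,g_m)\in\Z_{\ge0}^m$ with $\sum_ig_i=G:=(n-(2d+1)L-2m)/2$: from $(\cC^*,j)$, read the signed chain lengths clockwise starting from the marked chain to obtain $u$, let $p$ be the endpoint of the marked chain, and let $g_i$ record the slack in the clockwise gap after the $i$-th chain, using that by \eqref{eq:torus-feasible-jump-structure1} and \eqref{eq:torus-chain-structure} the clockwise distance between the endpoints of the $i$-th and $(i{+}1)$-st chains equals $(2d+1)|u_{i+1}|+2+2g_i$ and that these $m$ distances sum to $n$; conversely $(p,u,g)$ recovers the endpoints by cumulative clockwise distances, and the resulting configuration lies in $\mathcal S(\bar w)$ (the gap conditions hold by construction, the positions are distinct since the gaps are positive with sum $n$, $\cB^*$ is nonempty because $\sum u_i=\sum w_i=0$, and realizability follows from Lemma~\ref{lem:torus-bijection2} since $G=|FP|\ge0$). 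There are $\per(v)$ admissible $u$ and $\binom{G+m-1}{m-1}$ admissible $g$, so $|\{\cC^*\in\mathcal S(\bar w):W(\cC^*)=[v]\}|=\tfrac{n}{m}\binom{G+m-1}{m-1}\per(v)$; the prefactor does not depend on $[v]$, whence $\Pr(W=[v]\mid\bar W=\bar w)=\per(v)\,/\,\sum_{[v']}\per(v')$, the sum running over all cyclic classes of orderings of $\bar w$.

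\textbf{Step 3 (matching the right-hand side).} For a uniform permutation $\pi$, the tuple $(w_{\pi(1)},\dots,w_{\pi(m)})$ is a uniformly random linear ordering of the multiset $\bar w$, each of the $M$ distinct orderings having probability $1/M$. Since a cyclic class $[v]$ contains exactly $|[v]|=\per(v)$ distinct linear orderings and $\sum_{[v']}\per(v')=M$, we get $\Pr\big([w_{\pi(1)},\dots,w_{\pi(m)}]=[v]\big)=\per(v)/M=\per(v)/\sum_{[v']}\per(v')$, matching Step 2. This proves the lemma. The one delicate point is the bijection in Step 2: getting the gap bookkeeping right (the additive $+2$ comes from combining the strict inequality in \eqref{eq:torus-chain-structure} with the oddness in \eqref{eq:torus-feasible-jump-structure1}) and checking that every triple $(p,u,g)$ genuinely produces a feasible jump structure, including the topological condition \eqref{eq:torus-feasible-jump-structure2}.
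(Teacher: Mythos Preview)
Your proof is correct. Both you and the paper first use Lemma~\ref{lem:torus-bijection2} together with Claim~\ref{cl:torus-bijection-fluc-size} to reduce to a uniform count of signed chain structures with a given $\bar W$, and both end by observing that each side assigns probability proportional to $\per(v)$ to the cyclic class $[v]$. The execution of the count differs, however. The paper conditions further on $D$, the cyclic class of the gap sequence, and invokes Claim~\ref{claim:torus-feasible-jump-structure-count} to get $|\{Z=[x\vee w]\}|=\frac{n\,\per(x\vee w)}{m}2^{\sum x_i}$; summing over the $\gcd(\per x,\per w)$ elements of $\mathcal Z(x,w)$ and using $\gcd\cdot\lcm=\per x\cdot\per w$ yields the factor $\per(w)$. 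Your marking/double-count bypasses this: by tagging one chain you linearize the cyclic structure, so the count factors directly as $n\cdot\per(v)\cdot\binom{G+m-1}{m-1}$ with no $\lcm/\gcd$ manipulation. Your route is more elementary and gives the unconditional count in one stroke; the paper's route proves the slightly stronger statement that the conclusion already holds conditionally on $D$. Your gap bookkeeping (the ``$+2$'' from combining the strict inequality in \eqref{eq:torus-chain-structure} with the parity in \eqref{eq:torus-feasible-jump-structure1}) and the feasibility check for the inverse map are both correct.
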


Write $\cC^*(S,X) = \{(k_i,w_i)\}_{i=1}^{m}$, where $0 \leq k_1 < \cdots < k_m < n$. Let $k_0:=k_m$ and define
\[ Z := \left[\big( \rho^+(k_{i-1}, k_i - (2d+1)|w_i| - 2)/2, w_i \big)_{i=1}^m \right] . \]
That is, $Z$ forgets the absolute positions of the chains in
$\cC^*(S,X)$ and remembers only their distances one to the other
(precisely, half the distance from the last vertex of one chain to one vertex before the beginning of the next chain). Note that the first coordinate of each element in $Z$ is necessarily a non-negative integer. Also note that $W$ is determined by $Z$. The next claim calculates the
distribution of $Z$.

\begin{claim}
\label{claim:torus-feasible-jump-structure-count} Let $m\ge 1$. Let
$w \in (\Z \setminus \{0\})^m$ be such that $w_1+\cdots+w_m=0$ and
let $x \in (\N\cup\{0\})^m$ be such that
\begin{equation}
\label{eq:torus-feasible-jump-structure-count}
(2d+1)(|w_1| + \cdots + |w_m|) + 2m + 2(x_1 + \cdots + x_m) = n .
\end{equation}
Then
\[ |\{ Z = [x \vee w] \}| = \frac{n \cdot \per(x \vee w)}{m} \cdot 2^{x_1 + \cdots + x_m} .\]
\end{claim}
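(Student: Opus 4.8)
\medskip\noindent\textbf{Proof plan.} The plan is to reduce the count to a placement count of signed chain structures, using the bijection of Lemma~\ref{lem:torus-bijection2}. First I would record that the event $\{Z=[x\vee w]\}$ splits as the disjoint union, over all signed chain structures $\cC^*$ whose cyclic sequence of blocks $(x_i,w_i)$ (gap parameter followed by signed length) lies in the class $[x\vee w]$, of the events $\{\cC^*(S,X)=\cC^*\}$; and that each such $\cC^*$ equals $\cC^*(I,\epsilon)$ for a unique feasible jump structure $I$ together with a sign vector $\epsilon$, which lies in $\cB^*(I)$ precisely because $w_1+\cdots+w_m=0$. By Lemma~\ref{lem:torus-bijection2}, $|\{\cC^*(S,X)=\cC^*(I,\epsilon)\}|=2^{|FP(I)|}$; here $|I|=|w_1|+\cdots+|w_m|$ and $|\cC(I)|=m$, so Claim~\ref{cl:torus-bijection-fluc-size} together with the hypothesis~\eqref{eq:torus-feasible-jump-structure-count} gives
\[ |FP(I)| \;=\; \tfrac{n}{2}-(d+\tfrac12)\textstyle\sum_i|w_i|-m \;=\; x_1+\cdots+x_m, \]
a value that is the same for every $\cC^*$ realizing $[x\vee w]$. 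Consequently $|\{Z=[x\vee w]\}| = 2^{x_1+\cdots+x_m}\cdot M$, where $M$ is the number of signed chain structures realizing $[x\vee w]$.

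It then remains to prove $M = n\,\per(x\vee w)/m$, which I would do by a double count of \emph{rooted} signed chain structures (a structure together with a distinguished chain). On the one hand, each of the $M$ structures has $m$ chains and hence gives $m$ distinct rootings (the rooting records a concrete chain, so the $m$ choices remain distinct even when $x\vee w$ has a nontrivial period), producing $mM$ rooted structures. On the other hand, I would exhibit a bijection between rooted structures realizing $[x\vee w]$ and pairs $(T',v)$ with $T'\in[x\vee w]$ (there are $\per(x\vee w)$ such $T'$) and $v\in V$: from a rooted structure, read the block sequence clockwise starting at the distinguished chain to obtain $T'$, and let $v$ be the first vertex of the gap immediately preceding that chain; conversely, from $(T',v)$, lay the blocks of $T'$ consecutively around $V$ starting at $v$ and root at the chain of the first block. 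The layout is legitimate: the block lengths $2x_i+(2d+1)|w_i|+2$ sum to $n$ by~\eqref{eq:torus-feasible-jump-structure-count}, within a chain the jumps are $2d+1$ apart, and the nearest jumps of two consecutive chains are at clockwise distance $2x_i+2d+3$, which is odd and exceeds $2d+1$; hence the configuration satisfies~\eqref{eq:torus-feasible-jump-structure1}--\eqref{eq:torus-chain-structure} and has exactly $m$ chains, with induced pattern exactly $[x\vee w]$. These two maps are mutually inverse, so $mM=n\,\per(x\vee w)$, and combined with the previous paragraph this proves the claim.

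The routine verifications — that the laid-down configuration is a feasible jump structure with $m$ separate chains realizing the prescribed pattern, and that the two maps above are genuinely inverse — follow directly from~\eqref{eq:torus-feasible-jump-structure1}, \eqref{eq:torus-chain-structure} and $w_i\neq 0$. The step that carries the content, and the one I would be most careful with, is the double count: ensuring that distinct rootings of a symmetric structure are not accidentally identified, and that every representative of $[x\vee w]$ really does arise from some rooted structure, so that the pair set has size exactly $n\,\per(x\vee w)$. (That $n\,\per(x\vee w)$ is then divisible by $m$ is automatic, $M$ being an integer, via $\sum_{i\le \per(x\vee w)}(2x_i+(2d+1)|w_i|+2)=n\,\per(x\vee w)/m$; this need not be remarked on in the write-up.)
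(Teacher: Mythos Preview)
Your proof is correct and follows essentially the same approach as the paper: both partition $\{Z=[x\vee w]\}$ according to $\cC^*(S,X)$, use Lemma~\ref{lem:torus-bijection2} and Claim~\ref{cl:torus-bijection-fluc-size} to show each part has size $2^{x_1+\cdots+x_m}$, and then count the number $r$ of contributing signed chain structures. The only difference is that the paper disposes of $r=n\per(x\vee w)/m$ in one line (``$\{Z=[x\vee w]\}$ determines $\cC^*(S,X)$ up to a rotation of the torus''), effectively invoking orbit--stabilizer, whereas you unpack this via the rooted double count; your version is a fuller justification of the same fact. One small point: your anchor ``first vertex of the gap immediately preceding that chain'' is ambiguous when $x_i=0$; taking $v=k_{i-1}+1$ (the vertex right after the previous chain's last jump) works uniformly and is what your inverse map already uses.
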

\begin{proof}
Recall conditions \eqref{eq:torus-feasible-jump-structure1} and \eqref{eq:torus-feasible-jump-structure2}, and note that, together with the assumptions, they imply that the event $\{ Z=[x \vee w] \}$ is non-empty. We partition the event $\{ Z=[x \vee w] \}$ according to $\cC^*(S,X)$. Let $r \geq 1$ be the number of subsets in this partition, so that
\[ \{ Z=[x \vee w] \} = \bigcup_{i=1}^r \{ \cC^*(S,X)=\cC^*(I_i,\epsilon_i) \} ,\]
where the $(I_i,\epsilon_i)$ are distinct and feasible. By Lemma \ref{lem:torus-bijection2}, Claim \ref{cl:torus-bijection-fluc-size} and \eqref{eq:torus-feasible-jump-structure-count},
\[ |\{ \cC^*(S,X)=\cC^*(I_i,\epsilon_i) \}| = 2^{n/2 - (d+1/2)(|w_1|+\cdots+|w_m|) - m} = 2^{x_1 + \cdots + x_m} ,\]
for any $1 \leq i \leq r$, and therefore,
\[ |\{ Z=[x \vee w] \}| = r \cdot 2^{x_1 + \cdots + x_m} .\]
Recalling the definition of $Z$, we see that $\{ Z=[x \vee w] \}$
determines $\cC^*(S,X)=\{(k_i,w_i)\}_{i=1}^{m}$ up to a rotation of
the torus. It is not hard to see that if $l=\per(x \vee w)$ then
$r=nl/m$.
\end{proof}

\begin{proof}[Proof of Lemma \ref{lem:torus-distribution-of-W}]
We show something stronger. Define
\[ D := \left[\left( x_1, \dots, x_m \right)\right] \text{ where } Z = [((x_i,w_i))_{i=1}^{m}] .\]
Let $m \geq 1$, let $\bar{w}=\{w_1,\dots,w_m\}$ be a multi-set and let $x=(x_1,\dots,x_m)$ be such that $\Pr(\bar{W}=\bar{w}, D=[x])>0$. We shall show that
\[ (W \text{ conditioned on } \bar{W} = \bar{w} \text{ and } D = [x]) \eqd [w_{\pi(1)}, \dots, w_{\pi(m)}] ,\]
where $\pi$ is a uniformly chosen permutation of $\{1,2,\dots,m\}$.

Let $w$ be an ordering of $\bar{w}$. Define
\[ \mathcal{Z}(x,w) := \big\{ [x' \vee w'] ~|~ x' \in [x], ~ w' \in [w] \big\} , \]
and note that $|\mathcal{Z}(x,w)| = \gcd(\per(x),\per(w))$. We have
\[ \Pr(W=[w] ~|~ \bar{W} = \bar{w}, ~ D=[x]) = \frac{\Pr(W=[w], ~ D=[x])}{\Pr(\bar{W} = \bar{w}, ~ D=[x])} = \frac{\sum_{z \in \mathcal{Z}(x,w)} |\{Z=z\}|}{\big|\{ \bar{W}=\bar{w} \} \cap \{ D=[x] \}\big|} .\]
Let $w' \in [w]$ and $x' \in [x]$ be representatives of their equivalence classes. By Claim \ref{claim:torus-feasible-jump-structure-count}, we have
\[ |\{ Z=[x' \vee w'] \}| = \frac{n \cdot \per(x' \vee w')}{m} \cdot 2^{x_1 + \cdots + x_m} .\]
Since, $\per(x' \vee w') = \lcm(\per(x'),\per(w'))$, $\per(x')=\per(x)$ and $\per(w')=\per(w)$, we see that $\Pr(W=[w] ~|~ \bar{W} = \bar{w}, ~ D=[x])$ is proportional to
\[ |\mathcal{Z}(x,w)| \cdot \lcm(\per(x),\per(w)) = \per(x) \cdot \per(w) . \]
That is, conditioned on $\bar{W} = \bar{w}$ and $D=[x]$, the probability that $W$ equals $[w]$ is proportional to $\per(w)$. Finally, observe that the same is true for the probability that $[w_{\pi(1)}, \dots, w_{\pi(m)}]$ equals $[w]$. Indeed, one may check that
\[ \Pr([w_{\pi(1)}, \dots, w_{\pi(m)}] = [w]) = \frac{\per(w)}{C(\bar{w})} ,\]
where $C(\bar{w})$ is a multinomial coefficient depending on $\bar{w}$.
\end{proof}

\subsection{Proof of theorems}

In this section, we are primarily concerned with homomorphisms on the graph $T_{n,d}$. However, we will occasionally also refer to homomorphisms on the graph $P_{n,d}$. We note that in either case, such a homomorphism can be seen as an element of $\Z^{\{0,1,\dots,n\}}$, where $f \in \Hom(T_{n,d})$ is extended to $\{0,1,\dots,n\}$ by $f(n):=0$. Therefore, the uniform distributions on $\Hom(P_{n,d})$ and $\Hom(T_{n,d})$ can be seen as distributions on $\Z^{\{0,1,\dots,n\}}$. We shall denote the probability and expectation with respect to each of these distributions by $\Pr_P$ and $\E_P$ and $\Pr_T$ and $\E_T$, respectively. Throughout this section, we will frequently drop the subscript, in which case $\Pr$ and $\E$ will refer to $\Pr_T$ and $\E_T$.

We first state some technical lemmas and propositions whose proofs we defer to the next section. Our first proposition is one which will allow us to transfer some results from the line to the torus. This is an FKG-type inequality for the measure induced on non-negative homomorphisms by taking pointwise absolute value.

\begin{prop}
\label{prop:fkg-line-torus}
For any increasing function $\phi \colon \Z^{\{0,1,\dots,n\}} \to [0, \infty)$, we have
\[ \E_T[\phi(|f|)] \leq 9 \cdot \E_P[\phi(|f|)] .\]
\end{prop}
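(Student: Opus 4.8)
The plan is to compare the torus measure with the line measure by conditioning on the event that a line homomorphism happens to also be a valid torus homomorphism, and then to invoke the genuine FKG inequality (Theorem~\ref{thm:fkg}) on the line. The subtlety, already flagged in the text preceding the statement, is that the event ``$f\in\Hom(T_{n,d})$'' — call it $Q$, the set of $f\in\Hom(P_{n,d})$ for which $|f(n-i)-f(0+i)|$ is correct for all the wrap-around edges, i.e.\ $f(n-j)$ and $f(i)$ adjacent whenever $|n-i-j|$ is an admissible distance, together with $f(n)=0$ — is \emph{not} a function of $|f|$, so Theorem~\ref{thm:fkg} cannot be applied directly to $\mathbf 1_Q\cdot\phi(|f|)$.

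\textbf{Step 1: reduce $Q$ to an increasing event of $|f|$ up to a bounded factor.} The first move is to find an increasing function $\psi\colon\Z^{\{0,\dots,n\}}\to[0,\infty)$ of the absolute value such that $\mathbf 1_Q(f)\le \psi(|f|)$ pointwise, while the ``loss'' $\E_P[\psi(|f|)]/\Pr_P(Q)$ is bounded by an absolute constant. A natural candidate is to let $\psi(|f|)$ be (a constant times) the indicator of a suitable wrap-around \emph{boundary condition} stated purely in terms of $|f|$ near the vertices $0$ and $n$: on $Q$ the values $f(0)=f(n)=0$ force $|f|$ to be small (namely, $0$ or $1$) on all vertices within distance $2d$ of the ``seam'', and this ``$|f|$ is small near the seam'' event is increasing in the \emph{downward} order, hence decreasing — so one should instead use its complement-type formulation, or work with the fact that $\{|f|\le 1 \text{ on a fixed vertex set}\}$ is a decreasing event and combine two such to pin $f$ down. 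Concretely I expect the cleanest route is: decompose $Q$ according to the value of $f$ on the seam vertices (only finitely many, boundedly many, possibilities since those values are forced to lie in $\{-1,0,1\}$), so that $\Pr_P(Q)=\sum_{\text{seam patterns }\tau}\Pr_P(Q_\tau)$ with boundedly many $\tau$; pick the dominant pattern $\tau^*$; and observe $\Pr_P(Q)\le (\text{const})\,\Pr_P(Q_{\tau^*})$.

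\textbf{Step 2: run FKG on the line with the dominant seam pattern.} Having fixed $\tau^*$, condition the line homomorphism on the event $Q_{\tau^*}$ (a combination of events of the form ``$f$ takes a prescribed value in $\{-1,0,1\}$ at a given seam vertex''). Such an event is, up to taking absolute value, an intersection of events $\{|f(v)|=0\}$ and $\{|f(v)|=1\}$, i.e.\ a decreasing event of $|f|$ together with a parity constraint that is automatically satisfied; the key point is that $\mathbf 1_{Q_{\tau^*}}(f)$ can be written as $\psi(|f|)$ for an \emph{increasing} $\psi$ after passing to the complement — more precisely, one applies the FKG inequality of Theorem~\ref{thm:fkg} with $\phi$ the given test function and $\psi$ the (increasing function whose expectation controls the) event, to get $\E_P[\phi(|f|)\psi(|f|)]\ge \E_P[\phi(|f|)]\,\E_P[\psi(|f|)]$, or with the opposite monotonicity to get the reverse; the point is that $\E_T[\phi(|f|)]=\E_P[\phi(|f|)\mid Q]$ and one relates this to $\E_P[\phi(|f|)]$ via Theorem~\ref{thm:fkg} applied to the $|f|$-measurable approximant of $Q$. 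The bounded constant (the paper claims $9=3^2$, suggesting two independent factors of $3$, one for each ``end'' of the seam, or a factor $3$ from the three seam patterns times a factor $3$ from an FKG step) is then accumulated from Step~1's bounded number of patterns and possibly a second application of FKG or Lemma~\ref{lem:main-tool}.

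\textbf{The main obstacle} is Step~1: writing the genuinely non-$|f|$-measurable event $Q$ in terms of $|f|$ without losing more than a constant, and getting the monotonicity direction to line up so that Theorem~\ref{thm:fkg} applies in the useful direction (FKG gives a correlation inequality, not an equality, so one must be careful that conditioning on the $|f|$-event only \emph{increases} $\E[\phi(|f|)]$, not decreases it — which forces $Q$'s approximant to be a \emph{decreasing} event of $|f|$, handled by passing to complements and using $\E_P[\phi]=\E_P[\phi\mathbf 1_{Q^c_{\text{appx}}}]+\E_P[\phi\mathbf 1_{Q_{\text{appx}}}]$ bookkeeping). Once the right increasing/decreasing $|f|$-event is identified and the seam patterns are enumerated, the rest is routine: combine Theorem~\ref{thm:fkg}, the bound $\Pr_P(Q)\le C\Pr_P(Q_{\tau^*})$ from the finite pattern count, and arithmetic to land the absolute constant $9$.
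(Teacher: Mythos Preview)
Your outline has the right high-level shape — relate $\E_T[\phi(|f|)]=\E_P[\phi(|f|)\mid Q]$ to $\E_P[\phi(|f|)]$ via FKG — but the concrete mechanism you propose does not work, and the actual argument hinges on two observations you are missing.

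First, your ``seam pattern decomposition'' $Q=\bigcup_\tau Q_\tau$ does not resolve the non-$|f|$-measurability of $Q$: each individual pattern $Q_\tau$ pins down the \emph{signs} of $f$ near the seam, so $\mathbf 1_{Q_\tau}$ is still not a function of $|f|$, and you are back where you started. The paper instead \emph{sandwiches} $Q$ between two genuinely $|f|$-measurable events:
\[
J\cap J' \;\subset\; Q \;\subset\; \{f(n)=0\},
\]
where $J:=\{|f(k)|\le 1:\ 0\le k\le 2d\}$ and $J':=\{|f(k)|\le 1:\ n-2d\le k\le n\}$. The right inclusion is obvious; the left one holds because if $|f|\le 1$ on the first and last $2d+1$ vertices then (by parity) $f$ is $0$ on the even ones and $\pm 1$ on the odd ones there, which forces every wrap-around edge constraint to be satisfied. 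Both $\{f(n)=0\}$ and $J\cap J'$ are decreasing in $|f|$, so the monotonicity direction is never in doubt.

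Second, the argument needs FKG \emph{twice}, on two different graphs. One application of Theorem~\ref{thm:fkg} on $P_{n,d}$ with the decreasing event $\{f(n)=0\}$ gives $\E_P[\phi(|f|)\mid f(n)=0]\le \E_P[\phi(|f|)]$. Then, since $\phi\ge 0$,
\[
\E_P[\phi(|f|)\mid Q]\;\le\; \frac{\E_P[\phi(|f|)]}{\Pr_P(J\cap J'\mid f(n)=0)}.
\]
The key step you do not have is that $\Pr_P(\cdot\mid f(n)=0)$ is itself the uniform homomorphism measure on the graph $P'_{n,d}$ obtained from $P_{n,d}$ by identifying $0$ and $n$, so Theorem~\ref{thm:fkg} applies \emph{again} to decouple $J$ and $J'$: $\Pr_P(J\cap J'\mid f(n)=0)\ge \Pr_P(J\mid f(n)=0)^2$ by symmetry. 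A third FKG application (back on $P_{n,d}$) gives $\Pr_P(J\mid f(n)=0)\ge \Pr_P(J)$, and Lemma~\ref{lem:line-jump-at-start} supplies $\Pr_P(J)\ge 1/3$. This is exactly where the constant $9=3^2$ comes from — your guess ``one factor of $3$ for each end of the seam'' is morally right, but the mechanism is two FKG decouplings, not a count of seam patterns.
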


The next two lemmas are concerned with the probability of jumps occurring at given vertices. In the case of the line, we were able to obtain in Lemma \ref{lem:line-jumps-at-positions} a good upper bound on the probability of having $t$ jumps at any given vertices. In the case of the torus, we are not able to obtain such a general result. The main difficulty is due to the topological constraint imposed by the torus. In particular, if a jump occurs at a given vertex then a jump in the opposite direction must also occur at some other vertex. The next lemma shows that the probability of a chain of consecutive jumps is still unlikely.

\begin{lemma}
\label{lem:torus-chain-prob}
For any vertex $x \in V$ and any positive integer $t$, we have
\[ \Pr(C_{x,t}) \leq C 2^{-dt} .\]
\end{lemma}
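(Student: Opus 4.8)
The plan is to imitate the chain-removal argument of Lemma~\ref{lem:line-jumps-at-positions}, adapting it to the topology of the torus. Fix $x \in V$ and $t \geq 1$ and suppose $C_{x,t}$ has positive probability (otherwise there is nothing to prove). On the event $C_{x,t}$ there is a chain of $t$ minimal-distance jumps ending at $x$; let $x' := x - (2d+1)t - 1$ be the vertex just before the start of this chain. Exactly as on the line, the values of $f$ along the arc from $x'$ to $x+1$ are forced (up to an overall sign) to be of the form $f(x') + \epsilon \cdot(\underbrace{0,1,\dots,0,1,0}_{2d+1},\underbrace{1,2,\dots,1,2,1}_{2d+1},\dots,\underbrace{t-1,t,\dots,t-1,t,t-1}_{2d+1},t,t+1,t)$ with $\epsilon \in \{-1,1\}$. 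The idea is to ``excise'' this chain: replace the forced segment between $x'$ and $x$ by a segment of constant average height carrying free $\pm 1$ fluctuations at every other vertex, and splice the rest of the homomorphism back together after shifting heights so the endpoints match. This should gain roughly $dt$ bits of entropy from the freed-up fluctuation sites, giving a factor $2^{-dt}$.

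The steps, in order: (i) define the excision map $T_{x,t}$ from $C_{x,t} \times \{-1,1\}^{dt}$ (or $\{-1,1\}^{dt - O(1)}$) into $\Hom(T_{n,d})$, analogously to \eqref{eq:def-T-k-t}, where the ``new'' segment has constant average height equal to $f(x')$ and the portion of the torus outside $[x', x]$ is relabelled by a rotation (to absorb the parity shift when $t$ is odd) and a vertical translation by $f(x') - f(x-1)$; (ii) verify that the image is a valid homomorphism on $T_{n,d}$ — this means checking, for every edge $(i,j)$ with $\rho(i,j) \in \{1,3,\dots,2d+1\}$, that the new values differ by exactly one, splitting into the cases where both endpoints are in the excised region, both are outside, or the edge straddles $x'$ or $x$; the straddling cases use that $C_{x,t} \subseteq B_{x'} \cap B_{x}$ for the analogue of the event $B_j$ from the proof of Lemma~\ref{lem:line-jumps-at-positions}; (iii) observe the map is injective because the forced shape of the chain lets us recover $f$ from its image together with $w$; (iv) conclude $\Pr(C_{x,t}) \cdot 2^{dt} \leq C$ by counting, i.e. $\Pr(C_{x,t}) \leq C 2^{-dt}$, where the constant $C$ absorbs the small discrepancy in the exponent coming from the parity-shift rotation and from the fact that we only free up fluctuation sites strictly between $x'$ and $x$.

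The main obstacle I expect is the splicing across the torus: unlike the line, there is no ``free end'' to push the accumulated height change into, so after removing the chain the total height gained around the loop drops by $\pm t$, which would violate the bridge/topological constraint \eqref{eq:torus-feasible-jump-structure2}. Two ways to handle this: either (a) the chain we remove is not the only chain — but we are bounding $\Pr(C_{x,t})$ unconditionally, so we cannot assume another compensating chain exists, hence we should instead (b) not require the image to lie in $C_{x,t}^c$ or in any prescribed jump structure, but only in $\Hom(T_{n,d})$, and compensate by \emph{not} shifting: replace the excised segment by a segment that goes up-and-back (reaching height $+1$ and returning), so that the net height change along that arc is $0$ rather than $t$ — but then the endpoints $f(x')$ and $f(x-1)$ need not coincide, which is exactly why we translate the \emph{rest} of the torus by $f(x') - f(x-1)$ and why a rotation by one step is needed when $t$ is odd to fix parity. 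Getting this bookkeeping exactly right — in particular confirming that the translated-and-rotated remainder still glues to the new segment along all long-range edges near $x$ and $x'$, and that the entropy gain is genuinely $\geq dt - O(1)$ rather than something smaller — is where the real work lies; the constant $C$ in the statement is precisely there to cover the $O(1)$ slack, so I do not need the sharp $2^{-dt}$ that Lemma~\ref{lem:line-jumps-at-positions} obtained on the line.
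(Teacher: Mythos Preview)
Your approach has a genuine gap that is not merely bookkeeping. The translation you propose, by $f(x')-f(x-1)=-\epsilon t$, repairs the gluing near $x$ but simultaneously destroys it near $x'$. Concretely: with the outside arc translated and the new segment starting at height $f(x')$, you get $g(x')=f(x')$ while $g(x'-1)=f(x'-1)-\epsilon t$, so $|g(x')-g(x'-1)|=|f(x')-f(x'-1)+\epsilon t|\in\{t-1,t+1\}$, which is not $1$ for $t\ge 2$. The same failure propagates to all long-range edges crossing $x'$. No choice of rotation $\delta$ fixes this: the rotation handles parity, not the height offset of $\epsilon t$. The underlying reason is that the outside arc carries net height change $-\epsilon t$ (by the bridge constraint), so after removing the chain you are left with an arc that cannot be glued to a flat segment at both ends, no matter how you translate or rotate. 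On the line this problem does not arise because one end is free; on the torus it is a genuine obstruction.

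The paper's proof avoids this entirely by a different route. It first observes that $C_{x,t}=C^0_{x,t}\cup C^1_{x,t}$ according to whether $|f(x)-f(x'-1)|$ equals $t+2$ or $t$, and shows $\Pr(C^1_{x,t})\le\Pr(C^0_{x,t})$ by a one-vertex flip at $x'-1$. The key point is then that $C^0_{x,t}=\{|f(x)-f(x'-1)|\ge t+2\}$ is an \emph{increasing} event in $|f|$. After rotating so that $x'-1=0$, this becomes $\{|f((2d+1)t+2)|\ge t+2\}$, and Proposition~\ref{prop:fkg-line-torus} (the FKG-type transfer) gives $\Pr_T(C^0_{x,t})\le 9\,\Pr_P(|f((2d+1)t+2)|\ge t+2)$. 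On the line this last event forces a chain structure to which Lemma~\ref{lem:line-jumps-at-positions} applies directly, yielding the bound $C\,2^{-dt}$. Thus the topological constraint is never confronted head-on; it is sidestepped by reducing to a monotone event and invoking the line result.
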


The following lemma shows that having jumps in opposing directions
at given vertices is also unlikely.

\begin{lemma}
\label{lem:torus-jumps-at-multiple-positions}
For $I,J \subset V$ let $A_{I,J} := \bigcap_{x \in I, y \in J} A_{x,y}$ be the event that there are jumps in one direction at all vertices in $I$ and jumps in the opposite direction at all vertices in $J$. Then, for any subsets $I,J \subset V$ of size $m$ each, we have
\[ \Pr(A_{I,J}) \leq 2^{-(2d-1)m} .\]
\end{lemma}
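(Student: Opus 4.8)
The plan is to imitate the proof of Lemma~\ref{lem:line-jumps-at-positions}: I would build an entropy-gaining injection which deletes the jumps prescribed by $I$ and $J$ and fills the vacated arcs with free fluctuations around a locally constant average height. The one genuinely new feature is the topology of the torus — a single jump cannot simply be erased, since that would force the average height to change by $\pm 1$ around the entire loop. This is exactly where the hypothesis $|I|=|J|=m$ enters: on the event $A_{I,J}$ the $m$ jumps of $I$ point in one direction and the $m$ jumps of $J$ in the opposite direction, so the total signed height contributed by the deleted jumps is zero and the surgery is compatible with closing up the cycle.

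Concretely, I would first reduce (using the symmetry $f\mapsto-f$) to the sub-event $\tilde A_{I,J}$ on which $\Delta\equiv 1$ on $I$ and $\Delta\equiv-1$ on $J$. On $\tilde A_{I,J}$ the set $I\cup J$ is a feasible jump sub-structure all of whose chains, in the sense of~\eqref{eq:torus-chain-structure}, are monochromatic — an ``$I$-chain'' or a ``$J$-chain'' — since two opposite-direction jumps are at clockwise distance at least $2d+3$. Writing $\cC(I\cup J)=\{(k_1,t_1),\dots,(k_r,t_r)\}$ with $\sum_l t_l=2m$ and with the $I$-chains and the $J$-chains each accounting for total length $m$, I would define an injection $T\colon \tilde A_{I,J}\times\{-1,1\}^{(2d-1)m+1}\to\Hom(T_{n,d})$ which, given $f$ and a word $w$, simultaneously excises each chain-arc (a clockwise block of $(2d+1)t_l+2$ vertices on which, as in the bijection underlying Lemma~\ref{lem:torus-bijection2}, the values of $f$ are forced once the entry height and the chain direction are fixed), re-glues the remaining arcs — possible because the excised signed heights sum to zero — and rewrites each vacated arc as a run of fluctuations about the now-constant local average height, the $\pm 1$ values being read off from $w$. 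The parity mismatch created when some $t_l$ is odd is absorbed by a one-vertex shift, exactly as the parameter $\delta$ does in~\eqref{eq:def-T-k-t}; on the torus this is harmless because the total number of such shifts is compensated globally. Counting fluctuation points via Claim~\ref{cl:torus-bijection-fluc-size}, the excision of $2m$ jumps organised into $r$ chains frees up at least $(2d-1)m+1$ new fluctuation coordinates: the loss relative to the line bound $2^{-2dm}$ is the price of re-absorbing the $2m$ units of signed height carried by the deleted jumps so that the loop closes. Any further jumps of $S$ lying outside $I\cup J$ are simply carried along unchanged by the surgery; since Lemma~\ref{lem:torus-bijection2} indexes homomorphisms by the signed chain structure $\cC^{*}(S,X)$ rather than by $S$ alone, conditioning on these extra jumps causes no trouble. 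Injectivity of $T$ gives $\Pr(\tilde A_{I,J})\le 2^{-(2d-1)m-1}$, hence $\Pr(A_{I,J})=2\Pr(\tilde A_{I,J})\le 2^{-(2d-1)m}$.

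The main obstacle is making the surgery and its inverse precise on the torus: one must verify that $T(f,w)$ genuinely lies in $\Hom(T_{n,d})$ by checking the edge constraint across every splice (the case analysis following~\eqref{eq:def-T-k-t}), that the chain directions and the entry heights can be recovered from $T(f,w)$ together with the fixed data $I,J$ so that $T$ is injective, and that the arithmetic of arc lengths and parities yields exactly the claimed number of free coordinates. An alternative, slightly softer route that avoids the full surgery is to bound $\Pr(A_{I,J})\le\Pr\big(\bigcap_{l}C_{k_l,t_l}\big)$ and peel the chains off one at a time: conditioning on all but one chain effectively cuts the torus into a segment, on which the estimate~\eqref{eq:line-chain-prob-given-initial-values} from the proof of Lemma~\ref{lem:line-jumps-at-positions} controls the remaining chain by $2^{-dt_l-\lfloor t_l/2\rfloor}$, while the first chain is controlled by Lemma~\ref{lem:torus-chain-prob}; summing the gains $\lfloor t_l/2\rfloor$ against the multiplicative constant then recovers the bound. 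I expect the direct excision argument to be the cleaner of the two once the bookkeeping on arc lengths is set up carefully.
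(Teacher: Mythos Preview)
Your strategy—delete opposite-sign jump pairs and harvest the resulting fluctuation freedom—is exactly the paper's idea, but the paper organizes it very differently, and your execution has a counting gap that loses a factor of~$2$.

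The paper does \emph{not} excise all $2m$ jumps at once. It works inductively at the level of signed jump structures $(S,X)$: at each step it picks a single pair $x\in J$, $y\in J'$ with no other element of $J\cup J'$ on the arc $[x,y]$, and applies the two-point deletion $I\mapsto I^{x,y}$ (with the one-vertex shift on the arc $(x,y)$). The crucial point you are missing is a case split on whether $x,y$ have neighbours in the ambient jump set $I=S$. Using Claim~\ref{cl:torus-bijection-fluc-size}, one pair-removal gains exactly $2^{\,2d+1+|\cC(I)|-|\cC(I^{x,y})|}$. If $I\cap D_{x,y}=\emptyset$ (both $x,y$ are isolated in $I$) the chain count drops by~$2$, so the gain is $2^{2d+3}$, while the preimage $|T^{-1}(I',\epsilon')|$ can be~$2$ (the sign at $x$ is free); if $I\cap D_{x,y}\neq\emptyset$ the chain count may \emph{rise} by as much as~$2$, so the gain is only $\ge 2^{2d-1}$, but now the sign at $x$ is pinned by its neighbour, forcing preimage~$1$. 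In both cases the net factor is $\ge 2^{2d-1}$, and induction gives the stated bound.

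In your all-at-once version the worst case is that every element of $I\cup J$ sits strictly in the interior of an $S$-chain (this can happen: take each $x\in I$ with $x\pm(2d+1)\in S\setminus(I\cup J)$, and likewise for $J$). Then removing the $2m$ jumps increases $|\cC|$ by $2m$, and by Claim~\ref{cl:torus-bijection-fluc-size} you free exactly $(2d+1)m-2m=(2d-1)m$ fluctuation coordinates, not $(2d-1)m+1$. Your injection then yields only $\Pr(\tilde A_{I,J})\le 2^{-(2d-1)m}$ and hence $\Pr(A_{I,J})\le 2^{-(2d-1)m+1}$. The missing factor of~$2$ is precisely what the paper's adjacency dichotomy recovers: in the bad ``interior'' case the sign redundancy you pay when doubling $\tilde A_{I,J}\mapsto A_{I,J}$ is already absorbed, because those signs are forced by the adjacent $S$-jumps. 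An all-at-once argument would have to replicate this case analysis globally, which is considerably messier than doing it one pair at a time. Your alternative route via Lemma~\ref{lem:torus-chain-prob} also falls short: that lemma carries an absolute constant $C$, so you cannot get the clean $2^{-(2d-1)m}$ without further work for small $m$, and the ``cut the torus into a segment'' conditioning is not literally what \eqref{eq:line-chain-prob-given-initial-values} provides.
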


The last lemma is the analog of Corollary \ref{cor:line-ratio-ineq-for-R} on the line. It will allow us to deduce the typical order of magnitude of $R$.

\begin{lemma}
\label{lem:torus-ratio-ineq-for-R} For any positive even integer $n$
and any positive integers $d$ and $r$ such that $Cdr \leq n$, we
have
\[ \Pr(R=r)\ge \frac{n^2}{r^2 2^{2d+5}} \cdot \left( 1 - \frac{Cdr}{n} \right)^2 \cdot \Pr(R=r-1).\]
\end{lemma}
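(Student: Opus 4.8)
The plan is to obtain a lower bound on $\Pr(R=r)/\Pr(R=r-1)$ by an explicit counting argument, entirely analogous in spirit to the proof of Lemma \ref{lem:line-ratio-ineq-for-R-cond} on the line, but now using the torus bijection (Lemma \ref{lem:torus-bijection}) together with the counting identity in Claim \ref{cl:torus-bijection-fluc-size}. First I would recall that, by Lemma \ref{lem:torus-bijection}, for each non-empty feasible jump structure $I$ we have $|\{S=I\}| = |\cB^*(I)| \cdot 2^{|FP(I)|}$, and by Claim \ref{cl:torus-bijection-fluc-size}, $|FP(I)| = n/2 - (d+1/2)|I| - |\cC(I)|$. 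Summing over all feasible $I$ with $|I| = 2r$ (note $|S| = 2R$, so $R=r$ corresponds to $|I|=2r$) gives
\[ |\{R=r\}| = \sum_{I : |I| = 2r} |\cB^*(I)| \cdot 2^{n/2 - (d+1/2)\cdot 2r - |\cC(I)|} . \]
So the ratio $\Pr(R=r)/\Pr(R=r-1)$ equals $2^{-(2d+1)}$ times the ratio of the two sums $\sum_{|I|=2r} |\cB^*(I)| 2^{-|\cC(I)|}$ and $\sum_{|I|=2(r-1)} |\cB^*(I)| 2^{-|\cC(I)|}$.

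The heart of the argument is to inject structures with $r-1$ pairs of jumps into structures with $r$ pairs of jumps, keeping track of the multiplicative weight $|\cB^*(I)| 2^{-|\cC(I)|}$. The natural move is to take a feasible structure $I'$ with $|I'| = 2(r-1)$ and insert one new $+$ jump and one new $-$ jump, each as a fresh length-one chain, into two of the "long" fluctuation gaps. Concretely, the complement of the chain-points decomposes into intervals whose total length is $2|FP(I')|+\,(\text{short pieces})$; there are on the order of $n/2$ available even offsets at which a new jump can be placed while keeping the clockwise-distance-odd and $\ge 2d+1$ constraints \eqref{eq:torus-feasible-jump-structure1}, and the gap condition \eqref{eq:torus-chain-structure} ensuring the new jumps do not merge into neighbouring chains costs only $O(dr)$ forbidden positions. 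Placing a $+$ and a $-$ this way automatically preserves the sign-balance condition $\sum \epsilon_{(k,t)} t = 0$, so $|\cB^*|$ does not decrease — in fact each old feasible sign vector extends, and one checks $|\cB^*(I)| \ge |\cB^*(I')|$ while $|\cC(I)| = |\cC(I')| + 2$, costing a factor $2^{-2}$ per insertion. Counting the number of ways to perform the insertion gives roughly $\big(\tfrac{n}{2} - O(dr)\big)^2 / (\text{overcount})$; the overcount arises because the new structure $I$ could be produced from $\binom{r}{1}^2$ or so choices of which chains were "new", but in the end one extracts a clean lower bound of the form
\[ \frac{|\{R=r\}|}{|\{R=r-1\}|} \ge \frac{1}{2^{2d+1}} \cdot \frac{(n/2 - Cdr)^2}{4 r^2} \cdot 2^{-2} , \]
which after absorbing constants is exactly the claimed $\dfrac{n^2}{r^2 2^{2d+5}}\big(1 - \tfrac{Cdr}{n}\big)^2$.

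The main obstacle I expect is bookkeeping the factor $|\cB^*(I)|$ correctly: on the torus the number of feasible sign vectors is not a simple closed form (it counts sign assignments to chains that balance to zero), so the injection must be chosen so that it manifestly does not decrease $|\cB^*|$ — inserting one $+$ chain and one $-$ chain of equal length (length one) is exactly what achieves this, since any balanced signing of the old chains extends by assigning $+$ and $-$ to the two new ones. A secondary nuisance is the combinatorial overcounting in the insertion map and the precise accounting of the $O(dr)$ positions excluded by the no-merge gap condition; this is where the $(1 - Cdr/n)$ factor and the absolute constant $C$ are generated, and it is handled by the same elementary estimates (choosing positions, substituting to remove the minimal gaps) as in Claim \ref{cl:line-feasible-jump-structure-count} and Lemma \ref{lem:line-ratio-ineq-for-R-cond}. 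I would present the weight-preserving injection first, then the position count, then collect the constants.
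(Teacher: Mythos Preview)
Your proposal is correct and follows essentially the same approach as the paper: insert a pair of opposite-sign length-one chains into a structure with $2(r-1)$ jumps, count the available insertion positions (roughly $(n/2 - Cdr)^2$), and bound the overcount by $O(r^2)$. The paper streamlines the $|\cB^*(I)|$ bookkeeping you flag as the main obstacle by working directly with \emph{signed} jump structures $(I,\epsilon)$ and applying Lemma~\ref{lem:main-tool} to the insertion map $T_r : B_{r-1} \to \cP(B_r)$, $(I,\epsilon)\mapsto\{(I_{x,y},\epsilon_{x,y,i})\}$; since each old sign vector extends in exactly two ways (one sign choice $i\in\{-1,1\}$ for the new pair), the sign-balance constraint is tracked automatically and no separate inequality $|\cB^*(I)|\ge|\cB^*(I')|$ is needed.
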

As in the case of homomorphisms on the line, it is also possible to
prove an inequality in the opposite direction, showing that
$\Pr(R=r)\le C\frac{n^2}{r^2 2^{2d}} \Pr(R=r-1)$, but we neither use
nor prove this.

\subsubsection{The supercritical regime}
We prove Theorem \ref{thm:torus-supercritical} and Corollary \ref{cor:torus-supercritical}.
By Lemma \ref{lem:torus-jumps-at-multiple-positions} and by the union bound, we have
\[ \Pr(R \geq r) = \Pr\left(\bigcup_{\substack{I,J \subset V \\ |I|=|J|=r}} A_{I,J} \right) \leq \binom{n}{r}^2 2^{-(2d-1)r} , \quad r \geq 1 .\]
One may easily check that $|\Rng(f)| \leq R+3$, so that
\begin{equation}
\label{eq:torus-supercritical-bound}
\Pr\big(|\Rng(f)| \geq 3 + r\big) \leq \binom{n}{r}^2 2^{-(2d-1)r} , \quad r \geq 1 .
\end{equation}
Moreover, it is easy to describe all homomorphisms which take on at most $3$ values.
Let $\Omega_0$ be the set of homomorphisms which are constant on the even vertices (having the value $0$ on the even vertices and $1$ or $-1$ on the odd vertices), and let $\Omega_1$ be the set of homomorphisms which are constant on the odd vertices (having the value $\pm 1$ on the odd vertices, and $0$ or $\pm 2$, respectively, on the even vertices).
Then $\{ |\Rng(f)| \leq 3 \} = \Omega_0 \cup \Omega_1$, $|\Omega_0 \cap \Omega_1| = 2$, and $|\Omega_0| = |\Omega_1| = 2^{n/2}$. Therefore,
\[ \Pr\big(|\Rng(f)| < 3\big) = \Pr(\Omega_0 \cap \Omega_1) \leq \frac{|\Omega_0 \cap \Omega_1|}{|\Omega_0 \cup \Omega_1|} = \frac{2}{2^{n/2+1} - 2}  \leq 2^{1-n/2} ,\]
completing the proof of Theorem \ref{thm:torus-supercritical}. To
obtain Corollary \ref{cor:torus-supercritical}, recall that
$|\Omega_0| = |\Omega_1|$ and note that if $d - \log_2 n \to \infty$
as $n \to \infty$ then $\Pr(\Omega_0 \cup \Omega_1) = \Pr(|\Rng(f)|
\leq 3) = 1 - o(1)$, by Theorem \ref{thm:torus-supercritical}.

We remark that the bound \eqref{eq:torus-supercritical-bound}
obtained for the probability that the range is large constitutes
something of a compromise between two possibilities. With somewhat
less work we could have used the FKG-type inequality, Proposition
\ref{prop:fkg-line-torus}, to obtain a weaker bound. With somewhat
more work we could make a finer analysis of the possible cases in
the proof of Lemma \ref{lem:torus-jumps-at-multiple-positions} and
obtain a somewhat better bound, with $2d-1$ replaced by $2d+1$ or
even $2d+2$. The bound we chose to prove has the benefit that it is
already rather good and has a relatively simple proof.

\subsubsection{The subcritical regime}
We begin by proving the upper bound in Theorem \ref{thm:torus-subcritical-range}.
Since $\max_{0 \leq k \leq n} |f(k)|$ is an increasing function in $|f|$, we have by Proposition \ref{prop:fkg-line-torus},
\[ \E_T\left[\max_{0 \leq k \leq n} |f(k)| \right] \leq 9 \cdot \E_P\left[\max_{0 \leq k \leq n} |f(k)|\right] .\]
By our previous result on the line, Theorem \ref{thm:line-subcritical-range}, we have
\[ \E_P\left[\max_{0 \leq k \leq n} |f(k)|\right] \leq \E_P\big[|\Rng(f)| - 1\big] \leq 2 + C \sqrt{n 2^{-d}} ,\]
and then, using symmetry,
\[ \E_T\big[|\Rng(f)|\big] = \E_T\left[1 + \max_{0 \leq k \leq n} f(k) - \min_{0 \leq k \leq n} f(k)\right] \leq 1 + 2 \cdot\E_T\left[\max_{0 \leq k \leq n} |f(k)|\right] \leq 37 + C \sqrt{n 2^{-d}} .\]

We now prove the lower bound in Theorem \ref{thm:torus-subcritical-range}. The proof is very similar to the proof of the lower bound in Theorem \ref{thm:line-subcritical-range} in Section \ref{sec:line}, and so we only give an outline of the proof. First, we show that for any $\epsilon>0$ there exists a $\delta>0$ such that
\[ \Pr\left(|\Rng(f)| < 3 + \big\lfloor \delta \sqrt{n 2^{-d}} \big\rfloor \right) \leq \epsilon + 2^{1-n/2} .\]
Let $\epsilon > 0$. Note that, by Theorem \ref{thm:torus-supercritical}, the statement is trivial when $\delta \sqrt{n 2^{-d}} < 1$. Hence, we may assume that $n2^{-d} \geq C/\epsilon$.
Mimicking the proof of Lemma \ref{lem:line-R-large-whp} and its corollary, using Lemma \ref{lem:torus-ratio-ineq-for-R} in place of Lemma \ref{lem:line-ratio-ineq-for-R-cond}, we find that there exists a $\delta_1>0$ such that
\[ \Pr \left( R < \delta_1 n 2^{-d} \right) \leq \epsilon/4 .\]
Continuing as in \eqref{eq:line-M-R-C-relation} - \eqref{eq:line-many-chains-whp}, using Lemma \ref{lem:torus-chain-prob} in place of Lemma \ref{lem:line-jumps-at-positions}, we obtain
\begin{equation}
\label{eq:torus-many-chains-whp}
\Pr\left(|\cC(S)| \geq \delta_2 n 2^{-d}\right) \geq 1 - \epsilon/2 ,
\end{equation}
for some $\delta_2 > 0$.
Proposition \ref{prop:torus-range-distribution} and Proposition \ref{prop:sampling-without-replacement2} imply that
\begin{equation}
\label{eq:torus-range-prob-by-number-of-chains}
\Pr\big(|\Rng(f)| < r ~|~ |\cC(S)| \big) \leq \frac{C r}{\sqrt{|\cC(S)|}} , \quad r \in \N .
\end{equation}
Now, putting \eqref{eq:torus-many-chains-whp} and \eqref{eq:torus-range-prob-by-number-of-chains} together, we see that there exists a $\delta>0$ such that
\[ \Pr\left(|\Rng(f)| < 3 + \big\lfloor \delta \sqrt{n2^{-d}} \big\rfloor \right) \leq \Pr\left(|\Rng(f)| < \big\lfloor 4 \delta \sqrt{n2^{-d}} \big\rfloor \right) \leq \epsilon . \]
Finally, repeating the calculation in \eqref{eq:line-subcritical-range-expectation-lower-bound}, where we use Theorem \ref{thm:torus-supercritical} in place of Theorem \ref{thm:line-supercritical}, we obtain
\[ \E\big[|\Rng(f)|\big] \geq 3 + \big\lfloor c \sqrt{n 2^{-d}} \big\rfloor - 2^{1-n/2} . \qedhere \]

\subsubsection{The critical regime}
Here we prove Theorem \ref{thm:torus-critical-range}. Denote $\lambda := \lim n 2^{-d}$ which exists and is a positive number by assumption. We begin by showing that in the critical regime the jumps are sparse enough so that it is unlikely to have chains of length greater than one. Let
\[ B := \bigcap_{x \in V} (A^c_x \cup A^c_{x+2d+1}) \]
be the event that there are no two minimal-distance jumps (i.e. jumps at distance $2d+1$). We wish to show that $\Pr(B) = 1 - o(1)$. Indeed, by Lemma \ref{lem:torus-chain-prob}, we have
\[ \Pr(B^c) \leq \sum_{x \in V} \Pr(A_x \cap A_{x+2d+1}) \leq Cn 2^{-2d} = o(1) .\]

We now find the limiting distribution of $R$ as $n$ tends to infinity. By Lemma \ref{lem:torus-bijection} and Claim \ref{cl:torus-bijection-fluc-size}, we have that $\Pr(R=r ~|~ B)$ is proportional to
\begin{equation}
\label{eq:torus-critical-prob-R-prop}
\begin{cases} c(n,d,r) \cdot \binom{2r}{r} \cdot 2^{n/2 - r(2d+3)} &\text{if } r \geq 1 \\ 2^{n/2+1} - 2 &\text{if } r=0 \end{cases} ,
\end{equation}
where $c(n,d,r)$ is the number of feasible jump structures $I$ having $|I|=|\cC(I)|=2r$.

\begin{claim}
\label{cl:torus-critical-feasible-jump-structure-count}
For any $r \geq 1$, we have
\[ c(n,d,r) = \frac{n}{2r} \cdot \binom{n/2 - (2d+1)r - 1}{2r-1} .\]
\end{claim}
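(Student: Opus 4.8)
The plan is to unwind the definitions to see that $c(n,d,r)$ counts a concrete family of subsets of $\Z/n\Z$, and then to count that family with a standard "mark a point" argument. First I would identify the subsets in question. The condition $|I|=|\cC(I)|=2r$ means exactly that, in the notation of \eqref{eq:torus-chain-structure}, the chain structure of $I$ consists of $m=2r$ chains, all of length $t_j=1$. By \eqref{eq:torus-chain-structure} this forces the cyclic clockwise gaps $g_j:=\rho^+(k_{j-1},k_j)$ (with $k_0:=k_{2r}$) to satisfy $g_j>2d+1$, and by \eqref{eq:torus-feasible-jump-structure1} each $g_j$ is odd, hence $g_j\geq 2d+3$. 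Moreover the remaining feasibility requirement \eqref{eq:torus-feasible-jump-structure2}, namely $\cB^*(I)\neq\emptyset$, is automatic here: there is an even number $2r$ of unit-length chains, so taking $r$ of the signs equal to $+1$ and $r$ equal to $-1$ produces an element of $\cB^*(I)$. Thus $c(n,d,r)$ equals the number of $2r$-element subsets $I\subset\Z/n\Z$ all of whose $2r$ cyclic gaps are odd and at least $2d+3$.

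Next I would count such subsets by passing through pairs $(I,s)$ with $s\in I$ a distinguished element. Cutting the cycle at $s$ and reading the gaps clockwise yields a sequence $(g_1,\dots,g_{2r})$ of odd integers, each $\geq 2d+3$, with $g_1+\cdots+g_{2r}=n$; conversely a base position $s\in\{0,1,\dots,n-1\}$ together with any such sequence determines $(I,s)$ uniquely via partial sums. So the number of pairs is $n$ times the number of admissible gap sequences. The latter is computed by the substitution $g_j=2d+3+2h_j$ with $h_j\geq 0$, which turns the constraint into $h_1+\cdots+h_{2r}=n/2-(2d+3)r=n/2-(2d+1)r-2r$; the number of non-negative integer solutions is $\binom{n/2-(2d+1)r-1}{2r-1}$. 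Hence the number of pairs $(I,s)$ equals $n\binom{n/2-(2d+1)r-1}{2r-1}$, and since each admissible $I$ contributes exactly $2r$ such pairs (one for each of its elements), we get $c(n,d,r)=\tfrac{n}{2r}\binom{n/2-(2d+1)r-1}{2r-1}$, as claimed.

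I do not expect a serious obstacle here; the argument is a routine cycle-counting computation. The only two points that need care are (i) checking that the topological feasibility condition \eqref{eq:torus-feasible-jump-structure2} imposes nothing extra in the unit-chain case, so that $c(n,d,r)$ really is just a count of gap patterns on the cycle, and (ii) the bookkeeping in the cyclic-to-linear reduction, in particular the division by $2r$ coming from the cyclic symmetry of a $2r$-element subset.
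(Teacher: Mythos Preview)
Your proposal is correct and follows essentially the same approach as the paper: a double-counting/marked-point argument to pass from cyclic to linear, followed by the substitution $g_j=2d+3+2h_j$ and stars-and-bars. The paper phrases the double counting as $n|\cI_0|=\sum_v|\cI_v|=2r\,c(n,d,r)$ and then computes $|\cI_0|$ directly, whereas you count pairs $(I,s)$ and divide by $2r$; these are the same computation, and your write-up is in fact slightly more explicit about why \eqref{eq:torus-feasible-jump-structure2} is automatic in the unit-chain case.
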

\begin{proof}
Denote by $\cI$ the set of all feasible jump structures $I$ having
$|I|=|\cC(I)|=2r$. For $v \in V$, let $\cI_v := \{ I \in \cI ~|~ v
\in I \}$. Then,
\[ n |\cI_0| = \sum_{v \in V} |\cI_v| = \sum_{v \in V} \sum_{I \in \cI} \mathbf{1}_{I}(v) = \sum_{I \in \cI} \sum_{v \in V} \mathbf{1}_{I}(v) = \sum_{I \in \cI} |I| = 2r |\cI| = 2r \cdot c(n,d,r) .\]
It remains to compute the size of $\cI_0$. By considering the distances between consecutive elements in any $I \in \cI_0$, and recalling \eqref{eq:torus-feasible-jump-structure1}, \eqref{eq:torus-chain-structure} and \eqref{eq:torus-feasible-jump-structure2}, we see that $|\cI_0|$ is given by the number of non-negative integer solutions to the equation
\[ x_1 + x_2 + \cdots + x_{2r} = n , \]
under the additional constraint that, for $1 \leq j \leq 2r$, $x_j$
is odd and at least $2d+3$. Therefore, after substituting $x_j=2y_j
+ 2d+3$ for $1 \leq j \leq 2r$, we see that $|\cI_0|$ is equal to
the number of non-negative integer solutions to the equation
\[ y_1 + \cdots y_{2r} = n/2 - (2d+3)r ,\]
from which the result now follows.
\end{proof}

By \eqref{eq:torus-critical-prob-R-prop} and Claim \ref{cl:torus-critical-feasible-jump-structure-count}, for any fixed $r \geq 1$, we have
\[ \frac{\Pr(R=r ~|~ B)}{\Pr(R=r-1 ~|~ B)} = \frac{n^2}{r^2 2^{2d+5}} \cdot (1 + o(1)) = \frac{(\lambda')^2}{r^2} \cdot (1 + o(1)) ,\]
where $\lambda' := \lambda / (4 \sqrt{2})$. By Lemma \ref{lem:torus-chain-prob} and since $\Pr(B)=1-o(1)$, we have
\[ \E[R ~|~ B] = \frac{1}{2} \sum_{x \in V} \Pr(A_x ~|~ B) \leq \frac{1}{2\Pr(B)} \sum_{x \in V} \Pr(A_x) \leq \frac{C n 2^{-d}}{\Pr(B)} = O(1) .\]
Therefore, conditioned on $B$, the expectation of $R$ is uniformly
bounded as $n \to \infty$. Hence, Markov's inequality implies that,
conditioned on $B$, $R$ is tight as $n \to \infty$. Recall the
definition of the distribution $\nu(\lambda')$ in
\eqref{eq:equality-biased-poisson}. Let $N(\lambda') \sim
\nu(\lambda')$ and note that
\[ \frac{\Pr(N(\lambda') = r)}{\Pr(N(\lambda') = r-1)} = \frac{(\lambda')^2}{r^2} , \quad r \geq 1 .\]
Thus, Lemma \ref{lem:distribution-ratio-convergence} implies that, conditioned on $B$, $R$ converges in distribution to $N(\lambda')$. Finally, since $\Pr(B)=1-o(1)$, we conclude that $R$ converges in distribution to $\nu(\lambda')$.

It remains to understand the range of a homomorphism. Recalling the definition of $\bar{W}$ given in \eqref{eq:torus-def-bar-W}, we observe that the event $B$ is the same as the event $\{ |\bar{W}|=|\cC(S)|=2R \}$, which is the same as the event that $\bar{W}$ consists of $R$ $1$'s and $R$ $(-1)$'s. Therefore, by Proposition \ref{prop:torus-range-distribution}, conditioned on $B$ and on $R$, on the event $\{R>0\}$, the range of a homomorphism is equal in distribution to two plus the range of a random walk bridge of length $2R$. By Theorem \ref{thm:torus-supercritical}, conditioned on the event $\{R=0\}$, the range of a homomorphism is $3$ with probability tending to one. This, together with our previous result on the convergence of $R$ in distribution, completes the proof of Theorem \ref{thm:torus-critical-range}.

\subsection{Proof of main lemmas}

\begin{proof}[Proof of Proposition \ref{prop:fkg-line-torus}]
Recall that a homomorphism on $P_{n,d}$ or $T_{n,d}$ can be seen as an element of $\Z^{\{0,1,\dots,n\}}$, where $f \in \Hom(T_{n,d})$ is extended to $\{0,1,\dots,n\}$ by $f(n):=0$. Therefore, the uniform distributions on $\Hom(P_{n,d})$ and $\Hom(T_{n,d})$ are distributions on $\Z^{\{0,1,\dots,n\}}$, which we denote by $\Pr_P$ and $\Pr_T$ respectively. We also denote by $Q := \{ f \in \Hom(T_{n,d}) \}$ the support of $\Pr_T$, so that the measure $\Pr_P(\cdot ~|~ Q)$ is just the measure $\Pr_T$.

Let $\phi \colon \Z^{\{0,1,\dots,n\}} \to [0, \infty)$ be an increasing function.
Note that the event $\{f(n)=0\}$ is a decreasing event in $|f|$. Therefore, we can apply Theorem \ref{thm:fkg} for the functions $\phi$ and $\psi(f) := \mathbf{1}_{\{f(n)=0\}}$, to obtain
\begin{equation}
\label{eq:fkg-condition-on-f-n-0}
\E_P\big[\phi(|f|) ~|~ f(n)=0\big] \leq \E_P\big[\phi(|f|)\big] .
\end{equation}
Notice that sampling a random homomorphism on $P_{n,d}$ conditioned on $\{f(n)=0\}$ is not equivalent to sampling a random homomorphism on $T_{n,d}$, which is just to say that $Q \neq \{ f(n)=0 \}$. However, it is equivalent to sampling a random homomorphism on another graph. Namely, the graph $P'_{n,d}$ obtained from $P_{n,d}$ by identifying the vertex $0$ with the vertex $n$. In order to obtain $T_{n,d}$ from $P'_{n,d}$, we must still add some edges which are missing, for example, the edge between $n-1$ and $2$, and the edge between $n-2$ and $1$. Nonetheless, this observation shows that the measure $\Pr_P(\cdot ~|~ f(n)=0)$ also satisfies the FKG inequality in Theorem \ref{thm:fkg}, since it is equivalent to sampling a random homomorphism on $P'_{n,d}$. Define the events
\begin{align*}
J &:= \{ |f(k)| \leq 1, ~ k=0,1,\dots,2d \} \quad \text{and} \\
J' &:= \{ |f(k)| \leq 1, ~ k=n,n-1,\dots,n-2d \} .
\end{align*}
Notice that
\[ J \cap J' \subset Q \subset \{ f(n) = 0 \} .\]
Thus, using \eqref{eq:fkg-condition-on-f-n-0} and the fact that $\phi$ is non-negative, we obtain
\[ \E_P\big[\phi(|f|) ~|~ Q\big] \leq \E_P\big[\phi(|f|) ~|~ f(n)=0\big] \cdot \frac{\Pr_P(f(n)=0)}{\Pr_P(Q)} \leq \frac{\E_P\big[\phi(|f|)\big]}{\Pr_P(J \cap J' ~|~ f(n)=0)} .\]
We now wish to bound $\Pr(J \cap J' ~|~ f(n)=0)$ from below. We first apply Theorem \ref{thm:fkg} to the graph $P'_{n,d}$ to get
\[ \Pr_P(J \cap J' ~|~ f(n)=0) \geq \Pr_P(J ~|~ f(n)=0) \cdot \Pr_P(J' ~|~ f(n)=0) = \Pr_P(J ~|~ f(n)=0)^2 ,\]
where we have used symmetry in the second step. Next, we apply Theorem \ref{thm:fkg} again to the graph $P_{n,d}$ to get
\[ \Pr_P(J ~|~ f(n)=0) \geq \Pr_P(J) .\]
Finally, since $J$ is just the event that no jump occurs at the first $2d+1$ vertices, we have by Lemma \ref{lem:line-jump-at-start} that $\Pr_P(J) \geq 1/3$. Therefore, we have shown that
\[ \E_T\big[\phi(|f|)\big] = \E_P\big[\phi(|f|) ~|~ Q\big] \leq 9 \cdot \E_P\big[\phi(|f|)\big] . \qedhere \]
\end{proof}

For the proofs of the remaining lemmas, it is convenient to denote by $[x,y]$ the vertices on the arc going from $x$ to $y$ in
the clockwise direction. That is, for $x,y \in V$, we define
\[ [x,y] := \{ z \in V ~|~ \rho^+(x,z) \leq \rho^+(x,y) \} . \]
Also, for a set $J\subset V$ and an integer $t$, we let
\begin{equation*}
J+t:=\{j+t~|~ j\in J\}
\end{equation*}
where, as always, addition for vertices on the torus is taken modulo
$n$.

\begin{proof}[Proof of Lemma \ref{lem:torus-chain-prob}]
First, we partition $C_{x,t}$ into two events $C^0_{x,t}$ and
$C^1_{x,t}$. Denote $x' := x-(2d+1)t-1$ and define
\begin{align*}
C^0_{x,t} &:= C_{x,t} \cap \big\{ | f(x) - f(x'-1) | = 2+t \big\} , \\
C^1_{x,t} &:= C_{x,t} \cap \big\{ | f(x) - f(x'-1) | = t \big\} .
\end{align*}
Note that since any $f \in C_{x,t}$ has $|f(x)-f(x')| = 1+t$, we indeed have $C_{x,t} = C^0_{x,t} \cup C^1_{x,t}$.
Also, observe that
\begin{equation}
\label{eq:torus-chain-prob-minimal-chain-segment}
C^0_{x,t} = \{ | f(x) - f(x'-1) | \geq 2+t \} ,
\end{equation}
since for any $f \in \Hom(T_{n,d})$, $|f(x)-f(x'-1)| \leq 2+t$ necessarily holds and $| f(x) - f(x'-1) | = 2+t$ holds only if there is a chain of length $t$ at $x$.

We now prove that
\begin{equation}\label{eq:C_1_C_0_inequality}
  \Pr(C^1_{x,t}) \leq \Pr(C^0_{x,t}).
\end{equation}
By rotating the torus if necessary, we note that it suffices to
prove this under the assumption that $x'\neq 1$. This simplifies
slightly the following discussion as it avoids issues stemming from
the fact that $f(0)$ is normalized to be $0$.

Consider the mapping $f \mapsto f_0$ from $C^1_{x,t}$ to $C^0_{x,t}$
defined by
\[ f_0(y) :=
    \begin{cases}
        f(y)                & \text{if } y \neq x'-1 \\
        2f(y+1) - f(y)      & \text{if } y = x'-1
    \end{cases} , \quad y \in V .
\]
Let $f \in C^1_{x,t}$. Note that, by the definition of the jumps and
the average height, there exists a vertex $x'' \in V$ such that
$|f(x)-f(x'')|=2+t$ and such that $1 \leq |f(y)-f(x'')| \leq 2$ for
any $y \in [x''+1,x'+2d+1]$. It is easy to see that the existence of such a $x''$ implies that $f_0$ is well-defined (see Figure
\ref{fig:torus-chain-prob}). Since the mapping is clearly injective,
\eqref{eq:C_1_C_0_inequality} follows, and so $\Pr(C_{x,t}) \leq 2\Pr(C^0_{x,t})$.

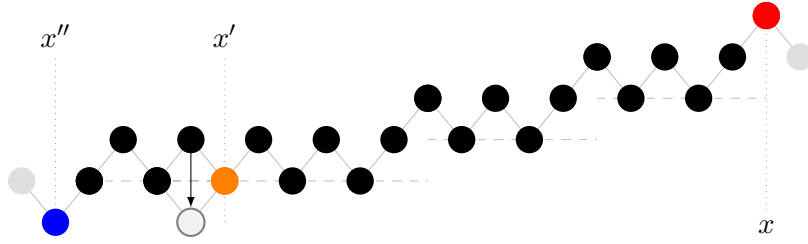
\begin{figure}[!t]
\centering
\begin{tikzpicture}
    \pgftransformcm{0.45}{0}{0}{0.55}{\pgfpoint{0cm}{0cm}}

    \draw[line width=0.1mm,gray,dotted](2,-1)--(2,3);
    \draw[line width=0.1mm,gray,dotted](7,-1)--(7,3);
    \draw[line width=0.1mm,gray,dotted](23,-0.7)--(23,4);
    \node at (2,3) [above] {$x''$};
    \node at (7,3) [above] {$x'$};
    \node at (23,-0.7) [below] {$x$};

    \drawRandomWalk{1}{{0,-1,0}}{99}{gray!25}{}{blue};
    \drawRandomWalk{3}{{0,1,0}}{0}{}{}{};
    \drawRandomWalk{5}{{0,1,0}}{0}{}{orange}{};
    \drawRandomWalk{5}{{0,-1,0}}{0}{}{orange}{};
    \node (A) at (6,-1) [draw=black!50,circle,fill=gray!10,thick] {};
    \path[->, >=latex] (6,1) edge (A);
    \drawRandomWalk{7}{{0,1,0,1,0,1,2}}{0}{orange}{}{};
    \drawRandomWalk{13}{{2,1,2,1,2,3}}{1}{}{}{};
    \drawRandomWalk{18}{{3,2,3,2,3,4}}{2}{}{red}{};
    \drawRandomWalk{23}{{4,3}}{99}{red}{gray!25}{};

\end{tikzpicture}
\caption{A homomorphism $f$ in $C^1_{x,t}$. Modifying the value at $x'-1$ to be $f(x'')$ injectively maps this homomorphism to $C^0_{x,t}$. Here $d=2$ and $t=3$.}
\label{fig:torus-chain-prob}
\end{figure}

It remains to bound the probability of the event $C^0_{x,t}$. Due to
rotation equivariance, the probability of this event is independent
of $x$. Thus, substituting $x=(2d+1)t+2$ so that $x'=1$, and
recalling \eqref{eq:torus-chain-prob-minimal-chain-segment}, we have
\[ \Pr(C^0_{x,t}) = \Pr(C^0_{(2d+1)t+2,t}) = \Pr(|f((2d+1)t+2)| \geq 2+t) . \]
Since this last event is clearly an increasing event in $|f|$, Proposition \ref{prop:fkg-line-torus} and Lemma \ref{lem:line-jumps-at-positions} now yield
\begin{align*}
\Pr_T(C^0_{x,t}) &= \Pr_T\left(|f((2d+1)t+2))| \geq 2 + t \right) \\
&\leq 9 \cdot \Pr_P\left(|f((2d+1)t+2))| \geq 2 + t\right) = 9 \cdot \Pr_P(C_{(2d+1)t+2,t+1}) \leq C 2^{-dt} .
\qedhere
\end{align*}

\end{proof}

\begin{proof}[Proof of Lemma \ref{lem:torus-jumps-at-multiple-positions}]
The idea of the proof is to remove jumps from the jump structure of
the given homomorphism and observe that this results in more
fluctuation points. We shall do so by removing the jumps two at a time. See Figure~\ref{fig:torus-T-x-y-mapping}.

We begin with some notation. For a feasible jump structure $I$ and a
vertex $x \in I$, denote by $C(I,x)$ the chain in $I$ containing
$x$, i.e., $C(I,x)$ is the unique element $(k,t) \in \cC(I)$
satisfying $x \in [k-(2d+1)(t-1),k]$. For a feasible jump structure
$I$ and two vertices $x,y \in I$ belonging to different chains, i.e.
$C(I,x) \neq C(I,y)$, denote
\[ I^{x,y} := (I \cap [y+1,x-1]) \cup ((I \cap [x+1,y-1]) - 1) .\]
Note that $I^{x,y}$ satisfies condition
\eqref{eq:torus-feasible-jump-structure1}. Moreover, it is easy to
see that the chain structure of $I^{x,y}$ satisfies
\[ \cC(I^{x,y}) = \cC(I \cap [y+1,x-1]) \cup \cC((I \cap [x+1,y-1])-1) . \]
In particular,
\begin{equation}\label{eq:chain_structure_I_x_y}
  |\cC(I)| - 2 \leq |\cC(I^{x,y})| \leq |\cC(I)| + 2
\end{equation}
and, denoting $D_{x,y} := \{ x-2d-1, x+2d+1, y-2d-1, y+2d+1 \}$,
\begin{equation}\label{eq:chain_structure_I_x_y_2}
|\cC(I^{x,y})| = |\cC(I)| - 2,\quad\text{if } I \cap D_{x,y} =
\emptyset.
\end{equation}
Now, for a feasible sign vector $\epsilon \in \cB^*(I)$, define $\epsilon^{x,y} \in \{-1,1\}^{\cC(I^{x,y})}$ by
\[ \epsilon^{x,y}(k,t) := \begin{cases}
 \epsilon(C(I,k)) &\text{if } k \in I \\
 \epsilon(C(I,k+1)) &\text{if } k+1 \in I
 \end{cases}, \quad (k,t) \in \cC(I^{x,y}).\]
That is, the sign of a chain in $\cC(I^{x,y})$ is inherited from its
corresponding chain in $\cC(I)$. Note that,
\[ \sum_{(k,t) \in \cC(I^{x,y})} \epsilon^{x,y}(k,t) \cdot t = - \epsilon(C(I,x)) - \epsilon(C(I,y)) .\]
Hence, if $\epsilon(C(I,x)) \neq \epsilon(C(I,y))$ then $\epsilon^{x,y} \in \cB^*(I^{x,y})$ and, by \eqref{eq:torus-feasible-jump-structure2}, $I^{x,y}$ is a
feasible jump structure.

\begin{figure}[!t]
\centering
\begin{tikzpicture}
    \pgftransformcm{0.4}{0}{0}{0.5}{\pgfpoint{0cm}{0cm}}

    \draw[line width=0.1mm,gray,dotted](4,-2)--(4,2);
    \draw[line width=0.1mm,gray,dotted](12,-2)--(12,2);
    \draw[line width=0.1mm,gray,dotted](19,-2)--(19,2);
    \draw[line width=0.1mm,gray,dotted](28,-2)--(28,2);
    \draw[line width=0.1mm,gray,dotted](15,1)--(18,1);

    \drawRandomWalk{0}{{0,1,0}}{99}{gray!25}{gray!50}{gray!37};
    \drawRandomWalk{2}{{0,-1,0}}{99}{gray!50}{orange}{gray!75};
    \drawRandomWalk{4}{{0,1,0,1,0,1,0,1,2}}{0}{orange}{red}{};
    \drawRandomWalk{12}{{2,1,0}}{1}{red}{gray!50}{};
    \drawRandomWalk{14}{{0,1}}{1}{gray!50}{gray!25}{};
    \drawRandomWalk{18}{{1,2}}{1}{gray!25}{gray!50}{};
    \drawRandomWalk{19}{{2,1,0}}{1}{gray!50}{}{orange};
    \drawRandomWalk{21}{{0,1,0,1,0,1,0,-1}}{1}{}{red}{};
    \drawRandomWalk{28}{{-1,0}}{0}{red}{}{};
    \drawRandomWalk{29}{{0,1,0}}{0}{}{gray!50}{gray!75};
    \drawRandomWalk{31}{{0,-1,0}}{0}{gray!50}{gray!12}{gray!25};
\end{tikzpicture}
\\ [-0.7ex]
\begin{tikzpicture}
    \pgftransformcm{0.4}{0}{0}{0.5}{\pgfpoint{0cm}{0cm}}

    \draw[line width=0.1mm,gray,dotted](4,-2)--(4,2);
    \draw[line width=0.1mm,gray,dotted](12,-2)--(12,2);
    \draw[line width=0.1mm,gray,dotted](19,-2)--(19,2);
    \draw[line width=0.1mm,gray,dotted](28,-2)--(28,2);
    \draw[line width=0.1mm,gray,dotted](14,0)--(17,0);
    \node at (12,-2) [below] {$x$};
    \node at (28,-2) [below] {$y$};

    \drawRandomWalk{0}{{0,1,0}}{99}{gray!12}{gray!50}{gray!25};
    \drawRandomWalk{2}{{0,-1,0}}{99}{gray!50}{}{gray!75};

    \drawRandomWalk{4}{{0,1,0,1,0,1,0,1,0}}{0}{}{}{white!0};
    \drawRandomWalk{4}{{0,-1,0,-1,0,-1,0,-1,0}}{0}{}{}{white!0};
    \node at (5,1) [draw=black!50,circle,fill=gray!10,thick] {};
    \node at (5,-1) [draw=black!50,circle,fill=gray!10,thick] {};
    \node at (6,0) [draw=black,circle,fill=black] {};
    \node at (7,1) [draw=black!50,circle,fill=gray!10,thick] {};
    \node at (7,-1) [draw=black!50,circle,fill=gray!10,thick] {};
    \node at (8,0) [draw=black,circle,fill=black] {};
    \node at (9,1) [draw=black!50,circle,fill=gray!10,thick] {};
    \node at (9,-1) [draw=black!50,circle,fill=gray!10,thick] {};
    \node at (10,0) [draw=black,circle,fill=black] {};
    \node at (11,1) [draw=black!50,circle,fill=gray!10,thick] {};
    \node at (11,-1) [draw=black!50,circle,fill=gray!10,thick] {};

    \drawRandomWalk{12}{{0,-1}}{0}{}{gray!50}{};
    \drawRandomWalk{13}{{-1,0}}{0}{gray!50}{gray!25}{};
    \drawRandomWalk{17}{{0,1}}{0}{gray!25}{gray!50}{};
    \drawRandomWalk{18}{{1,0}}{0}{gray!50}{}{};

    \drawRandomWalk{19}{{0,1,0,1,0,1,0,1,0,1,0}}{0}{}{}{white!0};
    \drawRandomWalk{19}{{0,-1,0,-1,0,-1,0,-1,0,-1,0}}{0}{}{}{white!0};
    \node at (20,1) [draw=black!50,circle,fill=gray!10,thick] {};
    \node at (20,-1) [draw=black!50,circle,fill=gray!10,thick] {};
    \node at (21,0) [draw=black,circle,fill=black] {};
    \node at (22,1) [draw=black!50,circle,fill=gray!10,thick] {};
    \node at (22,-1) [draw=black!50,circle,fill=gray!10,thick] {};
    \node at (23,0) [draw=black,circle,fill=black] {};
    \node at (24,1) [draw=black!50,circle,fill=gray!10,thick] {};
    \node at (24,-1) [draw=black!50,circle,fill=gray!10,thick] {};
    \node at (25,0) [draw=black,circle,fill=black] {};
    \node at (26,1) [draw=black!50,circle,fill=gray!10,thick] {};
    \node at (26,-1) [draw=black!50,circle,fill=gray!10,thick] {};
    \node at (27,0) [draw=black,circle,fill=black] {};
    \node at (28,1) [draw=black!50,circle,fill=gray!10,thick] {};
    \node at (28,-1) [draw=black!50,circle,fill=gray!10,thick] {};

    \drawRandomWalk{29}{{0,1,0}}{0}{}{gray!50}{gray!75};
    \drawRandomWalk{31}{{0,-1,0}}{0}{gray!50}{gray!12}{gray!25};
\end{tikzpicture}
\caption{An illustration of the operation of ``removing jumps'' from
a homomorphism. Given a homomorphism having jumps at $x$ and $y$ in
opposite directions, we may remove these jumps and gain entropy in
the newly formed fluctuation points. Here $d=3$.}
\label{fig:torus-T-x-y-mapping}
\end{figure}
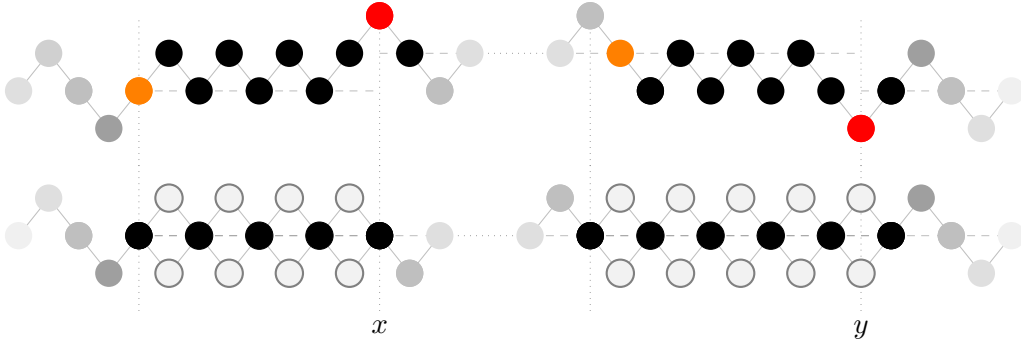

Denote by $\cI$ the set of all feasible jump structures.
For subsets $J,J' \subset V$, denote by $B(J,J')$ the set of all feasible signed jump structures containing $J \cup J'$ and having different signs on $J$ and $J'$, i.e.,
\[ B(J,J') := \big\{ (I,\epsilon) ~|~ I \in \cI, ~ \epsilon \in \cB^*(I), ~ J \cup J' \subset I, ~ \epsilon(C(I,x)) \neq \epsilon(C(I,y)) ~ \forall x \in J, y \in J' \big\}. \]
Then, we have the equality of events,
\[ A_{J,J'} = \{ (S,X) \in B(J,J') \} .\]
Therefore, the lemma is equivalent to
\begin{equation}\label{eq:removing_jumps_induction}
\Pr\big((S,X) \in B(J,J')\big) \leq 2^{-(2d-1)m} , \quad m \geq 0, ~
J,J' \subset V, ~ |J|=|J'|=m .
\end{equation} We prove this by
induction on $m$. The induction base, $m=0$, and the case when
$B(J,J')=\emptyset$ are trivial. Suppose that $m \geq 1$ and let
$J,J' \subset V$ be such that $B(J,J')\neq\emptyset$. We choose two
vertices $x \in J$ and $y \in J'$ such that $[x,y] \cap (J \cup J')
= \{x,y\}$ and define a mapping
\[ T \colon B(J,J') \to B(J \setminus \{x\}, J' \setminus \{y\}) \]
by
\[ T(I,\epsilon) := (I^{x,y}, \epsilon^{x,y}) .\]
Note that the mapping $I \mapsto I^{x,y}$ is injective on $\{ I \in \cI ~|~ x,y \in I\}$. Thus, recalling that jumps belonging to the same chain must have the same sign, it is not hard to see that, for any $(I',\epsilon') \in B(J \setminus \{x\}, J' \setminus \{y\})$, we have
\[ |T^{-1}(I',\epsilon')| \leq \begin{cases} 1 &\text{if } I' \cap D'_{x,y} \neq \emptyset \\ 2 &\text{if } I' \cap D'_{x,y} = \emptyset \end{cases} ,\]
where $D'_{x,y} := \{ x-2d-1, x+2d, y-2d-2, y+2d+1 \}$. Moreover, by
Lemma \ref{lem:torus-bijection2} and Claim
\ref{cl:torus-bijection-fluc-size}, for any $(I,\epsilon) \in
B(J,J')$,
\[ \Pr\big((S,X)=T(I,\epsilon)\big) = \Pr\big((S,X)=(I,\epsilon)\big) \cdot
 \begin{cases}
   2^{2d+1 + |\cC(I)| - |\cC(I^{x,y})|} &\text{if } I^{x,y} \neq \emptyset \\
   2^{2d+3} (2 - 2^{1-n/2}) &\text{if } I^{x,y} = \emptyset
 \end{cases} . \]
Thus, by \eqref{eq:chain_structure_I_x_y} and
\eqref{eq:chain_structure_I_x_y_2}, for any $(I,\epsilon) \in
B(J,J')$,
\[ \Pr\big((S,X)=T(I,\epsilon)\big) \ge \Pr\big((S,X)=(I,\epsilon)\big) \cdot 2^{2d-1}\cdot
 \begin{cases} 1 &\text{if } I \cap D_{x,y} \neq \emptyset \\ 16 &\text{if } I \cap D_{x,y} = \emptyset \end{cases} .\]
Denote
\begin{align*}
B &:= \big\{ (I,\epsilon) \in B(J,J') ~|~ I \cap D_{x,y} = \emptyset \big\} , \\
B' &:= \big\{ (I',\epsilon') \in B(J \setminus \{x\},J' \setminus
\{y\}) ~|~ I' \cap D'_{x,y} = \emptyset \big\} .
\end{align*}
Note that $T$ maps $B$ into $B'$ and $B(J,J') \setminus B$ into $B(J
\setminus \{x\},J' \setminus \{y\}) \setminus B'$. Therefore,
applying Lemma \ref{lem:main-tool} to the restriction of $T$ to $B$
and separately to its restriction to $B(J,J') \setminus B$, we
obtain
\begin{align*}
\Pr\big((S,X) \in B(J,J')\big) &= \Pr\big((S,X) \in B\big) +
\Pr\big((S,X) \in B(J,J') \setminus B \big) \\
&\leq 2^{-(2d+2)} \cdot \Pr\big((S,X) \in B' \big) +
2^{-(2d-1)} \cdot \Pr\big((S,X) \in B(J \setminus \{x\},J' \setminus \{y\}) \setminus B' \big) \\
&\leq 2^{-(2d-1)} \cdot \Pr\big((S,X) \in B(J \setminus \{x\},J'
\setminus \{y\}) \big) .
\end{align*} Thus,
\eqref{eq:removing_jumps_induction} follows by induction, proving
the lemma.
\end{proof}

\begin{proof}[Proof of Lemma \ref{lem:torus-ratio-ineq-for-R}]
The proof utilizes a similar technique as the proof of
Lemma~\ref{lem:torus-jumps-at-multiple-positions}, where this time
we aim to add jumps to the jump structure of the homomorphism rather
than remove jumps.

For a feasible jump structure $I$, denote
\[ U(I) := \begin{cases} \left\{ (x,y) \in V^2 ~|~ \rho(x,y) \geq 2d+3, ~ \substack{\rho^+(I,x),~\rho^+(x,I)+1,\\ \rho^+(y,I),~\rho^+(I,y)-1} \in D \right\}
&\text{if } I \neq \emptyset \\[5pt]
\left\{ (x,y) \in V^2 ~|~ \rho(x,y) \in D, ~ \rho(0,x) \text{ is
even} \right\} &\text{if } I = \emptyset \end{cases}
 , \]
where $\rho^+(x,I) := \min_{s \in I} \rho^+(x,s)$, $\rho^+(I,x) := \min_{s \in I} \rho^+(s,x)$ and
\[ D := \{ 2d+3, 2d+5, \dots \} .\]
For a feasible jump structure $I$ and a pair $(x,y) \in U(I)$, define
\[ I_{x,y} := (I \cap [y,x]) \cup ((I \cap [x,y]) + 1) \cup \{x,y\} . \]
Note that $I_{x,y}$ satisfies condition \eqref{eq:torus-feasible-jump-structure1}. Moreover, it is easy to see that the chain structure of $I_{x,y}$ satisfies
\[ \cC(I_{x,y}) = \cC(I \cap [y,x]) \cup \cC((I \cap [x,y]) + 1) \cup \{ (x,1), (y,1) \} . \]
Now, for a feasible sign vector $\epsilon \in \cB^*(I)$ and a sign
$i \in \{-1,1\}$, define $\epsilon_{x,y,i} \in \cB^*(I_{x,y})$ by
\[ \epsilon_{x,y,i}(k,t) := \begin{cases}
 \epsilon(k,t) &\text{if } k \in I \\
 \epsilon(k-1,t) &\text{if } k-1 \in I \\
 i &\text{if } k=x \\
 -i &\text{if } k=y
 \end{cases}, \quad (k,t) \in \cC(I_{x,y}).\]
That is, the sign of a chain in $\cC(I_{x,y}) \setminus \{(x,1),(y,1)\}$ is inherited from its corresponding chain in $\cC(I)$ and the sign of the chain at $x$, which is opposite of that of $y$, is determined independently.
Note that, by \eqref{eq:torus-feasible-jump-structure2}, $I_{x,y}$ is a feasible jump structure.
Moreover, since $|I_{x,y}|=|I|+2$ and $|\cC(I_{x,y})|=|\cC(I)|+2$, Lemma \ref{lem:torus-bijection2} and Claim \ref{cl:torus-bijection-fluc-size} imply that
\begin{equation}
\label{eq:torus-ratio-ineq-for-R-prob-I}
\Pr\big((S,X)=(I_{x,y},\epsilon_{x,y,i})\big) = \Pr\big((S,X)=(I,\epsilon)\big) \cdot
2^{-2d-3} \cdot \begin{cases} 1 &\text{if } I \neq \emptyset \\ (2-2^{1-n/2})^{-1} &\text{if } I = \emptyset \end{cases} .
\end{equation}

Denote by $\cI$ the set of all feasible jump structures. For $r \geq 0$, let $B_r$ denote the set of feasible signed jump structures having $2r$ jumps, i.e.,
\[ B_r := \{ (I,\epsilon) ~|~ I \in \cI, ~ \epsilon \in \cB^*(I), ~ |I|=2r \}. \]
For $r \geq 1$, define the mapping
\[ T_r \colon B_{r-1} \to \cP(B_r) \]
by
\[ T_r(I,\epsilon) := \big\{ (I_{x,y}, \epsilon_{x,y,i}) ~|~ (x,y) \in U(I), ~ i \in \{-1,1\} \big\} .\]
Assume henceforth that $n \geq Cdr$.
Then, since the mapping $((x,y),i) \mapsto (I_{x,y}, \epsilon_{x,y,i})$ is injective on $U(I) \times \{-1,1\}$, we have
\[ |T_r(I,\epsilon)| = 2 |U(I)| \geq 2 (n/2 - Cdr)^2.\]
Therefore, by \eqref{eq:torus-ratio-ineq-for-R-prob-I},
\[ \Pr\big((S,X) \in T_r(I,\epsilon)\big) \geq \Pr\big((S,X) = (I,\epsilon)\big) \cdot \frac{(n-Cdr)^2}{2^{2d+5}} \cdot \begin{cases} 2 &\text{if } r \geq 2 \\ 1 &\text{if } r=1 \end{cases} . \]
For $(I',\epsilon') \in B_r$, denote
\[ N_r(I',\epsilon') := \big\{ (I,\epsilon) \in B_{r-1} ~|~ (I',\epsilon') \in T_r(I,\epsilon) \big\} .\]
We have
\[ |N_r(I',\epsilon')| \leq r^2 \cdot \begin{cases} 2 &\text{if } r \geq 2 \\ 1 &\text{if } r=1 \end{cases} .\]
Thus, considering separately the case $r=1$, Lemma \ref{lem:main-tool} implies that for any $r \geq 1$,
\[ \Pr(R=r-1) = \Pr\big((S,X) \in B_{r-1}\big) \leq \Pr\big((S,X) \in B_r\big) \cdot \frac{r^2 2^{2d+5}}{(n-Cdr)^2} = \Pr(R=r) \cdot \frac{r^2 2^{2d+5}}{(n-Cdr)^2} . \qedhere \]
\end{proof}

\section{Local limits on the line}
\label{sec:local-limits}

In this section we prove the theorems which were stated in Section
\ref{sec:main-results-local-limit}. Throughout this section, the
parameter $d \geq 1$ is fixed, and so we drop the $d$ from the
notation when convenient. On the other hand, the parameter $n \geq
1$ is allowed to vary, and our main goal is to understand
$\Hom(P_{n,d}) := \Hom(P_{n,d},0)$ as $n$ grows larger. At first, in
Section \ref{sec:local-limit-counting}, we investigate the
asymptotic size of $\Hom(P_{n,d})$ as $n$ tends to infinity.
Subsequently, in Sections \ref{sec:local-limit-infinite-hom} and
\ref{sec:local-limit-markov-chain}, we describe the local limit of
such homomorphisms as a probability measure on infinite
homomorphisms defined through a Markov chain (see Figure
\ref{fig:local-limit-markov-chain}).

\subsection{Definitions}

\newcommand{\concat}{\ensuremath{\,\circ\,}}

Given a finite set $\Pi$, called an {\em alphabet}, we denote by $\Pi^*$ the set of all finite words on $\Pi$. That is,
\[ \Pi^* := \big\{ (a_1,a_2,\dots,a_t) ~|~ a_i \in \Pi, ~ t \geq 0 \big\} . \]
For $u,v \in \Pi^*$, we denote the length of $u$ by $|u|$ and the {\em concatenation} of $u$ and $v$ by $u \concat v$, i.e.,
\[ u \concat v := (u_1, \dots, u_{|u|}, v_1, \dots, v_{|v|}) .\]
It is clear that concatenation is associative. For $u \in \Pi^*$ with $|u| \geq 1$, let $u^-$ be the word obtained from $u$ by dropping the last element, i.e.,
\[ u^- := (u_1, \dots, u_{|u|-1}) .\]

Define the derivative operator $D_n \colon \Hom(P_{n,d}) \to
\{-1,1\}^n$ by
\begin{equation}
\label{eq:local-limit-def-D}
(D_n(f))_k := f(k) - f(k-1) , \quad 1 \leq k \leq n .
\end{equation}
Denote by $\cD \subset \{-1,1\}^*$ the set of words on $\{-1,1\}$ which do not contain $(-1,-1,-1)$ or $(1,1,1)$ as a subsequence, and note that $D_n(\Hom(P_{n,d})) \subset \cD$. Let $\Sigma$ be the four letter alphabet
\[ \Sigma := \{a,b,A,B\} , \]
where
\begin{align*}
a &:= (1,-1), \\
b &:= (-1,1), \\
A &:= (1,1,-1), \\
B &:= (-1,-1,1).
\end{align*}
These basic sequences will serve as a means to encode homomorphisms into words (see Figure \ref{fig:local-limit-encoding-building-blocks}). Define $T' \colon \Sigma^* \to \{-1,1\}^*$ by
\[ T'(x) := x_1 \concat x_2 \concat \cdots \concat x_{|x|} .\]
For $x \in \Sigma^*$, define the {\em weight} of $x$ by
\begin{equation}
\label{eq:local-limit-def-weight}
w(x) := |T'(x)| = \sum_{k=1}^{|x|} |x_k| .
\end{equation}
Now, we define a mapping $T \colon \cD \to \Sigma^*$ recursively by the relations
\begin{equation}
\label{eq:local-limit-recursion-for-T}
\begin{aligned}
T(()) &:=() , \\
T((1)) &:= (a) , \\
T((-1)) &:= (b) , \\
T((1,1)) &:= (A) , \\
T((-1,-1)) &:= (B) , \\
T(u \concat v) &:= (u) \concat T(v) \quad \text{ for } u \in \Sigma \text{ and } v \in \cD .
\end{aligned}
\end{equation}
It is not hard to see that $T$ is indeed well-defined (see Figure
\ref{fig:local-limit-encoding-to-words}), and that it maps a word $u
\in \cD$ to the unique word $x \in \Sigma^*$ satisfying $T'(x)=u$ or
$T'(x)^{-}=u$ (in which case $w(x)=|u|$ or $w(x)=|u|+1$,
respectively). Also, one should note that $T^{-1}(x)=\emptyset$ if
$x \in \Sigma^*$ contains $(a,B)$ or $(b,A)$ as a sub-word or if
$x=\emptyset$, and that $T^{-1}(x) = \{ T'(x), T'(x)^- \}$
otherwise.

\begin{figure}[!t]
\[ \begin{array}{ccc}
\begin{tikzpicture}\pgftransformcm{0.45}{0}{0}{0.55}{\pgfpoint{0cm}{0cm}} \drawRandomWalk{0}{{0,1,0}}{99}{}{}{};\end{tikzpicture}
& &
\begin{tikzpicture}\pgftransformcm{0.45}{0}{0}{0.55}{\pgfpoint{0cm}{0cm}} \drawRandomWalk{0}{{0,1,2,1}}{99}{}{}{};\end{tikzpicture}
\\
a := (1, -1) & &
A := (1, 1, -1) \\
\\
\begin{tikzpicture}\pgftransformcm{0.45}{0}{0}{0.55}{\pgfpoint{0cm}{0cm}} \drawRandomWalk{0}{{0,-1,0}}{99}{}{}{};\end{tikzpicture}
& &
\begin{tikzpicture}\pgftransformcm{0.45}{0}{0}{0.55}{\pgfpoint{0cm}{0cm}} \drawRandomWalk{0}{{0,-1,-2,-1}}{99}{}{}{};\end{tikzpicture}
\\
b := (-1, 1) & &
B := (-1, -1, 1)
\end{array} \]
\caption{The basic building blocks for encoding a homomorphism into a word on the alphabet $\Sigma := \{a,b,A,B\}$.}
\label{fig:local-limit-encoding-building-blocks}
\end{figure}
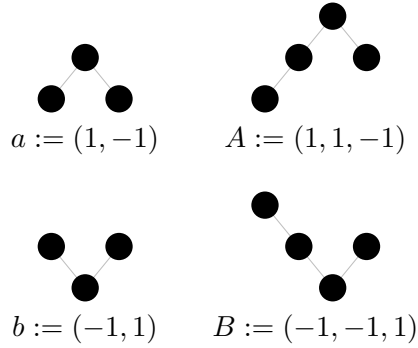

Another observation which will be useful later on is that the
recursive relation in the last line of \eqref{eq:local-limit-recursion-for-T} may be
generalized to hold for certain $u \in \cD$.

\begin{claim}\label{cl:local-limit-recursion-for-T-extended}
We have
\begin{equation}
\label{eq:local-limit-recursion-for-T-extended}
T(u \concat v) = T(u) \concat T(v) \quad \text{ for } u, v \in \cD \text{ such that } |u|=w(T(u)) .
\end{equation}
\end{claim}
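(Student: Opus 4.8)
## Proof proposal for Claim \ref{cl:local-limit-recursion-for-T-extended}

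The plan is to induct on the length $|u|$ of the word $u \in \cD$, using the base cases $|u| \in \{0,1,2\}$ that are essentially built into the definition \eqref{eq:local-limit-recursion-for-T} of $T$. For $|u|=0$ we have $u=()$, and the hypothesis $|u|=w(T(u))$ reads $0=w(())=0$, so \eqref{eq:local-limit-recursion-for-T-extended} becomes $T(v)=()\concat T(v)$, which holds. For $|u|=1$ the hypothesis $1=w(T(u))$ forces $T(u)$ to be a single one of the letters $a$ or $b$ (not $A$ or $B$, which have weight $2$ but would arise from a length-$2$ word only — in fact a length-$1$ word $u$ always has $T(u)\in\{(a),(b)\}$, consistent with the hypothesis), and then $T(u\concat v) = T(v)$ prepended by that letter is exactly the last line of \eqref{eq:local-limit-recursion-for-T}. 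The case $|u|=2$ with $w(T(u))=2$ means $T(u)$ is a single letter among $\{A,B\}$ (when $u=(1,1)$ or $u=(-1,-1)$) or a two-letter word from $\{a,b\}$ (e.g. $u=(1,-1)\mapsto(a)$ would have weight $2$, but $|u|=2=w$, so this is fine too); in every such case one checks $T(u\concat v)=T(u)\concat T(v)$ directly from \eqref{eq:local-limit-recursion-for-T} by peeling off the first building block of $u$ and applying the recursion once or twice.

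For the inductive step, suppose $|u|=m\ge 3$ and $w(T(u))=m$, and that the claim holds for all shorter words satisfying the hypothesis. Write the first letter of $T(u)$ as $u^{(1)}\in\Sigma$; by the structure of $T$, this means $u$ begins with the block $u^{(1)}$ (read as a $\pm1$-string), i.e. $u = u^{(1)}\concat u'$ for some $u'\in\cD$ with $|u'| = |u| - |u^{(1)}|$. Applying the last line of \eqref{eq:local-limit-recursion-for-T} to the longer concatenation, $T(u\concat v) = T(u^{(1)}\concat(u'\concat v)) = (u^{(1)})\concat T(u'\concat v)$ and similarly $T(u) = (u^{(1)})\concat T(u')$. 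From the latter, $w(T(u)) = |u^{(1)}| + w(T(u'))$, and combined with $w(T(u))=|u| = |u^{(1)}| + |u'|$ we get $w(T(u')) = |u'|$, so $u'$ satisfies the hypothesis and is shorter. By the inductive hypothesis $T(u'\concat v) = T(u')\concat T(v)$, whence
\[ T(u\concat v) = (u^{(1)})\concat T(u')\concat T(v) = \big((u^{(1)})\concat T(u')\big)\concat T(v) = T(u)\concat T(v), \]
using associativity of concatenation. This closes the induction.

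The only real subtlety — and the step I would be most careful about — is the assertion that the hypothesis $|u|=w(T(u))$ guarantees that $u$ literally begins with the $\pm1$-string spelled by the first letter of $T(u)$, so that the last line of \eqref{eq:local-limit-recursion-for-T} genuinely applies to both $T(u\concat v)$ and $T(u)$ with the \emph{same} leading block. In general $T(u)$ only satisfies $T'(T(u))=u$ or $T'(T(u))^- = u$ (as noted after \eqref{eq:local-limit-recursion-for-T}); the condition $|u| = w(T(u)) = |T'(T(u))|$ rules out the second, truncated, alternative, so $T'(T(u)) = u$ exactly, meaning $u$ is the verbatim concatenation of the blocks named by the letters of $T(u)$. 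This is what legitimizes peeling off the first block cleanly. One should also record that this "no truncation" property is inherited by $u'$: since $u = u^{(1)}\concat u'$ with $u^{(1)}$ a complete block and $u$ untruncated, $u'$ is untruncated as well, which is exactly the reformulation $w(T(u'))=|u'|$ obtained above. With that bookkeeping in place, everything else is routine associativity and a one-block unwinding of the definition.
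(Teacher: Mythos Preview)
Your argument is correct and takes the same approach as the paper: induct on $|u|$, peel off the leading $\Sigma$-block, and observe that the hypothesis $|u|=w(T(u))$ is inherited by the remainder. Your base-case discussion is slightly muddled about weights (in fact $w((a))=w((b))=2$ and $w((A))=w((B))=3$, so the hypothesis is vacuous for $|u|=1$ and excludes $u\in\{(1,1),(-1,-1)\}$ when $|u|=2$), but this is harmless, and the paper sidesteps it entirely by taking only $|u|=0$ as the base case and running the inductive step for all $|u|\ge 1$.
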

\begin{proof}
We prove the claim by induction on $|u|$. If $|u|=0$ then there is
nothing to prove. Otherwise, $|u| \geq 1$. By the assumption, we
have $|u|=w(T(u))$, which implies that $u$ may be decomposed as
$u=u' \concat u''$, where $u' \in \Sigma$. Note that this now
implies that $|u''|=w(T(u''))$, since $T(u)=T(u') \concat T(u'')$,
by \eqref{eq:local-limit-recursion-for-T}, and since $|u'|=w(T(u'))$
trivially. Therefore, by induction,
\begin{align*}
T(u \concat v) &= T(u' \concat u'' \concat v) = T(u') \concat T(u'' \concat v) \\
&= T(u') \concat T(u'') \concat T(v) = T(u' \concat u'') \concat T(v) = T(u) \concat T(v) . \qedhere
\end{align*}
\end{proof}

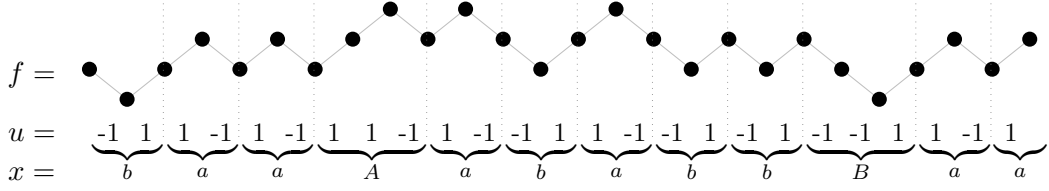
\begin{figure}[!t]
\newlength{\dlen}
\newlength{\dspc}
\dlen=0.42cm
\dspc=0.08cm

\newsavebox{\myline}
\savebox{\myline}{\tikz{\draw[line width=0.1mm,gray,dotted](0,0)--(0,1.85);}}

\newcommand{\wordline}{\makebox[0cm]{\raisebox{0.05cm}{\hspace{0.04cm}\smash{{\usebox{\myline}}}}}}

\newsavebox{\worda}
\newsavebox{\wordb}
\newsavebox{\wordA}
\newsavebox{\wordB}
\savebox{\worda}{$\hspace{\dspc}\underbrace{
    \makebox[\dlen]{\small 1}\hspace{\dspc}
    \makebox[\dlen]{\small -1}
    }_{a}$\wordline}
\savebox{\wordb}{$\hspace{\dspc}\underbrace{
    \makebox[\dlen]{\small -1}\hspace{\dspc}
    \makebox[\dlen]{\small 1}
    }_{b}$\wordline}
\savebox{\wordA}{$\hspace{\dspc}\underbrace{
    \makebox[\dlen]{\small 1}\hspace{\dspc}
    \makebox[\dlen]{\small 1}\hspace{\dspc}
    \makebox[\dlen]{\small -1}
    }_{A}$\wordline}
\savebox{\wordB}{$\hspace{\dspc}\underbrace{
    \makebox[\dlen]{\small -1}\hspace{\dspc}
    \makebox[\dlen]{\small -1}\hspace{\dspc}
    \makebox[\dlen]{\small 1}
    }_{B}$\wordline}

\[
\begin{array}{rl}
\raisebox{2ex}{$f =$}
&
\begin{tikzpicture}
    \tikzstyle{every node}=[minimum size=0.2cm,inner sep=0]
    \pgftransformcm{0.5}{0}{0}{0.4}{\pgfpoint{0cm}{0cm}}
    \drawRandomWalk{0}{{0,-1,0,1,0,1,0,1,2,1,2,1,0,1,2,1,0,1,0,1,0,-1,0,1,0,1}}{99}{}{}{}
\end{tikzpicture}

\\
\underset{\raisebox{-1.75ex}{$x =$}}{u =}
&
\hspace{0.065cm}
\usebox{\wordb}
\usebox{\worda}
\usebox{\worda}
\usebox{\wordA}
\usebox{\worda}
\usebox{\wordb}
\usebox{\worda}
\usebox{\wordb}
\usebox{\wordb}
\usebox{\wordB}
\usebox{\worda}
\underbrace{
    \makebox[\dlen]{\small 1 \hfill}
    }_{a}

\end{array}
\]

\caption{A homomorphism $f \in \Hom(P_{n,d})$ is first viewed as a word $u := D_n(f) $ of length $n$ on the alphabet $\{-1,1\}$. Then, $u$ is encoded into a word $x := T(u)$ on the alphabet $\Sigma$ by sequentially reading off the letters from left to right, as defined in the recursive formula in \eqref{eq:local-limit-recursion-for-T}. If this process exhausts $u$ completely then we end up with a word $x$ of weight exactly $n$. Otherwise, we remain with a tail of $u$ of length one or two (as is the case in this figure), which is a prefix of at least one element in $\Sigma$. In this case, the last letter is chosen in such a way that the weight of the resulting word $x$ is $n+1$, as defined by the base cases in \eqref{eq:local-limit-recursion-for-T}. }
\label{fig:local-limit-encoding-to-words}
\end{figure}

We say a word $x \in \Sigma^*$ is {\em $d$-legal} if it satisfies the conditions
\begin{equation}
\label{eq:local-limit-d-legal-word}
\begin{array}{lclr}
x_m = A & \Rightarrow & x_{m-i} = a , &  \forall i \in \{1,\dots,d-1\} \text{ such that } i<m , \\
x_m = B & \Rightarrow & x_{m-i} = b , &  \forall i \in \{1,\dots,d-1\} \text{ such that } i<m , \\
x_m = A , ~ m > d & \Rightarrow & x_{m - d} \in \{a,A\} , \\
x_m = B , ~ m > d & \Rightarrow & x_{m - d} \in \{b,B\}.
\end{array}
\end{equation}
Denote by $\Omega_{n,d}$ the set of $d$-legal words on $\Sigma$ of
weight $n$ or $n+1$. That is,
\[ \Omega_{n,d} := \{ x \in \Sigma^* ~|~ x \text{ is $d$-legal}, ~ w(x)\in\{n,n+1\} \} .\]
Define
\[ L_n := (T \circ D_n) |_{\Hom(P_{n,d})} .\]

\begin{claim}
\label{cl:local-limit-bijection}
The mapping $L_n$ is a bijection between $\Hom(P_{n,d})$ and $\Omega_{n,d}$.
\end{claim}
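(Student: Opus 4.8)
The plan is to peel off the formal parts and reduce the claim to a single equivalence between the edge constraints of $P_{n,d}$ and the $d$-legality conditions \eqref{eq:local-limit-d-legal-word}.

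\textbf{Step 1 (injectivity and weight).} A homomorphism is determined by $f(0)=0$ together with its increments, so $D_n$ is injective, and $D_n(\Hom(P_{n,d}))\subseteq\cD$ as already noted. The map $T$ is injective on words of a fixed length: if $T(u)=T(v)=x$ with $|u|=|v|$, then $x$ has nonempty preimage, so $x\neq()$ and (by the stated description of $T^{-1}$) $T^{-1}(x)=\{T'(x),T'(x)^-\}$, whose two members have lengths differing by one; hence $u=v$. Consequently $L_n$ is injective, and since $T(u)$ has weight $|u|$ or $|u|+1$, $L_n$ maps $\Hom(P_{n,d})$ into the set of words of weight $n$ or $n+1$.

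\textbf{Step 2 (the key equivalence and how the claim follows).} The core assertion, call it $(\star)$, is: for $u\in\cD\cap\{-1,1\}^n$ with associated path $f$ (i.e.\ $f(0)=0$ and $f(k)-f(k-1)=u_k$), one has $f\in\Hom(P_{n,d})$ if and only if $T(u)$ is $d$-legal. Granting $(\star)$, the claim follows quickly. If $f\in\Hom(P_{n,d})$ then $u:=D_n(f)\in\cD$, $T(u)$ has weight $n$ or $n+1$, and $T(u)$ is $d$-legal by $(\star)$, so $L_n(f)\in\Omega_{n,d}$. Conversely, given $x\in\Omega_{n,d}$, observe first that a $d$-legal word contains neither $(a,B)$ nor $(b,A)$ (immediate from \eqref{eq:local-limit-d-legal-word}) and, for $n\ge 1$, is nonempty, so $T^{-1}(x)=\{T'(x),T'(x)^-\}$; exactly one of these two words, call it $u$, has length $n$, since $w(x)\in\{n,n+1\}$. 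Moreover, concatenating the blocks of a $d$-legal word cannot create three equal consecutive symbols: such a run must straddle a block boundary with two of its symbols forming the internal pair of a single $A$- or $B$-block, which forces a $(b,A)$, $(B,A)$, $(a,B)$ or $(A,B)$ adjacency, all excluded by \eqref{eq:local-limit-d-legal-word}; hence $u\in\cD$ (and so is $T'(x)^-$, as a prefix of a word in $\cD$). Then $(\star)$ gives that the path of $u$ lies in $\Hom(P_{n,d})$, and $T(u)=x$ since $u\in T^{-1}(x)$, so $L_n(f)=x$. Combined with Step 1, this proves the bijection.

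\textbf{Step 3 (proving $(\star)$; the main obstacle).} Since $u\in\cD$ already handles the distance-$1$ and distance-$3$ edges, $f\in\Hom(P_{n,d})$ is equivalent to $|f(j)-f(i)|\le 1$ for all $0\le i<j\le n$ with $j-i$ odd and at most $2d+1$; all such odd lengths must be checked, not only $2d+1$, since $\cD$ alone does not control intermediate windows. To relate this to $d$-legality I would use the block decomposition $u=T'(x)$ or $u=T'(x)^-$, where $x=T(u)=(x_1,\dots,x_t)$: the value of $f$ at the right end of the $m$-th block equals $\#\{i\le m:x_i=A\}-\#\{i\le m:x_i=B\}$, while inside an $a$- or $A$-block $f$ sits one unit, respectively two units, above its bracketing boundary values (symmetrically for $b,B$). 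A window of odd length $\le 2d+1$ with $|f(j)-f(i)|\ge 3$ then corresponds exactly to two ``ascending'' features ($A$-blocks, or an $A$-block and a trailing boundary increment) or two ``descending'' ones occurring too close together in $x$ relative to the number of intervening neutral ($a$ or $b$) blocks, which is precisely what the four conditions in \eqref{eq:local-limit-d-legal-word} rule out; conversely each condition forbids exactly such a window. I expect the cleanest route is an induction on the number of blocks: appending one block introduces only edges reaching back at most $2d+1$ vertices, i.e.\ into roughly the last $d$ blocks, so that admissibility of a new block given its predecessors depends only on the last $O(d)$ of them, mirroring the local form of \eqref{eq:local-limit-d-legal-word}. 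The case bookkeeping in this induction, together with the two boundary subtleties---the left endpoint (the word starts at vertex $0$ with $f(0)=0$) and the possibly truncated final block (the weight-$n$ versus weight-$(n+1)$ dichotomy)---is where the real work lies; everything else is formal.
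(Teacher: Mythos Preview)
Your Steps~1 and~2 are correct and line up with the paper's setup. The gap is in Step~3: you correctly isolate the equivalence $(\star)$ as the heart of the matter, but you do not prove it. You describe the geometric picture (two ``ascending features'' too close together) and then propose an induction on the number of blocks, explicitly deferring the ``case bookkeeping'' and the boundary subtleties. That bookkeeping is the entire content of $(\star)$; without it you have a plan, not a proof.

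The paper avoids the induction entirely by a direct pattern characterization, which you may find cleaner. The observation is that for $u\in\cD$ with associated path $f$, one has $f\in\Hom(P_{n,d})$ if and only if $u$ does not contain any of the words
\[
u^l_{\pm}:=\pm(1,\underbrace{1,-1,\dots,1,-1}_{2l},1,1),\qquad 1\le l\le d-1,
\]
as a factor: indeed, $|f(i)-f(j)|=3$ with $|i-j|=2l+3$ forces precisely this increment pattern (the $\cD$ constraint pins down the interior). One then checks by hand that $u^l_+$ occurs in $u$ if and only if $T(u)$ contains one of
\[
(A,\underbrace{a,\dots,a}_{l-1},A),\quad (B,\underbrace{a,\dots,a}_{l},A),\quad (b,\underbrace{a,\dots,a}_{l},A),
\]
the three cases corresponding to where the block boundaries fall relative to the leading $(1,1)$ of $u^l_+$; symmetrically for $u^l_-$. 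The union of these forbidden subwords over $1\le l\le d-1$ is exactly the complement of the $d$-legality conditions~\eqref{eq:local-limit-d-legal-word}. This handles both the left boundary (the ``$i<m$'' clauses in~\eqref{eq:local-limit-d-legal-word}) and the truncated final block automatically, since the characterization is in terms of forbidden factors of $u$ rather than an inductive build-up of $x$.
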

\begin{proof}
It is clear from \eqref{eq:local-limit-def-weight} and \eqref{eq:local-limit-recursion-for-T} that $L_n$ injectively maps $\Hom(P_{n,d})$ to words on $\Sigma$ of weight $n$ or $n+1$. It remains to show that the image of $L_n$ is precisely $\Omega_{n,d}$.

One may easily see that a homomorphism $f \in \Hom(P_{n,1})$ is a homomorphism in $\Hom(P_{n,d})$ if and only if $D_n(f)$ does not contain a sequence of the form
\[ u^l_{\pm} := \pm (1,\underbrace{1,-1,\dots,1,-1}_{2l},1,1) , \]
with $1 \leq l \leq d-1$. Indeed, $D_n(f)$ contains $u^l_{\pm}$ if and only if there exist $0 \leq i,j \leq n$ such that $|i-j|=2l+3$ and $|f(i)-f(j)|=3$. Now, it is also not hard to check that $u^l_+$ appears in $D_n(f)$ if and only if $L_n(f)$ contains a subword of the form
\[ (A,\underbrace{a,\dots,a}_{l-1},A) \quad\text{or}\quad (B,\underbrace{a,\dots,a}_{l},A) \quad\text{or}\quad (b,\underbrace{a,\dots,a}_{l},A) , \]
depending on the position of $u^l_+$ in $D_n(f)$. The same is true for $u^l_{-}$ with $\{a,A\}$ and $\{b,B\}$ interchanged. Therefore, by \eqref{eq:local-limit-d-legal-word}, we see that $u^l_{\pm}$ appears in $D_n(f)$, for some $1 \leq l \leq d-1$, if and only if $L_n(f)$ is not $d$-legal.

We have shown that for any $f \in \Hom(P_{n,1})$, $f \in \Hom(P_{n,d})$ if and only if $L_n(f) \in \Omega_{n,d}$. In particular, since $\Hom(P_{n,d}) \subset \Hom(P_{n,1})$, we have $L_n(\Hom(P_{n,d})) \subset \Omega_{n,d}$. For the other direction, let $x \in \Omega_{n,d}$. Either $T'(x)$ or $T'(x)^-$ is of length $n$. Let $u \in \cD$ be this sequence and let $f := D_n^{-1}(u) \in \Hom(P_{n,1})$. Since $L_n(f)=T(u)=x$ is $d$-legal, we see that $f \in \Hom(P_{n,d})$. Hence, $\Omega_{n,d} \subset L_n(\Hom(P_{n,d}))$, completing the proof.
\end{proof}

\subsection{Counting the homomorphisms}
\label{sec:local-limit-counting}

In this section we prove Theorem \ref{thm:local-limit-hom-count}. This is done by deriving a recursion formula and investigating its characteristic polynomial.

For $0 \leq k,m \leq d$, define
\begin{align*}
\Omega_{n,d}(k,m) &:= \left\{ x \in \Omega_{n,d} ~|~ x_1,\dots,x_k \neq A ~\text{and}~ x_1,\dots,x_m \neq B \right\} .
\end{align*}
By symmetry we have $|\Omega_{n,d}(k,m)| = |\Omega_{n,d}(m,k)|$, so we can define
\[ c_n(k) := |\Omega_{n,d}(k,d)| = |\Omega_{n,d}(d,k)| , \quad 0 \leq k \leq d-1 .\]
This definition is motivated by the following two lemmas, which show that the $c_n(k)$ satisfy some explicit recursion formulas and that they have a simple relation to $|\Hom(P_{n,d})|$.

\begin{lemma}
\label{lem:local-limit-hom-count}
For any $n \geq 3$, we have
\[ |\Hom(P_{n,d})| = 2c_{n-2}(0) + 2c_{n-3}(d-1) .\]
\end{lemma}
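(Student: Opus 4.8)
The plan is to invoke the bijection $L_n$ of Claim~\ref{cl:local-limit-bijection}, which identifies $\Hom(P_{n,d})$ with $\Omega_{n,d}$, so that the statement reduces to the combinatorial identity $|\Omega_{n,d}| = 2c_{n-2}(0) + 2c_{n-3}(d-1)$ for $n\ge 3$. I would prove this by partitioning $\Omega_{n,d}$ according to the first letter $x_1\in\{a,b,A,B\}$ of the word, stripping off $x_1$, and describing which truncated words $x' := (x_2,\dots,x_{|x|})$ arise.

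The crux is the following structural observation: for a letter $c\in\Sigma$ and a word $x'\in\Sigma^*$, the concatenation $(c)\concat x'$ is $d$-legal if and only if $x'$ is $d$-legal and $x'$ avoids prescribed letters among its first few positions. One obtains this by unwinding the conditions \eqref{eq:local-limit-d-legal-word} for $x=(c)\concat x'$: a condition imposed on a letter $x_m$ with $m\ge 2$ either stays entirely inside $x'$ --- in which case it is equivalent to the corresponding condition for $x'$ --- or else it ``reaches back'' to position $1$, becoming a constraint between $c$ and a nearby $A$ or $B$ in $x'$. Carrying out the four cases gives: $(a)\concat x'$ is $d$-legal iff $x'$ is $d$-legal with no $B$ among its first $d$ letters; $(b)\concat x'$ iff $x'$ is $d$-legal with no $A$ among its first $d$ letters; $(A)\concat x'$ iff $x'$ is $d$-legal with no $A$ among its first $d-1$ letters and no $B$ among its first $d$ letters; and $(B)\concat x'$ iff $x'$ is $d$-legal with no $B$ among its first $d-1$ letters and no $A$ among its first $d$ letters. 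The slight asymmetry (an $A$ is tolerated at position $d$ of $x'$ but not at positions $1,\dots,d-1$) comes precisely from the ``$d$-back'' conditions in \eqref{eq:local-limit-d-legal-word}: an $A$ at position $d$ of $x'$ sits at position $d+1$ of $x$, and its $d$-back condition reads the leading letter $c$, which is allowed iff $c\in\{a,A\}$. Since $w((c)\concat x')=w(c)+w(x')$ with $w(a)=w(b)=2$ and $w(A)=w(B)=3$, the truncations obtained from a leading $a$ or $b$ have weight in $\{n-2,n-1\}$, hence lie in $\Omega_{n-2,d}$, and those from a leading $A$ or $B$ have weight in $\{n-3,n-2\}$, hence lie in $\Omega_{n-3,d}$; the hypothesis $n\ge 3$ makes these indices nonnegative, and the case $x'=()$ (possible only when the leading letter is $A$ or $B$ and $n=3$) is covered since the empty word vacuously satisfies every constraint.

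Putting this together with the definition $c_m(k)=|\Omega_{m,d}(k,d)|=|\Omega_{m,d}(d,k)|$, the four parts of the partition have sizes $|\Omega_{n-2,d}(0,d)|=c_{n-2}(0)$ (leading $a$), $|\Omega_{n-2,d}(d,0)|=c_{n-2}(0)$ (leading $b$), $|\Omega_{n-3,d}(d-1,d)|=c_{n-3}(d-1)$ (leading $A$), and $|\Omega_{n-3,d}(d,d-1)|=c_{n-3}(d-1)$ (leading $B$); summing yields $|\Omega_{n,d}|=2c_{n-2}(0)+2c_{n-3}(d-1)$, and $L_n$ then gives the lemma. I expect the only real obstacle to be the bookkeeping in the structural observation: one must separate the instances of \eqref{eq:local-limit-d-legal-word} internal to $x'$ from the $O(d)$ instances that probe position $1$, and verify both directions of each of the four equivalences. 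Everything else --- exhaustiveness and disjointness of the four cases, the weight count, and the symmetry $|\Omega_{m,d}(k,\ell)|=|\Omega_{m,d}(\ell,k)|$ --- is routine.
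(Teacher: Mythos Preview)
Your proposal is correct and follows essentially the same approach as the paper: partition $\Omega_{n,d}$ by the first letter, strip it off, identify the resulting constrained word sets as $\Omega_{n-2,d}(0,d)$, $\Omega_{n-2,d}(d,0)$, $\Omega_{n-3,d}(d-1,d)$, $\Omega_{n-3,d}(d,d-1)$, and invoke the $\{a,A\}\leftrightarrow\{b,B\}$ symmetry together with the bijection of Claim~\ref{cl:local-limit-bijection}. The paper's proof is briefer, simply recording the two set identities \eqref{eq:Omega_n_d_relations} and appealing to symmetry, whereas you spell out the case analysis behind them; but the argument is the same.
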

\begin{proof}
Note that by \eqref{eq:local-limit-d-legal-word}, we have
\begin{equation}\label{eq:Omega_n_d_relations}
\begin{aligned}
\{ x \in \Omega_{n,d} ~|~ x_1=a \} &= \{ (a) \concat x ~|~ x \in \Omega_{n-2,d}(0,d) \} , \\
\{ x \in \Omega_{n,d} ~|~ x_1=A \} &= \{ (A) \concat x ~|~ x \in \Omega_{n-3,d}(d-1,d) \} .
\end{aligned}
\end{equation}
Therefore, by partitioning according to the first element and by the symmetry between $\{a,A\}$ and $\{b,B\}$, we obtain
\begin{align*}
|\Omega_{n,d}| &= 2 \big|\{ x \in \Omega_{n,d} ~|~ x_1=a \}\big| + 2 \big|\{ x \in \Omega_{n,d} ~|~ x_1=A \}\big| \\
&= 2c_{n-2}(0) + 2c_{n-3}(d-1) .
\end{align*}
The result now follows as $|\Omega_{n,d}|=|\Hom(P_{n,d})|$, by Claim \ref{cl:local-limit-bijection}.
\end{proof}

\begin{lemma}
For any $n \geq 3$, we have
\begin{align}
\label{eq:recursion1}
c_n(0) &= c_{n-2}(0) + c_{n-2}(d-1) + c_{n-3}(d-1) , \\
\label{eq:recursion2}
c_n(k) & = c_{n-2}(k-1) + c_{n-2}(d-1) , \quad 1 \leq k \leq d-1 .
\end{align}
\end{lemma}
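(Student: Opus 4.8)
The plan is to establish both recursions by a single device: partition the word sets counted by $c_n(0)$ and $c_n(k)$ according to their first letter $x_1 \in \Sigma$, and use the two bijections recorded in \eqref{eq:Omega_n_d_relations} (from the proof of Lemma~\ref{lem:local-limit-hom-count}) together with their images under the sign-flip symmetry swapping $\{a,A\}$ with $\{b,B\}$, which also interchanges the two arguments of $\Omega_{n,d}(\cdot,\cdot)$ and hence sends $\{x : x_1=b\}$ to $\{(b)\concat x' : x'\in\Omega_{n-2,d}(d,0)\}$ and $\{x : x_1=B\}$ to $\{(B)\concat x' : x'\in\Omega_{n-3,d}(d,d-1)\}$. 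All the weight bookkeeping — that stripping an $a$ or $b$ lowers the weight by $2$ and stripping an $A$ or $B$ lowers it by $3$ — is already packaged into those relations, and is exactly what produces the shifts $n-2$ and $n-3$.

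For \eqref{eq:recursion2}, fix $1\le k\le d-1$ and recall $c_n(k)=|\Omega_{n,d}(k,d)|$; here $x_1\ne A$ (since $k\ge1$) and $x_1\ne B$ (since $d\ge1$), so $x_1\in\{a,b\}$. If $x_1=a$, write $x=(a)\concat x'$ with $x'\in\Omega_{n-2,d}(0,d)$ via \eqref{eq:Omega_n_d_relations}; the residual conditions ``$x_2,\dots,x_k\ne A$'' and ``$x_2,\dots,x_d\ne B$'' become ``$x'_1,\dots,x'_{k-1}\ne A$'' and ``$x'_1,\dots,x'_{d-1}\ne B$'', and the latter is already implied, so this class has size $c_{n-2}(k-1)$. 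If $x_1=b$, write $x=(b)\concat x'$ with $x'\in\Omega_{n-2,d}(d,0)$; the residual conditions cut down to $x'\in\Omega_{n-2,d}(d,d-1)$, which by symmetry has cardinality $c_{n-2}(d-1)$. Adding the two classes gives \eqref{eq:recursion2}.

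For \eqref{eq:recursion1}, work with $c_n(0)=|\Omega_{n,d}(0,d)|$; now only $x_1\ne B$ is forced, so $x_1\in\{a,b,A\}$. The cases $x_1=a$ and $x_1=b$ are exactly the $k=0$ instances of the previous paragraph and contribute $c_{n-2}(0)$ and $c_{n-2}(d-1)$ respectively. The new case $x_1=A$ uses the second line of \eqref{eq:Omega_n_d_relations}: $x=(A)\concat x'$ with $x'\in\Omega_{n-3,d}(d-1,d)$, and the residual condition ``$x_2,\dots,x_d\ne B$'' is again automatic, so this class contributes $c_{n-3}(d-1)$. Summing the three classes yields \eqref{eq:recursion1}.

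The only mildly delicate point, and the one I would check most carefully, is that after stripping the first letter the residual membership conditions coincide \emph{exactly} with those defining the relevant $\Omega_{n-2,d}(\cdot,\cdot)$ or $\Omega_{n-3,d}(\cdot,\cdot)$: one must confirm that each leftover ``no $B$ among the first $d-1$ letters'' condition (and symmetrically for $A$) is already forced by the ``no $B$ among the first $d$ letters'' condition supplied by \eqref{eq:Omega_n_d_relations}, so that no spurious extra restriction is left over and no valid word is excluded. This is a brief index-chase from the definition of a $d$-legal word, and I do not anticipate any genuine obstacle.
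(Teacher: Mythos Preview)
Your proposal is correct and follows essentially the same approach as the paper: both argue by partitioning $\Omega_{n,d}(k,d)$ (resp.\ $\Omega_{n,d}(0,d)$) according to the first letter $x_1$, using the bijections in \eqref{eq:Omega_n_d_relations} and their sign-flipped analogues, and reading off the counts $c_{n-2}(k-1)$, $c_{n-2}(d-1)$, $c_{n-3}(d-1)$ from the resulting tail sets. The only difference is cosmetic: the paper states the target sets $\Omega_{n-2,d}(k-1,d)$, $\Omega_{n-2,d}(d,d-1)$, $\Omega_{n-3,d}(d-1,d)$ directly, whereas you start from the unconstrained versions in \eqref{eq:Omega_n_d_relations} and intersect with the residual first-letter restrictions; your final ``delicate point'' about the leftover $B$-condition being implied is in fact just the trivial inclusion $\Omega_{n',d}(\cdot,d)\subset\Omega_{n',d}(\cdot,d-1)$ and needs no appeal to $d$-legality.
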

\begin{proof}
Note that by \eqref{eq:local-limit-d-legal-word}, similarly to
\eqref{eq:Omega_n_d_relations}, we have
\begin{align*}
\{ x \in \Omega_{n,d}(0,d) ~|~ x_1=a \} &= \{ (a) \concat x ~|~ x \in \Omega_{n-2,d}(0,d) \} , \\
\{ x \in \Omega_{n,d}(0,d) ~|~ x_1=b \} &= \{ (b) \concat x ~|~ x \in \Omega_{n-2,d}(d,d-1) \} , \\
\{ x \in \Omega_{n,d}(0,d) ~|~ x_1=A \} &= \{ (A) \concat x ~|~ x \in \Omega_{n-3,d}(d-1,d) \} .
\end{align*}
Therefore, by partitioning according to the first element, we obtain
\begin{align*}
c_n(0)
&= |\Omega_{n,d}(0,d)| \\
&= \big|\{ x \in \Omega_{n,d}(0,d) ~|~ x_1=a \}\big| + \big|\{ x \in \Omega_{n,d}(0,d) ~|~ x_1=b \}\big| + \big|\{ x \in \Omega_{n,d}(0,d) ~|~ x_1=A \}\big| \\
&= |\Omega_{n-2,d}(0,d)| + |\Omega_{n-2,d}(d,d-1)| + |\Omega_{n-3,d}(d-1,d)| \\
&= c_{n-2}(0) + c_{n-2}(d-1) + c_{n-3}(d-1) .
\end{align*}
In a similar manner, for $1 \leq k \leq d-1$, we have
\begin{align*}
c_n(k)
    &= |\Omega_{n,d}(k,d)| \\
    &= \big|\{ x \in \Omega_{n,d}(k,d) ~|~ x_1=a \}\big| + \big|\{ x \in \Omega_{n,d}(k,d) ~|~ x_1=b \}\big| \\
    &= \big|\{ (a) \concat x ~|~ x \in \Omega_{n-2,d}(k-1,d) \}\big| + \big|\{ (b) \concat x ~|~ x \in \Omega_{n-2,d}(d,d-1) \}\big| \\
    &= c_{n-2}(k-1) + c_{n-2}(d-1) . \qedhere
\end{align*}
\end{proof}

We express all quantities $c_n(k)$ in terms of $c_n(d-1)$. Substituting $k=d-1$ in \eqref{eq:recursion2} yields
\[ c_n(d-2) = c_{n+2}(d-1) - c_n(d-1) . \]
Now substituting $k=d-2$ in \eqref{eq:recursion2} gives
\[ c_n(d-3) = c_{n+2}(d-2) - c_n(d-1) = c_{n+4}(d-1) - c_{n+2}(d-1) - c_n(d-1) . \]
Continuing in this manner (by induction), we get for $1 \leq m < d$,
\begin{equation}
\label{eq:recursion3}
c_n(d-m-1) = c_{n+2m}(d-1) - c_{n+2m-2}(d-1) - \dots - c_{n+2}(d-1) - c_n(d-1) .
\end{equation}
In particular, for $m = d-1$ this gives,
\[ c_n(0) = c_{n+2d-2}(d-1) - c_{n+2d-4}(d-1) - \dots - c_{n+2}(d-1) - c_n(d-1) .\]
Substituting this in \eqref{eq:recursion1} gives
\[ c_{n+2d-2}(d-1) = 2 c_{n+2d-4}(d-1) + c_{n-3}(d-1), \quad n\ge 3.\]
The characteristic polynomial for this equation is
\[ q(\mu) := \mu^{2d - 1} (\mu^2 - 2) - 1 .\]

\begin{claim}
The polynomial $q$ has $2d+1$ distinct (complex) roots. Exactly one of these, which we denote by $\mu_0$, is positive. Moreover, $\mu_0 > \sqrt{2}$, while all other roots have modulus less than $\sqrt{2}$.
\end{claim}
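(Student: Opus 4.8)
The plan is to combine elementary calculus of $q$ on the real line with a single application of Rouch\'e's theorem. Throughout write $q(\mu)=\mu^{2d+1}-2\mu^{2d-1}-1$, and set
\[ s:=\frac{2(2d-1)}{2d+1}\in(0,2),\qquad t_*:=\sqrt{s}<\sqrt2 .\]
First I would analyze the real roots. Since $q(0)=-1$, the origin is not a root. For $0<\mu\le\sqrt2$ we have $\mu^{2d-1}(\mu^2-2)\le0$, hence $q(\mu)\le-1<0$; for $\mu>\sqrt2$ one computes $q'(\mu)=\mu^{2d-2}\bigl((2d+1)\mu^2-2(2d-1)\bigr)>0$, while $q(\sqrt2)=-1$ and $q(\mu)\to+\infty$. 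Thus $q$ has exactly one positive real root $\mu_0$, and $\mu_0>\sqrt2$.

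For the negative reals it is convenient to consider $p(t):=q(-t)=2t^{2d-1}-t^{2d+1}-1$. Then $p'(t)=t^{2d-2}\bigl(2(2d-1)-(2d+1)t^2\bigr)$, so $p$ is increasing on $(0,t_*)$ and decreasing on $(t_*,\infty)$, with $p(0)=-1$ and $p(t)\to-\infty$. The one nontrivial elementary input needed here is that the maximal value $p(t_*)=t_*^{2d-1}(2-t_*^2)-1$ is strictly positive. Squaring and simplifying, this is equivalent to the integer inequality $2^{2d+3}(2d-1)^{2d-1}>(2d+1)^{2d+1}$, which I would check directly for $d\in\{1,2\}$ and, for $d\ge3$, deduce from $\bigl(\tfrac{2d+1}{2d-1}\bigr)^{2d-1}<e^{2}<8$ together with $2^{d}\ge 2d+1$. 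Granting $p(t_*)>0$, the function $p$ has exactly two positive zeros $0<t_0<t_*<t_1$; since $p(\sqrt2)=-1<0$ and $p$ is decreasing past $t_*$, we get $t_1<\sqrt2$. Hence the real roots of $q$ are precisely $\mu_0>\sqrt2$ and $-t_0,-t_1$ with $t_0<t_*<t_1<\sqrt2$.

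Next, distinctness. A repeated root of $q$ is a common zero of $q$ and $q'$; since $q(0)\ne0$, such a $\mu$ satisfies $(2d+1)\mu^2=2(2d-1)$, i.e.\ $\mu^2=s$, and then $q(\mu)=0$ forces $\mu=-1/\bigl(s^{d-1}(2-s)\bigr)$, so that $s^{2d-1}(2-s)^2=1$. Substituting $s=\tfrac{2(2d-1)}{2d+1}$ and $2-s=\tfrac{4}{2d+1}$ turns this into $2^{2d+3}(2d-1)^{2d-1}=(2d+1)^{2d+1}$, which is impossible because the left-hand side is even and the right-hand side is odd. Hence $q$ has $2d+1$ distinct roots.

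Finally I would localize the roots by applying Rouch\'e's theorem on the circle $|\mu|=t_*$: there
\[ \bigl|\mu^{2d-1}(\mu^2-2)\bigr|=t_*^{2d-1}\,|\mu^2-2|\ \ge\ t_*^{2d-1}(2-t_*^2)=p(t_*)+1>1=|-1| ,\]
so $q(\mu)=\mu^{2d-1}(\mu^2-2)-1$ has the same number of zeros in $\{|\mu|<t_*\}$ as $\mu^{2d-1}(\mu-\sqrt2)(\mu+\sqrt2)$, namely $2d-1$ (the zero of order $2d-1$ at the origin; the zeros $\pm\sqrt2$ lie outside the disk since $t_*<\sqrt2$). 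Therefore exactly two of the $2d+1$ roots lie in $\{|\mu|\ge t_*\}$; as $\mu_0$ and $-t_1$ are two distinct such roots, they are exactly these two, and every remaining root lies in $\{|\mu|<t_*\}\subset\{|\mu|<\sqrt2\}$. Since also $|-t_1|=t_1<\sqrt2$, the roots are $\mu_0\in(\sqrt2,\infty)$ together with $2d$ roots of modulus $<\sqrt2$, and of all these only $\mu_0$ is positive, as claimed. The main obstacle is the elementary but slightly delicate inequality $p(t_*)>0$ (equivalently $2^{2d+3}(2d-1)^{2d-1}>(2d+1)^{2d+1}$), which is used both to run the Rouch\'e estimate and to force $t_1<\sqrt2$; everything else is routine.
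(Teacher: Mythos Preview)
Your argument is correct. Both the paper and you rely on real calculus plus a single Rouch\'e estimate, but the implementations differ in several respects. The paper spots the factorization helper $q(-1)=0$, which immediately accounts for one of the three real roots; you instead run a full max/min analysis of $p(t)=q(-t)$, which forces you to establish the inequality $p(t_*)>0$ (equivalently $2^{2d+3}(2d-1)^{2d-1}>(2d+1)^{2d+1}$). For the localization, the paper compares $q$ with $g(z)=2z^{2d-1}$ on a circle of radius just above $1$, using the simple estimate $r^{2d+1}+1<2r^{2d-1}$, which needs $d\ge2$; you compare $q$ with $\mu^{2d-1}(\mu^2-2)$ on $|\mu|=t_*$, where the already-proved inequality $p(t_*)>0$ gives exactly the Rouch\'e margin. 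The paper's route is shorter and avoids the arithmetic inequality by exploiting $q(-1)=0$, at the price of treating $d=1$ separately. Your route is uniform in $d$, yields the slightly sharper localization that $2d-1$ roots lie in $\{|\mu|<t_*\}$ with $t_*<\sqrt2$, and your parity argument for distinctness is a nice touch that makes the simple-root check completely explicit.
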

\begin{proof}
Assume that $d \geq 2$ (the case $d=1$ can be verified directly). It
is easy to verify that the derivative of $q$ does not vanish at any
zero, so that the roots are simple, and hence there are $2d+1$
distinct roots. Since $q(\pm \sqrt{2})=-1$, $q(\sqrt{3})>0$ and
$q(-2/\sqrt{3})>0$, the intermediate value theorem implies that
there are roots $\sqrt{2} < \mu_0 < \sqrt{3}$ and $-\sqrt{2} < \mu_1
< -2/\sqrt{3}$. Considering $q$ as a real function, by
differentiating, one finds that $q$ has a single minimum and a
single maximum, and hence at most $3$ real roots. Since $q(-1)=0$,
we see that $\mu_0$ is indeed the unique positive root. For the last
part, it suffices to show $q$ has $2d-1$ roots of modulus at most
$1$. This is a consequence of Rouch\'{e}'s theorem applied to $q$
and $g(z):=2z^{2d-1}$ on the disc $D:=\{ z \in \mathbb{C} ~|~
|z|\leq r\}$ for any sufficiently small $r>1$. Indeed, on $\partial
D$, we have $|g(z)|=2r^{2d-1}$ and $|q(z)+g(z)| \leq r^{2d+1}+1$,
and since $r^{2d+1}+1 < 2r^{2d-1}$ (using our assumption that $d\ge
2$), Rouch\'{e}'s theorem implies that $g$ and $q$ have the same
number of zeros in $D$. As $g$ clearly has $2d-1$ zeros in $D$, this
completes the proof.
\end{proof}

Let $\mu_0$ be the unique positive root of $q$. We denote $\lambda := \mu_0^2$. That is, $\lambda$ is the unique positive solution of the equation
\begin{equation}
\label{eq:lambda}
\lambda^{d - 1/2} (\lambda - 2) = 1 .
\end{equation}

\begin{claim}
\label{cl:local-limit-asymptotic-c_n}
For any fixed $0 \leq k \leq d-1$, there exists a constant $r_k>0$, such that
\[ c_n(k) \sim r_k \lambda^{n/2} \quad \text{ as } n \to \infty . \]
\end{claim}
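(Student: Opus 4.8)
The plan is to analyze $a_n := c_n(d-1)$ using the linear recurrence derived just above the statement of the claim, and then to read off the behaviour of $c_n(k)$ for $0 \le k \le d-2$ from \eqref{eq:recursion3}.

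First I would record that $a_n = c_n(d-1)$ satisfies $a_n = 2a_{n-2} + a_{n-2d-1}$ for all $n$ beyond a fixed threshold — this is exactly the relation $c_{n+2d-2}(d-1) = 2c_{n+2d-4}(d-1) + c_{n-3}(d-1)$ after re-indexing $N = n+2d-2$ — a linear recurrence of order $2d+1$ whose characteristic polynomial is $q(\mu) = \mu^{2d-1}(\mu^2-2)-1$. By the preceding claim, $q$ has $2d+1$ distinct roots $\mu_0,\dots,\mu_{2d}$ with $\mu_0 = \sqrt\lambda > \sqrt 2$ the unique root of maximal modulus and $|\mu_j| < \sqrt 2$ for $1 \le j \le 2d$. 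The standard theory of linear recurrences with simple characteristic roots then produces constants $\alpha_0,\dots,\alpha_{2d}$ (fitted to $2d+1$ consecutive values of $a$) with $a_n = \sum_{j=0}^{2d} \alpha_j \mu_j^n$ for all large $n$; setting $\rho := \max_{1 \le j \le 2d}|\mu_j| < \sqrt 2 < \mu_0$, this gives $a_n = \alpha_0\lambda^{n/2} + O(\rho^n)$.

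Next I would establish $\alpha_0 > 0$, which then lets me take $r_{d-1} := \alpha_0$. Non-negativity is forced by $c_n(d-1) \ge 0$ together with $\rho < \mu_0$. For strictness I would use a cheap exponential lower bound: for even $n$, every word on the two-letter sub-alphabet $\{a,b\}$ of length $n/2$ has weight $n$, is automatically $d$-legal (it contains neither $A$ nor $B$, so all conditions in \eqref{eq:local-limit-d-legal-word} are vacuous), and lies in $\Omega_{n,d}(d-1,d)$; hence $c_n(d-1) \ge 2^{n/2} = (\sqrt 2)^n$ along even $n$, which is incompatible with $\alpha_0 = 0$ since then $a_n = O(\rho^n)$ with $\rho < \sqrt 2$.

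Finally, for $0 \le k \le d-2$ put $m := d-1-k \in \{1,\dots,d-1\}$; from \eqref{eq:recursion3}, $c_n(k) = c_{n+2m}(d-1) - \sum_{i=0}^{m-1} c_{n+2i}(d-1)$. Substituting $c_{n+2i}(d-1) = r_{d-1}\lambda^{i}\lambda^{n/2} + O(\rho^n)$ (using $\lambda = \mu_0^2$) yields $c_n(k) = r_k\lambda^{n/2} + O(\rho^n)$ with $r_k := r_{d-1}\bigl(\lambda^m - \tfrac{\lambda^m-1}{\lambda-1}\bigr)$; positivity of $r_k$ follows either from $\lambda > 2$ (which holds by \eqref{eq:lambda}) or, more uniformly, from the same $2^{n/2}$ lower bound applied to $\Omega_{n,d}(k,d)$, which again rules out $r_k = 0$ while $c_n(k) \ge 0$ rules out $r_k < 0$. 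I expect the only genuinely delicate point to be the bookkeeping in the first step — confirming the recursion is valid for all sufficiently large $n$, so that the closed form is asymptotically correct, and invoking simplicity of the roots and strict dominance of $\mu_0$ from the preceding claim; everything after that is routine.
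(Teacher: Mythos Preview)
Your proposal is correct and follows essentially the same approach as the paper: use the linear recurrence for $c_n(d-1)$ with characteristic polynomial $q$, invoke the simplicity and strict dominance of $\mu_0=\sqrt\lambda$ from the preceding claim, and establish positivity of the leading coefficient via the lower bound $c_n(k)\ge 2^{\lceil n/2\rceil}$ coming from words on $\{a,b\}$ alone. The only organizational difference is that the paper states the expansion $c_n(k)=\sum_j r_k^j\mu_j^n$ for all $k$ at once and applies the $2^{n/2}$ bound directly to each, whereas you first treat $k=d-1$ and then propagate via \eqref{eq:recursion3}; your explicit formula for $r_k$ in terms of $r_{d-1}$ is a small bonus and in fact reappears later in the paper as \eqref{eq:local-limit-asymptotic-c_n-explicit}.
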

\begin{proof}
Denote by $\mu_0:=\sqrt{\lambda}, \mu_1, \dots, \mu_{2d}$ the roots of $q$. The roots $\mu_i$ are distinct, and therefore,
\[ c_n(k) = r_k^0 \mu_0^n + \cdots + r_k^{2d} \mu_{2d}^n ,\]
for some coefficients $r_k^i$. Now, since any word $x \in \{a,b\}^*$ is $d$-legal and has $w(x)=2|x|$, we see that
\[ c_n(k) = |\Omega_{n,d}(k,d)| \geq \left|\{a,b\}^{\lceil n/2 \rceil}\right| = 2^{\lceil n/2 \rceil} \geq \sqrt{2}^n .\]
Therefore, since $|\mu_i|<\sqrt{2}$ for $1 \leq i \leq 2d$, we must have $r_k^0 > 0$ for all $0 \leq k \leq d-1$, and then
\[ c_n(k) \sim r_k^0 \mu_0^n = r_k^0 \lambda^{n/2} . \qedhere \]
\end{proof}

We now have all the ingredients to prove Theorem \ref{thm:local-limit-hom-count}.
\begin{proof}[Proof of Theorem \ref{thm:local-limit-hom-count}]
By Lemma \ref{lem:local-limit-hom-count} and Claim \ref{cl:local-limit-asymptotic-c_n}, we have
\begin{align*}
|\Hom(P_{n,d})| &= 2c_{n-2}(0) + 2c_{n-3}(d-1) \\
    &\sim 2r_0 \lambda^{(n-2)/2} + 2r_{d-1} \lambda^{(n-3)/2} = C(d)\lambda^{n/2}(1 + o(1)) \quad \text{ as } n \to \infty . \qedhere
\end{align*}
\end{proof}

Claim \ref{cl:local-limit-asymptotic-c_n} gives the asymptotic behavior of $c_n(k)$ as $n \to \infty$ for fixed $d \geq 1$. Specifically, it says that the order of magnitude of $c_n(k)$ is $\lambda^n$, where $\lambda=\lambda(d)$ depends on $d$ and is given implicitly by \eqref{eq:lambda}. The next claim describes the dependence of the constant $\lambda(d)$ on $d$ as $d \to \infty$.

\begin{claim}\label{cl:lambda_asymptotics}
The unique positive solution $\lambda$ of \eqref{eq:lambda} satisfies
\[ \lambda = \lambda(d) = 2 + 2^{-d+1/2} (1 - o(1)) \quad \text{ as } d \to \infty .\]
\end{claim}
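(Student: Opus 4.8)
The plan is to substitute $\lambda = 2 + \epsilon$ into \eqref{eq:lambda} and solve for $\epsilon = \epsilon(d)$ asymptotically by a short bootstrap argument. First I would record that \eqref{eq:lambda} has a unique positive solution and that it satisfies $\lambda > 2$: this is contained in the preceding claim (via $\lambda = \mu_0^2$ and $\mu_0 > \sqrt2$), and can also be seen directly since $\lambda \mapsto \lambda^{d-1/2}(\lambda-2)$ is negative on $(0,2)$, vanishes at $2$, and is strictly increasing from $0$ to $\infty$ on $(2,\infty)$. Hence $\epsilon := \lambda - 2 > 0$, and \eqref{eq:lambda} rewrites as
\[ \epsilon = (2+\epsilon)^{-(d-1/2)} . \]

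Next I would extract a crude a priori bound on $\epsilon$. Since $\epsilon > 0$ we have $2+\epsilon > 2$, so
\[ \epsilon = (2+\epsilon)^{-(d-1/2)} < 2^{-(d-1/2)} = 2^{-d+1/2} . \]
In particular $\epsilon \to 0$, and indeed $\epsilon = O(2^{-d})$, as $d \to \infty$.

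Finally I would bootstrap this bound to pin down the constant. Writing $(2+\epsilon)^{-(d-1/2)} = 2^{-d+1/2}(1+\epsilon/2)^{-(d-1/2)}$ gives
\[ \frac{\epsilon}{2^{-d+1/2}} = \left(1 + \tfrac{\epsilon}{2}\right)^{-(d-1/2)} = \exp\!\left(-(d-\tfrac12)\log\!\left(1+\tfrac{\epsilon}{2}\right)\right) . \]
Using $0 < \log(1+x) \le x$ together with the crude bound $\epsilon \le 2^{-d+1/2}$, the exponent is negative with absolute value at most $(d-\tfrac12)\cdot\tfrac{\epsilon}{2} \le (d-\tfrac12)\,2^{-d-1/2}$, which tends to $0$ as $d \to \infty$. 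Therefore $\epsilon/2^{-d+1/2} \to 1$, i.e. $\lambda(d) = 2 + 2^{-d+1/2}(1-o(1))$, proving the claim.

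There is essentially no obstacle here: the computation is routine, and the only point requiring (a trivial amount of) care is first establishing the a priori smallness $\epsilon = O(2^{-d})$ before substituting it back in to control the correction factor $(1+\epsilon/2)^{-(d-1/2)}$.
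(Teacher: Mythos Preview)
Your proof is correct and follows essentially the same approach as the paper: write $\lambda = 2 + \epsilon$, deduce $\epsilon = (2+\epsilon)^{-(d-1/2)} < 2^{-d+1/2}$, and then use this crude bound to show $(1+\epsilon/2)^{-(d-1/2)} \to 1$. The only difference is that you spell out the final step via the inequality $\log(1+x)\le x$, whereas the paper simply asserts the limit.
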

\begin{proof}
Writing $\lambda = 2 + \delta$, we have by \eqref{eq:lambda} that
\[ \delta = (2+\delta)^{-d+1/2} .\]
Thus, since $\delta>0$, we have
\[ \delta \leq 2^{-d+1/2} ,\]
and therefore,
\[ 2^{d-1/2} \delta = (1+\delta/2)^{-d+1/2} \to 1 \quad \text{ as } d \to \infty . \qedhere \]
\end{proof}

\subsection{Infinite homomorphisms}
\label{sec:local-limit-infinite-hom}

Denote by $P_{\infty,d}$ the graph on the vertex set $\{0,1,2,...\}$
with the edge set $\{ (i,j) ~|~ |i - j| = 1,3,...,2d+1 \}$. Note
that $\Hom(P_{\infty,d}):=\Hom(P_{\infty,d},0)$ is an infinite set
of homomorphisms. For a homomorphism $f \in \Hom(P_{n,d})$ (where
possibly $n=\infty$) and an integer $r \geq 0$, we denote by
$B_r(f)$ the restriction of $f$ to the first $r+1$ vertices, so that
\[ B_r(f) := f|_{\{0,1,\dots,\min\{r,n\}\}} \in \Hom(P_{\min\{r,n\},d}) .\]

An infinite word $x$ on $\Sigma$ is $d$-legal if it satisfies \eqref{eq:local-limit-d-legal-word}, as for finite words. Denote by $\Omega_{\infty,d}$ the set of infinite $d$-legal words on $\Sigma$. That is,
\[ \Omega_{\infty,d} := \{ x \in \Sigma^\N ~|~ x \text{ is $d$-legal} \} .\]
The mapping $D_n$ defined in \eqref{eq:local-limit-def-D} extends to
the case $n=\infty$ in an obvious way. The mapping $T$ defined in
\eqref{eq:local-limit-recursion-for-T} can also be extended to map
the infinite words $D_{\infty}(\Hom(P_{\infty,d}))$ to
$\Omega_{\infty,d}$ by the same recursion formula. Then, following
the proof of Claim \ref{cl:local-limit-bijection}, we see that
\[ L_{\infty} := (T \circ D_{\infty}) |_{\Hom(P_{\infty,d})} \]
is a bijection between $\Hom(P_{\infty,d})$ and $\Omega_{\infty,d}$.

\subsection{The local limit as a Markov chain}
\label{sec:local-limit-markov-chain}

The main goal of this section is to prove Theorem \ref{thm:local-limit}. To this end, we will describe a Markov chain (see Figure \ref{fig:local-limit-markov-chain}) on the state space
\[ \tilde{\Sigma} := \{a_1,\dots,a_d,b_1,\dots,b_d,A,B\}, \]
which will allow us to generate words in $\Omega_{\infty,d}$, and hence also homomorphisms in $\Hom(P_{\infty,d})$ through the bijection $L_{\infty}$. Loosely speaking, the idea of this Markov chain is that the state $a_k$ ($b_k$) represents the fact that a streak of $k$ consecutive $a$'s ($b$'s) has been accumulated. Likewise, the state $A$ ($B$) represents the fact that a jump has occurred in the positive (negative) direction.

\begin{figure}[!t]
\begin{center}

\begin{tabular}{lcr}

\begin{tikzpicture}[->,>=stealth',shorten >=1pt,auto,node distance=2.2cm,semithick]
\tikzstyle{every state}=[fill=white,draw=black,thick,text=black,scale=0.9]

\node[state]    (a0)    {$a_1$};
\node[state]    (a1)    [right of=a0] {$a_2$};
\node[state,draw=white!0]   (aa)    [right of=a1] {$\cdots$};
\node[state]    (ad)    [right of=aa] {$a_d$};
\node[state]    (A)     [left=2cm, above=0.75cm] at (a0) {$A$};

\node[state]    (b0)    [below of=a0] {$b_1$};
\node[state]    (b1)    [right of=b0] {$b_2$};
\node[state,draw=white!0]   (bb)    [right of=b1] {$\cdots$};
\node[state]    (bd)    [right of=bb] {$b_d$};
\node[state]    (B)     [left=2cm, below=0.75cm] at (b0) {$B$};

\path (a0) edge  node[above] {} (a1);
\path (a1) edge  node[above] {} (aa);
\path (aa) edge  node[above] {} (ad);
\path (ad) edge  [bend right] node[above] {} (A);
\path (ad) edge  [loop right] node[above] {} (ad);

\path (b0) edge  node[above] {} (b1);
\path (b1) edge  node[above] {} (bb);
\path (bb) edge  node[above] {} (bd);
\path (bd) edge  [bend left] node[above] {} (B);
\path (bd) edge  [loop right] node[above] {} (bd);

\path (A) [bend right] edge  (b0);
\path (A) [bend left] edge  (a1);
\path (B) [bend left] edge  (a0);
\path (B) [bend right] edge  (b1);
\path[<->] (b0) edge (a0);

\tikzstyle{every edge}=[draw=gray]
\path [dotted] (a1) edge  (b0);
\path [dotted] (ad) edge  (b0);
\path [dotted] (b1) edge  (a0);
\path [dotted] (bd) edge  (a0);

\end{tikzpicture}

&
\quad\quad
&

\raisebox{4.8em}{
\begin{tikzpicture}[->,>=stealth',shorten >=1pt,auto,node distance=2.2cm,semithick]
\tikzstyle{every state}=[fill=white,draw=black,thick,text=black,scale=0.9]

\node[state]    (a0)    {$a_1$};
\node[state]    (A)     [left of=a0] {$A$};

\node[state]    (b0)    [below of=a0] {$b_1$};
\node[state]    (B)     [left of=b0] {$B$};

\path[<->] (a0) edge node[above] {} (A);
\path (a0) edge  [loop right] node[above] {} (a0);

\path[<->] (b0) edge node[above] {} (B);
\path (b0) edge  [loop right] node[above] {} (b0);

\path (A) edge  (b0);
\path (B) edge  (a0);
\path[<->] (b0) edge (a0);
\path (A) edge [loop left] node[above] {} (A);
\path (B) edge [loop left] node[above] {} (B);

\end{tikzpicture}
}

\end{tabular}

\end{center}
\caption{The Markov chains describing the local limit when $d \geq
2$ (on the left) and when $d=1$ (on the right). The allowed transitions are those determined by \eqref{eq:local-limit-d-legal-word}.}
\label{fig:local-limit-markov-chain}
\end{figure}
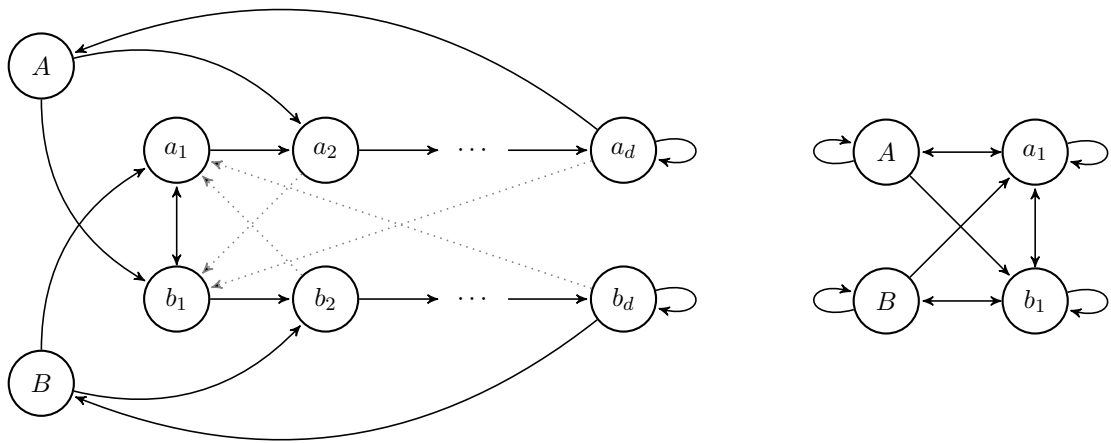

Consider the above Markov chain (see Figure
\ref{fig:local-limit-markov-chain}) on the state space
$\tilde{\Sigma}$ with the transition probabilities $p$ and the
initial state distribution $\pi$ as described below.
\begin{equation}
\label{eq:local-limit-transition-prob}
\begin{aligned}
&p(A, b_1) = p(a_d, a_d) := \lambda^{-1} , \\
&p(A, a_1) := \begin{cases} 0 &\text{if } d \geq 2 \\ \lambda^{-1} &\text{if } d=1 \end{cases} , \\
&p(a_k, b_1) := \frac{\lambda - 1}{\lambda^k (\lambda-2) + \lambda} , \quad 1 \leq k \leq d , \\
\end{aligned}
\end{equation}
where $\lambda$ is the unique positive solution to \eqref{eq:lambda}. The analogous relations hold with the roles of $\{a,A\}$ and $\{b,B\}$ interchanged. Figure \ref{fig:local-limit-markov-chain} shows the legal transitions (i.e., transitions having positive probability). The probability of unspecified legal transitions are determined by the condition $\sum_s p(s',s) = 1$. The initial state distribution $\pi$ is given by
\[ \pi(a_d) = \pi(b_d) :=  \frac{\lambda^{-1/2}}{2}  \quad\text{and}\quad
\pi(A) = \pi(B) :=  \frac{1 - \lambda^{-1/2}}{2} .\] It is
interesting to note that, since $\lambda > 2$ and using
\eqref{eq:lambda}, we have
\[ \frac{1}{2} > \lambda^{-1} = p(a_1,b_1) > p(a_2,b_1) > \cdots > p(a_d,b_1) = \frac{\lambda - 1}{\lambda+\sqrt{\lambda}} > \frac{1}{2 + \sqrt{2}} ,\]
which expresses the fact that there is a small but growing tendency to continue in the same direction.

Running this chain for an infinite amount of time and considering
its trajectory as an infinite word on $\tilde{\Sigma}$, we may
obtain an infinite word $W_{\infty}$ on $\Sigma$ by dropping the
subscripts of the letters in $\tilde{\Sigma}$. More precisely, let
$\tilde{W}(1),\tilde{W}(2),\dots$ be a Markov chain on
$\tilde{\Sigma}$ with transition probabilities as in
\eqref{eq:local-limit-transition-prob} and such that $\tilde{W}(1)
\sim \pi$. Define $\phi \colon \tilde{\Sigma} \to \Sigma$ by
$\phi(a_i):=a$, $\phi(b_i):=b$, $\phi(A):=A$ and $\phi(B):=B$. Then
$W_{\infty}$ is defined by $W_{\infty}(k) := \phi(\tilde{W}(k))$ for
$k \geq 1$. Recalling \eqref{eq:local-limit-d-legal-word}, it is
clear that this process generates a $d$-legal word, i.e. that
$W_{\infty} \in \Omega_{\infty,d}$. Denote by
\[ f_{\infty} := L_{\infty}^{-1}(W_{\infty}) \]
the infinite homomorphism corresponding to this word. Let $f_n$ be a
uniformly chosen homomorphism in $\Hom(P_{n,d})$.
Theorem~\ref{thm:local-limit} will follow when we show that
\begin{equation}
\label{eq:local-limit-convergence-of-F} \Pr(B_r(f_n)=f)
\xrightarrow[n \to \infty]{} \Pr(B_r(f_{\infty})=f) \quad \text{ for
any } r \geq 1 \text{ and } f \in \Hom(P_{r,d}).
\end{equation}

\begin{figure}[!t]
\centering
\vspace{8pt}
\[
\begin{array}{ccl}
\begin{array}{c}
    \begin{tikzpicture}
        \pgftransformcm{0.24}{0}{0}{0.3}{\pgfpoint{0cm}{0cm}}
        \tikzstyle{every node}=[minimum size=0.13cm,inner sep=0]
        \drawRandomWalk{0}{{0,-1,0,1,0,1,0,1,0}}{0}{}{red}{};
    \end{tikzpicture} \\
    b\underbrace{a\dots a}_{< d}
\end{array}
&
\longrightarrow
&
\begin{array}{c}
   \begin{array}{c}
    \begin{tikzpicture}
        \pgftransformcm{0.24}{0}{0}{0.3}{\pgfpoint{0cm}{0cm}}
        \tikzstyle{every node}=[minimum size=0.13cm,inner sep=0]
        \drawRandomWalk{0}{{0,-1,0,1,0,1,0,1,0}}{0}{}{red}{};
        \drawRandomWalk{8}{{0,1,0}}{99}{red}{red}{red};
    \end{tikzpicture} \\
    ba\dots a a
   \end{array}
    \text{ or }
   \begin{array}{c}
    \begin{tikzpicture}
        \pgftransformcm{0.24}{0}{0}{0.3}{\pgfpoint{0cm}{0cm}}
        \tikzstyle{every node}=[minimum size=0.13cm,inner sep=0]
        \drawRandomWalk{0}{{0,-1,0,1,0,1,0,1,0}}{0}{}{red}{};
        \drawRandomWalk{8}{{0,-1,0}}{99}{red}{red}{red};
    \end{tikzpicture} \\
    ba\dots a b
   \end{array}
\end{array}

\\ \\

\begin{array}{c}
    \begin{tikzpicture}
        \pgftransformcm{0.24}{0}{0}{0.3}{\pgfpoint{0cm}{0cm}}
        \tikzstyle{every node}=[minimum size=0.13cm,inner sep=0]
        \drawRandomWalk{0}{{0,-1,0,1,0}}{0}{}{}{};
        \drawRandomWalk{4}{{0,1}}{0}{}{}{};
        \drawRandomWalk{5}{{1,0,1,0}}{0}{}{red}{};
    \end{tikzpicture} \\
    b\underbrace{a\dots a}_{\geq d}
\end{array}
&
\longrightarrow
&
\begin{array}{c}
   \begin{array}{c}
    \begin{tikzpicture}
        \pgftransformcm{0.24}{0}{0}{0.3}{\pgfpoint{0cm}{0cm}}
        \tikzstyle{every node}=[minimum size=0.13cm,inner sep=0]
        \drawRandomWalk{0}{{0,-1,0,1,0,1,0,1,0}}{0}{}{red}{};
        \drawRandomWalk{8}{{0,1,0}}{99}{red}{red}{red};
    \end{tikzpicture} \\
    ba\dots aa
   \end{array}
    \text{ or }
   \begin{array}{c}
    \begin{tikzpicture}
        \pgftransformcm{0.24}{0}{0}{0.3}{\pgfpoint{0cm}{0cm}}
        \tikzstyle{every node}=[minimum size=0.13cm,inner sep=0]
        \drawRandomWalk{0}{{0,-1,0,1,0,1,0,1,0}}{0}{}{red}{};
        \drawRandomWalk{8}{{0,-1,0}}{99}{red}{red}{red};
    \end{tikzpicture} \\
    ba\dots ab
   \end{array}
    \text{ or }
   \begin{array}{c}
    \begin{tikzpicture}
        \pgftransformcm{0.24}{0}{0}{0.3}{\pgfpoint{0cm}{0cm}}
        \tikzstyle{every node}=[minimum size=0.13cm,inner sep=0]
        \drawRandomWalk{0}{{0,-1,0,1,0,1,0,1,0}}{0}{}{red}{};
        \drawRandomWalk{8}{{0,1,2,1}}{99}{red}{red}{red};
    \end{tikzpicture} \\
    ba\dots aA
   \end{array}
\end{array}
\end{array}
\]
\caption{The possible transitions from state $a_k$ for $1 \leq k < d$ (on the top) and for $k=d$ (on the bottom), as determined by \eqref{eq:local-limit-d-legal-word}. The transitions from state $A$ are analogous, and the transitions from states $b_k$ and $B$ are symmetric.}
\label{fig:transitions1}
\end{figure}
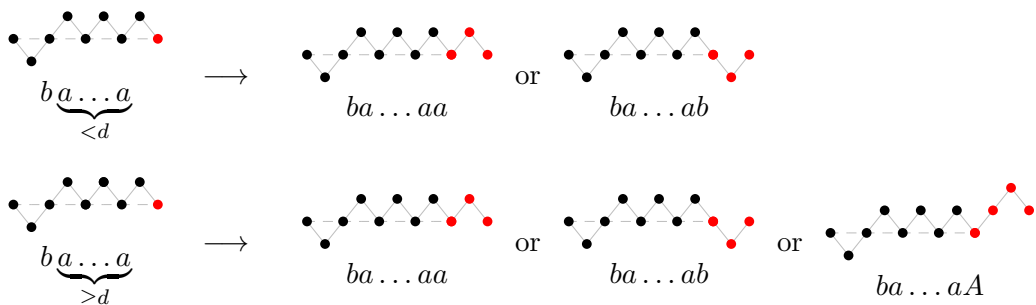

For $n\ge 1$, define
\begin{equation*}
  W_n := L_n(f_n).
\end{equation*}
The following lemma links the uniform distribution on homomorphisms
to the above Markov chain. For any $1 \leq n \leq \infty$ and any
word $x \in \Omega_{n,d}$, there exists a unique trajectory
$(s_1,s_2,\dots,s_{|x|})$ such that $s_i \in \tilde{\Sigma}$, $s_1
\in \{a_d,b_d,A,B\}$ and $p(s_i,s_{i+1})>0$, which generates the
word $x$ by the process of dropping the subscripts of the symbols in
$\tilde{\Sigma}$. For a finite word $x$, define $\text{State}(x) :=
s_{|x|}$ to be the final state of this trajectory. Let $P_k \colon \Sigma^* \to \Sigma^k$ denote the truncation to length $k$.
\begin{lemma}\label{lem:Markov_limit}
For any $u \in \Sigma$ and $x \in \Sigma^*$ such that $x$ and $x
\concat (u)$ are $d$-legal, we have
  \begin{equation}\label{eq:local-limit-convergence-of-trans-prob}
    \begin{aligned}
    \Pr(W_n(1)=u) &\xrightarrow[n \to \infty]{} \pi(\text{State}(u)) \\
    \Pr\big(W_n(|x|+1)=u ~|~ P_{|x|}(W_n)=x\big) &\xrightarrow[n \to \infty]{}
    p(\text{State}(x),\text{State}(x \concat (u)) .
    \end{aligned}
\end{equation}
\end{lemma}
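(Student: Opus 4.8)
The idea is to use the bijection $L_n$ of Claim~\ref{cl:local-limit-bijection} to turn the statement into a counting problem about $d$-legal words, and then extract the asymptotics from the analysis of Section~\ref{sec:local-limit-counting} via a Perron--Frobenius argument.

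\emph{Reduction to counting continuations.} Since $W_n=L_n(f_n)$ is uniform on $\Omega_{n,d}$, for a $d$-legal finite word $x$ the elements $y\in\Omega_{n,d}$ with $P_{|x|}(y)=x$ are exactly those of the form $y=x\concat z$ with $x\concat z$ being $d$-legal and $w(z)\in\{n-w(x),\,n+1-w(x)\}$. Because the conditions \eqref{eq:local-limit-d-legal-word} are local, whether $x\concat z$ is $d$-legal (given that $x$ is) depends on $x$ only through $\text{State}(x)$, and $\text{State}(x\concat z')$ is determined by $\text{State}(x)$ and $z'$; this is essentially the ``unique trajectory'' description preceding the lemma. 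Hence, writing $\hat C_s(N)$ for the number of words $z$ that legally continue a word of state $s$ and satisfy $w(z)\in\{N,N+1\}$, we get
\[ \Pr\big(P_{|x|}(W_n)=x\big)=\frac{\hat C_{\text{State}(x)}\big(n-w(x)\big)}{|\Omega_{n,d}|}. \]
Splitting a continuation on its first letter $u\in\Sigma$ --- which adds $w((u))\in\{2,3\}$ to the weight and advances the state deterministically --- gives, for $N$ large, a linear recursion for the vector $(\hat C_s(N))_{s\in\tilde\Sigma}$ with delays $2$ (for $u\in\{a,b\}$) and $3$ (for $u\in\{A,B\}$), of exactly the type treated in Section~\ref{sec:local-limit-counting}.

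\emph{Asymptotics via Perron--Frobenius.} The companion matrix of this recursion is non-negative, its underlying digraph is strongly connected (the chain of Figure~\ref{fig:local-limit-markov-chain} is irreducible) and has two cycles through $a_d$ of coprime weights (the self-loop $a_d\xrightarrow{a}a_d$ of weight $2$ and the cycle $a_d\xrightarrow{A}A\xrightarrow{a}a_2\to\cdots\to a_d$ of weight $2d+1$); hence it is primitive, so that $\hat C_s(N)=\beta(s)\,\nu^{N}\,(1+o(1))$ as $N\to\infty$ for a unique $\nu>0$ and $\beta(s)>0$ for every $s$ (the Perron right eigenvector is strictly positive), with no parity-dependent oscillation. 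The same splitting gives $|\Omega_{n,d}|=2\hat C_{a_d}(n-2)+2\hat C_{A}(n-3)$, so $|\Omega_{n,d}|$ grows like a constant times $\nu^n$; comparing with $|\Omega_{n,d}|=|\Hom(P_{n,d})|=C(d)\lambda^{n/2}(1+o(1))$ from Theorem~\ref{thm:local-limit-hom-count} forces $\nu=\sqrt\lambda$. Therefore
\[ \Pr\big(P_{|x|}(W_n)=x\big)\xrightarrow[n\to\infty]{}\ \Lambda(x):=\frac{\beta(\text{State}(x))}{C(d)}\,\lambda^{-w(x)/2}. \]

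\emph{Identification of the limit, and the main obstacle.} From the display above, \eqref{eq:local-limit-convergence-of-trans-prob} follows: the first limit is the case $x=(u)$, and for the second we write the conditional probability as $\Pr(P_{|x|+1}(W_n)=x\concat(u))/\Pr(P_{|x|}(W_n)=x)$ (the denominator is positive for large $n$, by padding $x$ with $a$'s), which tends to $\Lambda(x\concat(u))/\Lambda(x)=\beta(\text{State}(x\concat(u)))\,\lambda^{-w((u))/2}/\beta(\text{State}(x))$; this depends on $x$ only through the pair $\big(\text{State}(x),\text{State}(x\concat(u))\big)$, since $w((u))\in\{2,3\}$ is determined by $u$. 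Thus the limit is a Markov chain on $\tilde\Sigma$, and it remains only to identify its initial law and kernel with $\pi$ and $p$ of \eqref{eq:local-limit-transition-prob}: substituting $\hat C_s(N)=\beta(s)\lambda^{N/2}$ into the recursion yields a linear system determining the $\beta(s)$ up to one overall constant, which is fixed by $\hat C_{a_d}(N)=|\Omega_{N,d}(0,d)|=c_N(0)$ together with Lemma~\ref{lem:local-limit-hom-count}; solving it and simplifying with \eqref{eq:lambda} reproduces \eqref{eq:local-limit-transition-prob} exactly (the case $d=1$ is identical, with the two-block chain). I expect the delicate point to be the second step: one must ensure that $\sqrt\lambda$ is a \emph{strictly} dominant growth rate of the continuation recursion with no parity-dependent term in the leading order --- which is precisely why primitivity (equivalently, the coprime-weight cycles above) and the spectral information from Section~\ref{sec:local-limit-counting} are needed; the concluding matching of constants is then a routine, if somewhat lengthy, computation.
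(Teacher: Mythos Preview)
Your proposal is correct and follows the same overall scheme as the paper: reduce the conditional probabilities to ratios of continuation counts, extract asymptotics, and then identify the limiting transition kernel. The difference lies in how you obtain the asymptotics. The paper does not set up a companion matrix or invoke Perron--Frobenius; instead it observes that the continuation count from a word $x$ is exactly one of the quantities $c_{n-w(x)}(d-M(x)-1)$ already analyzed in Section~\ref{sec:local-limit-counting}, and then plugs in the explicit asymptotics $c_n(d-m-1)\sim a\lambda^{n/2}\frac{\lambda^m(\lambda-2)+1}{\lambda-1}$ coming from Claim~\ref{cl:local-limit-asymptotic-c_n} and the telescoping recursion~\eqref{eq:recursion3}. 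The ratios are then computed directly and simplified using~\eqref{eq:lambda} to match~\eqref{eq:local-limit-transition-prob}, with no eigenvector system to solve.

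What each approach buys: the paper's route is shorter and entirely concrete, since the strict dominance of $\sqrt{\lambda}$ and the explicit leading constants are already available from the Rouch\'e argument and the recursion in Section~\ref{sec:local-limit-counting}; there is no need to re-establish aperiodicity or positivity of the Perron vector. Your route is more structural and would transfer more readily to variants of the model, but it duplicates work (your primitivity/coprime-cycle argument is essentially a second proof of what the characteristic-polynomial analysis already gives), and it defers the identification of $\pi$ and $p$ to a ``routine computation'' that the paper carries out explicitly. In short: same skeleton, but the paper cashes in the formulas from Section~\ref{sec:local-limit-counting} rather than rebuilding the spectral picture.
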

\begin{proof}
For a $d$-legal word $x \in \Sigma^*$, define $M(x):=i-1$ if
$\text{State}(x) \in \{a_i,b_i\}$ for $1 \leq i \leq d$ and
$M(x):=0$ if $\text{State}(x) \in \{A,B\}$. Then, using
Claim~\ref{cl:local-limit-bijection}, we have for any $n \geq w(x)$
that
\[ |\{ P_{|x|}(W_n) = x \}| = c_{n - w(x)}(d - M(x) - 1) .\]
By Claim \ref{cl:local-limit-asymptotic-c_n}, we have $c_n(d-1) \sim
a \lambda^{n/2}$, for some constant $a>0$, and then
\eqref{eq:recursion3} gives
\begin{equation*}
c_n(d-m-1) \sim \displaystyle{a\lambda^{n/2+m} - a\lambda^{n/2 +
m-1} - \dots - a\lambda^{n/2 + 1} - a\lambda^{n/2}},\quad 1\le m<d.
\end{equation*}
Thus,
\begin{equation}\label{eq:local-limit-asymptotic-c_n-explicit}
  c_n(d-m-1)\sim a\lambda^{n/2} \frac{\lambda^{m}(\lambda - 2) +
1}{\lambda - 1}, \quad 0\le m<d.
\end{equation}
Therefore, if $\text{State}(x)=a_i$ with $1 \leq i \leq d$, then
$\text{State}(x \concat (b))=b_1$ and
\begin{align*}
\Pr\big(W_n(|x|+1)=b ~|~ P_{|x|}(W_n)=x\big)
&= \frac{|\{ P_{|x|+1}(W_n) = x \concat (b) \}|}{|\{ P_{|x|}(W_n) = x \}|} \\
&= \frac{c_{n-w(x)-2}(d-1)}{c_{n-w(x)}(d-i)} \sim \frac{\lambda -
1}{\lambda^i (\lambda-2) + \lambda} = p(a_i,b_1) .
\end{align*}
The remaining cases are handled similarly by taking the relevant
ratios. This proves the second part of
\eqref{eq:local-limit-convergence-of-trans-prob}. For the first
part, we will also need to compute the size of $\Hom(P_{n,d})$. By
Lemma \ref{lem:local-limit-hom-count},
\eqref{eq:local-limit-asymptotic-c_n-explicit} and
\eqref{eq:lambda}, we have
\begin{equation}\label{eq:local-limit-asymptotic-hom-P-n-d-explicit}
|\Hom(P_{n,d})| = 2c_{n-2}(0) + 2c_{n-3}(d-1) \sim 2a\lambda^{n/2-1} \frac{\lambda^{1/2} + 1}{\lambda - 1} .
\end{equation}
Therefore, since $\text{State}((a))=a_d$,
\[ \Pr(W_n(1) = a) =
\frac{|\{ P_{1}(W_n) = (a) \}|}{|\Hom(P_{n,d})|} =
\frac{c_{n-2}(0)}{|\Hom(P_{n,d})|} \sim \lambda^{-1/2}/2 = \pi(a_d)
.\] The remaining cases are again handled similarly. This proves the
first part of \eqref{eq:local-limit-convergence-of-trans-prob}.
\end{proof}
We continue by observing, using
Claim~\ref{cl:local-limit-recursion-for-T-extended}, that for any
$r\ge 1$ and $f\in\Hom(P_{r,d})$ there exists a $k \geq 1$ and a set
$X(f)\subset \Sigma^k$ such that we have the equality of events,
\begin{equation}\label{eq:X_f_def}
  \{B_r(f_n) = f\} = \{P_k(W_n)\in X(f)\},\quad r\le n\le \infty.
\end{equation}
For instance, one may take $X((0,-1,0,1,0)) = \{(b,a)\}$ and
$X((0,-1,0,1,0,1)) = \{(b,a,a),(b,a,A)\}$. In addition,
Lemma~\ref{lem:Markov_limit} implies that
\begin{equation}\label{eq:W_n_W_infty_limit}
  \Pr(P_k(W_n)=x) \xrightarrow[n \to \infty]{} \Pr(P_k(W_{\infty})=x) \text{ for any } k \geq 0 \text{ and } x \in
  \Sigma^k.
\end{equation}
This follows directly from
\eqref{eq:local-limit-convergence-of-trans-prob} when $x$ is
$d$-legal, and it follows trivially when $x$ is not $d$-legal since
the probabilities involved are zero.

Finally, putting together \eqref{eq:X_f_def} and
\eqref{eq:W_n_W_infty_limit}, we conclude that for any $r \geq 1$ and
$f \in \Hom(P_{r,d})$, we have
\begin{equation*}
  \lim_{n\to\infty} \Pr(B_r(f_n)=f) = \lim_{n\to\infty} \sum_{x\in
  X(f)} \Pr(P_{|x|}(W_n) = x) = \sum_{x\in
  X(f)} \Pr(P_{|x|}(W_\infty) = x) = \Pr(B_r(f_{\infty})=f),
\end{equation*}
proving \eqref{eq:local-limit-convergence-of-F}, as required.

We remark that it is now simple to derive an exact formula for the probability that $B_{r}(f_{\infty})=f$ for certain homomorphisms $f \in \Hom(P_{r,d})$. Specifically, let $f \in \Hom(P_{r,d})$ satisfy $w(L_r(f))=r$. For such $f$, one may take $X(f)=\{L_r(f)\}$. Hence, denoting $x:=L_r(f)$ and $m:=M(x)$ (defined in the proof of Lemma \ref{lem:Markov_limit}), we have using \eqref{eq:local-limit-asymptotic-c_n-explicit} and \eqref{eq:local-limit-asymptotic-hom-P-n-d-explicit} that
\begin{align*}
\Pr(B_r(f_{\infty})=f) &=
\Pr(P_{|x|}(W_{\infty})=x) =
\lim_{n \to \infty} \Pr(P_{|x|}(W_n) = x) \\
&= \lim_{n \to \infty}
\frac{c_{n-r}(d-m-1)}{|\Hom(P_{n,d})|} = \frac{1}{2} \lambda^{1-r/2}
\frac{\lambda^{m}(\lambda - 2) + 1}{\lambda^{1/2} + 1} .
\end{align*}

\section{Discussion and Open Problems}

\subsection{A continuous model} One may consider a continuous
variant of the graph homomorphisms considered here. Given a finite
connected graph $G=(V,E)$ and a vertex $v_0\in V$, let
\begin{equation*}
\text{Lip}(G,v_0):=\{f\colon V\to\R ~|~ f(v_0)=0, ~ |f(u)-f(v)|\le 1\text{ when }(u,v)\in E\}.
\end{equation*}
Thus, elements of $\text{Lip}(G,v_0)$ may be regarded as real-valued
Lipschitz functions on the graph, normalized to equal $0$ at $v_0$.
There is a natural uniform measure on $\text{Lip}(G,v_0)$ obtained
by regarding a function $f\in \text{Lip}(G,v_0)$ as a vector in
$\R^{V\setminus\{v_0\}}$ and using normalized Lebesgue measure
there. Hence, one may speak of a uniformly sampled function from
$\text{Lip}(G,v_0)$. In statistical physics terminology, this models
a random surface whose energy is defined via the Hammock potential
(see, e.g., \cite{BrascampLiebLebowitz}).

Naively, one may expect the behavior of a uniformly chosen function
$f$ from $\text{Lip}(P_{n,d},0)$ to be rather similar, perhaps up to
constants, to that of a uniformly chosen function from
$\Hom(P_{n,d},0)$. In particular, one may expect that
$\Var(f(n))\approx n 2^{-d}$ when $n 2^{-d}\ge 1$, say. However, a
different intuition comes from the following consideration. A
standard heuristic in statistical physics is that (continuous)
models of random surfaces should behave similarly to the Gaussian
free field. The Gaussian free field is again a real-valued function
$g:V\to\R$, satisfying $g(v_0)=0$, and sampled from a distribution
whose density is proportional to
\begin{equation*}
  \exp\left(-\beta \sum_{(u,v)\in E} (g(u)-g(v))^2\right),
\end{equation*}
with $\beta\in (0,\infty)$ a parameter. Analysis of the variance of
the Gaussian free field on a graph is made simple by the observation
that its distribution is a multivariate Gaussian. When $G=P_{n,d}$
and $v_0=0$ one obtains that $\Var(g(n))\approx nd^{-3}/\beta$. Thus
it is not clear whether one should expect a function $f$ sampled
uniformly from $\text{Lip}(G,v_0)$ to satisfy $\Var(f(n))\approx n
2^{-d}$ or $\Var(f(n))\approx n d^{-\alpha}$. We conjecture the
latter to be the truth. Thus, we expect a significant difference in
behavior between the homomorphism model considered in this paper and
its continuous counterpart. Consideration of the complete graph suggests that, when comparing the Gaussian free field to the continuous Lipschitz model on a regular graph, one should take $\beta$ to be one over the degree. As $P_{n,d}$ is nearly a $(2d+2)$-regular graph, this leads to the following conjecture.
\begin{conj}
There exist absolute constants $C,c>0$ such that the
following holds for any positive integers $n$ and $d$. If $f$ is uniformly sampled
from $\text{Lip}(P_{n,d},0)$ then
\begin{equation*}
  c(nd^{-2}+1)\le\Var(f(n))\le C (nd^{-2} + 1).
\end{equation*}
\end{conj}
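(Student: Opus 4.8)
This is only a plan of attack for the conjecture. The idea is to exploit the Markov structure, in the ``time'' coordinate, of the uniform measure on the Lipschitz polytope. Since every edge of $P_{n,d}$ has length at most $2d+1$, conditioning on the values of $f$ on any window of $2d+1$ consecutive vertices decouples the portion of $f$ to its left from the portion to its right; hence the windowed increments
\[ Z_k := \big(f(k+1)-f(k),\,\dots,\,f(k+2d)-f(k)\big)\in[-1,1]^{2d},\qquad 0\le k\le n-2d, \]
form a Markov chain, and $f(k)-f(k-1)$ is the first coordinate of $Z_{k-1}$. Thus $f(n)-f(0)$ is an additive functional of this chain, and the whole question reduces to two quantitative inputs: (i) the chain mixes on the scale $\asymp d$, so that, uniformly in $n$, the increment process decorrelates past lag $\asymp d$; and (ii) the ``drift per window'' $g(m+1)-g(m)$, where $g(m):=f\big(m(2d+1)\big)$, has second moment $\asymp 1/d$. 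Granting (i) and (ii), the coarse-grained sum $f(n)-f(0)=\sum_{m}\big(g(m+1)-g(m)\big)$ has $\sim n/d$ roughly independent terms of variance $\asymp 1/d$, so a CLT-for-additive-functionals estimate gives $\Var(f(n))=n\,\sigma_{\mathrm{eff}}^2+O(1)$ with $\sigma_{\mathrm{eff}}^2\asymp d^{-2}$, which is the assertion (the $+1$ absorbing the $O(1)$ correction, relevant when $n\lesssim d^2$).

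For the upper bound the structural input behind (ii) is that $\sum_{j=1}^{2d+1}\big(f(k+j)-f(k+j-1)\big)=f(k+2d+1)-f(k)$ has absolute value at most $1$, because the distance $2d+1$ is odd and hence an edge; so $g$ already has increments bounded by $1$, and one must upgrade the resulting a priori bound to $g(m+1)-g(m)$ having typical size $\asymp d^{-1/2}$. I would obtain this from a volume estimate: conditioned on the window immediately to the left, $f(k+2d+1)-f(k)$ is a coordinate-type functional of a uniform point in a $\Theta(d)$-dimensional section of the polytope on which roughly $2d$ coordinates are forced into a common interval of length at most $2$; a Pr\'ekopa--Leindler argument, or a direct computation of the conditional density, should show this functional has conditional variance $O(1/d)$ — the $d$ ``free'' coordinates each fluctuating by $O(1)$ but entering the drift only through an average-like combination. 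Combined with (i) this yields $\Var(f(n))\le C(nd^{-2}+1)$.

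For the lower bound one wants $\Omega(n/d^{2})$ genuinely productive local moves. Partition $\{0,\dots,n\}$ into $\sim n/d$ blocks of length $\sim 2d+1$ and set $D_j:=g(j+1)-g(j)$, so $f(n)-f(0)=\sum_j D_j$. I would prove that each $D_j$ has variance $\ge c/d$ — the matching lower half of (ii), obtained by exhibiting inside the conditioned polytope an explicit $\Omega(\sqrt d)$-fluctuation direction, e.g.\ superposing a small sawtooth perturbation on the profile across the block — and that the $D_j$ are non-negatively correlated, via a monotonicity/FKG-type input for the uniform measure on this polytope (an analogue of Theorem~\ref{thm:fkg} after the pointwise-absolute-value reduction, or a direct positive-association argument, in the spirit of Section~\ref{sec:line}). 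Non-negative correlation then gives $\Var(f(n))=\Var\big(\sum_j D_j\big)\ge\sum_j\Var(D_j)\ge c\,(n/d)(1/d)$, and the additive $+1$ follows from the elementary bound $\Var(f(1))\ge c$.

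The main obstacle is input (i): establishing, with the correct $d$-dependence and uniformly in $n$, that correlations in the increment process decay on the scale $\asymp d$ rather than $\asymp d^{2}$ or worse. Unlike the homomorphism model there is no jump decomposition reducing the measure to independent signs, and the transfer operator lives on the infinite-dimensional cube $[-1,1]^{2d}$; the natural route is a Doeblin-type coupling showing that two copies of the windowed chain started from different states can be forced to agree on a common ``flat'' stretch within $O(d)$ steps with probability bounded below using the explicit geometry of the polytope. Making this coupling quantitative with the right dependence on $d$ — equivalently, proving a spectral-gap bound of order $d^{-1}$ for the transfer operator — is the step I expect to be genuinely hard, and is presumably why the statement is recorded here only as a conjecture.
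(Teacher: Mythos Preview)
The paper does not prove this statement: it is recorded in the Discussion and Open Problems section as a genuine conjecture, with no proof or even a proof sketch offered. You correctly recognize this, explicitly labeling your write-up as ``only a plan of attack'' and identifying the hard step (a Doeblin/spectral-gap estimate with the right $d$-dependence for the windowed transfer operator) as the likely reason the authors left it open. There is therefore no paper proof to compare against.

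As a heuristic outline your plan is sensible: the Markov property of the windowed process $Z_k$ is a valid structural observation (all edges of $P_{n,d}$ have length at most $2d+1$), and the coarse-graining into blocks of length $2d+1$ is natural since $|f(k+2d+1)-f(k)|\le 1$ really is enforced by an edge. But none of the quantitative inputs you list --- the $O(1/d)$ conditional variance of a block increment, the $\Theta(d)$ mixing time, the positive-association step for the lower bound --- are established here or in the paper, and each would require genuine work on the geometry of the Lipschitz polytope that has no analogue in the homomorphism arguments of Sections~\ref{sec:line}--\ref{sec:torus}. So your proposal is an honest statement of what would need to be done, not a proof, which is exactly the status the paper assigns to the conjecture.
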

In particular, the threshold function $d(n)$ separating the regime
of localization from the regime of delocalization is polynomial in $n$, rather than logarithmic in $n$ as is the case for the
homomorphism model. Figure \ref{fig:lipschitz-model-sample} shows a uniformly sampled function in $\text{Lip}(P_{n,d},0)$.
We remark that when considering this model it is natural to consider the non-bipartite graph $\tilde{P}_{n,d}$, which is the discrete segment $\{0,1,\dots,n\}$ with edges between vertices at distance at most $d+1$, regardless of their parity.

\newcommand{\drawRandomWalker}[6]
{
	\begin{scope}

	\tikzstyle{every node}+=[circle, fill=black]
	\setcounter{c}{0}
	\setcounter{x}{#1}

        \foreach \y in #2 {

		\ifnum\value{c} > 0
			\draw[gray,randomWalkPathStyle] (\value{x} - 1,\value{lastY}/100) -- (\value{x},\y);
			\ifnum #3 < 99 \draw[gray,dashed,randomWalkPathStyle] (\value{x} - 1,#3) -- (\value{x},#3); \fi
			\ifnum \value{c} > 1
				\node at (\value{x} - 1,\value{lastY}/100) [fill=#6] {};
			\else
				\node at (\value{x} - 1,\value{lastY}/100) [fill=#4] {};
			\fi
		\fi

		\setcounter{lastY}{100*\real{\y}}
		\stepcounter{x}
		\stepcounter{c}
        }

	\node at (\value{x} - 1,\value{lastY}/100) [fill=#5] {};

    \end{scope}
}

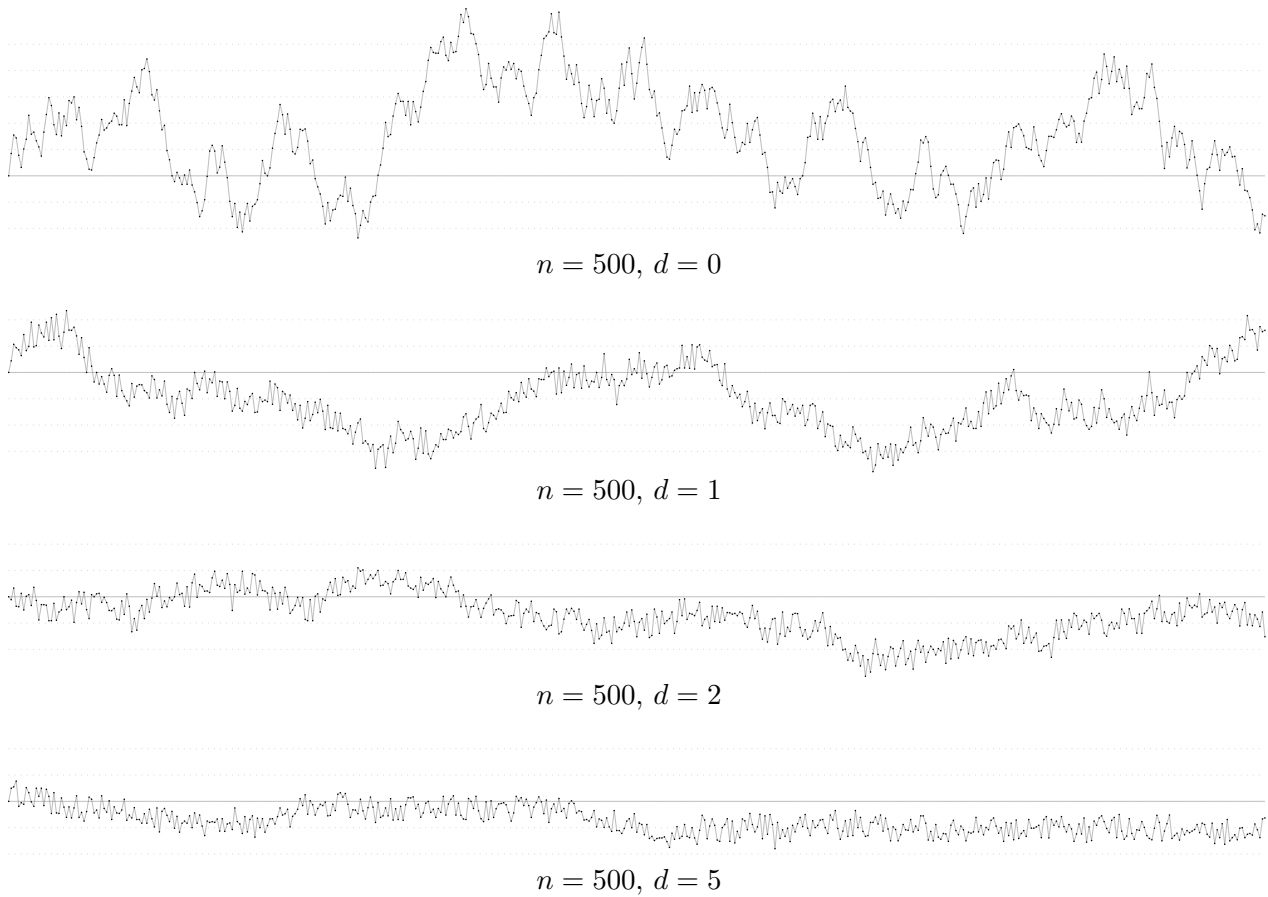
\begin{figure}[!t]
\centering
\begin{tikzpicture}

	\pgftransformcm{0.0334}{0}{0}{0.35}{\pgfpoint{0cm}{0cm}}
	\tikzstyle{every node}=[minimum size=0.02cm,inner sep=0]
	\draw[line width=0.1mm,gray!40,dotted](0,5)--(500,5);
	\draw[line width=0.1mm,gray!40,dotted](0,4)--(500,4);
	\draw[line width=0.1mm,gray!40,dotted](0,3)--(500,3);
	\draw[line width=0.1mm,gray!40,dotted](0,2)--(500,2);
	\draw[line width=0.1mm,gray!40,dotted](0,1)--(500,1);
	\draw[line width=0.1mm,gray!40,dotted](0,-1)--(500,-1);
	\draw[line width=0.1mm,gray!40,dotted](0,-2)--(500,-2);
	\drawRandomWalker{0}{{0.00,0.85,1.55,1.44,0.79,0.32,1.02,1.40,2.31,1.58,1.67,1.33,1.10,0.75,1.67,2.33,2.97,2.68,1.95,1.57,2.40,1.53,2.28,1.92,2.87,2.78,3.00,2.14,2.60,1.88,0.91,0.65,0.25,0.22,0.69,1.25,1.56,2.15,1.74,1.82,2.00,2.09,2.39,2.27,1.95,1.95,2.90,1.92,2.75,3.23,3.74,3.38,3.17,4.04,4.08,4.45,3.96,3.09,2.89,3.28,2.48,1.76,1.93,0.96,0.61,0.01,-0.22,0.14,-0.08,-0.34,0.04,-0.32,0.22,-0.35,-0.62,-1.02,-1.56,-1.34,-0.92,-0.02,0.95,1.17,0.95,0.12,0.34,1.15,0.53,-0.07,-0.97,-1.56,-1.05,-1.96,-1.38,-2.13,-1.46,-1.06,-1.72,-1.17,-1.09,-0.91,-0.31,0.59,0.10,-0.01,0.30,1.05,1.61,2.07,2.71,2.32,1.59,2.35,1.77,0.93,0.83,1.08,1.78,1.72,1.79,1.32,0.46,0.60,-0.11,-0.43,-0.70,-1.17,-1.84,-1.02,-1.71,-1.31,-1.27,-0.89,-0.74,-0.79,-0.06,-0.98,-0.47,-1.14,-1.45,-2.36,-1.88,-1.34,-1.60,-1.75,-1.00,-0.79,-0.76,0.03,0.42,1.06,2.04,1.49,1.57,2.27,2.74,3.14,2.83,2.84,2.30,2.83,1.94,2.82,2.45,2.06,2.61,3.21,3.58,4.40,4.89,4.70,4.66,4.66,5.10,5.28,4.59,4.38,5.07,4.69,4.72,5.30,6.13,5.83,6.35,6.05,5.42,5.39,5.02,4.61,3.81,3.29,3.47,4.27,3.71,3.38,3.38,2.80,3.72,4.15,4.02,4.33,4.05,4.28,3.69,4.05,3.98,3.42,3.04,2.76,2.30,2.97,3.15,4.04,4.59,5.23,5.32,5.48,6.16,5.45,5.39,6.22,5.29,4.55,3.68,4.21,3.89,3.47,4.23,3.56,2.76,2.21,2.64,3.45,2.92,2.25,2.87,2.92,3.70,3.34,2.39,2.90,2.14,2.01,2.47,3.34,4.26,3.45,4.32,4.86,3.88,2.93,3.52,4.31,4.89,5.24,4.24,3.29,2.98,3.23,2.29,2.40,1.82,1.24,0.71,0.64,1.17,1.77,1.58,1.76,2.47,2.50,2.66,3.42,2.42,3.17,2.48,3.16,3.48,3.26,2.55,3.31,3.37,3.23,2.77,2.35,1.82,1.44,1.76,2.72,2.03,1.46,0.88,0.94,1.28,1.19,1.99,1.28,1.75,2.09,2.23,1.55,0.82,0.90,0.32,-0.64,-0.62,-1.22,-0.26,-0.58,-0.67,-0.05,-0.48,-0.28,-0.12,-0.41,-0.71,-0.12,0.00,0.50,1.46,1.50,2.38,2.00,1.40,2.25,1.38,2.00,2.28,2.80,2.92,2.45,2.75,2.88,2.51,3.42,2.60,2.46,2.39,1.81,1.31,0.55,1.28,1.45,0.99,0.32,-0.32,-0.26,-0.86,-0.83,-0.59,-1.49,-1.07,-0.70,-1.08,-1.43,-1.25,-1.61,-0.96,-1.32,-1.00,-0.53,-0.54,0.09,0.53,1.36,1.25,1.49,1.29,0.36,-0.50,-0.83,-0.73,0.04,0.20,0.37,0.02,-0.32,-0.32,-0.70,-1.10,-1.91,-2.20,-1.55,-1.12,-0.58,-0.46,-1.19,-0.31,-0.97,-0.71,-0.08,-0.89,-0.08,0.51,0.60,0.59,-0.15,0.26,1.14,1.68,1.08,1.84,1.88,1.98,1.75,1.53,1.08,1.04,0.99,1.86,1.36,0.79,0.57,0.35,0.96,1.51,1.51,1.46,1.76,2.15,2.37,2.01,1.87,1.93,2.27,2.09,1.27,1.61,1.29,2.09,2.38,2.93,2.98,3.71,3.95,3.14,3.66,4.63,4.16,3.51,3.97,4.53,3.58,4.05,3.74,3.33,4.18,3.32,2.34,2.63,2.81,2.56,2.56,2.98,3.90,3.76,4.26,3.36,2.94,2.07,1.13,0.29,1.27,0.73,1.44,1.88,1.13,1.69,1.62,1.20,0.31,0.61,1.34,0.71,0.02,-0.57,-1.28,-0.30,0.26,0.32,0.82,1.34,0.80,0.21,1.00,0.79,0.90,1.10,0.73,0.74,0.19,-0.35,0.27,-0.55,-0.59,-0.83,-1.31,-2.05,-1.84,-2.18,-1.46,-1.52}}{0}{}{}{};

\end{tikzpicture}
$n=500$, $d=0$

\makebox[0cm]{\vspace{5pt}}

\begin{tikzpicture}

	\pgftransformcm{0.0334}{0}{0}{0.35}{\pgfpoint{0cm}{0cm}}
	\tikzstyle{every node}=[minimum size=0.02cm,inner sep=0]
	\draw[line width=0.1mm,gray!40,dotted](0,2)--(500,2);
	\draw[line width=0.1mm,gray!40,dotted](0,1)--(500,1);
	\draw[line width=0.1mm,gray!40,dotted](0,-1)--(500,-1);
	\draw[line width=0.1mm,gray!40,dotted](0,-2)--(500,-2);
	\draw[line width=0.1mm,gray!40,dotted](0,-3)--(500,-3);
	\drawRandomWalker{0}{{0.00,0.45,1.07,0.94,0.86,0.65,1.45,0.83,0.99,1.92,0.97,1.02,1.80,1.51,1.35,1.91,1.23,2.09,1.24,2.21,1.39,0.91,1.54,2.35,1.61,1.60,1.73,1.40,0.69,1.32,0.57,0.01,0.95,0.24,-0.37,-0.00,-0.44,-0.16,-0.45,-0.76,-0.11,0.15,-0.11,-0.91,-0.10,-0.70,-0.92,-1.09,-1.28,-0.43,-0.95,-0.52,-0.46,-0.48,-0.70,-0.77,-0.98,-0.14,-0.28,-0.98,-0.67,-0.90,-0.34,-1.21,-1.53,-0.92,-1.74,-1.01,-0.73,-1.18,-1.64,-0.64,-0.70,-0.99,-0.01,-0.43,-0.66,-0.22,0.05,-0.77,-0.26,-0.40,0.03,-0.91,-0.33,-0.36,-1.01,-0.37,-0.72,-1.48,-1.25,-0.62,-1.36,-1.47,-1.10,-1.23,-0.93,-0.81,-1.52,-1.51,-0.94,-1.11,-1.05,-1.11,-0.34,-1.21,-0.45,-0.59,-1.44,-0.72,-0.82,-1.17,-0.87,-1.45,-1.20,-1.81,-1.26,-2.12,-1.74,-1.11,-1.84,-1.60,-1.13,-1.54,-1.03,-1.47,-2.18,-1.25,-2.10,-2.14,-1.52,-2.22,-1.56,-2.13,-2.18,-2.34,-2.04,-2.73,-2.27,-1.93,-1.78,-2.69,-2.60,-2.73,-2.99,-2.74,-3.65,-2.93,-2.84,-2.75,-3.62,-2.88,-2.43,-3.05,-2.69,-1.87,-2.18,-2.31,-2.58,-2.53,-2.24,-2.52,-3.31,-2.88,-2.12,-3.07,-2.22,-3.10,-3.29,-3.00,-2.76,-2.83,-2.33,-2.56,-2.56,-2.17,-2.21,-2.49,-2.27,-2.37,-2.26,-1.70,-1.57,-2.05,-2.54,-1.93,-1.78,-1.69,-1.22,-2.13,-2.26,-1.91,-1.56,-1.63,-1.77,-1.47,-1.18,-1.54,-1.52,-0.73,-0.96,-1.05,-0.77,-1.12,-0.38,-0.79,-0.94,-0.24,-0.64,-0.71,-0.93,-0.24,-0.28,-0.27,0.18,-0.16,0.01,0.05,-0.82,0.18,-0.78,0.05,-0.49,-0.19,0.32,-0.58,-0.09,-0.03,-0.60,0.25,-0.31,-0.56,-0.34,-0.37,0.01,-0.57,-0.20,0.71,-0.28,-0.49,-0.08,-0.28,-1.22,-0.51,-0.32,-0.50,-0.25,0.50,-0.08,0.15,-0.03,0.75,-0.22,0.44,-0.06,-0.56,-0.19,0.21,0.36,-0.44,-0.29,0.21,0.28,-0.18,-0.13,0.08,0.15,0.18,1.04,0.18,0.60,0.17,1.05,0.14,0.98,1.05,0.59,0.47,0.43,0.84,0.23,0.29,0.31,-0.31,-0.51,0.33,-0.64,-0.91,-0.51,-0.78,-0.97,-0.83,-1.31,-1.12,-0.72,-1.30,-1.93,-1.18,-1.84,-1.54,-1.39,-2.15,-1.54,-1.17,-1.65,-1.64,-1.90,-1.94,-1.00,-1.12,-1.58,-1.42,-1.23,-1.67,-1.37,-1.55,-1.25,-2.03,-1.80,-1.25,-1.19,-1.27,-2.14,-1.87,-1.50,-2.44,-1.96,-2.02,-2.06,-2.80,-2.97,-2.40,-2.84,-2.65,-2.77,-2.86,-2.07,-2.35,-2.62,-2.50,-3.06,-2.99,-3.29,-2.85,-3.78,-3.53,-2.71,-3.13,-3.28,-2.84,-3.23,-2.73,-3.54,-2.76,-3.34,-2.91,-3.06,-2.85,-2.64,-2.08,-2.77,-2.68,-3.06,-2.60,-1.73,-2.11,-2.43,-1.97,-2.08,-2.46,-2.94,-2.37,-2.17,-2.32,-2.31,-2.02,-1.74,-2.61,-1.91,-1.96,-1.77,-1.35,-2.06,-1.87,-2.13,-2.13,-1.49,-2.05,-1.16,-0.74,-1.57,-0.69,-1.39,-1.28,-0.42,-1.20,-0.78,-0.24,-0.66,-0.13,0.12,-0.69,-1.06,-0.73,-0.89,-0.86,-0.90,-1.84,-1.35,-1.57,-2.14,-1.57,-1.76,-1.91,-2.01,-2.31,-1.97,-1.38,-1.99,-1.41,-0.63,-1.04,-1.46,-1.15,-1.78,-1.73,-1.06,-1.21,-1.81,-2.03,-2.17,-2.05,-1.33,-1.93,-1.36,-0.66,-1.01,-1.71,-1.39,-1.79,-1.91,-1.85,-2.17,-1.72,-1.59,-2.39,-1.73,-1.30,-1.65,-1.83,-1.37,-0.92,-1.47,-0.88,0.02,-0.77,-1.64,-0.93,-1.26,-1.21,-0.68,-1.39,-1.13,-1.14,-1.73,-1.04,-0.90,-1.02,-0.90,-0.01,-0.36,-0.05,0.05,0.25,-0.24,0.60,0.46,0.28,1.02,0.91,0.60,0.91,-0.06,0.53,0.59,0.41,1.04,0.84,0.45,0.51,1.32,1.36,1.26,2.17,1.61,1.63,1.47,0.85,1.74,1.56,1.60}}{0}{}{}{};

\end{tikzpicture}
\raisebox{0.6ex}{$n=500$, $d=1$}

\makebox[0cm]{\vspace{5pt}}

\begin{tikzpicture}

	\pgftransformcm{0.0334}{0}{0}{0.35}{\pgfpoint{0cm}{0cm}}
	\tikzstyle{every node}=[minimum size=0.02cm,inner sep=0]
	\draw[line width=0.1mm,gray!40,dotted](0,2)--(500,2);
	\draw[line width=0.1mm,gray!40,dotted](0,1)--(500,1);
	\draw[line width=0.1mm,gray!40,dotted](0,-1)--(500,-1);
	\draw[line width=0.1mm,gray!40,dotted](0,-2)--(500,-2);
	\drawRandomWalker{0}{{0.00,-0.12,0.34,-0.36,-0.38,0.13,-0.50,0.03,0.08,-0.33,0.37,-0.15,-0.89,-0.31,-0.31,-0.32,-0.92,-0.91,-0.25,-0.90,-0.59,-0.51,0.01,-0.49,-0.39,-0.14,-0.98,-0.23,0.19,-0.27,0.18,-0.49,-0.83,0.08,0.01,0.11,0.14,-0.21,-0.02,-0.23,-0.53,-0.54,-0.44,-0.85,-0.88,-0.16,-0.68,0.12,-0.36,-1.34,-0.84,-1.28,-0.54,-0.33,-0.84,-0.10,-0.58,-0.06,0.51,0.02,0.39,-0.41,-0.17,-0.38,-0.04,-0.15,-0.01,0.53,0.08,0.14,-0.37,0.38,-0.40,0.54,0.87,-0.03,0.46,0.40,0.21,0.23,0.16,0.72,0.98,0.44,0.40,0.64,0.32,0.88,0.47,-0.52,0.46,0.18,0.24,0.97,0.10,0.34,0.16,0.89,0.83,0.07,0.60,0.24,0.22,0.15,-0.51,0.37,0.14,-0.48,0.16,-0.28,0.37,0.01,-0.64,-0.50,-0.34,-0.03,-0.35,-0.04,-0.86,-0.92,-0.11,-0.91,0.02,0.12,-0.59,-0.08,-0.15,0.28,0.56,0.17,0.70,0.05,0.09,0.74,0.56,0.61,0.19,0.42,0.32,1.10,0.97,1.03,0.49,0.67,0.84,0.69,0.83,1.01,0.43,0.45,0.59,0.51,0.12,0.57,0.68,1.00,0.66,0.67,0.91,0.42,0.34,0.29,0.41,-0.02,0.74,0.26,0.52,0.48,0.55,0.69,-0.04,0.59,0.30,0.17,0.01,0.42,0.76,0.44,0.19,0.22,-0.38,0.07,-0.37,-0.77,-0.38,-0.36,-0.11,-0.46,-0.79,-0.45,-0.24,-0.08,0.07,-0.69,-0.49,-0.46,-0.50,-0.63,-1.02,-0.74,-0.72,-0.71,-0.15,-0.23,-1.17,-0.64,-0.48,-0.58,-0.18,-0.95,-0.60,-0.65,-0.92,-1.12,-1.25,-1.12,-1.08,-0.48,-0.57,-1.42,-1.33,-0.85,-0.47,-0.65,-0.45,-0.55,-0.87,-0.47,-1.04,-1.27,-1.06,-0.86,-1.29,-1.77,-1.04,-1.60,-1.46,-0.81,-1.35,-1.31,-1.77,-0.77,-1.33,-1.16,-1.26,-1.01,-0.58,-1.11,-0.50,-0.95,-1.52,-0.97,-0.84,-1.28,-0.66,-1.56,-1.47,-0.48,-0.77,-1.40,-0.56,-1.02,-0.91,-0.83,-0.59,-1.40,-0.95,-0.12,-0.73,-0.09,-0.86,-0.64,-0.67,-0.72,-0.53,-0.21,-1.10,-0.86,-0.75,-1.20,-0.37,-0.95,-0.37,-0.98,-0.43,-0.90,-0.60,-0.62,-0.67,-0.64,-0.72,-0.58,-0.65,-1.03,-1.16,-0.80,-0.33,-0.84,-0.56,-1.23,-1.62,-1.19,-1.72,-1.23,-0.85,-0.82,-1.81,-1.41,-0.58,-1.50,-1.17,-1.32,-0.68,-0.63,-0.65,-1.18,-1.49,-1.25,-1.62,-1.54,-1.30,-0.91,-1.30,-0.98,-0.61,-1.19,-1.87,-1.14,-1.82,-1.43,-1.59,-1.55,-2.02,-1.99,-2.41,-2.16,-2.45,-1.85,-2.14,-2.64,-2.24,-3.02,-2.40,-2.87,-2.17,-1.62,-2.20,-2.62,-2.21,-2.78,-2.28,-1.92,-1.98,-2.63,-2.27,-1.96,-1.68,-2.59,-2.35,-2.07,-2.76,-2.01,-1.66,-2.12,-1.88,-2.49,-2.11,-1.89,-2.00,-2.21,-1.97,-2.06,-1.71,-2.54,-1.59,-2.37,-1.67,-1.80,-2.33,-2.02,-1.66,-1.52,-2.18,-1.60,-2.24,-1.74,-1.66,-1.65,-2.14,-1.73,-2.26,-1.89,-1.85,-1.99,-1.65,-1.62,-1.77,-1.98,-1.12,-1.22,-1.12,-1.60,-1.43,-0.89,-1.78,-1.57,-1.16,-1.75,-1.56,-1.38,-2.07,-2.06,-1.88,-1.86,-1.79,-2.30,-1.55,-0.90,-1.61,-0.89,-1.33,-1.59,-0.89,-1.16,-0.64,-0.73,-1.24,-1.11,-0.90,-0.97,-1.43,-0.64,-0.83,-0.73,-0.63,-0.90,-1.38,-1.50,-1.08,-0.97,-1.22,-1.01,-1.19,-0.48,-0.63,-1.06,-0.46,-0.83,-0.84,-1.39,-0.67,-1.40,-0.64,-0.54,-0.43,-0.18,-0.84,-0.46,0.05,-0.94,-0.58,-0.94,-1.01,-0.68,-0.60,-1.10,-0.24,-0.92,-0.75,-0.05,-0.34,-0.43,-0.32,-0.39,0.12,-0.78,-0.66,-0.62,-0.11,-0.59,-0.40,-1.01,-0.25,-0.41,-0.21,-0.68,-0.84,-0.55,-0.30,-0.47,-1.12,-0.49,-0.23,-1.07,-0.67,-1.10,-0.69,-0.85,-1.14,-0.58,-1.53}}{0}{}{}{};

\end{tikzpicture}
\raisebox{0.6ex}{$n=500$, $d=2$}

\makebox[0cm]{\vspace{5pt}}

\begin{tikzpicture}

	\pgftransformcm{0.0334}{0}{0}{0.35}{\pgfpoint{0cm}{0cm}}
	\tikzstyle{every node}=[minimum size=0.02cm,inner sep=0]
	\draw[line width=0.1mm,gray!40,dotted](0,2)--(500,2);
	\draw[line width=0.1mm,gray!40,dotted](0,1)--(500,1);
	\draw[line width=0.1mm,gray!40,dotted](0,-1)--(500,-1);
	\draw[line width=0.1mm,gray!40,dotted](0,-2)--(500,-2);
	\drawRandomWalker{0}{{0.00,0.50,0.55,0.78,-0.20,-0.06,-0.12,0.53,0.33,-0.03,-0.19,0.51,0.19,0.51,-0.16,0.46,0.20,-0.46,0.33,-0.45,-0.46,0.14,-0.22,-0.61,-0.21,-0.63,-0.25,0.21,-0.61,-0.43,-0.75,-0.40,0.16,0.08,-0.45,-0.33,-0.64,0.21,-0.27,-0.42,0.03,-0.69,-0.21,-0.44,-0.46,-0.34,0.09,-0.72,-0.53,-0.64,-0.24,-0.70,-0.69,-0.84,-0.47,-0.83,-0.31,-0.67,-0.97,-0.26,-1.17,-0.38,-0.27,-0.54,-0.77,-0.39,-1.02,-0.57,-1.18,-0.83,-0.76,-0.84,-0.55,-1.01,-0.44,-0.84,-0.52,-0.95,-1.31,-0.76,-0.81,-0.78,-0.80,-0.68,-1.14,-0.67,-1.15,-0.86,-0.82,-1.07,-0.25,-0.78,-0.86,-1.23,-0.59,-1.16,-0.69,-0.56,-1.36,-0.65,-1.01,-0.66,-1.18,-0.93,-0.68,-0.66,-0.87,-0.58,-0.25,-0.52,-0.48,-0.30,-0.97,-0.57,-0.47,-0.30,0.10,-0.17,0.25,-0.49,0.03,-0.13,-0.52,-0.28,-0.12,-0.16,-0.04,-0.56,-0.54,-0.11,-0.35,0.27,0.33,0.17,0.31,-0.12,-0.62,-0.32,-0.34,-0.47,0.09,-0.37,-0.68,-0.24,0.17,-0.23,-0.81,-0.06,-0.61,-0.56,-0.25,-0.55,0.10,-0.71,-0.29,-0.69,-0.20,-0.72,-0.14,0.15,0.13,0.18,-0.29,-0.12,-0.60,-0.44,-0.07,0.00,-0.43,-0.31,-0.62,-0.27,-0.43,0.20,-0.53,-0.10,0.08,0.21,-0.53,-0.07,-0.54,-0.68,-0.70,0.11,0.00,-0.28,0.20,-0.36,-0.67,-0.78,-0.03,-0.69,0.03,-0.61,-0.29,-0.10,-0.09,-0.06,-0.14,-0.43,-0.76,-0.44,0.19,0.03,0.15,-0.17,0.08,-0.17,-0.61,-0.23,-0.05,0.18,-0.03,-0.47,0.05,-0.80,-0.29,-0.10,-0.43,0.12,-0.39,-0.28,-0.63,-0.08,0.03,-0.08,-0.45,-0.37,-0.40,-0.71,-0.69,-0.75,-0.62,-0.35,-0.97,-0.68,-0.73,-0.99,-0.64,-0.97,-1.15,-0.35,-1.11,-1.11,-1.22,-1.04,-0.75,-0.66,-1.01,-0.48,-1.18,-0.83,-0.88,-1.28,-0.89,-1.41,-1.34,-1.49,-1.42,-1.40,-1.40,-1.35,-1.54,-1.76,-1.21,-1.45,-0.96,-1.41,-1.33,-0.89,-1.33,-1.38,-1.00,-1.40,-1.64,-1.10,-0.54,-0.71,-1.43,-0.94,-0.68,-0.92,-1.47,-1.53,-1.00,-0.89,-0.66,-1.52,-0.76,-1.58,-1.15,-1.39,-0.68,-0.74,-1.09,-0.46,-0.88,-1.17,-0.98,-0.49,-1.43,-0.55,-1.26,-0.50,-0.88,-1.80,-0.81,-1.41,-1.09,-1.33,-1.03,-1.54,-0.91,-1.16,-0.85,-0.61,-0.96,-0.50,-1.03,-1.30,-0.74,-1.01,-0.92,-0.71,-0.70,-1.00,-1.22,-0.52,-0.71,-0.35,-1.04,-1.44,-1.14,-0.74,-1.34,-0.70,-1.31,-0.35,-1.00,-0.61,-1.15,-1.33,-1.31,-0.81,-0.81,-0.60,-0.87,-0.53,-1.13,-0.65,-1.05,-1.50,-0.97,-1.07,-1.50,-1.28,-0.88,-1.09,-1.10,-0.69,-1.17,-0.94,-1.10,-0.55,-0.56,-0.83,-1.47,-1.15,-1.51,-0.89,-1.37,-1.44,-1.23,-1.24,-1.52,-0.92,-0.73,-1.11,-1.27,-0.94,-1.08,-1.50,-1.18,-1.21,-1.16,-0.61,-1.17,-0.93,-1.28,-0.99,-1.08,-0.73,-0.63,-0.94,-1.21,-1.01,-1.52,-0.80,-1.31,-0.83,-1.46,-0.71,-1.41,-1.22,-1.12,-1.34,-0.69,-1.02,-1.03,-0.98,-0.91,-1.38,-0.69,-0.58,-1.40,-0.70,-0.92,-0.75,-0.98,-0.59,-0.96,-1.17,-0.88,-1.05,-1.53,-1.24,-0.86,-1.24,-0.63,-1.55,-0.98,-1.49,-0.76,-0.92,-0.64,-0.55,-0.77,-1.04,-1.19,-0.73,-0.62,-1.39,-0.51,-1.14,-1.44,-1.32,-0.67,-1.22,-1.44,-1.09,-1.10,-0.78,-0.99,-1.30,-1.41,-1.46,-1.03,-0.54,-0.80,-1.51,-1.14,-0.85,-0.53,-1.42,-1.30,-1.21,-1.48,-1.06,-1.27,-1.11,-1.19,-1.13,-1.07,-0.89,-1.19,-0.83,-0.89,-1.25,-1.37,-0.83,-1.60,-1.09,-1.39,-0.85,-1.64,-0.70,-1.50,-1.38,-1.28,-1.46,-1.02,-1.00,-1.39,-1.25,-1.22,-1.29,-0.86,-1.09,-1.39,-0.68,-0.63}}{0}{}{}{};

\end{tikzpicture}
$n=500$, $d=5$

\caption{Uniformly sampled functions in $\text{Lip}(P_{n,d},0)$. The case $d=0$ is just a random walk with independent uniform increments in $[-1,1]$. The simulation uses a Metropolis
algorithm (see, e.g., \cite[Chapter~3]{Peres2009markov}) and coupling from the past \cite{ProppWilson}.}
\label{fig:lipschitz-model-sample}
\end{figure}

\subsection{The scaling limit}

In this paper we explored the properties of a random homomorphism
for given $n$ and $d$, and also the local limit of the homomorphism
when $d$ is fixed and $n$ tends to infinity. Another limit of
interest is the scaling limit. As in many models of random walk, one
may expect that in the subcritical regime, when the range of a
homomorphism in $\Hom(P_{n,d},0)$ tends to infinity as $n$ tends to
infinity, the homomorphism has a Brownian motion scaling limit. This
is the content of the next conjecture.

\begin{conj}
There exists a function $\sigma \colon \N \to (0,\infty)$ such that
the following holds. Let $f_{n,d}$ be a uniformly chosen
homomorphism in $\Hom(P_{n,d},0)$. Define $B_{n,d} \colon [0,1] \to
\mathbb{R}$ to be the continuous function defined by
\[ B_{n,d}\left(\frac{i}{n}\right) := \frac{f_{n,d}(i)}{\sigma(d)\sqrt{n}} \]
and interpolated linearly between these points. If $d(n) - \log_2 n
\to -\infty$ as $n \to \infty$, then $B_{n,d(n)}$ converges in
distribution as $n \to \infty$ to a standard Brownian motion on
$[0,1]$.
\end{conj}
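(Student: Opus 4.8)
The plan is to prove the functional CLT for the average-height process and transfer it to $f_{n,d}$ at no cost. Since $|f_{n,d}(i)-h(i)|\le 1$ for every $i$ by construction of $h$, and since the normalization satisfies $\sigma(d)^2 n\asymp n2^{-d}\to\infty$ in the subcritical regime by Theorem~\ref{thm:line-subcritical-endpoint}, the linearly interpolated processes built from $f_{n,d}(i)/(\sigma(d)\sqrt n)$ and from $h(i)/(\sigma(d)\sqrt n)$ differ by $o(1)$ in the supremum norm (the possible $O(1)$ jump occurring in the first $2d+1$ vertices is likewise negligible, as $d=o(n)$). So the first step is to reduce to proving convergence of $k\mapsto h(k)/(\sigma(d)\sqrt n)$ to Brownian motion.

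\textbf{Martingale structure via the chain decomposition.} By Corollary~\ref{cor:line-indep-chain-signs}, conditionally on the jump set $S$ one has $h(k)=\sum_{(j,t)\in\cC(S\cap\{1,\dots,k\})}t\,\Delta(j)$ with the $\Delta(j)$ independent uniform signs. Hence, with respect to the filtration generated by $S$ together with the signs revealed up to position $k$, the sequence $(h(k))$ is a martingale with predictable quadratic variation $\langle h\rangle_k=\sum_{(j,t)\in\cC(S\cap\{1,\dots,k\})}t^2$. I would then invoke a martingale functional central limit theorem (equivalently, a Lindeberg functional CLT applied, after conditioning on $S$, to the array of independent block contributions): convergence of $h(\lfloor nt\rfloor)/(\sigma(d)\sqrt n)$ to a standard Brownian motion on $[0,1]$ follows once one verifies \emph{(i)} a Lindeberg negligibility condition for the jumps and \emph{(ii)} convergence of the rescaled quadratic variation, $\langle h\rangle_{\lfloor nt\rfloor}/(\sigma(d)^2 n)\to t$ uniformly in $t\in[0,1]$, in probability. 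Condition \emph{(i)} is easy: the chain length at any vertex has an exponentially decaying tail by Lemma~\ref{lem:line-jumps-at-positions}, so the largest single contribution is $o(\sqrt{n2^{-d}})$ with probability $1-o(1)$, which annihilates the Lindeberg term; tightness in $C[0,1]$ then comes for free from the FCLT and can in any case be read off the Kolmogorov maximal bound $\Pr(\max_{k\le n}|h'(k)|\ge x)\le Cn2^{-d}/x^2$ already proved in Section~\ref{sec:line}.

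\textbf{The quadratic variation, and the main obstacle.} Condition \emph{(ii)} is the heart of the matter. One first defines $\sigma(d)^2$: for fixed $d$, Theorem~\ref{thm:local-limit} exhibits the increment process as a function of an ergodic Markov chain on $2d+2$ states, so the Markov-chain central limit theorem gives that $\Var(f_{n,d}(n))/n$ converges to a finite positive limit, which I take to be $\sigma(d)^2$; in particular $\E[\langle h\rangle_{\lfloor nt\rfloor}]/n\to\sigma(d)^2 t$. One then needs a law of large numbers for $\langle h\rangle$ that is quantitative in $d$, namely $\Var(\langle h\rangle_{\lfloor nt\rfloor})=o(n^2 2^{-2d})$ uniformly over $d\le\log_2 n-\omega(1)$. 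Because $\langle h\rangle$ is a sum over chains and the homomorphism has dependence range $2d+1$, this should follow from a covariance estimate $\big|\operatorname{Cov}(t(j)^2\mathbf 1_{A_j},\,t(j')^2\mathbf 1_{A_{j'}})\big|$ that decays exponentially in $d$ plus the distance $|j-j'|$ — exactly the type of bound implicit in the chain-removal maps of Section~\ref{sec:line}. The genuinely new difficulty, and the step I expect to be hardest, is the uniformity as $d\to\infty$ with $n2^{-d}\to\infty$: one must show that the finite-volume variance rate $\Var(f_{n,d(n)}(n))/n$ differs from the infinite-volume rate $\sigma(d(n))^2$ by a factor $1+o(1)$ uniformly along the sequence — i.e.\ that boundary and clumping corrections contribute only $o(2^{-d})$ to the variance rate. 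Theorem~\ref{thm:line-subcritical-endpoint} pins this ratio down only up to constants and so does not by itself suffice; a refined analysis of the chain structure's variance profile is required.

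\textbf{Finite-dimensional distributions.} These are handled by the same conditional argument: for $0\le t_1<\cdots<t_m\le 1$, the increments $h(\lfloor nt_{i+1}\rfloor)-h(\lfloor nt_i\rfloor)$ are, conditionally on $S$, sums over disjoint sets of chains and hence independent, so applying the conditional one-dimensional Lindeberg CLT to each block and using condition \emph{(ii)} gives joint convergence to independent centered Gaussians of variances $\sigma(d)^2(t_{i+1}-t_i)$, that is, to the increments of standard Brownian motion. Combining this with tightness in $C[0,1]$ and the reduction of the first step completes the proof.
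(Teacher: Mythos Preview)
This statement is a \emph{conjecture}, not a theorem: it appears in the paper's final section (Discussion and Open Problems) and the paper offers no proof. There is therefore nothing to compare against; the paper only motivates the candidate normalization by computing, via the local-limit Markov chain, the asymptotic variance rate $\sigma'(d)^2=(\lambda(d)-2)(\lambda(d)-1)/\big(4+(2d+1)(\lambda(d)-2)\big)$ and remarking that it is plausible that $\sigma(d)=\sigma'(d)$.

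Your outline is reasonable and you have correctly located the real obstruction. The reduction from $f$ to $h$, the conditional-on-$S$ independence of chain signs (Corollary~\ref{cor:line-indep-chain-signs}), the Lindeberg negligibility via the $2^{-dt}$ tail on chain lengths (Lemma~\ref{lem:line-jumps-at-positions}), and tightness from the Kolmogorov maximal bound are all sound and essentially already in the paper's toolbox. The decisive missing step is precisely the one you flag: showing $\langle h\rangle_{\lfloor nt\rfloor}/(\sigma(d(n))^2 n)\to t$ in probability \emph{uniformly} along any subcritical sequence $d(n)$. Theorem~\ref{thm:line-subcritical-endpoint} pins down only the scale $k2^{-d}$ up to constants, not the constant; upgrading this to a sharp $1+o(1)$ asymptotic---equivalently, showing that the finite-$n$ chain statistics match the stationary Markov-chain statistics with error $o(2^{-d})$---is genuinely new work that neither you nor the paper supplies, and this is why the statement remains a conjecture.

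One minor technical remark: your stated $\langle h\rangle_k=\sum_{(j,t)\in\cC(S\cap\{1,\dots,k\})}t^2$ is the conditional variance of $h(k)$ given $S$, not the predictable quadratic variation of the vertex-indexed martingale under the filtration you describe. Within a chain, only the first jump is a fresh $\pm1$; the subsequent $t-1$ jumps are predictable once the chain's sign is revealed. This does not break the argument---you can index the martingale by chain endpoints, or simply run the conditional Lindeberg CLT on the independent block contributions---but the filtration should be stated accordingly.
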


An educated guess for the function $\sigma$ may be obtained as
follows. Recall the local limit $f_{\infty,d}$ from Section
\ref{sec:local-limits}. The fact that $f_{\infty, d}$ may be
described via a Markov chain simplifies the analysis of its scaling
limit. Define
\begin{equation*}
  \sigma'(d)^2 :=
  \frac{(\lambda(d)-2)(\lambda(d)-1)}{4+(2d+1)(\lambda(d)-2)},
\end{equation*}
where $\lambda(d)$ is defined in
Theorem~\ref{thm:local-limit-hom-count}. Observe that, by
Claim~\ref{cl:lambda_asymptotics},
\[ \sigma'(d)^2 = 2^{-d-3/2}(1 - o(1)) \quad \text{as } d \to \infty . \]
Then, defining the continuous function
\[ B'_{n,d}\left(\frac{i}{n}\right) := \frac{f_{\infty,d}(i)}{\sigma'(d)\sqrt{n}}, \]
interpolated linearly between these points, it may be shown that for
any fixed $d$ the process $B'_{n,d}$ converges in distribution as
$n\to\infty$ to a standard Brownian motion on $[0,1]$. Thus, it seems
plausible that the $\sigma(d)$ of the above conjecture equals
$\sigma'(d)$.

\bibliographystyle{amsplain}
\nocite{*}
\bibliography{article}

\end{document}